\newcommand\KK{\mathbb{K}}
\newcommand\LL{\mathbb{L}}
\newcommand\QQ{\mathbb{Q}}
\newcommand\CC{\mathbb{C}}
\newcommand\HH{\mathbb{H}}
\newcommand\PP{\mathbb{P}}
\newcommand\RR{\mathbb{R}}
\newcommand\NN{\mathbb{N}}
\newcommand\ZZ{\mathbb{Z}}
\numberwithin{equation}{section}
\theoremstyle{plain}
\newtheorem{theorem}{Theorem}[section]
\newtheorem{thmx}{Theorem}
\newtheorem{proposition}[theorem]{Proposition}
\newtheorem{corollary}[theorem]{Corollary}
\newtheorem{lemma}[theorem]{Lemma}
\theoremstyle{definition}
\newtheorem{definition}[theorem]{Definition}
\newtheorem{example}[theorem]{Example}
\theoremstyle{definition}
\newtheorem{remark}[theorem]{Remark}
\newcommand{\frM}{{\mathfrak{M}}}
\newcommand{\frN}{{\mathfrak{N}}}
\newcommand{\SSS}{{\slashed{S}}}
\newcommand{\DDD}{{\slashed{D}}}
\newcommand{\dd}{{\boldsymbol{\Delta}}}
\DeclareMathOperator{\ch}{{\mathrm{ch}}}
\DeclareMathOperator{\ph}{{\mathrm{ph}}}
\DeclareMathOperator{\SO}{{\mathrm{SO}}}
\DeclareMathOperator{\BSO}{{\mathrm{BSO}}}
\DeclareMathOperator{\BO}{{\mathrm{BO}}}
\DeclareMathOperator{\Sp}{{\mathrm{Sp}}}
\DeclareMathOperator{\Spin}{{\mathrm{Spin}}}
\DeclareMathOperator{\BSpin}{{\mathrm{BSpin}}}
\DeclareMathOperator{\MSpin}{{\mathrm{MSpin}}}
\DeclareMathOperator{\Cl}{{\mathrm{Cl}}}
\DeclareMathOperator{\CCl}{{\mathbb{C}\mathrm{l}}}
\DeclareMathOperator{\KO}{{\mathrm{KO}}}
\DeclareMathOperator{\KU}{{\mathrm{KU}}}
\DeclareMathOperator{\KSp}{{\mathrm{KSp}}}
\DeclareMathOperator{\KR}{{\mathrm{KR}}}
\DeclareMathOperator{\KQ}{{\mathrm{KQ}}}
\DeclareMathOperator{\KM}{{\mathrm{KM}}}
\DeclareMathOperator{\pt}{{\mathrm{pt}}}
\DeclareMathOperator{\spin}{{\mathrm{spin}}}
\DeclareMathOperator{\ind}{{\mathsf{ind}}}
\DeclareMathOperator{\Ind}{{\mathrm{Ind}}}
\DeclareMathOperator{\Res}{{\mathrm{Res}}}
\DeclareMathOperator{\tind}{{\mathsf{t-ind}}}
\DeclareMathOperator{\aind}{{\mathsf{a-ind}}}
\DeclareMathOperator{\Aut}{{\mathrm{Aut}}}
\DeclareMathOperator{\Ad}{{\mathrm{Ad}}}
\DeclareMathOperator{\Hom}{{\mathrm{Hom}}}
\DeclareMathOperator{\Ext}{{\mathrm{Ext}}}
\DeclareMathOperator{\End}{{\mathrm{End}}}
\DeclareMathOperator{\inv}{{\mathrm{inv}}}
\DeclareMathOperator{\coker}{{\mathrm{coker}}}
\title{\textbf{Invariants of Real Vector Bundles}}
\author{Jiahao Hu}
\date{}
\begin{document}
\maketitle
\begin{spacing}{1.1}
\begin{abstract}
\noindent
For a compact smooth manifold with corners (or finite CW-complex) $X$, we can prescribe a finite set of spin or spin$^h$ manifolds (possibly with boundary) mapping into it so that every real vector bundle over $X$ is determined, up to stable equivalence, by the Dirac indices of the real vector bundle when pulled-back onto those prescribed spin or spin$^h$ manifolds.
\end{abstract}
%\end{spacing}
\setcounter{tocdepth}{2}
\tableofcontents
%\begin{spacing}{1.1}
\section{Introduction}
\subsection{Main results}
The main purpose of this paper is to present a complete set of invariants for deciding whether a real vector bundle is \textit{stably} trivial. Unlike obstruction theory where higher order invariants are defined only when the previous ones vanish, our invariants will be a priori given. Our result is analogous to De Rham's theorem which asserts that a closed differential form is exact if and only if its periods (i.e. integrals) are zero over a set of a priori chosen cycles. Likewise, our invariants for real vector bundles arise from pairing real vector bundles against a set of a priori chosen cycles of the base. The major difference is, in our case the cycles will come equipped with extra geometric structures adapted to the question, and our pairing invokes geometry intensively. Moreover the cycles in our case should be broadly interpreted to include $\ZZ_k$ cycles\footnote{a chain is a $\ZZ_k$ cycle if its boundary is zero modulo $k$.} to deal with stably non-trivial bundles whose certain multiple is stably trivial. The appropriate geometric structures to put on the cycles and how the geometrically structured cycles pair against real vector bundles are the central topics of this paper.

Two types of correlated geometric structures will be considered. One is the well-known spin structure, and the other is its quaternionic sibling--the spin$^h$ structure (\Cref{spinhstructure}), which is less-known but appears to be more natural for the subject of this paper due to a certain duality between the reals and quaternions. The way a real vector bundle pairs against a (spin or spin$^h$) structured cycle is through geometry by means of Dirac operator. The invariants we get for real vector bundles are indices of twisted Dirac operators.

To elaborate, let us now describe our invariants more concretely in terms of spin structured cycles. First consider closed spin manifolds mapping into the base $X$. Such a mapping $f: M\to X$ is called a \textbf{spin cycle} in $X$. For a vector bundle $E$ over $X$, we define a pairing
\[
\left\langle M\xrightarrow{f} X| X\gets E\right\rangle:=\text{index of Dirac operator on $M$ twisted by $f^* E$}
\]
taking values either in $\ZZ$ or $\ZZ_2$ depending on the dimension of $M$ (see e.g. \cite{LM89}). We call these \textbf{integer invariants} and \textbf{parity invariants}. 

Next we consider spin manifolds $M$ with boundary, whose boundary $\partial M$ has several, say $k$, identical parts denoted by $\beta M$ (Bockstein of $M$). Denote by $\overline{M}$ the quotient space of $M$ obtained by gluing $\partial M$ onto $\beta M$.
\begin{figure}[H]
\centering
\tikzset{every picture/.style={line width=0.75pt}} %set default line width to 0.75pt        

\begin{tikzpicture}[x=0.75pt,y=0.75pt,yscale=-0.8,xscale=0.8]
%uncomment if require: \path (0,442); %set diagram left start at 0, and has height of 442

%Curve Lines [id:da6813320657087881] 
\draw    (198,108.5) .. controls (24.4,108.7) and (14.4,259.7) .. (197,259.5) ;
%Shape: Ellipse [id:dp15576601958392544] 
\draw   (183,122.5) .. controls (183,114.77) and (189.72,108.5) .. (198,108.5) .. controls (206.28,108.5) and (213,114.77) .. (213,122.5) .. controls (213,130.23) and (206.28,136.5) .. (198,136.5) .. controls (189.72,136.5) and (183,130.23) .. (183,122.5) -- cycle ;
%Shape: Ellipse [id:dp9270922173675821] 
\draw   (182.33,181.83) .. controls (182.33,174.1) and (189.05,167.83) .. (197.33,167.83) .. controls (205.62,167.83) and (212.33,174.1) .. (212.33,181.83) .. controls (212.33,189.57) and (205.62,195.83) .. (197.33,195.83) .. controls (189.05,195.83) and (182.33,189.57) .. (182.33,181.83) -- cycle ;
%Shape: Ellipse [id:dp6280849744557612] 
\draw   (182,245.5) .. controls (182,237.77) and (188.72,231.5) .. (197,231.5) .. controls (205.28,231.5) and (212,237.77) .. (212,245.5) .. controls (212,253.23) and (205.28,259.5) .. (197,259.5) .. controls (188.72,259.5) and (182,253.23) .. (182,245.5) -- cycle ;
%Curve Lines [id:da8184848136863638] 
\draw    (93,178.34) .. controls (98.11,198.91) and (136.1,198.91) .. (143.4,184.86) ;
%Curve Lines [id:da8821755154159043] 
\draw    (99.57,182.85) .. controls (103.96,168.8) and (136.83,171.81) .. (137.56,185.86) ;
%Shape: Ellipse [id:dp4742433253849416] 
\draw   (477.33,182.17) .. controls (477.33,174.43) and (484.05,168.17) .. (492.33,168.17) .. controls (500.62,168.17) and (507.33,174.43) .. (507.33,182.17) .. controls (507.33,189.9) and (500.62,196.17) .. (492.33,196.17) .. controls (484.05,196.17) and (477.33,189.9) .. (477.33,182.17) -- cycle ;
%Curve Lines [id:da426692926631986] 
\draw    (417.67,108.83) .. controls (501.27,109.8) and (461.27,168.47) .. (492.33,168.17) ;
%Curve Lines [id:da9788990096354018] 
\draw    (417.67,136.83) .. controls (472.53,137.2) and (431.47,195.4) .. (492.33,196.17) ;
%Straight Lines [id:da8952468626336303] 
\draw    (417,168.17) -- (492.33,168.17) ;
%Straight Lines [id:da31786177232204027] 
\draw    (417,196.17) -- (492.33,196.17) ;
%Curve Lines [id:da34747964412156174] 
\draw    (416.67,231.83) .. controls (467.93,231.8) and (432.27,168.8) .. (492.33,168.17) ;
%Curve Lines [id:da7622739227186491] 
\draw    (416.67,259.83) .. controls (492.67,259.83) and (469.6,196.13) .. (492.33,196.17) ;
%Curve Lines [id:da9124218259045427] 
\draw    (197.33,167.83) .. controls (156.4,167.7) and (154.4,136.7) .. (198,136.5) ;
%Curve Lines [id:da6656080921551834] 
\draw    (197.33,195.83) .. controls (154.4,195.7) and (156.4,231.7) .. (197,231.5) ;
%Curve Lines [id:da7922548840746431] 
\draw    (417.67,108.83) .. controls (244.07,109.03) and (234.07,260.03) .. (416.67,259.83) ;
%Curve Lines [id:da5974235619388528] 
\draw    (312.67,178.67) .. controls (317.78,199.25) and (355.76,199.25) .. (363.07,185.19) ;
%Curve Lines [id:da5024424994804679] 
\draw    (319.24,183.19) .. controls (323.62,169.13) and (356.49,172.15) .. (357.22,186.2) ;
%Curve Lines [id:da9790079532252692] 
\draw    (417,168.17) .. controls (376.07,168.03) and (374.07,137.03) .. (417.67,136.83) ;
%Curve Lines [id:da18753810310125096] 
\draw    (417,196.17) .. controls (374.07,196.03) and (376.07,232.03) .. (416.67,231.83) ;
%Straight Lines [id:da3329759864888947] 
\draw    (219.33,181.83) -- (273.2,181.9) ;
\draw [shift={(275.2,181.9)}, rotate = 180.07] [color={rgb, 255:red, 0; green, 0; blue, 0 }  ][line width=0.75]    (10.93,-3.29) .. controls (6.95,-1.4) and (3.31,-0.3) .. (0,0) .. controls (3.31,0.3) and (6.95,1.4) .. (10.93,3.29)   ;

% Text Node
\draw (227,159) node [anchor=north west][inner sep=0.75pt]   [align=left] {glue};
% Text Node
\draw (122,139.4) node [anchor=north west][inner sep=0.75pt]    {$M$};
% Text Node
\draw (344,136.4) node [anchor=north west][inner sep=0.75pt]    {$\overline{M}$};
% Text Node
\draw (512,174.4) node [anchor=north west][inner sep=0.75pt]    {$\beta M$};

\end{tikzpicture}

\caption{Gluing $\partial M$ onto $\beta M$}
\end{figure}
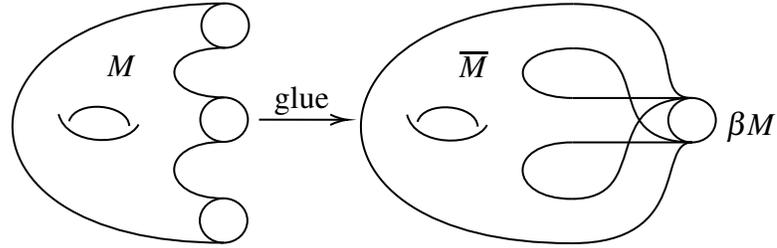
A continuous map $f: \overline{M}\to X$ is called a \textbf{$\ZZ_k$-spin-cycle} or \textbf{torsion spin cycle} in $X$. Then we define a pairing
\begin{align*}
\left\langle \overline{M}\xrightarrow{f} X| X\gets E \right\rangle :=\frac{1}{\epsilon k}\cdot&\text{index of Dirac operator\footnotemark on $M$ twisted by $f^* E$}(\bmod\ZZ)
\end{align*}
\footnotetext{with appropriate Atiyah-Patodi-Singer boundary condition.}taking values in $\QQ/\ZZ$, where $\epsilon=2$ if $\dim M\equiv 4\bmod 8$ and $\epsilon=1$ otherwise. We call these \textbf{angle invariants}.

It follows from index theory that stably equivalent bundles have the same integer, parity and angle invariants over \textit{all} spin cycles and torsion spin cycles. In other words, a \textit{necessary} condition for two vector bundles to be stably equivalent is that they have the same those invariants. One of our main theorems is that this necessary condition is also \textit{sufficient}. Thus these invariants form a complete set of invariants. In particular, a real vector bundle is stably trivial if and only if it has the same invariants as a trivial bundle.
\begin{thmx}[\Cref{thm1'}]\label{thm1}
	For each compact manifold with corners (or finite CW-complex) $X$, there exists a \textit{finite} set of spin cycles and torsion spin cycles in $X$ such that every real vector bundle on $X$ can be determined, up to stable equivalence, by the corresponding integer, parity and angle invariants.
\end{thmx}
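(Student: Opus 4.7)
The plan is to reduce Theorem~\ref{thm1} to two ingredients: bordism invariance of the proposed invariants (so that they descend to pairings on spin and $\ZZ_k$-spin bordism), and a Pontryagin-duality statement for $KO^0(X)$ saying that pairings with $KO_{*}(X;\ZZ_k)$ for all $k$ and all degrees $n$ separate points.

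First I would verify that each invariant depends only on the spin (resp.\ $\ZZ_k$-spin) bordism class of the cycle over $X$. For a closed spin cycle this is the standard cobordism invariance of the twisted Dirac index. For a torsion spin cycle $\overline{M}\to X$ with $\partial M\simeq k\cdot \beta M$, the Atiyah--Patodi--Singer theorem combined with the normalization $1/(\epsilon k)$ gives a well-defined class in $\QQ/\ZZ$: two $\ZZ_k$-spin bordant fillings of the same $\overline{M}$ differ by a closed spin cobordism, whose twisted Dirac index is automatically divisible by $\epsilon k$ (the factor $\epsilon$ accounts for the quaternionic self-duality of the spinor bundle in dimensions $\equiv 4\bmod 8$).

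Second, via the Atiyah--Bott--Shapiro orientation $\Omega^{\spin}_{*}(X)\to KO_{*}(X)$ and its $\ZZ_k$-variant, the Dirac index equals the Kronecker pairing $\langle E, f_{*}[M]\rangle$ with the $KO$-fundamental class pushed into $X$. By the Anderson--Brown--Peterson theorem, this ABS map is surjective in every degree, so realizing $KO$-homology classes by spin cycles (and $KO$-homology with $\ZZ_k$-coefficients by torsion spin cycles) reduces the problem to injectivity of the assembled map
\[
KO^0(X)\longrightarrow \prod_{n}\Hom\bigl(KO_n(X),KO_n(\pt)\bigr)\oplus \prod_{n,k}\Hom\bigl(KO_n(X;\ZZ_k),\,\QQ/\ZZ\bigr).
\]

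Finally, since $X$ is a finite CW-complex, $KO^0(X)$ is finitely generated: its free part is detected by $\ZZ$-valued homomorphisms (supplied by integer and parity invariants paired against $KO_{*}(X)$), while its torsion part is detected by $\QQ/\ZZ$-valued homomorphisms (since $\QQ/\ZZ$ is an injective cogenerator for f.g.\ abelian groups), supplied by the angle invariants via a form of Anderson/Pontryagin self-duality for the $KO$-spectrum; finite generation then allows only finitely many cycles to appear. I expect the main obstacle to be establishing this last duality in geometric form, namely that every nontrivial $k$-torsion class $E\in KO^0(X)$ admits a concrete $\ZZ_k$-spin cycle with nonzero angle invariant, rather than merely a homomorphism to $\QQ/\ZZ$. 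Controlling the $\epsilon$-correction in dimension $\equiv 4\bmod 8$ is where the spin$^h$ sibling introduced in the paper is likely to become essential.
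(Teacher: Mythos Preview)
Your outline is essentially correct and follows the same architecture as the paper: bordism invariance of the Dirac pairings, surjectivity of the Atiyah--Bott--Shapiro orientation onto $K$-homology (the paper uses Hopkins--Hovey rather than Anderson--Brown--Peterson, but either suffices), an Anderson-duality statement to get injectivity, and finite generation of $\KO^0(X)$ to cut down to finitely many cycles.

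One point needs sharpening. The $\KO$-spectrum is \emph{not} Anderson self-dual: its Anderson dual is $\KSp$. Your map into $\prod_n\Hom(\KO_n(X),\KO_n(\pt))\oplus\prod_{n,k}\Hom(\KO_n(X;\ZZ_k),\QQ/\ZZ)$ still works, but only because you allow all degrees $n$ and $\KSp_n\cong\KO_{n-4}$; the paper makes this degree shift explicit by proving the spin$^h$ version first (cycles for $\KSp$) and then pulling back to spin via multiplication by $\HH\PP^1_+$. This is exactly the detour you anticipated in your last paragraph, and it is also how the $\epsilon$-factor in dimensions $\equiv 4\bmod 8$ is handled: the weak $\KO$-Thom class for spin$^h$ bundles lands in $\KSp$ on the nose, so no halving is needed on the topological side, and the analytic side is matched via a refinement of the Freed--Melrose mod~$k$ index theorem.

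The ``main obstacle'' you flag---passing from an abstract homomorphism $\KO_*(X;\QQ/\ZZ)\to\QQ/\ZZ$ to a concrete torsion cycle with nonzero angle invariant---is resolved in the paper not by constructing such a cycle directly, but by showing (Propositions on slant products) that the Dirac pairings factor through $\hat{\mathcal{A}}^h$ followed by the $\KO$-module slant product, and then invoking the Hopkins--Hovey splitting to identify the latter with the Anderson-duality isomorphism. So the geometric realization is automatic once you know the invariants compute slant products; you do not need to exhibit a witness cycle for each torsion class.
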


The same holds true with spin replaced by spin$^h$, this is our \Cref{thm3}. In fact we will prove \Cref{thm3} first and derive \Cref{thm1} from it by a geometric consequence of Bott periodicity. It is implicitly implied that Dirac operators can be defined on spin$^h$ manifolds and they share similar properties with Dirac operators on spin manifolds. Indeed much effort of this paper is devoted to studying Dirac operators and their indices on spin$^h$ manifolds.

Our \Cref{thm1} is analogous to \cite{Freed88}, in which it is proved that complex vector bundles up to stable equivalence are determined by the indices of twisted Dirac operators over spin$^c$ cycles and torsion spin$^c$ cycles. As is common in algebraic topology, the major difficulty in the real case we consider here lies in handling the $2$-torsion information. To overcome this difficulty, we come up with a general duality theorem between cycles and cocycles. This general duality in our context is a duality between the reals and quaternions that we earlier mentioned, and is the reason why spin$^h$ manifolds should be more natural than spin manifolds in our question. 

To explain this duality, we note the functor
\[
\KO^0(X)=\text{real vector bundles over $X$ modulo stable equivalence}
\]
that we are interested in extends to a generalized cohomology theory $\KO^*(X)$--the real K-theory. We approach the question of finding complete invariants for real vector bundles up to stable equivalence by putting it into a general framework of finding complete invariants for cohomology classes in a generalized cohomology theory. Our idea is that cohomology classes should be determined by their periods over \textit{suitable} cycles. To be precise, let us consider a generalized cohomology theory $h^*$. Since functors $\Hom(-,\QQ)$ and $\Hom(-,\QQ/\ZZ)$ are exact, the functors $\Hom(h^*(-),\QQ)$ and $\Hom(h^*(-),\QQ/\ZZ)$ define generalized \textit{homology} theories. Now there is a third generalized homology theory $Dh_*$, known as the \textbf{Anderson dual} of $h^*$, fitting into the following long exact sequence
\[
\cdots\to Dh_*\to \Hom(h^*,\QQ)\to \Hom(h^*,\QQ/\ZZ)\to Dh_{*-1}\to \cdots
\]
It turns out cohomology classes in a generalized cohomology theory are determined by their periods over cycles and torsion cycles for its Anderson dual theory.
\begin{thmx}[\Cref{thm2'}]\label{thm2}
Let $h^*$ be a generalized cohomology theory of finite type\footnote{that is $h^i(\pt)$ is finitely generated for all $i$.} and let $Dh_*$ be its Anderson dual homology theory. Suppose $X$ is a finite CW-complex, then $h^i(X)$ is naturally isomorphic to the group of homomorphisms $Dh_i(X;\QQ/\ZZ)\xrightarrow{\varphi} \QQ/\ZZ$ that can be lifted to $Dh_i(X;\QQ)\xrightarrow{\tilde{\varphi}}\QQ$ through the canonical "covering homomorphisms" $Dh_i(X;\QQ)\to Dh_i(X;\QQ/\ZZ)$ and $\QQ\to \QQ/\ZZ$. That is,
\[
h^i(X)\cong
\Biggl\{
\begin{tikzcd}[ampersand replacement=\&]
	Dh_i(X;\QQ)\ar[r,"\tilde{\varphi}"]\ar[d] \& \QQ\ar[d]\\
	Dh_i(X;\QQ/\ZZ)\ar[r,"\varphi"] \& \QQ/\ZZ
\end{tikzcd}
\Biggl\}.
\]
The isomorphism holds true with all the groups localized at a set of primes.
\end{thmx}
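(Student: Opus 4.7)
My plan is to construct the natural evaluation map $\Phi: h^i(X) \to \{\text{compatible pairs}\}$ and verify it is an isomorphism by matching parallel short exact sequences via the five lemma. To build $\Phi$, I first establish two natural identifications. Tensoring the paper's long exact sequence with $\QQ$ kills the $\Hom(h^i,\QQ/\ZZ)$ terms (they are torsion) and yields $Dh_i(X;\QQ) \cong \Hom(h^i(X),\QQ)$. Using injectivity of $\QQ/\ZZ$ and the spectrum-level Anderson-duality cofiber sequence, one similarly obtains $Dh_i(X;\QQ/\ZZ) \cong \Hom(h^i(X),\QQ/\ZZ)$. Under these, $\Phi(\alpha) = (\tilde\varphi_\alpha,\varphi_\alpha)$ with each component the evaluation-at-$\alpha$ homomorphism; commutativity of the defining square is then automatic from functoriality in the coefficient change $\QQ\to\QQ/\ZZ$.

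Next I would extract parallel short exact sequences on the two sides. Applying Anderson duality to $Dh$ in place of $h$---valid since Anderson duality is involutive on finite-type spectra, so $D(Dh) \simeq h$---gives the standard short exact sequence
\[
0 \to \Ext(Dh_{i-1}(X),\ZZ) \to h^i(X) \to \Hom(Dh_i(X),\ZZ) \to 0.
\]
For the target, I analyze the coefficient long exact sequence arising from $0\to\ZZ\to\QQ\to\QQ/\ZZ\to 0$:
\[
\cdots \to Dh_i(X) \to Dh_i(X;\QQ) \xrightarrow{p} Dh_i(X;\QQ/\ZZ) \to Dh_{i-1}(X) \to \cdots,
\]
in which $\ker p$ is the image of $Dh_i(X)$ and $\coker p$ is the torsion subgroup of $Dh_{i-1}(X)$. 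A compatible pair $(\tilde\varphi,\varphi)$ is then determined by (a) the restriction $\tilde\varphi|_{Dh_i(X)}$, which must land in $\ZZ$ for the square to commute, equivalently a homomorphism $Dh_i(X)\to\ZZ$, together with (b) an extension of the forced map on $\mathrm{im}\,p$ to all of $Dh_i(X;\QQ/\ZZ)$. Such extensions exist by injectivity of $\QQ/\ZZ$ and form a torsor over $\Hom(\text{torsion}(Dh_{i-1}(X)),\QQ/\ZZ) \cong \Ext(Dh_{i-1}(X),\ZZ)$, yielding the parallel sequence
\[
0 \to \Ext(Dh_{i-1}(X),\ZZ) \to \{\text{compatible pairs}\} \to \Hom(Dh_i(X),\ZZ) \to 0.
\]

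Finally, $\Phi$ sends torsion classes (for which the rational evaluation vanishes) into the $\Ext$ subgroup on the target, and induces the identity on both outer terms under the natural identifications, so the five lemma gives the desired isomorphism. Localization at any set of primes is inherited by the same argument, since localization is exact and preserves both the Anderson-duality cofiber and the coefficient long exact sequence. The main obstacle I anticipate is the second natural identification $Dh_i(X;\QQ/\ZZ) \cong \Hom(h^i(X),\QQ/\ZZ)$ compatibly with $p$: while the two groups have matching sizes by universal coefficients combined with the Anderson short exact sequence, organizing the isomorphism to genuinely intertwine the canonical map $p$ on the source with the restriction-along-$\QQ\to\QQ/\ZZ$ on the target requires careful bookkeeping of the Anderson-duality pairing at the spectrum level.
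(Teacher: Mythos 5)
Your strategy is workable but genuinely different from the paper's, and it is worth being precise about where each does its real work. The paper also starts from the identifications $Dh_i(X;\QQ)\cong\Hom(h^i(X),\QQ)$ and $Dh_i(X;\QQ/\ZZ)\cong\Hom(h^i(X),\QQ/\ZZ)$, \emph{compatibly} with the two long exact sequences; that is exactly its \Cref{prop:uct}, proved by a spectrum-level argument producing an equivalence $\underline{D}\underline{h}\wedge\underline{S}_\QQ\to\underline{D}_\QQ\underline{h}$ and its $\QQ/\ZZ$ companion. So the obstacle you flag at the end (intertwining $p$ with restriction along $\QQ\to\QQ/\ZZ$) is precisely that proposition, and your first step matches the paper. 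The endgame is where you diverge: the paper never invokes double duality or a five-lemma comparison. It instead proves an elementary algebraic ``Pontryagin duality'' lemma --- for a finitely generated abelian group $A$, evaluation identifies $A$ with the group of liftable homomorphisms $\Hom(A,\QQ/\ZZ)\to\QQ/\ZZ$, checked on $\ZZ$ and $\ZZ_n$ --- and applies it to $A=h^i(X)$. Once \Cref{prop:uct} is in place the theorem (and its localized version) is immediate, with no further topological input.

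Your route, by contrast, imports two things the paper's proof avoids, and both are left unproven in the proposal. First, the sequence $0\to\Ext(Dh_{i-1}(X),\ZZ)\to h^i(X)\to\Hom(Dh_i(X),\ZZ)\to 0$ needs both $D(Dh)\simeq h$ (the paper only cites Anderson for this and never uses it) and the \emph{cohomological} form of the defining fiber sequence for the Anderson dual, i.e.\ that $h^i(X)$ sits in a long exact sequence with $\Hom(Dh_i(X),\QQ)$ and $\Hom(Dh_i(X),\QQ/\ZZ)$; the paper's sequence \labelcref{les1} is the homological form, and passing between them (say via Spanier--Whitehead duality for finite $X$) is an extra step. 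Second, and more seriously, the five lemma requires commutativity of both squares, i.e.\ that your evaluation map $\Phi$ induces on the outer terms exactly the maps coming from that double-duality universal coefficient sequence; the assertion that it ``induces the identity on both outer terms under the natural identifications'' is precisely where the Anderson-duality pairing has to be unwound at the spectrum level, and it is not automatic --- this, rather than the $\QQ/\ZZ$ identification you flag, is the most delicate point of your route. Your analysis of the target group is correct as stated ($\tilde\varphi$ is unique, $\ker p=\mathrm{im}\,Dh_i(X)$, $\coker p\cong$ the torsion of $Dh_{i-1}(X)$, extensions exist by injectivity of $\QQ/\ZZ$, and the $\Ext$-indexing is right), so the skeleton is sound; but as written the heavy lifting is asserted rather than proved, whereas the paper's algebraic lemma is exactly the device that makes all of it unnecessary.
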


The maps $\varphi$ and $\widetilde{\varphi}$ record the periods of a cohomology class over cycles and torsion cycles for $Dh$, this will be explained in detail in \Cref{geometricinterpretation}.

Now back to our interested case $h^*=\KO^*$. It is known the Anderson dual of real K-theory is symplectic K-theory (see \cite{Anderson}), therefore \Cref{thm2} implies that real vector bundles up to stable equivalence should be determined by their periods over cycles and torsion cycles for symplectic K-theory. This thus leads us to finding cycles for symplectic K-theory and describing periods of a real vector bundle over those cycles. As discussed above, the most natural cycles we find for symplectic K-theory are spin$^h$ manifolds, and the periods we desire are indices of Dirac operators on spin$^h$ manifolds twisted by the bundle.

%\begin{table}[H]
%\begin{center}
%	\begin{tabular}{c c c}
%		$\RR$ & $\CC$ & $\HH$\\
%		\hline
%		Spin & Spin$^c$ & Spin$^h$\\
%		\hline
%		$\KO$ & $\KU$ & $\KSp$
%	\end{tabular}
%\end{center}
%\end{table}
\subsection{Organization}
This paper is organized as follows. In \Cref{sec2}, we prove \Cref{thm2} and use it to interpret cohomology classes as periods over cycles. \Cref{sec3}, \labelcref{sec4} and \labelcref{sec5} constitute a thorough study of Dirac operators on spin$^h$ manifolds. These three chapters respectively discuss the algebraic, topological and analytical aspects related to spin$^h$ manifolds. In \Cref{sec3} we study quaternionic Clifford algebras and their modules in order to establish a quaternionic version of Atiyah-Bott-Shapiro isomorphism relating quaternionic Clifford modules to symplectic K-theory. In \Cref{sec4} we build "Thom classes" for spin$^h$ vector bundles and prove a Riemann-Roch theorem for spin$^h$ maps. Along the way, we pick out a special characteristic class for spin$^h$ vector bundles analogous to the $\hat{A}$-class for spin vector bundles. We also discuss all possible characteristic classes for (stable) spin$^h$ vector bundles. In \Cref{sec5} we define Dirac operators on spin$^h$ manifolds and study their indices. An index theorem for families of quaternionic operators is needed, whose proof is postponed to \Cref{sec7}. All these are parallel to the study of Dirac operators on spin manifolds. In \Cref{sec6} we prove spin$^h$ manifolds provide enough cycles for symplectic K-theory and show the periods of a real vector bundle, abstractly obtained from \Cref{thm2}, coincides with indices of Dirac operators. This in turn implies \Cref{thm3} and we deduce \Cref{thm1} from \Cref{thm3}.
%\subsection{Conventions and notations}

\section{Periods of generalized cohomology classes}\label{sec2}
In this chapter, we prove \Cref{thm2} and use it to interpret $h^*$-cohomology classes as cocycles over a suitable class of manifolds-with-singularities determined by the homology theory $Dh_*$.

Throughout, generalized (co)homology theories are functors defined on the category of CW pairs satisfying all the Eilenberg-Steenrod axioms except for the dimension axiom. A fundamental result by Brown, Whitehead and Adams says generalized (co)homology theories can be represented by spectra. Spectra will be \underline{underlined} in our notation.

\subsection{Anderson duality}
Let $h^*$ be a generalized cohomology theory with $h^i(\pt)$ finitely generated for all $i$, i.e. $h^*$ is of \textbf{finite type}. Now since functors $\Hom(-,\QQ)$ and $\Hom(-,\QQ/\ZZ)$ are exact, $\Hom(h^*(-),\QQ)$ and $\Hom(h^*(-),\QQ/\ZZ)$ define generalized homology theories which come with a natural transformation $$\Hom(h^*(-),\QQ)\to \Hom(h^*(-),\QQ/\ZZ)$$ induced by the natural quotient map $\QQ\to \QQ/\ZZ$. Since $\Hom(h^*(\pt),\QQ)$ and $\Hom(h^*(\pt),\QQ/\ZZ)$ are countable, by Brown representability theorem these homology theories are represented by spectra denoted by $\underline{D}_\QQ \underline{h}$ and $\underline{D}_{\QQ/\ZZ}\underline{h}$ respectively. Moreover, the natural transformation induced by $\QQ\to \QQ/\ZZ$ is represented by a map $\underline{D}_\QQ \underline{h}\to \underline{D}_{\QQ/\ZZ}\underline{h}$ whose fiber is denoted by $\underline{D}\underline{h}$. We define the \textbf{Anderson dual} homology theory $Dh_*$ of $h^*$ to be the homology theory represented by $\underline{D}\underline{h}$. We also call the cohomology theory $Dh^*$ represented by $\underline{D}\underline{h}$ the Anderson dual cohomology theory of $h^*$.

From definition, for $X$ a finite CW-complex there is a long exact sequence
\begin{equation}\label{les1}
	\cdots \to Dh_i(X)\to \Hom(h^i(X),\QQ)\to \Hom(h^i(X),\QQ/\ZZ)\to Dh_{i-1}(X)\to \cdots
\end{equation}
from which one has for all $i$ the following splittable short exact sequence 
\[
0\to \Ext(h^{i-1}(X),\ZZ)\to Dh_i(X)\to \Hom(h^i(X),\ZZ)\to 0.
\]
In particular $Dh_i(\pt)$ is (non-canonically) isomorphic to the direct sum of the free part of $h^i(\pt)$ and the torsion part of $h^{i-1}(\pt)$.
\begin{example}[\cite{Anderson}]Consider the cases where $h^*$ is singular cohomology $H^*$ (with $\ZZ$-coefficients), complex K-theory $\KU^*$ or real K-theory $\KO^*$.
	\begin{enumerate}[label=(\roman*)] 		\item $DH_*(\pt)$ is concentrated in degree zero and $DH_0(\pt)=\ZZ$. Therefore $DH_*$ is the singular homology theory.
		\item $D\KU_{2i}(\pt)=\ZZ$ and $D\KU_{2i-1}(\pt)=0$. In fact, Anderson showed $D\KU_*=\KU_*$.
		\item $D\KO_*(\pt)=\ZZ, 0,0,0,\ZZ, \ZZ_2,\ZZ_2,0,\ZZ$ for $0\le *\le 8$. In fact, Anderson showed $D\KO_*=\KSp_*$.
	\end{enumerate}
\end{example}
\begin{example}[\cite{Sto12}]
The Anderson dual of topological modular form\footnote{the non-connective, non-periodic version corresponding to Deligne-Mumford compactified moduli stack of elliptic curves.} is the 21-fold suspension of itself.
\end{example}
\begin{remark}
A version of \Cref{thm1} should hold for topological modular form with spin manifolds replaced by string manifolds, provided we have a good index theory developed for string manifolds and topological modular form.
\end{remark}

Anderson duality is indeed a duality in the sense that $D^2$ is the identity. For a proof of this and for more about Anderson duality, we refer the reader to the original paper of Anderson \cite{Anderson}. For our purpose here, we need one extra fact about $Dh_*$ essentially due to Anderson.

\begin{proposition}[{\cite[pp. 42-43]{Anderson}}]\label{prop:uct}
	Let $h^*$ be a generalized cohomology theory of finite type and $Dh_*$ its Anderson dual homology theory. Then for all finite CW-complex $X$ and all $i\in\NN$ there are isomorphisms
	\begin{enumerate}[label=(\roman*)]
		\item $Dh_i(X;\QQ)\cong\Hom(h^i(X),\QQ)$, and
		\item $Dh_i(X;\QQ/\ZZ)\cong\Hom(h^i(X),\QQ/\ZZ)$.
	\end{enumerate}
	Moreover, under these isomorphisms, the long exact sequence \labelcref{les1} is identified with the coefficient long exact sequence
	\begin{equation}\label{les2}
	\cdots \to Dh_i(X)\to Dh_i(X;\QQ)\to Dh_i(X;\QQ/\ZZ)\to Dh_{i-1}(X)\to \cdots
\end{equation}
associated to the short exact sequence $0\to \ZZ\to \QQ\to \QQ/\ZZ\to 0$.
\end{proposition}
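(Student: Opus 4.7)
My plan is to deduce both isomorphisms from the splittable universal-coefficient short exact sequence
\[
0\to\Ext(h^{i-1}(X),\ZZ)\to Dh_i(X)\to\Hom(h^i(X),\ZZ)\to 0
\]
noted just before the statement, and then to leverage a five-lemma chase between sequences \eqref{les1} and \eqref{les2} to obtain compatibility. I would establish (i) first and bootstrap to (ii).

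For (i), since $\QQ$ is flat over $\ZZ$, the Moore-spectrum universal coefficient formula has vanishing $\mathrm{Tor}$ term, so $Dh_i(X;\QQ)\cong Dh_i(X)\otimes\QQ$. The finite-type hypothesis combined with $X$ being a finite CW-complex forces $h^j(X)$ to be finitely generated for every $j$ (by induction on cells via the long exact sequence of the cofiber pair). Tensoring the displayed short exact sequence with $\QQ$ kills the $\Ext$ term (which is torsion) and identifies $\Hom(h^i(X),\ZZ)\otimes\QQ$ with $\Hom(h^i(X),\QQ)$, yielding $Dh_i(X;\QQ)\cong\Hom(h^i(X),\QQ)$.

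For (ii), I would build a natural spectrum-level comparison map. Because $\pi_*(\underline{D}_\QQ\underline{h})=\Hom(h^*(\pt),\QQ)$ is a graded $\QQ$-vector space, $\underline{D}_\QQ\underline{h}$ is a rational spectrum, so the canonical map $\underline{D}\underline{h}\to\underline{D}_\QQ\underline{h}$ factors through the rationalization $\underline{D}\underline{h}\to\underline{D}\underline{h}\wedge M\QQ$ by a map $\alpha\colon \underline{D}\underline{h}\wedge M\QQ\to\underline{D}_\QQ\underline{h}$. Passing to cofibers of $\underline{D}\underline{h}\to\underline{D}\underline{h}\wedge M\QQ$ and of $\underline{D}\underline{h}\to\underline{D}_\QQ\underline{h}$ produces a compatible map $\beta\colon\underline{D}\underline{h}\wedge M(\QQ/\ZZ)\to\underline{D}_{\QQ/\ZZ}\underline{h}$. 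Smashing this map of cofiber sequences with $X_+$ and taking homotopy groups yields a map from \eqref{les2} to \eqref{les1} whose vertical map at $Dh_i(X)$ is the identity and whose vertical map at $Dh_i(X;\QQ)$ is the isomorphism of (i). The five lemma then forces $Dh_i(X;\QQ/\ZZ)\to\Hom(h^i(X),\QQ/\ZZ)$ to be an isomorphism, proving (ii) and simultaneously identifying the two long exact sequences.

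The localized statement follows by repeating the argument with all rings and groups $\mathcal{P}$-localized, since flatness of the localized ring and the vanishing of $\mathcal{P}^{-1}$-torsion are the only inputs used. The step I expect to require the most care is verifying that $\alpha$ is a spectrum-level equivalence: this reduces to the computation of (i) applied to $X=\pt$, where both sides have $\pi_i$ equal to $\Hom(h^i(\pt),\QQ)$, so $\alpha$ is an iso on homotopy groups and hence an equivalence of spectra. Once $\alpha$ is known to be an equivalence, $\beta$ is automatic by the five lemma on the cofiber sequences, and the remainder of the proof is a routine diagram chase.
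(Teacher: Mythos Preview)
Your proposal is correct and takes essentially the same approach as the paper. Both arguments construct a spectrum-level equivalence $\underline{Dh}\wedge\underline{S}_\QQ\to\underline{D}_\QQ\underline{h}$ over $\underline{Dh}$ and then pass to cofibers; the paper's explicit $\varphi=j^{-1}\circ(g\wedge\underline{S}_\QQ)$ (where $j$ is rationalization of the already-rational $\underline{D}_\QQ\underline{h}$) is precisely your factorization $\alpha$ through rationalization, and the paper's verification that $g\wedge\underline{S}_\QQ$ is an equivalence by tensoring \eqref{les1} with $\QQ$ is your part~(i) specialized to $X=\pt$.
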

\begin{proof}
	Let $\underline{S}$ denote the sphere spectrum and let $\underline{S}_\QQ$, $\underline{S}_{\QQ/\ZZ}$ be the Moore spectra for $\QQ$ and $\QQ/\ZZ$ respectively. Then the fiber sequence $\underline{S}\to \underline{S}_\QQ\to \underline{S}_{\QQ/\ZZ}$ induces a fiber sequence
	\[
	\underline{Dh}\xrightarrow{f} \underline{D} \underline{h}\wedge \underline{S}_\QQ\to \underline{D} \underline{h}\wedge \underline{S}_{\QQ/\ZZ}
	\]
	which corresponds to the coefficient long exact sequence \labelcref{les2}.
	On the other hand, by definition we have the fiber sequence
	\[
	\underline{D}\underline{h}\xrightarrow{g}\underline{D}_\QQ \underline{h}\to \underline{D}_{\QQ/\ZZ}\underline{h}
	\]
	which corresponds to the long exact sequence \labelcref{les1}.
	We will show there is a homotopy equivalence $\varphi: \underline{D} \underline{h}\wedge \underline{S}_\QQ\to \underline{D}_\QQ \underline{h}$ so that $g=\varphi\circ f$. Then it follows there is an induced homotopy equivalence $\varphi': \underline{D} \underline{h}\wedge \underline{S}_{\QQ/\ZZ}\to\underline{D}_{\QQ/\ZZ} \underline{h}$ such that the following diagram commutes:
	\[
	\begin{tikzcd}
	\underline{Dh}\ar[r,"f"]\ar[d,equal]& \underline{D} \underline{h}\wedge \underline{S}_\QQ\ar[r]\ar[d,"\varphi"] & \underline{D} \underline{h}\wedge \underline{S}_{\QQ/\ZZ}\ar[d,"\varphi'"]\\
		\underline{D} \underline{h}\ar[r,"g"] & \underline{D}_\QQ \underline{h}\ar[r]& \underline{D}_{\QQ/\ZZ} \underline{h}
	\end{tikzcd}
	\]
	Now the desired isomorphisms are induced by $\varphi$ and $\varphi'$, and the claim about long exact sequences follows as well.

	To construct $\varphi$, let us consider the commutative diagram
	\[
	\begin{tikzcd}
		\underline{D} \underline{h}\ar[r,"g"]\ar[d,"f"] & \underline{D}_\QQ \underline{h}\ar[d,"j"]\\
		\underline{D} \underline{h}\wedge \underline{S}_\QQ\ar[r,"g\wedge \underline{S}_\QQ"] & \underline{D}_\QQ \underline{h}\wedge \underline{S}_\QQ
	\end{tikzcd}
	\]
	where $j$ is induced by $\underline{S}\to \underline{S}_\QQ$. Applying $\pi_*(-)$ we have
	\[
	\begin{tikzcd}
		Dh_*(\pt)\ar[r,"g_*"]\ar[d,"f_*"] & \Hom(h^*(\pt),\QQ)\ar[d,"j_*"]\\
		Dh_*(\pt)\otimes\QQ\ar[r,"g_*\otimes\QQ"] & \Hom(h^*(\pt),\QQ)\otimes\QQ
	\end{tikzcd}
	\]
	It is clear that $j_*$ is an isomorphism, and it follows by applying the exact functor $\otimes\QQ$ to \labelcref{les1} that $g_*\otimes\QQ$ is an isomorphism since $\Hom(h^*(\pt),\QQ/\ZZ)\otimes\QQ=0$. Therefore both $g\wedge \underline{S}_\QQ$ and $j$ are homotopy equivalences, and thus the desired $\varphi$ can be obtained by composing $g\wedge \underline{S}_\QQ$ with a homotopy inverse to $j$.
	\end{proof}

%\begin{example}[Anderson]$\KU$ is Anderson self-dual and $\KO$ is Anderson dual to $\KSp$.
%\end{example}
%\begin{proof}[Sketch of proof] The cap product $\KU^i(X)\otimes \KU_i(X;\Lambda)\xrightarrow{\cap} \KU_{0}(\pt;\Lambda)=\Lambda$ yields homomorphisms
%\[
%\KU^i(X)\to \Hom(\Hom(\KU_i(X;\Lambda), \Lambda)
%\]
%In particular it yields a map $\KU^i\to D\KU^i$ which is an isomorphism on point.
%
%For $\KO$ it is similar.
%	
%\end{proof}
\subsection{Proof of \Cref{thm2}}
To prove \Cref{thm2}, we need an algebraic version of Pontryagin duality in which the circle group $\RR/\ZZ$ is replaced by $\QQ/\ZZ$ and continuity is imposed by a lifting property using a covering-like construction. To elaborate, let $A$ be an abelian group. Consider the abelian group of commutative diagrams of the form
\begin{equation}\label{doubledual}
\begin{tikzcd}[row sep=scriptsize, column sep=scriptsize]
	\Hom(A,\QQ)\ar[r]\ar[d,"\bmod\ZZ"'] & \QQ\ar[d,"\bmod\ZZ"]\\
	\Hom(A,\QQ/\ZZ)\ar[r] & \QQ/\ZZ
\end{tikzcd}
\end{equation}
In other words, consider the group of pairs $(\varphi,\tilde{\varphi})$ of homomorphisms 
\[
\varphi:\Hom(A,\QQ/\ZZ)\to \QQ/\ZZ,\text{ and } \tilde{\varphi}:\Hom(A,\QQ)\to \QQ
\]
in which $\tilde{\varphi}$ is a "lifting" of $\varphi$ via the "covering homomorphisms"
\[
\Hom(A,\QQ)\xrightarrow{\bmod \ZZ} \Hom(A,\QQ/\ZZ),\text{ and }\QQ\xrightarrow{\bmod\ZZ}\QQ/\ZZ.
\]

Notice that $\tilde{\varphi}$ is determined by $\varphi$ because any two "liftings" of $\varphi$ are differed by a homomorphism from $\Hom(A,\QQ)$ into $\ZZ$ which must be zero. Therefore this group can be viewed as the subgroup of the group of homomorphisms from $\Hom(A,\QQ/\ZZ)$ into $\QQ/\ZZ$ consisting of those "liftable" homomorphisms. In analogy with covering theory, we may think of the "liftable" homomorphisms as continuous homomorphisms and the group of such commutative diagrams as the continuous dual to the group $\Hom(A,\QQ/\ZZ)$, which itself can be considered as a dual to $A$. With these understood, the following can be viewed as an algebraic version of Pontryagin duality.
\begin{proposition}
Let $A$ be a finitely generated abelian group. Then the map 
\[
A\to \Biggl\{\begin{tikzcd}[row sep=scriptsize, column sep=scriptsize]
	\Hom(A,\QQ)\ar[r]\ar[d] & \QQ\ar[d]\\
	\Hom(A,\QQ/\ZZ)\ar[r] & \QQ/\ZZ
\end{tikzcd}\Biggl\}.
\]
induced by evaluation is an isomorphism, where the target is the group of diagrams of the form \labelcref{doubledual}. The isomorphism clearly remains true with all the groups localized at a set of primes.
\end{proposition}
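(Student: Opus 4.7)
The plan is to reduce to cyclic summands via the structure theorem for finitely generated abelian groups. Write $T(A)$ for the target group of diagrams \labelcref{doubledual}, and let $e_A\colon A\to T(A)$ denote the evaluation map. Both $A\mapsto A$ and $A\mapsto T(A)$ are additive functors and $e$ is a natural transformation respecting direct sums; indeed, the identity $\Hom(A\oplus B,M)=\Hom(A,M)\oplus\Hom(B,M)$ (for $M=\QQ,\QQ/\ZZ$) forces every diagram of the form \labelcref{doubledual} for $A\oplus B$ to decompose uniquely as a sum of such diagrams for $A$ and for $B$, and the identity $\mathrm{ev}_{(a,b)}(f)=f(a,0)+f(0,b)$ shows evaluation intertwines the two splittings. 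Combined with the structure theorem, this reduces the proposition to the cases $A=\ZZ$ and $A=\ZZ/n$.

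For $A=\ZZ$, evaluation at the generator identifies $\Hom(\ZZ,\QQ)=\QQ$ and $\Hom(\ZZ,\QQ/\ZZ)=\QQ/\ZZ$, turning the left vertical of \labelcref{doubledual} into the canonical projection $\QQ\to\QQ/\ZZ$. Any group homomorphism $\QQ\to\QQ$ is multiplication by some $q\in\QQ$, and the square commutes precisely when this multiplication sends $\ZZ$ into $\ZZ$, i.e.\ when $q\in\ZZ$. Thus $T(\ZZ)\cong\ZZ$ via $e_\ZZ$. For $A=\ZZ/n$, the torsion-freeness of $\QQ$ gives $\Hom(\ZZ/n,\QQ)=0$, so $\tilde{\varphi}=0$ automatically and the commutativity constraint on $\varphi$ is vacuous; hence $T(\ZZ/n)=\Hom(\ZZ/n,\QQ/\ZZ)\cong\ZZ/n$, again canonically via $e_{\ZZ/n}$.

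Assembling the cyclic cases through the structure theorem proves the main isomorphism. The localized statement is then automatic: both sides commute with $\otimes_\ZZ\ZZ_{(S)}$ on finitely generated abelian groups, and the cyclic computations above go through verbatim after replacing $\ZZ$, $\QQ$, and $\QQ/\ZZ$ by their $S$-localizations.

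I do not foresee any serious obstacle; the only point deserving care is the additivity of $T$, specifically the verification that the direct-sum decomposition of $T(A\oplus B)$ induced by $\Hom$-additivity really matches the one on $A\oplus B$ under the evaluation map. This is a direct naturality check, after which the two cyclic computations close the argument.
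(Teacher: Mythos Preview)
Your proof is correct and follows essentially the same approach as the paper: reduce via the structure theorem to the cyclic cases $A=\ZZ$ and $A=\ZZ/n$, then verify each directly. The paper's treatment is terser---it does not spell out the additivity of $T$ and, for $A=\ZZ$, parametrizes the diagrams by the liftable $\varphi$ rather than by $\tilde{\varphi}$---but the substance is identical.
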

\begin{proof}It suffices to prove for $\ZZ_n$ and $\ZZ$. The former case is a straightforward verification using that $\Hom(\ZZ_n,\QQ)=0$ and $\Hom(\ZZ_n,\QQ/\ZZ)=\ZZ_n$. The latter case is equivalent to the claim that every homomorphism $\varphi:\QQ/\ZZ\to \QQ/\ZZ$ that can be lifted to $\tilde{\varphi}:\QQ\to\QQ$ is a multiplication by  some integer. Indeed $\tilde{\varphi}$ must be a multiplication by some rational number $q$ and in order for $\tilde{\varphi}$ to descend to a map $\varphi$ it is necessary that $q\cdot\ZZ\subset \ZZ$ and consequently $q$ is an integer which in turn implies $\varphi$ is a multiplication by an integer.
\end{proof}
\begin{remark}
	This is an instance of coherent duality of Serre-Grothendieck-Verdier: $\ZZ$, as a coherent sheaf over $\mathrm{Spec}(\ZZ)$, is the dualizing sheaf for coherent duality over $\mathrm{Spec}(\ZZ)$ and $\QQ\to\QQ/\ZZ$ is an injective replacement of $\ZZ$.
\end{remark}
	
%\begin{corollary}
%Let $\mathcal{P}$ be a set of prime numbers, and $A$ a finitely generated $\ZZ_{(\mathcal{P})}$-module, where $\ZZ_{(\mathcal{P})}$ is $\ZZ$ localized at the set of primes $\mathcal{P}$. Then the map
%\[
%A\to \Biggl\{\begin{tikzcd}
%	\Hom(A,\QQ)\ar[r]\ar[d] & \QQ\ar[d]\\
%	\Hom(A,\QQ/\ZZ_{(\mathcal{P})})\ar[r] & \QQ/\ZZ_{(\mathcal{P})}
%\end{tikzcd}\Biggl\}.
%\]
%induced by evaluation is an isomorphism.
%\end{corollary}
%\begin{proof}
%	Apply the exact functor $\otimes\ZZ_{(\mathcal{P})}$ to the previous lemma and use that every finitely generated $\ZZ_{(\mathcal{P})}$-module is localized from a finitely generated $\ZZ$-module.
%\end{proof}

\begin{theorem}[\Cref{thm2}]\label{thm2'} Let $h^*$ be a generalized cohomology theory of finite type, and let $Dh_*$ be its Anderson dual. Suppose $X$ is a finite CW-complex, then for all $i$ we have natural isomorphisms
\begin{equation}\label{Dhcocyclediagram}
	h^i(X)\cong \Biggl\{
\begin{tikzcd}[row sep=scriptsize, column sep=scriptsize]
	Dh_i(X;\QQ)\ar[r]\ar[d] & \QQ\ar[d]\\
	Dh_i(X;\QQ/\ZZ)\ar[r] & \QQ/\ZZ
\end{tikzcd}\Biggl\}.
\end{equation}
The isomorphism remains true with all the groups localized at a set of primes.
\end{theorem}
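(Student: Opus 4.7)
The plan is to combine Proposition \ref{prop:uct} with the algebraic Pontryagin-type duality established in the preceding proposition. Once those two ingredients are in place, the theorem follows by mechanical substitution: apply Proposition \ref{prop:uct} to identify $Dh_i(X;\QQ)$ with $\Hom(h^i(X),\QQ)$ and $Dh_i(X;\QQ/\ZZ)$ with $\Hom(h^i(X),\QQ/\ZZ)$, so that the target diagram in (\ref{Dhcocyclediagram}) becomes exactly the group of diagrams appearing in the algebraic duality, with $A = h^i(X)$. That proposition then produces the desired isomorphism via the evaluation pairing.

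Before invoking the algebraic duality, I must verify that $h^i(X)$ is finitely generated, which is the hypothesis required there. This I would establish using the Atiyah--Hirzebruch spectral sequence converging to $h^*(X)$ with $E_2$-page $H^p(X;h^q(\mathrm{pt}))$: every entry is finitely generated since $X$ is a finite CW-complex and $h^*$ is of finite type, and finite generation is preserved by differentials and extensions, so $h^i(X)$ is finitely generated for all $i$.

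Naturality in $X$ follows because the evaluation pairing from the algebraic proposition is manifestly natural, and the identifications of Proposition \ref{prop:uct} are induced by spectrum-level maps, hence natural as well. The localized statement is immediate: the Atiyah--Hirzebruch argument, Anderson duality, and the algebraic duality all commute with localization at any set of primes, and the construction of the target diagram is functorial in the coefficient groups.

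The main obstacle, and the only nontrivial point beyond citing the two previous results, is ensuring that the \emph{vertical} map in (\ref{Dhcocyclediagram})---the map $Dh_i(X;\QQ)\to Dh_i(X;\QQ/\ZZ)$ coming from the coefficient short exact sequence $0\to\ZZ\to\QQ\to\QQ/\ZZ\to 0$---is intertwined by the isomorphisms of Proposition \ref{prop:uct} with the mod-$\ZZ$ reduction map $\Hom(h^i(X),\QQ)\to \Hom(h^i(X),\QQ/\ZZ)$. Without this compatibility, the two notions of ``liftable'' homomorphism would not match and the reduction to algebra would fail. Fortunately, this is exactly the content of the ``moreover'' clause of Proposition \ref{prop:uct}, where the coefficient long exact sequence (\ref{les2}) is identified with (\ref{les1}) via the spectrum equivalences $\varphi,\varphi'$ constructed there. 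Once that identification is invoked, no further topological input is needed and the theorem reduces entirely to the algebraic duality.
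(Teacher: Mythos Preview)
Your proposal is correct and follows essentially the same route as the paper's own proof, which simply says ``Combine the above proposition with \Cref{prop:uct}.'' You have filled in the details the paper leaves implicit: the finite generation of $h^i(X)$ via Atiyah--Hirzebruch, and the role of the ``moreover'' clause in \Cref{prop:uct} in matching the vertical maps.
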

\begin{proof}
	Combine the above proposition with \Cref{prop:uct}.
\end{proof}
\begin{remark}
	Compare the above to \cite[Theorem 2.1, Theorem 2.2]{MS74} in which the special case $h^*=H^*$ is obtained.
\end{remark}
\begin{remark}\label{inverselimit}
	For $X=\bigcup_n X_n$ an infinite complex filtered by an increasing sequence of finite sub-complexes $X_n$, the the same isomorphism holds with $h^i(X)$ replaced by $\varprojlim_n h^i(X_n)$. This can be proved by applying the above theorem each to $X_n$ and using the facts that homology commutes with direct limit and that $\Hom(\varinjlim-,-)=\varprojlim\Hom(-,-)$. In general $h^i(X)\neq \varprojlim_n h^i(X_n)$, see \cite{Milnor62}.
\end{remark}
%\begin{definition}\label{Dhcocycledefinition}
%	A commutative diagram in the right hand side of \labelcref{Dhcocyclediagram} is called a \textbf{$Dh_i$-cocycle} over $X$.
%\end{definition}
Since the Anderson dual of $\KO$ is $\KSp$, we get
\begin{corollary}\label{periodKO}
	Let $X$ be a finite CW-complex. Then for all $i$ we have natural isomorphisms
	\[
\KO^i(X)\cong\Biggl\{
\begin{tikzcd}[row sep=scriptsize, column sep=scriptsize]
	\KSp_i(X;\QQ)\ar[r]\ar[d] & \QQ\ar[d]\\
	\KSp_i(X;\QQ/\ZZ)\ar[r] & \QQ/\ZZ
\end{tikzcd}\Biggl\}.
\]
\end{corollary}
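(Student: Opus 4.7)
The plan is to deduce \Cref{periodKO} as a direct specialization of \Cref{thm2'} to the case $h^* = \KO^*$. The first step is to verify the finite-type hypothesis: by Bott periodicity the coefficient groups $\KO^i(\pt)$ are $\ZZ, \ZZ_2, \ZZ_2, 0, \ZZ, 0, 0, 0$ with period $8$, all of which are finitely generated, so $\KO^*$ is indeed of finite type. \Cref{thm2'} then produces the natural isomorphism
\[
\KO^i(X) \cong \Biggl\{
\begin{tikzcd}[row sep=scriptsize, column sep=scriptsize]
	D\KO_i(X;\QQ)\ar[r]\ar[d] & \QQ\ar[d]\\
	D\KO_i(X;\QQ/\ZZ)\ar[r] & \QQ/\ZZ
\end{tikzcd}\Biggl\}
\]
for every finite CW-complex $X$ and every $i$.

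The second step is to replace $D\KO_*$ by $\KSp_*$. For this I would invoke Anderson's theorem \cite{Anderson}, already recalled in the example following the introduction of the long exact sequence \labelcref{les1}, which identifies the Anderson dual of real K-theory with symplectic K-theory as homology theories, $D\KO_* \cong \KSp_*$. This identification is induced by an equivalence of the underlying representing spectra and therefore transports automatically to $\QQ$- and $\QQ/\ZZ$-coefficients (i.e.\ after smashing with the Moore spectra $\underline{S}_\QQ$ and $\underline{S}_{\QQ/\ZZ}$). Substituting into the diagram above yields precisely the isomorphism stated in \Cref{periodKO}.

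Since both inputs are already in hand, no new obstacle appears: the corollary is a formal specialization, and naturality in $X$ is inherited from the naturality established in \Cref{thm2'}. Any genuine difficulty has been absorbed into the proof of \Cref{thm2'} and into Anderson's computation $D\KO_* \cong \KSp_*$.
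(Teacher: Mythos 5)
Your proposal is correct and follows essentially the same route as the paper: the paper's own proof is exactly the one-line specialization of \Cref{thm2'} to $h^*=\KO^*$ combined with Anderson's identification $D\KO_*\cong\KSp_*$. Your additional remarks (finite type of $\KO^*$ via Bott periodicity, and that the spectrum-level equivalence carries over to $\QQ$- and $\QQ/\ZZ$-coefficients) are just the implicit details the paper leaves to the reader.
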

%\begin{corollary}\label{periodKO}
%	For $X$ a finite CW-complex, we have for all $i$ natural isomorphisms
%	\[
%\KO^i(X)\cong\big\{\text{$\KSp_i$-cocycles}\big\}.
%\]
%\end{corollary}
%\begin{remark}
%	Anderson's proof in fact shows the isomorphism is induced by slant products 
%\[
%	-\backslash-:\KSp_i(X;\Lambda)\otimes \KO^i(X)\to \KSp_0(\pt;\Lambda)=\Lambda
%\]
%\end{remark}
\begin{remark}
	This corollary localized at odd primes, i.e. with $2$ inverted, is obtained by Sullivan in the 70's in studying geometric topology, see \cite[Theorem 6.3]{MITnotes} and note $\KSp[\frac{1}{2}]=\KO[\frac{1}{2}]$. Sullivan's proof relies on that $\KO_*(-;\ZZ_n)$ is a $\ZZ_n$-module for $n$ odd. But since $\KO_2(\pt;\ZZ_2)=\ZZ_4$ is not a $\ZZ_2$-module, that proof cannot be directly applied to deal with $\KO^*$ at prime $2$. It is this difficulty at prime $2$ that motivated the author to formulate and prove \Cref{thm2}.
\end{remark}
%From the last example, we see that real K-theory classes, namely vector bundles up to stable equivalences, are determined by their periods over symplectic K-theory.

%Our next goal would be to find appropriate cycles for symplectic K-theory.

\subsection{Interpretation of \Cref{thm2}}\label{geometricinterpretation}
We will explain why \Cref{thm2} means cohomology classes for $h^*$ are determined by their periods over cycles and torsion cycles for $Dh_*$. Let us begin by pointing out generalized homology classes can be represented by geometric cycles.
\subsubsection{Representing homology classes by geometric cycles}
This subsection is a quick summary of the work of Buoncristiano, Rouke and Sanderson in \cite{BRS}. They showed every generalized \textit{homology} theory is a bordism theory of a suitable class of manifolds-with-singularities. To explain, let us introduce the wonderful notion of transverse CW-complex.
\begin{definition}
	Let $M$ be a compact smooth manifold (with boundary) and $X$ a CW-complex. A continuous map $f: M\to X$ is transverse to an open cell $e$ of $X$ if either $f^{-1}(e)=\varnothing$ or there is a commuting diagram
	\[
	\begin{tikzcd}[row sep=small, column sep=small]
		cl(T) \ar[rr,"t"]\ar[dr,"f|"']& & D^n\ar[dl,"h"]\\
		& X &
	\end{tikzcd}
	\]
	where $T=f^{-1}(e)$, $h$ from the $n$-dimensional closed unit disk $D^n$ into $X$ is characteristic map of the cell $e$ so that $h$ restricted to the open disk is a homeomorphism onto $e$, $t$ is a projection of a smooth bundle (necessarily trivial) and $cl(T)$ is the closure of $T$. Notice that this implies $\hat{T}=t^{-1}(0)$ is a submanifold of $M$ of codimension $n$ and $cl(T)$ is diffeomorphic to $\hat{T}\times D^n$.
	
	We say $f$ is a transverse map if $f$ is transverse to all cells of $X$. We say $X$ is a \textbf{transverse CW-complex} if its attaching maps are transverse to all previous cells.
\end{definition}
\begin{theorem}[Transversality theorem {\cite[pp. 134-135]{BRS}}]
	Every CW-complex is homotopy equivalent to a transverse CW-complex. Every continuous map from a manifold with boundary to a transverse CW-complex which is already transverse on the boundary can be deformed into a transverse map by a homotopy relative to boundary.
\end{theorem}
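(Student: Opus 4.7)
The plan is to prove the second statement first by a cell-by-cell induction, then derive the first by bootstrapping the second on attaching maps.

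For the second statement, let $f: M \to X$ be the given map with $f|_{\partial M}$ already transverse. A standard smooth approximation allows me to take $f$ smooth on the preimage of each open cell, rel $\partial M$. I then induct on the dimension $n$ of cells of $X$, processing the cells of each fixed dimension one after another. Suppose $f$ is already transverse to every cell of dimension $< n$ and to every $n$-cell handled so far. For the next $n$-cell $e$ with characteristic map $h: D^n \to X$, the restriction $h|_{\operatorname{int} D^n}$ is a homeomorphism onto $e$, so the composite $g = h^{-1} \circ f : f^{-1}(e) \to \operatorname{int} D^n$ is a smooth map between smooth manifolds. Classical smooth transversality (Thom's theorem, or directly Sard applied to $g$) perturbs $g$ within an arbitrarily small $C^0$ neighborhood to be transverse to $0 \in D^n$. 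The tubular neighborhood theorem then produces a codimension-$n$ submanifold $\hat{T} = g^{-1}(0)$ together with a product neighborhood $cl(T) \cong \hat{T} \times D^n$ on which $g$ is projection to the disk factor; composing with $h$ yields precisely the commuting triangle in the definition of transversality to $e$.

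The main obstacle is arranging the perturbation globally without destroying previously attained transversality or the boundary condition. The key point is that the perturbation of $g$ can be chosen $C^0$-small and with compact support inside $f^{-1}(e)$, so after extension by a bump-function cutoff one obtains a homotopy of $f$ supported in a neighborhood of $f^{-1}(e)$ that is disjoint from $f^{-1}(X^{n-1})$ and from the preimages of $n$-cells treated earlier. This is where the hypothesis that $X$ is a \emph{transverse} CW-complex enters: the attaching maps of higher cells being transverse to the lower ones guarantees that the characteristic map of $e$ has a compatible local product structure along $\partial e$, so that a small tubular neighborhood of $f^{-1}(e)$ can be taken without encroaching on the transverse structure already established for lower cells. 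The rel-$\partial M$ clause is handled by restricting the perturbation to the complement of a small collar of $\partial M$ and invoking standard relative smooth transversality.

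For the first statement I construct a transverse $Y$ homotopy equivalent to $X$ skeleton-by-skeleton. The $0$-skeleton is transverse vacuously. Assume inductively that $\psi: Y^{n-1} \to X^{n-1}$ is a homotopy equivalence with $Y^{n-1}$ transverse, and pick a homotopy inverse $\psi'$. For each attaching map $\phi_\alpha: S^{n-1} \to X^{n-1}$ of an $n$-cell of $X$, set $\tilde{\phi}_\alpha = \psi' \circ \phi_\alpha : S^{n-1} \to Y^{n-1}$. Since $S^{n-1}$ is a closed smooth manifold and $Y^{n-1}$ is transverse, the second statement (already proved) replaces $\tilde{\phi}_\alpha$ by a homotopic transverse map. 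Attaching $n$-cells to $Y^{n-1}$ along these new transverse attaching maps defines $Y^n$, transverse by construction; the homotopies $\psi \circ \tilde{\phi}_\alpha \simeq \phi_\alpha$, together with the standard gluing lemma asserting that attaching cells along homotopic maps yields homotopy equivalent complexes, extend $\psi$ to a homotopy equivalence $Y^n \to X^n$. Passing to the direct limit over $n$ produces the desired transverse CW-complex $Y$ homotopy equivalent to $X$.
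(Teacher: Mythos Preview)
The paper does not actually prove this theorem; it is quoted as a known result from \cite[pp.~134--135]{BRS}, so there is no in-paper proof to compare against. I will therefore assess your argument on its own merits.

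Your overall architecture---induct on cell dimension for the second statement, then bootstrap to build the transverse model skeleton-by-skeleton for the first---is the standard one and matches the approach in \cite{BRS}. However, there is a genuine gap in your inductive step. After perturbing $g = h^{-1}\circ f$ to be transverse to $0\in D^n$, the tubular neighbourhood theorem gives you a product structure on a small neighbourhood $\nu\cong \hat T\times \epsilon D^n$ of $\hat T$, on which $g$ looks like projection to the $\epsilon D^n$ factor. It does \emph{not} give you $cl(T)\cong \hat T\times D^n$: the closure $cl(T)$ of $f^{-1}(e)$ is typically much larger than $\nu$, and $g$ need not be a submersion on all of it. The definition of ``transverse to $e$'' requires the bundle structure over the full closed disk $D^n$, not just over a small sub-disk.

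The missing step is a compression (or ``squeezing'') homotopy: one post-composes $f$ with a self-map of $X$ that is the identity on $X^{(n-1)}$ and, on the cell $e$, radially dilates $\epsilon D^n$ to $D^n$ while crushing the annulus $D^n\setminus \epsilon D^n$ onto $\partial D^n$. After this homotopy, the new $f^{-1}(e)$ is exactly $\nu$ and the required product structure holds over all of $D^n$. The delicate point---which you correctly flag but do not carry out---is that this compression pushes part of $M$ from $f^{-1}(e)$ into $f^{-1}(\partial e)\subset f^{-1}(X^{(n-1)})$, and one must check that transversality to the lower cells is preserved. This is exactly where the hypothesis that $X$ is a \emph{transverse} CW-complex is used: because the attaching map $h|_{\partial D^n}$ is itself transverse to the cells of $X^{(n-1)}$, the composition of the crushing map with $h$ still has the required product structure inductively. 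Your paragraph on ``the main obstacle'' gestures at this but does not supply the compression argument; as written, the sentence ``the tubular neighborhood theorem then produces \ldots $cl(T)\cong \hat T\times D^n$'' is simply false.

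Your argument for the first statement is fine once the second is established.
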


A transverse CW-complex behaves like a Thom space.
\begin{example}[Moore space]\label{zkexample}
	Let $X$ be the $2$-dimensional Moore space $S^1\cup_{3} D^2$ for $\ZZ_3$ obtained by attaching a $2$-cell onto $S^1$ by a transverse degree $3$ map $g$, then $X$ is a transverse CW-complex. Denote by $\hat{e}_i$ the center of the (unique) $i$-cell $e_i$ of $X$, $i=0,1,2$. Let $Y$ denote the union of three closed line segments in $D^2$ connecting $\hat{e}_2$ to $g^{-1}(\hat{e}_1)$. Denote $Y-g^{-1}(\hat{e}_1)$ by $\mathring{Y}$ and the closure of $\mathring{Y}$ in $X$ by $\overline{Y}$. Then $X$ can be viewed as the Thom space of the "normal bundle" of $\overline{Y}$ in $X$ illustrated by the figure below, in which anything outside of a neighborhood of $\overline{Y}$ is collapsed to a single point $\hat{e}_0$.
	
	\begin{figure}[H]
	\centering
	\tikzset{every picture/.style={line width=0.75pt}} %set default line width to 0.75pt       
	\begin{tikzpicture}[yscale=0.8,xscale=0.8]
		\coordinate (O) at (0, 0);
		\coordinate (A) at (-1.732,1) ;
		\coordinate (B) at (1.732,1);
		\coordinate (C) at (0, -2);
		
		\draw (O) circle (2);
		\draw[thick] (O)--(A);
		\draw[thick] (O)--(B);
		\draw[thick] (O)--(C);
		\filldraw (O) circle (0.05) node[above] {$\hat{e}_2$};
		\filldraw (A) circle (0.05) node[above left] {$\hat{e}_1$};
		\filldraw (B) circle (0.05) node[above right] {$\hat{e}_1$};
		\filldraw (C) circle (0.05) node[below] {$\hat{e}_1$};
		
		\draw (-1.414,1.414) arc(-135:-45:2);
		\draw (-0.5176,-1.931) arc(-15:75:2);
		\draw (0.5176,-1.931) arc(195:105:2);
		
		\coordinate (A1) at (-1.3856,0.8);
		\coordinate (A2) at (-1.0392,0.6);
		\coordinate (A3) at (-0.6928,0.4);
		\coordinate (A4) at (-0.3464,0.2);
		
		\coordinate (a1) at (-1.1756,1.21);
		\coordinate (a2) at (-0.908,1.0464);
		\coordinate (a3) at (-0.618,0.9263);
		\coordinate (a4) at (-0.313,0.853);
		
		\coordinate (a_1) at (-1.636,0.413);
		\coordinate (a_2) at (-1.36,0.263);
		\coordinate (a_3) at (-1.11,0.072);
		\coordinate (a_4) at (-0.895,-0.156);
		
		\draw (A1)--(a1);
		\draw (A2)--(a2);
		\draw (A3)--(a3);
		\draw (A4)--(a4);
		\draw (A1)--(a_1);
		\draw (A2)--(a_2);
		\draw (A3)--(a_3);
		\draw (A4)--(a_4);
		\draw (O)--(0,0.828);
		
		\coordinate (B1) at (1.3856,0.8);
		\coordinate (B2) at (1.0392,0.6);
		\coordinate (B3) at (0.6928,0.4);
		\coordinate (B4) at (0.3464,0.2);
		
		\coordinate (b4) at (0.313,0.8531);
		\coordinate (b3) at (0.618,0.926);
		\coordinate (b2) at (0.908,1.046);
		\coordinate (b1) at (1.18,1.21);
		
		\coordinate (b_4) at (0.895,-0.156);
		\coordinate (b_3) at (1.11,0.072);
		\coordinate (b_2) at (1.36,0.263);
		\coordinate (b_1) at (1.636,0.413);
		
		\draw (B1)--(b1);
		\draw (B2)--(b2);
		\draw (B3)--(b3);
		\draw (B4)--(b4);
		\draw (B1)--(b_1);
		\draw (B2)--(b_2);
		\draw (B3)--(b_3);
		\draw (B4)--(b_4);

		\coordinate (C1) at (0,-1.6);
		\coordinate (C2) at (0,-1.2);
		\coordinate (C3) at (0,-0.8);
		\coordinate (C4) at (0,-0.4);
		
		\coordinate (c1) at (-0.4604,-1.623);
		\coordinate (c2) at (-0.452,-1.31);
		\coordinate (c3) at (-0.493,-0.998);
		\coordinate (c4) at (-0.582,-0.697);
		
		\coordinate (c_1) at (0.4604,-1.623);
		\coordinate (c_2) at (0.452,-1.31);
		\coordinate (c_3) at (0.493,-0.998);
		\coordinate (c_4) at (0.582,-0.697);
		
		\draw (C1)--(c1);
		\draw (C2)--(c2);
		\draw (C3)--(c3);
		\draw (C4)--(c4);
		\draw (C1)--(c_1);
		\draw (C2)--(c_2);
		\draw (C3)--(c_3);
		\draw (C4)--(c_4);
		\draw (O)--(-0.717,-0.414);
		\draw (O)--(0.717,-0.414);
		
	\end{tikzpicture}
	\caption{Moore space viewed as a Thom space}
	\end{figure}
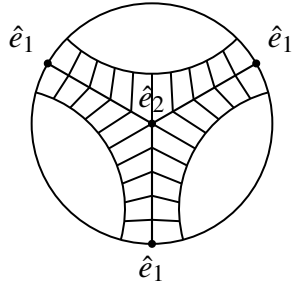
	
	Now let $f:S^n\to X$ be a transverse map, then $\beta M:=f^{-1}(\hat{e}_2)$ is a codimension $2$ submanifold of $S^n$ with an open neighborhood $f^{-1}(e_2)$ diffeomorphic to $\beta M\times e_2$. Denote $S_0:=S^n-f^{-1}(e_2)$, and $f_0:=f|_{S_0}$, then $M_0:=f_0^{-1}(\hat{e}_1)$ is a codimension $1$ submanifold of $S_0$ with an open neighborhood $f_0^{-1}(e_1)$ diffeomorphic to $M_0\times e_1$. Define $\overline{M}:=M_0\cup cl(f^{-1}(\mathring{Y}))$. Then $\overline{M}$ is a manifold with singularity whose singular locus is $\beta M$. Near $\beta M$, $\overline{M}$ is diffeomorphic to $\beta M\times \text{cone(3 points)}$. Therefore $\overline{M}$ is a $\ZZ_3$-manifold (see \Cref{defn:zkmanifold} below). Moreover $\overline{M}$ admits a normal framing inherited from the normal framing of $\hat{e}_1$ in $e_1$ and the normal framing of $\mathring{Y}$ in $e_2$.
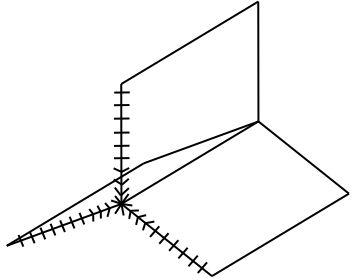
\begin{figure}[H]
\centering
\tikzset{every picture/.style={line width=0.75pt}} %set default line width to 0.75pt       
\begin{tikzpicture}[x=0.75pt,y=0.75pt,yscale=-0.85,xscale=0.85]
%uncomment if require: \path (0,451); %set diagram left start at 0, and has height of 451

%Straight Lines [id:da9656639672878554] 
\draw    (184.92,228.45) -- (185.04,299.63) ;
%Straight Lines [id:da9450250982558156] 
\draw    (185.04,299.63) -- (266.33,250.67) ;
%Straight Lines [id:da6038515153066174] 
\draw    (266.22,179.49) -- (266.33,250.67) ;
%Straight Lines [id:da06140338702221315] 
\draw    (238.75,342.5) -- (185.04,299.63) ;
%Straight Lines [id:da28499899064559275] 
\draw    (320.05,293.54) -- (266.33,250.67) ;
%Straight Lines [id:da903038588169383] 
\draw    (185.04,299.63) -- (117,324.5) ;
%Straight Lines [id:da2810733273606787] 
\draw    (266.33,250.67) -- (198.3,275.54) ;
%Straight Lines [id:da9290924313750312] 
\draw    (178.9,297.58) -- (185.04,299.63) ;
%Straight Lines [id:da5718006496420481] 
\draw    (185.04,299.63) -- (189.3,293.75) ;
%Straight Lines [id:da8490868914334062] 
\draw    (181.25,294.9) -- (185.04,299.63) ;
%Straight Lines [id:da4729528225846835] 
\draw    (185.04,299.63) -- (186.37,306.5) ;
%Straight Lines [id:da632675767965731] 
\draw    (185.04,299.63) -- (181.25,305.5) ;
%Straight Lines [id:da5404893919687563] 
\draw    (191.52,300.58) -- (185.04,299.63) ;
%Straight Lines [id:da070852433939112] 
\draw    (181.11,289.92) -- (184.69,294.63) ;
%Straight Lines [id:da8695939493626186] 
\draw    (189.47,290.25) -- (184.69,294.63) ;
%Straight Lines [id:da7393377125173701] 
\draw    (180.43,285.1) -- (185.34,288.23) ;
%Straight Lines [id:da4010777736883906] 
\draw    (180.43,278.5) -- (185.1,280.63) ;
%Straight Lines [id:da3098956611070536] 
\draw    (185.1,280.63) -- (189.64,278.3) ;
%Straight Lines [id:da5420483203207658] 
\draw    (185.34,288.23) -- (189.43,284.7) ;
%Straight Lines [id:da5879054197081399] 
\draw    (190.53,303.72) -- (195.37,303.5) ;
%Straight Lines [id:da8464299513844552] 
\draw    (190.53,303.72) -- (189.64,308.5) ;
%Straight Lines [id:da8947953762829045] 
\draw    (194.82,307.52) -- (193.73,312.1) ;
%Straight Lines [id:da24050846590244646] 
\draw    (199.67,306.3) -- (194.82,307.52) ;
%Straight Lines [id:da38091804692278297] 
\draw    (199.12,311.12) -- (197.82,315.3) ;
%Straight Lines [id:da32483936575981187] 
\draw    (199.12,311.12) -- (204.58,310.1) ;
%Straight Lines [id:da028406574902301363] 
\draw    (178.08,302.23) -- (176.75,307.3) ;
%Straight Lines [id:da6516507142329728] 
\draw    (175.32,299.1) -- (178.08,302.23) ;
%Straight Lines [id:da44967114824661425] 
\draw    (172.76,304.43) -- (172.86,308.3) ;
%Straight Lines [id:da5008405344413327] 
\draw    (170.81,300.7) -- (172.76,304.43) ;
%Straight Lines [id:da9170128949257327] 
\draw    (165.9,302.7) -- (168.26,305.63) ;
%Straight Lines [id:da2873348398143304] 
\draw    (168.26,305.63) -- (168.36,310.1) ;
%Straight Lines [id:da0157309849522711] 
\draw    (117,324.5) -- (198.3,275.54) ;
%Straight Lines [id:da8042125190378463] 
\draw   (238.75,342.5) -- (320.05,293.54) ;
%Straight Lines [id:da13247291981216303] 
\draw   (184.92,228.45) -- (266.22,179.49) ;
%Straight Lines [id:da8428505626753663] 
\draw    (180.6,272.53) -- (189.83,272.42) ;
%Straight Lines [id:da9632613993267056] 
\draw    (180.6,265.2) -- (189.83,265.08) ;
%Straight Lines [id:da9529036228300408] 
\draw    (180.6,257.37) -- (189.83,257.25) ;
%Straight Lines [id:da6208505369330477] 
\draw    (180.6,249.03) -- (189.83,248.92) ;
%Straight Lines [id:da41813932542091514] 
\draw    (180.6,241.03) -- (189.83,240.92) ;
%Straight Lines [id:da5629960070788914] 
\draw    (180.77,233.53) -- (190,233.42) ;
%Straight Lines [id:da7598558532147273] 
\draw    (202,319.25) -- (207.5,312.92) ;
%Straight Lines [id:da3833912806262655] 
\draw    (213.33,327.5) -- (218.83,321.17) ;
%Straight Lines [id:da03004077893099133] 
\draw    (207.5,323.33) -- (213,317) ;
%Straight Lines [id:da11065906764603739] 
\draw    (218.83,332) -- (224.33,325.67) ;
%Straight Lines [id:da6948796296899079] 
\draw    (224.83,336.67) -- (230.33,330.33) ;
%Straight Lines [id:da8379281812539807] 
\draw    (230.33,340.68) -- (235.83,334.35) ;
%Straight Lines [id:da9985257699398685] 
\draw    (163,311.83) -- (160.17,304.92) ;
%Straight Lines [id:da37710791159178814] 
\draw    (157.17,314.17) -- (154.33,307.25) ;
%Straight Lines [id:da6257413883569733] 
\draw    (151.5,316.33) -- (148.67,309.42) ;
%Straight Lines [id:da23900207101535975] 
\draw    (145.83,318.33) -- (143,311.42) ;
%Straight Lines [id:da8452660946316419] 
\draw    (139.83,320.67) -- (137,313.75) ;
%Straight Lines [id:da02938745411735466] 
\draw    (133.5,323) -- (130.67,316.08) ;
%Straight Lines [id:da5858731270032596] 
\draw    (126.83,325) -- (124,318.08) ;
\end{tikzpicture}
\caption{Local structure near $\beta M$}
\end{figure}
	
Reversing the above construction, we see conversely that every framed $\ZZ_3$-manifold embedded in $S^n$ yields a transverse map from $S^n$ to $X$. So $X$ can be viewed as the Thom space for (codimension 1) framed $\ZZ_3$-manifold.
\end{example}

\begin{definition}\label{defn:zkmanifold}
	A \textbf{$\ZZ_k$-manifold} is a pair $(M,\beta M)$ consisting of an oriented smooth manifold with boundary $M$ and a closed oriented manifold $\beta M$ so that $\partial M$ is a disjoint union of $k$-copies (assume labelled) of $\beta M$. Let $\overline{M}$ be the quotient space of $M$ obtained by gluing $\partial M$ onto $\beta M$, we also say $\overline{M}$ is a $\ZZ_k$-manifold. The manifold $\beta M$ (also denoted by $\beta\overline{M}$) is called the \textbf{Bockstein} of $\overline{M}$.
	
	If $(N,\beta N)$ is another $\ZZ_k$-manifold, then $(M,\beta M)$ is \textbf{$\ZZ_k$-cobordant} to $(N,\beta N)$ if $\beta M$ is cobordant to $\beta N$ via a cobordism $W$, i.e. $\partial W=\beta W\sqcup (-\beta N)$, and $ M\cup_{\partial M} kW\cup_{(-\partial N)} (-N)$ is a boundary. A $\ZZ_k$-manifold is a \textbf{$\ZZ_k$-boundary} if it is $\ZZ_k$-cobordant to the empty $\ZZ_k$-manifold.
	
\begin{figure}[H]
\centering

\tikzset{every picture/.style={line width=0.75pt}} %set default line width to 0.75pt        

\begin{tikzpicture}[x=0.75pt,y=0.75pt,yscale=-0.6,xscale=0.6]
%uncomment if require: \path (0,490); %set diagram left start at 0, and has height of 490

%Shape: Ellipse [id:dp03594902648240739] 
\draw   (150,99) .. controls (150,90.53) and (162.61,83.67) .. (178.17,83.67) .. controls (193.72,83.67) and (206.33,90.53) .. (206.33,99) .. controls (206.33,107.47) and (193.72,114.33) .. (178.17,114.33) .. controls (162.61,114.33) and (150,107.47) .. (150,99) -- cycle ;
%Curve Lines [id:da42964725923736835] 
\draw    (150,99) .. controls (155,141) and (148.33,184.33) .. (75.67,189.67) ;
%Shape: Ellipse [id:dp42978836325054603] 
\draw   (74.64,245.99) .. controls (66.18,245.84) and (59.54,233.1) .. (59.82,217.55) .. controls (60.11,202) and (67.2,189.51) .. (75.67,189.67) .. controls (84.13,189.82) and (90.77,202.55) .. (90.49,218.11) .. controls (90.2,233.66) and (83.11,246.14) .. (74.64,245.99) -- cycle ;
%Shape: Ellipse [id:dp7041755708040223] 
\draw   (272.26,195.95) .. controls (280.71,196.36) and (286.97,209.28) .. (286.23,224.82) .. controls (285.48,240.36) and (278.02,252.63) .. (269.56,252.22) .. controls (261.1,251.82) and (254.85,238.89) .. (255.59,223.36) .. controls (256.34,207.82) and (263.8,195.55) .. (272.26,195.95) -- cycle ;
%Shape: Boxed Bezier Curve [id:dp6049449429840542] 
\draw    (272.26,195.95) .. controls (230.56,188.86) and (190.88,170.21) .. (206.33,99) ;
%Curve Lines [id:da301109844938639] 
\draw    (74.64,245.99) .. controls (149.67,225) and (180.33,229) .. (269.56,252.22) ;
%Curve Lines [id:da4647683165037243] 
\draw    (145.33,200.33) .. controls (156.33,175) and (183,167) .. (207.67,193) ;
%Curve Lines [id:da33154443930688826] 
\draw    (151,195) .. controls (163.33,207.67) and (192.33,204.33) .. (199.67,190.33) ;
%Shape: Ellipse [id:dp32517317869608764] 
\draw   (150,85.67) .. controls (150,77.2) and (162.61,70.33) .. (178.17,70.33) .. controls (193.72,70.33) and (206.33,77.2) .. (206.33,85.67) .. controls (206.33,94.14) and (193.72,101) .. (178.17,101) .. controls (162.61,101) and (150,94.14) .. (150,85.67) -- cycle ;
%Curve Lines [id:da09057878736205816] 
\draw    (150,85.67) .. controls (147.67,43) and (201,26.33) .. (206.33,85.67) ;
%Shape: Ellipse [id:dp9426696961918606] 
\draw   (283.94,196.52) .. controls (292.39,197.13) and (298.32,210.21) .. (297.2,225.72) .. controls (296.08,241.24) and (288.32,253.32) .. (279.87,252.71) .. controls (271.43,252.1) and (265.49,239.02) .. (266.61,223.51) .. controls (267.74,207.99) and (275.49,195.91) .. (283.94,196.52) -- cycle ;
%Curve Lines [id:da4663217798319713] 
\draw    (283.94,196.52) .. controls (326.66,197.28) and (339.44,251.67) .. (279.87,252.71) ;
%Shape: Ellipse [id:dp6565302521043124] 
\draw   (63.32,246.47) .. controls (54.85,246.76) and (47.57,234.38) .. (47.05,218.84) .. controls (46.53,203.29) and (52.97,190.46) .. (61.43,190.17) .. controls (69.89,189.89) and (77.18,202.26) .. (77.7,217.81) .. controls (78.22,233.36) and (71.78,246.19) .. (63.32,246.47) -- cycle ;
%Curve Lines [id:da18132174592601236] 
\draw    (63.32,246.47) .. controls (20.75,250.24) and (2.31,197.49) .. (61.43,190.17) ;

% Text Node
\draw (123.33,201.4) node [anchor=north west][inner sep=0.75pt]    {$M$};
% Text Node
\draw (213,84.07) node [anchor=north west][inner sep=0.75pt]    {$\beta M$};
% Text Node
\draw (58.67,249.4) node [anchor=north west][inner sep=0.75pt]    {$\beta M$};
% Text Node
\draw (264.67,254.4) node [anchor=north west][inner sep=0.75pt]    {$\beta M$};
% Text Node
\draw (26.67,211.07) node [anchor=north west][inner sep=0.75pt]    {$W$};
% Text Node
\draw (301.33,219.4) node [anchor=north west][inner sep=0.75pt]    {$W$};
% Text Node
\draw (169.33,54.07) node [anchor=north west][inner sep=0.75pt]    {$W$};

\end{tikzpicture}
\caption{$\ZZ_k$-boundary}
\end{figure}

%In general a \textbf{$\ZZ_k$-manifold with boundary} is a pair $(M,\beta M)$ of oriented manifolds-with-boundary with $k$ disjoint codimension zero compact embeddings of $\beta M$ into $\partial M$. The boundary of $(M,\beta M)$ is the $\ZZ_k$-manifold $\left(\partial M-int (k\beta M),\partial (\beta M)\right)$. We also call $\overline{M}$, the quotient space of $M$ obtained by gluing $\beta M$'s together, a $\ZZ_k$-manifold with boundary, and denote $\partial \overline{M}=\left(\partial M-int (k\beta M),\partial (\beta M)\right)$.
\end{definition}
\begin{example}[$\ZZ_k$-sphere]\label{zksphereexample}
	Let $X$ be $S^n$ with $k$ disjoint open disks removed, then $X$ is a $\ZZ_k$-manifold with $\beta X=S^{n-1}$, we call $(X,\beta X)$ the $\ZZ_k$-$n$-sphere and denote it by $\overline{S^n}$. Observe that $\overline{S^n}$ is $\ZZ_k$-cobordant to the $\ZZ_k$-manifold $(S^n,\varnothing)$ since $\beta X=S^{n-1}$ is cobordant to $\varnothing$ via $D^n$ and $X\cup_{\partial X} kD^n$ is exactly $S^n$. Moreover, both $\overline{S^n}$ and $(S^n,\varnothing)$ are framed, and the $\ZZ_k$-cobordism is a framed one.
\end{example}
We leave it to the reader to define smooth maps between $\ZZ_k$-manifolds, $\ZZ_k$-submanifolds, $\ZZ_k$-manifolds with extra tangential structures and their corresponding notions of cobordisms.

Combining the transversality theorem and the previous example one can show there is a 1-1 correspondence between the set of homotopy class of maps $[S^n, S^j\cup_k D^{j+1}]$ and the set of cobordism classes of codimension $j$ framed $\ZZ_k$-manifolds embedded in $S^n$. In general every CW-complex can be viewed as a generalized Thom space for framed manifolds-with-singularities whose singularity type is determined by the CW-complex. Consequently every CW-spectrum can be viewed as a generalized Thom spectrum whose corresponding homology theory is a bordism theory of framed manifolds with certain type of singularities.

For a CW-spectrum $\underline{X}$, we call its corresponding homology theory the bordism theory of manifolds with $\underline{X}$-singularity. A mapping from a manifold with $\underline{X}$-singularity into a space is called an \textbf{$\underline{X}$-cycle} in that space, an $\underline{X}$-cycle is an \textbf{$\underline{X}$-boundary} if the map is cobordant (with $\underline{X}$-singularity) to the empty $\underline{X}$-cycle.
\begin{example}\label{mooreasthom}
	\begin{enumerate}[label=(\roman*)]
		\item For a usual Thom spectrum, the singularity is virtual in the sense that the corresponding bordism theory are formed by smooth manifolds, but with normal structures degenerated from normal framings to weaker structures.
		\item The Moore spectrum for $\ZZ_k$, $\underline{S}_{\ZZ_k}$, corresponds to the bordism theory of framed $\ZZ_k$-manifolds.
			\begin{enumerate}
			\item The reducing modulo $k$ transformation $\underline{S}\to \underline{S}_{\ZZ_k}$ induced by $\ZZ\xrightarrow{\bmod k}\ZZ_k$ corresponds to the geometric operation $M\mapsto (M,\varnothing)$.
			\item The transformation $\underline{S}_{\ZZ_k}\to \underline{S}_{\ZZ_{kl}}$ induced by $\ZZ_k\xrightarrow{\times l}\ZZ_{kl}$ corresponds to the geometric multiplication operation $(M,\beta M)\mapsto (\underbrace{M\sqcup M\sqcup\cdots\sqcup M}_\text{$l$ times} ,\beta M)=:(lM,\beta M)$.
			\item  The Bockstein transformation $\underline{S}_{\ZZ_k}\to \Sigma\underline{S}$ corresponds exactly to the geometric operation $(M,\beta M)\mapsto \beta M$, where $\Sigma$ means suspension.
			\end{enumerate}
		\item If $\underline{X},\underline{Y}$ are CW-spectra, then $\underline{X}\wedge \underline{Y}$-singularity combines $\underline{X}$-singularity and $\underline{Y}$-singularity. For instance $\underline{\MSpin}\wedge \underline{S}_{\ZZ_k}$ corresponds to the bordism theory of spin $\ZZ_k$-manifolds. Here $\underline{\MSpin}$ is the Thom spectrum for spin cobordism theory.
\end{enumerate}
\end{example} 
\subsubsection{Cohomology classes as periods over cycles}\label{periods}
Let $h^*$ be a generalized cohomology theory of finite type and $X$ a finite CW-complex. Then using $Dh_i(X;\QQ)=Dh_i(X)\otimes \QQ$ and $Dh_i(X;\QQ/\ZZ)=\varinjlim_k Dh_i(X,\ZZ_k)$ where the direct system is formed by maps induced by $\ZZ_k\xrightarrow{\times l}\ZZ_{kl}$ for all $k,l$, \Cref{thm2} says a cohomology class in $h^i(X)$ is equivalent to the following data:
\begin{enumerate}[label=(\roman*)]
	\item An assignment $\lambda_\QQ$, called $\QQ$-periods, that assigns to each $i$-dimensional $\underline{Dh}$-cycle (see remark below) in $X$ a rational number that is additive upon disjoint union of cycles and vanishes on $\underline{Dh}$-boundaries in $X$.
	\item An assignment $\lambda_{\QQ/\ZZ}$, called $\QQ/\ZZ$-periods, that assigns for each $k$ and to each $i$-dimensional $\underline{Dh}$-$\ZZ_k$-cycle in $X$ a rational angle in $\QQ/\ZZ$ that is additive upon disjoint union of $\ZZ_k$-cycles and vanishes on $\underline{Dh}$-$\ZZ_k$-boundaries. Moreover the assignment is unchanged if a $\ZZ_k$-cycle is treated as a $\ZZ_{kl}$-cycle by multiplication $(M,\beta M)\mapsto (lM,\beta M)$.
	\item The two assignments $\lambda_\QQ$ and $\lambda_{\QQ/\ZZ}$ are compatible, that is the following diagram commutes:
	\[
	\begin{tikzcd}
		\text{$\underline{Dh}$-cycles}\ar[r,"\frac{1}{k}\lambda_\QQ"]\ar[d,"\bmod k"] &\QQ\ar[d,"\bmod\ZZ"] \\
		\text{$\underline{Dh}$-$\ZZ_k$-cycles}\ar[r,"\lambda_{\QQ/\ZZ}"] &\QQ/\ZZ
	\end{tikzcd}
	\]
\end{enumerate}
In short, a $h^*$-cohomology class is equivalent to a compatible system of $\QQ$- and $\QQ/\ZZ$-periods over $\underline{Dh}$-cycles and $\underline{Dh}$-$\ZZ_k$-cycles.

\begin{remark}
	We caution the reader that by an $i$-dimensional $\underline{Dh}$-cycle, we mean a geometric cycle representing an element in $Dh_i(X)$, which does not necessarily have geometric dimension $i$, but might be of mix dimension (possibly unbounded). However if $\underline{Dh}$ happens to be connective, then $Dh_*(X)=0$ for $*<0$ and an element in $Dh_i(X)$ can indeed be represented by a geometric cycle of homogeneous dimension $i$. See also \cite[p. 144, Remark 5.2]{BRS}.
\end{remark}

In practice the $\underline{Dh}$-singularity is hard to describe concretely, and if $h^*$-cohomology classes themselves have geometric interpretations, one wonders how the periods are related to geometry. These are the problems we will address for $h^*=\KO^*$ and $Dh_*=\KSp_*$ in the rest of this paper.

\section{Quaternionic Clifford modules}\label{sec3}
In this chapter, we develop an algebraic theory of quaternionic Clifford algebras and quaternionic Clifford modules for the geometric study of spin$^h$ manifolds and their Dirac operators. In particular, we prove a quaternionic version of Atiyah-Bott-Shapiro isomorphism that relates quaternionic Clifford modules to symplectic K-theory.

Throughout $\RR,\CC,\HH$ stand for the real, complex, quaternion number-fields respectively. $\mathbf{i},\mathbf{j},\mathbf{k}$ will be the standard basis for the imaginary quaternions, and $\mathbf{i}\in\CC$ is the standard square root of $-1$.

\subsection{Quaternionic Clifford algebras}
\subsubsection{Review of Atiyah-Bott-Shapiro isomorphism}
Let $\Cl_n$ be the (real) Clifford algebra on $\RR^n$ with respect to the quadratic form $\|\cdot\|^2$ where $\|\cdot\|$ the Euclidean norm. That is, $\Cl_n$ is the unital associative $\RR$-algebra generated by $\RR^n$ subject to relations $e^2=-\|e\|^2$ for all $e\in\RR^n$. If $e_1,\dots,e_n$ is the standard orthonormal basis of $\RR^n$ then $\Cl_n$ is the associative $\RR$-algebra generated by a unit and the symbols $e_i$ subject to relations $e_i^2=-1$ for $1\le i\le n$ and $e_i e_j+e_j e_i=0$ for $i\neq j$. A basis of $\Cl_n$ is given by $\{e_{i_1}e_{i_2}\cdots e_{i_k}\}$ for $1\le i_1<\dots<i_k\le n$. In particular $\Cl_n$ has dimension $2^n$.

The antipodal map $\RR^n\to \RR^n, e\mapsto -e$ extends to an automorphism of $\Cl_n$ of order $2$, whose eigenspace decomposition yields a $\ZZ_2$-grading $\Cl_n=\Cl_n^0\oplus\Cl_n^1$ where $\Cl_n^\alpha$ is the eigenspace of eigenvalue $(-1)^\alpha$. It is clear that the multiplication in $\Cl_n$ respects the $\ZZ_2$-grading in the sense that $\Cl_n^\alpha\cdot \Cl_n^\beta\subset \Cl_n^{\alpha+\beta}$ for $\alpha,\beta\in \ZZ_2$. In other words, $\Cl_n$ is a $\ZZ_2$-graded algebra.

For $\KK=\RR,\CC$ or $\HH$, by a $\ZZ_2$-graded $\KK$-module over $\Cl_n$ we mean a $\ZZ_2$-graded $\KK$-module $V=V^0\oplus V^1$ equipped with a $\RR$-linear (left) $\Cl_n$-action that commutes with scalar multiplication by $\KK$ and satisfies $\Cl_n^\alpha\cdot V^\beta\subset V^{\alpha+\beta}$ where $\alpha,\beta\in \ZZ_2$.

Let $\hat{\frM}_n(\KK)$ denote the Grothendieck group of finite dimensional $\ZZ_2$-graded $\KK$-modules over $\Cl_n$ with respect to direct sum. Let $\hat{\frN}_n(\KK)$ denote the cokernel of the map $i^*:\hat{\frM}_{n+1}(\KK)\to \hat{\frM}_{n}(\KK)$ induced by the embedding of $\ZZ_2$-graded algebras $i_*: \Cl_n\to \Cl_{n+1}$ extended from the \textit{isometric} embedding $\RR^n\to \RR^{n+1}, e\mapsto (e,0)$.

Using that $\Cl_n\hat{\otimes}\Cl_m=\Cl_{m+n}$ where $\hat{\otimes}$ means $\ZZ_2$-graded tensor product (\Cref{Z2gradedtensorproduct}), Atiyah, Bott and Shapiro \cite{ABS64} showed for $\KK=\RR$ or $\CC$ the graded group $\hat{\frM}_\bullet(\KK)=\bigoplus_{n\ge 0}\hat{\frM}_n(\KK)$ forms a commutative graded ring (with unit) with respect to direct sum and $\ZZ_2$-graded tensor product. Further, this ring structure descends to make $\hat{\frN}_\bullet(\KK)=\bigoplus_{n\ge 0}\hat{\frN}_n(\KK)$ into a commutative graded ring. Moreover they constructed the following celebrated graded ring isomorphisms relating Clifford modules to K-theories
	\[
	\begin{aligned}
		&\varphi: \hat{\frN}_\bullet(\RR)\xrightarrow{\simeq} \KO^{-\bullet}(\pt)=\bigoplus_{n\ge 0}\KO^{-n}(\pt),\\
		&\varphi^c: \hat{\frN}_\bullet(\CC)\xrightarrow{\simeq} \KU^{-\bullet}(\pt)=\bigoplus_{n\ge 0}\KU^{-n}(\pt).
	\end{aligned}
	\]
As we will see, the construction of the above isomorphisms easily extends to the quaternionic case to give a map of graded abelian groups
 $$\varphi^h:\hat{\frN}_\bullet(\HH)\to \KSp^{-\bullet}(\pt)=\bigoplus_{n\ge 0}\KSp^{-n}(\pt).$$
 We would like to show $\varphi^h$ is an isomorphism. In order so, we must examine $\HH$-modules over $\Cl_n$. But notice that $\HH$-modules over $\Cl_n$ are the same as $\RR$-modules over $\Cl_n\otimes_\RR\HH$. So we will begin by studying this algebra $\Cl_n\otimes_\RR\HH$. Our following discussion will be guided by the Bott's $4$-fold periodicity $\KSp^{-n}=\KO^{-n-4}$, $\KO^{-n}=\KSp^{-n-4}$.
\subsubsection{Quaternionic Clifford algebra and complexification}
\begin{definition}
	We define the ($n$-th) \textbf{quaternionic Clifford algebra} $\Cl_n^h$ to be the associative $\RR$-algebra $\Cl_n\otimes_\RR\HH$. Adopting the notation of \cite{LM89} we denote the complex Clifford algebra $\Cl_n\otimes_\RR\CC$ by $\CCl_n$ and denote the complexification of the quaternionic Clifford algebra $\Cl_n^h\otimes_\RR\CC$ by $\CCl_n^h$. Both $\CCl_n$ and $\CCl_n^h$ are associative $\CC$-algebras.
\end{definition}

Since the real Clifford algebras are classified, we can classify the quaternionic Clifford algebras and their complexifications using the well-known identities:
\begin{equation}\label{fundamentalid}
	\begin{aligned}
	&\CC\otimes_\RR\CC\cong \CC\oplus\CC\\
    &\HH\otimes_\RR\CC\cong \CC(2)\\
    &\HH\otimes_\RR\HH\cong\RR(4)\\
    &\RR(n)\otimes_\RR\RR(m)\cong\RR(nm)\text{ for all }n,m\\
    &\RR(n)\otimes_\RR \KK\cong \KK(n)\text{ for all }n, \KK=\RR,\CC,\HH
\end{aligned}
\end{equation}
where $\KK(n)$ means the full $n\times n$ matrix algebra over $\KK$ for $\KK=\RR,\CC$ or $\HH$.
The results are listed in the following table.
\begin{table}[H]
	\begin{center}
    \begin{tabular}{|c||c|c|c|c|}
    \hline
    $n$ & $\Cl_n$ & $\CCl_n$ & $\Cl_{n}^h$ & $\CCl_{n}^h$\\
%    \hline 
    \hhline{|=#=|=|=|=|}
    $0$ & $\RR$ & $\CC$ & $\HH$ & $\CC(2)$\\
    \hline
    $1$ & $\CC$ & $\CC\oplus\CC$ & $\CC(2)$ & $\CC(2)\oplus\CC(2)$\\
    \hline
    $2$ & $\HH$ & $\CC(2)$ & $\RR(4)$ & $\CC(4)$\\
    \hline
    $3$ & $\HH\oplus\HH$ & $\CC(2)\oplus\CC(2)$ & $\RR(4)\oplus\RR(4)$ & $\CC(4)\oplus\CC(4)$\\
    \hline
    $4$ & $\HH(2)$ & $\CC(4)$ & $\RR(8)$ & $\CC(8)$\\
    \hline
    $5$ & $\CC(4)$ & $\CC(4)\oplus\CC(4)$ & $\CC(8)$ & $\CC(8)\oplus\CC(8)$\\
    \hline
    $6$ & $\RR(8)$ & $\CC(8)$ & $\HH(8)$ & $\CC(16)$\\
    \hline
    $7$ & $\RR(8)\oplus\RR(8)$ & $\CC(8)\oplus\CC(8)$ & $\HH(8)\oplus\HH(8)$ & $\CC(16)\oplus\CC(16)$\\
    \hline
    $8$ & $\RR(16)$ & $\CC(16)$ & $\HH(16)$ & $\CC(32)$\\
    \hline
    \end{tabular}
    \caption{Clifford algebras}\label{cliffalg}
\end{center}
\end{table}
The rest can be deduced from this table because the real and complex Clifford algebras are periodic in the following sense:
\begin{equation}\label{periodicity}
	\begin{aligned}
	\Cl_{n+8}&\cong\Cl_n\otimes_\RR\Cl_8\cong\Cl_n\otimes_\RR\RR(16),\\
	\CCl_{n+2}&\cong\CCl_n\otimes_\CC \CCl_2\cong \CCl_n\otimes_\CC \CC(2).
	\end{aligned}
\end{equation}
Consequently the quaternionic Clifford algebras and their complexifications are periodic as well.

Now we observe from the above table there is a symmetry between the real and quaternionic Clifford algebras:
\begin{proposition}\label{halfbott}
For $n\ge 0$, there are isomorphisms of associative $\RR$- and $\CC$-algebras
\begin{enumerate}[label=(\roman*)]
	\item $\Cl_{n+4}\cong \Cl_{n}^h\otimes_\RR\RR(2)$ and $\Cl_{n+4}^h\cong \Cl_n\otimes_\RR \RR(8)$;
	\item $\CCl_{n}^h\cong \CCl_n\otimes_\CC\CC(2)$.
\end{enumerate}
\end{proposition}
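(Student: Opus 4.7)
The plan is to reduce both parts to the elementary identities \eqref{fundamentalid} via one substantive input: the ungraded $\RR$-algebra isomorphism
\[
\Cl_{n+4}\;\cong\;\Cl_n\otimes_\RR \Cl_4.
\]
Once this is in hand, the rest is formal.

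To construct this isomorphism, I would work inside $\Cl_{n+4}$ with standard generators $e_1,\dots,e_{n+4}$ and introduce the volume element of the last four, $\omega:=e_{n+1}e_{n+2}e_{n+3}e_{n+4}$. Three facts follow from direct computation with the Clifford relations: $\omega^2=+1$; $\omega$ commutes with $e_i$ for $i\le n$ (one moves past four anticommuting generators, $(-1)^4=+1$); and $\omega$ anticommutes with each $e_{n+j}$, $j=1,\dots,4$. Setting $E_i:=e_i\omega$ for $i\le n$ and $F_j:=e_{n+j}$, a short calculation using these three facts gives $E_i^2=-1$, $E_iE_{i'}+E_{i'}E_i=0$ for $i\neq i'$, $F_j^2=-1$, $F_jF_{j'}+F_{j'}F_j=0$ for $j\neq j'$, and crucially $E_iF_j=F_jE_i$. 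The universal property therefore produces an $\RR$-algebra map $\Cl_n\otimes_\RR\Cl_4\to \Cl_{n+4}$; it is surjective because $e_i=E_i\omega$ with $\omega$ a polynomial in the $F_j$, and a dimension count $2^n\cdot 2^4=2^{n+4}$ promotes it to an isomorphism.

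Granted the reduction, (i) follows by reading off \Cref{cliffalg} and \eqref{fundamentalid}: since $\Cl_4\cong \HH(2)\cong \HH\otimes_\RR\RR(2)$, one has
\[
\Cl_{n+4}\;\cong\;\Cl_n\otimes_\RR\HH\otimes_\RR\RR(2)\;=\;\Cl_n^h\otimes_\RR\RR(2).
\]
Tensoring on the right by $\HH$ and using $\HH\otimes_\RR\HH\cong \RR(4)$ together with $\RR(4)\otimes_\RR\RR(2)\cong \RR(8)$ yields $\Cl_{n+4}^h\cong \Cl_n\otimes_\RR\RR(8)$. For (ii), complexify the definition $\Cl_n^h=\Cl_n\otimes_\RR\HH$ and apply $\HH\otimes_\RR\CC\cong \CC(2)$:
\[
\CCl_n^h\;=\;\Cl_n\otimes_\RR\HH\otimes_\RR\CC\;\cong\;\Cl_n\otimes_\RR\CC(2)\;\cong\;(\Cl_n\otimes_\RR\CC)\otimes_\CC \CC(2)\;=\;\CCl_n\otimes_\CC\CC(2).
\]

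The one point that requires care is the volume-element step: in general $\Cl_n\otimes_\RR\Cl_m$ is \emph{not} isomorphic to $\Cl_{n+m}$ as an ungraded algebra (for arbitrary $m$ only the $\ZZ_2$-graded tensor product works). What rescues the case $m=4$ is the special identity $\omega^2=+1$, which lets $\omega$ behave like an invertible commuting sign that can be absorbed into the generators $E_i$. This is the same mechanism behind the $8$-periodicity $\Cl_{n+8}\cong \Cl_n\otimes_\RR\RR(16)$ recalled in \eqref{periodicity}; the proposition should be thought of as its half-period refinement, with $\HH$ entering precisely because the matrix factor $\HH(2)$ absorbs the four extra generators rather than the $\RR(16)$ needed at full period.
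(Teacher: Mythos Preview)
Your proof is correct and follows essentially the same route as the paper: both reduce (i) to the ungraded isomorphism $\Cl_{n+4}\cong\Cl_n\otimes_\RR\Cl_4$ via the volume-element trick (using $\omega^2=+1$ for a block of four generators), then feed in $\Cl_4\cong\HH(2)$ and the identities \eqref{fundamentalid}; part (ii) is identical. The only cosmetic differences are that you construct the map $\Cl_n\otimes_\RR\Cl_4\to\Cl_{n+4}$ while the paper builds its inverse, and you take the \emph{last} four generators as the $\Cl_4$ block whereas the paper takes the \emph{first} four---the latter choice is what makes the isomorphism visibly compatible with the inclusions $i_*:\Cl_n\hookrightarrow\Cl_{n+1}$ needed later (\Cref{extraremarks}(i)), but this is immaterial for the proposition itself.
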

\begin{proof}
	The complex case (ii) follows directly from definition and \labelcref{fundamentalid}:
	\[
	\CCl_n^h=\Cl_n^h\otimes_\RR\CC=\Cl_n\otimes_\RR\HH\otimes_\RR\CC\cong\Cl_n\otimes_\RR\CC(2)=\CCl_n\otimes_\CC\CC(2).
	\]
	For (i), we claim that $\Cl_{n+4}\cong \Cl_n\otimes\Cl_4$. Then using $\Cl_4=\HH(2)$ we have
	\begin{align*}
	&\Cl_{n+4}\cong\Cl_n\otimes_\RR\HH(2)\cong\Cl_n\otimes_\RR\HH\otimes_\RR\RR(2)=\Cl_n^h\otimes_\RR \RR(2),\text{ and}\\
	&\Cl_{n+4}^h=\Cl_{n+4}\otimes_\RR\HH\cong\Cl_n\otimes_\RR\HH(2)\otimes_\RR\HH\cong \Cl_n\otimes_\RR \RR(8).
	\end{align*}
	Now to prove the claim, consider the linear map $f:\RR^{n+4}\to \Cl_n\otimes_\RR\Cl_4$ defined on the standard basis $e_1,\dots,e_{n+4}$ by
	\begin{equation}
	f(e_i)=
		\begin{cases}
			1\otimes e_i'' & \mbox{for } 1\le i\le 4\\
			e_{i-4}'\otimes e_1'' e_2'' e_3'' e_4''& \mbox{for } 5\le i\le n+4
		\end{cases}
	\end{equation}
	where $e_1',\dots,e_n'$ (resp. $e_1'',e_2'',e_3'',e_4''$) are the standard generators of $\Cl_n$ (resp. $\Cl_4$). It is straightforward to check that $f(e_i)^2=-1$ and $f(e_i)f(e_j)+f(e_j)f(e_i)=0$ for $i\neq j$. Therefore $f$ extends to an algebra morphism $\Cl_{n+4}\to \Cl_n\otimes_\RR\Cl_4$. Now notice $f$ maps onto a set of generators and the two algebras in question have the same dimension (both algebras are of dimension $2^{n+4}$), we conclude $\Cl_{n+4}\cong\Cl_n\otimes_\RR\Cl_4$.
\end{proof}
This proposition in particular shows $\Cl_{n+4}$ is \textbf{Morita equivalent} to $\Cl_n^h$ as associative $\RR$-algebras. More precisely, the functor
\begin{align*}
	\text{$\RR$-modules over $\Cl_n^h$}&\to \text{$\RR$-modules over $\Cl_{n+4}\cong\Cl_n^h\otimes\RR(2)$}\\
	V&\mapsto V\otimes\RR^2
\end{align*}
is an equivalence of categories. Similarly $\Cl_{n+4}^h$ is Morita equivalent to $\Cl_n$ as associative $\RR$-algebras, and $\CCl_n^h$ is Morita equivalent to $\CCl_n$ as associative $\CC$-algebras. We remark on several extra properties of the isomorphisms in \Cref{halfbott} that will be used later.
 \begin{remark}\label{extraremarks}
 \begin{enumerate}[label=(\roman*), leftmargin=*]
	\item These isomorphisms are compatible with the isometric embedding $i:\RR^n\to \RR^{n+1}$, $e\mapsto (e,0)$ in the sense that the isomorphisms commute with the induced embeddings of algebras $\Cl_n\xrightarrow{i_*} \Cl_{n+1}$, $\Cl_n^h\xrightarrow{i_*}\Cl_{n+1}^h$ and their complexifications. The complex case is clear, while the real case follows from that $f\circ i=i_*\circ f$.
	\item The quaternionic Clifford algebras and their complexifications inherit $\ZZ_2$-gradings from those on the real Clifford algebras by setting
	\[
	(\Cl_n^h)^\alpha=\Cl_n^\alpha\otimes_\RR \HH, (\CCl_n^h)^\alpha=(\Cl_n^h)^\alpha\otimes_\RR \CC
	\]
	where $\alpha=0,1$. Now the isomorphism in \Cref{halfbott}(ii) is a $\ZZ_2$-graded one where $\CCl_n$ is graded by $\CCl_n^\alpha=\Cl_n^\alpha\otimes_\RR\CC$ for $\alpha=0,1$. But the isomorphisms in \Cref{halfbott}(i) do \textit{not} preserve the $\ZZ_2$-gradings.
	\item In particular, the Morita equivalence between $\CCl_n^h$ and $\CCl_n$ is a $\ZZ_2$-graded one, in the sense that the equivalence $V_\CC\mapsto V_\CC\otimes_\CC\CC^2$ between the categories of $\CC$-modules restricts to an equivalence of the corresponding subcategories of $\ZZ_2$-graded modules.
\end{enumerate}
 \end{remark}

\subsubsection{$\ZZ_2$-grading} We can relate higher dimensional Clifford algebras to lower dimensional ones in two ways through their $\ZZ_2$-gradings. One is the following:
\begin{lemma}\label{inductive}
	Let $\{e_i\}$ be the standard orthonormal basis of $\RR^{n+1}$. Then the linear map $\RR^n\to \Cl_{n+1}$ given by
	$e_i\mapsto e_i e_{n+1}$ extends to an algebra isomorphism $\Cl_n\cong \Cl_{n+1}^0$ compatible with the isometric embeddings, i.e. the following diagram commutes:
	\[
	\begin{tikzcd}[row sep=small, column sep=small]
		\Cl_n\ar[d,"\cong"]\ar[r,"i_*"] & \Cl_{n+1}\ar[d,"\cong"]\\
		\Cl_{n+1}^0\ar[r,"i_*"] & \Cl_{n+2}^0
	\end{tikzcd}
	\]
	
\end{lemma}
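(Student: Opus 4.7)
The plan is to use the universal property of $\Cl_n$ to construct the map, verify it lands in the even subalgebra $\Cl_{n+1}^0$, prove it is an isomorphism by combining surjectivity with a dimension count, and finally check compatibility with the inclusions by a direct comparison on generators.

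First I would verify that $\phi(e_i):=e_i e_{n+1}\in \Cl_{n+1}$ satisfies the defining Clifford relations of $\Cl_n$. A short computation gives $\phi(e_i)^2=e_i e_{n+1}e_i e_{n+1}=-e_i^2 e_{n+1}^2=-1$, and for $i\neq j$ the pairwise anticommutation of $e_i,e_j,e_{n+1}$ yields $\phi(e_i)\phi(e_j)+\phi(e_j)\phi(e_i)=e_i e_j+e_j e_i=0$. The universal property then extends $\phi$ uniquely to a unital $\RR$-algebra homomorphism $\Phi:\Cl_n\to\Cl_{n+1}$; since each generator is sent to a monomial of length two, $\Phi$ factors through the even subalgebra $\Cl_{n+1}^0$.

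To prove $\Phi$ is an isomorphism, I use that $\dim_\RR\Cl_n=2^n=\dim_\RR\Cl_{n+1}^0$, so it suffices to establish surjectivity. I work with the monomial basis $\{e_{i_1}\cdots e_{i_{2k}}:1\le i_1<\cdots<i_{2k}\le n+1\}$ of $\Cl_{n+1}^0$ and split into two cases. If $e_{n+1}$ does not appear, all indices are $\le n$ and the identity $\Phi(e_i e_j)=(e_i e_{n+1})(e_j e_{n+1})=e_i e_j$ (for distinct $i,j\le n$), applied repeatedly, expresses the monomial as $\Phi$ of the identical expression in $\Cl_n$. If $e_{n+1}$ does appear, it must be the final factor, and the remaining $2k-1$ generators form an odd-length monomial in $\Cl_n$ whose image under $\Phi$ picks up exactly one factor of $e_{n+1}$, recovering the basis element up to the sign absorbed by the commutation bookkeeping. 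Hence $\Phi$ is surjective, and by dimension an isomorphism.

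Finally, compatibility with the isometric embeddings reduces to evaluating both compositions around the square on each generator $e_i$ of $\Cl_n$ and matching the resulting elements of $\Cl_{n+2}^0$, which is a direct unpacking of the definitions. I do not expect any genuine obstacle: the whole argument is essentially the universal property plus a basis check, and the only place requiring care is sign bookkeeping when transporting $e_{n+1}$ past an odd number of generators in the surjectivity step.
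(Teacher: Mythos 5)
Your construction of the isomorphism is correct and follows the paper's own route: check the Clifford relations for $e_i\mapsto e_ie_{n+1}$, extend by the universal property, observe the image lies in $\Cl_{n+1}^0$, and conclude by surjectivity plus the dimension count $\dim_\RR\Cl_n=2^n=\dim_\RR\Cl_{n+1}^0$. The only cosmetic difference is that you prove surjectivity by a case analysis on the monomial basis of $\Cl_{n+1}^0$ (and in your second case there is in fact no sign at all: pairing off the first $2k-2$ factors gives exactly $e_{i_1}\cdots e_{i_{2k-1}}e_{n+1}$), whereas the paper simply notes that the images $e_ie_{n+1}$ and $\Phi(e_ie_j)=e_ie_j$ already form a generating set of $\Cl_{n+1}^0$. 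These are the same idea.

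The place where your proposal would run into trouble is the last step, which you describe as "evaluating both compositions on each generator and matching the resulting elements of $\Cl_{n+2}^0$." With the conventions as literally stated ($i_*$ induced by $e\mapsto(e,0)$, and each vertical isomorphism defined using the \emph{last} basis vector), the composite right-then-down sends $e_i\mapsto e_ie_{n+2}$ while down-then-right sends $e_i\mapsto e_ie_{n+1}$; these are distinct basis monomials of $\Cl_{n+2}^0$, so the naive matching fails and the square does not commute on the nose. To repair this one must either fix the indexing so that the adjoined vector is the same at every stage (e.g.\ insert the new coordinate in a fixed slot and define the isomorphism using that fixed vector), or be content with commutativity up to the grading-preserving automorphism of $\Cl_{n+2}$ given by conjugation with the unit vector $\tfrac{1}{\sqrt{2}}(e_{n+1}-e_{n+2})$, which carries $e_ie_{n+1}$ to $e_ie_{n+2}$ and is all that is needed for the module-theoretic use of the lemma (identifying $\hat{\frM}_{n+1}(\KK)$ with $\frM_n(\KK)$ compatibly with $i^*$). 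To be fair, the paper's own proof dispatches this point with the single sentence that commutativity "follows directly from construction," so you are no worse off than the source; but as you phrased the step, carrying it out literally would not close the argument, and you should either adjust the conventions or supply the conjugation.
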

\begin{proof}
	It is straightforward to check the map extends to an algebra map $\Cl_n\to\Cl_{n+1}$ whose image lies in $\Cl_{n+1}^0$. Now observe that the images of $e_i$ for $i\le n$ and $e_i e_j$ for $1\le i< j\le n$ generate $\Cl_{n+1}^0$. Therefore the algebra map $\Cl_n\to \Cl_{n+1}^0$ is surjective, but the two algebras in question have the same dimension. So the map is an isomorphism. The commutativity of the diagram follows directly from construction.
\end{proof}

The other involves $\ZZ_2$-graded tensor product.
\begin{definition}\label{Z2gradedtensorproduct}
Let $V=V^0\oplus V^1$ and $W=W^0\oplus W^1$ be $\ZZ_2$-graded $\KK$-vector spaces for $\KK=\RR$ or $\CC$. Then their \textbf{$\ZZ_2$-graded tensor product} $V\hat{\otimes}W$ is defined to be the vector space $V\otimes_\KK W$ equipped the $\ZZ_2$-grading $(V\hat{\otimes}W)^k =\bigoplus_{i+j\equiv k\bmod 2}(V^i\otimes W^j)$ where direct sum and tensor product are taken over the underlying field $\KK$. If further $V$ and $W$ are $\ZZ_2$-graded algebras, then $V\hat{\otimes}W$ is made into a $\ZZ_2$-graded algebra with the usual Koszul rule: $(v\hat{\otimes}w)\cdot(v'\hat{\otimes}w')=(-1)^{\deg w\deg v'}vv'\hat{\otimes}ww'$. If $V$ and $V'$ are $\ZZ_2$-graded $\KK$-modules over $\ZZ_2$-graded $\KK$-algebras $A$ and $A'$ respectively, then $V\hat{\otimes}_\KK V'$ is made into a $\ZZ_2$-graded module over $A\hat{\otimes}_\KK A'$ also by the usual Koszul rule.
\end{definition}

\begin{lemma}[{see \cite[Proposition 1.5]{LM89}}]\label{z2gradediso1}
Let $\{e_i\}$, $\{e_i'\}$ and $\{e_i''\}$ be standard orthonormal base of $\RR^{m+n}, \RR^m$ and $\RR^n$ respectively. Then the linear map $\RR^{m+n}\to \Cl_m\hat{\otimes}\Cl_n$ given by
\[
e_i\mapsto
\begin{cases}
	e_i'\hat{\otimes}1& \text{for $i\le m$} \\
	1\hat{\otimes}e_{i-m}''& \text{for $i>m$}
\end{cases}
\]
extends to an isomorphism of $\ZZ_2$-graded $\RR$-algebras $\Cl_{m+n}\cong\Cl_m\hat{\otimes}\Cl_n$ compatible with the isometric embeddings, i.e. the following diagram commutes:
\[
\begin{tikzcd}
	\Cl_m\hat{\otimes}\Cl_n\ar[r,"1\hat{\otimes}i_*"]\ar[d,"\cong"] & \Cl_m\hat{\otimes}\Cl_{n+1}\ar[d,"\cong"]\\
	\Cl_{m+n}\ar[r,"i_*"] & \Cl_{m+n+1}
\end{tikzcd}
\]
Upon tensoring with $\CC$, we get $\CCl_{m+n}\cong\CCl_m\hat{\otimes}\CCl_n$ as $\ZZ_2$-graded $\CC$-algebras compatible with isometric embeddings.
\end{lemma}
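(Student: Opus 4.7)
The plan is to invoke the universal property of the Clifford algebra. Writing $f: \RR^{m+n} \to \Cl_m \hat{\otimes} \Cl_n$ for the specified linear map, I would first verify that $f$ satisfies the Clifford relations $f(e_i)^2 = -1$ and $f(e_i)f(e_j) + f(e_j)f(e_i) = 0$ for $i \neq j$. The relations when both indices lie in $\{1,\dots,m\}$ or both lie in $\{m+1,\dots,m+n\}$ are immediate from the defining relations in $\Cl_m$ and $\Cl_n$ together with the fact that multiplication in a $\ZZ_2$-graded tensor product restricted to a single factor is the ordinary product. The mixed case, with $i \le m < j$, is exactly where the Koszul sign rule is needed: $(e_i' \hat{\otimes} 1)(1 \hat{\otimes} e_{j-m}'') = e_i' \hat{\otimes} e_{j-m}''$ while $(1 \hat{\otimes} e_{j-m}'')(e_i' \hat{\otimes} 1) = (-1)^{1 \cdot 1}\, e_i' \hat{\otimes} e_{j-m}'' = -e_i' \hat{\otimes} e_{j-m}''$, since both $e_i'$ and $e_{j-m}''$ are odd. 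These two expressions thus sum to zero. The universal property then produces an $\RR$-algebra homomorphism $\Phi: \Cl_{m+n} \to \Cl_m \hat{\otimes} \Cl_n$ extending $f$.

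Next I would show $\Phi$ is a $\ZZ_2$-graded isomorphism. The grading is preserved because $\Phi$ sends each degree-one generator $e_i$ of $\Cl_{m+n}$ to an odd element in $\Cl_m \hat{\otimes} \Cl_n$. For bijectivity, a dimension count gives $\dim_\RR \Cl_{m+n} = 2^{m+n} = 2^m \cdot 2^n = \dim_\RR (\Cl_m \hat{\otimes} \Cl_n)$, so it suffices to check surjectivity. The image of $\Phi$ contains $e_i' \hat{\otimes} 1$ for $i \le m$ and $1 \hat{\otimes} e_{i-m}''$ for $i > m$; taking products (and absorbing signs from the Koszul rule) one reaches every basis element $e_{i_1}' \cdots e_{i_k}' \hat{\otimes} e_{j_1}'' \cdots e_{j_l}''$, which span $\Cl_m \hat{\otimes} \Cl_n$ as an $\RR$-vector space.

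Compatibility with the isometric embeddings $i: \RR^{m+n} \to \RR^{m+n+1}$ and $i: \RR^n \to \RR^{n+1}$ (extending by zero in the last coordinate) is immediate from the construction: both $\Phi_{m+n+1} \circ i_*$ and $(1 \hat{\otimes} i_*) \circ \Phi_{m+n}$ send $e_i \in \Cl_{m+n}$ to the same element of $\Cl_m \hat{\otimes} \Cl_{n+1}$ (namely $e_i' \hat{\otimes} 1$ for $i \le m$ and $1 \hat{\otimes} e_{i-m}''$ for $m < i \le m+n$), so they agree on generators and hence as algebra maps. Finally, the complex statement $\CCl_{m+n} \cong \CCl_m \hat{\otimes} \CCl_n$ follows simply by applying $-\otimes_\RR \CC$ to $\Phi$, since tensoring with $\CC$ is compatible with $\ZZ_2$-graded tensor product over $\RR$.

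I do not anticipate a serious obstacle here: the only subtle point is ensuring that the Koszul sign in the $\ZZ_2$-graded tensor product produces \emph{anticommutation} in the mixed case (rather than commutation, as would occur with an ordinary tensor product of algebras), which is precisely what makes the lemma work and forces the use of $\hat{\otimes}$ rather than $\otimes$.
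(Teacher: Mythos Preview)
Your proposal is correct and follows the same approach as the paper's proof, which simply says it is ``straightforward to check'' that the map extends to an algebra map, then notes that the image contains the generators and the dimensions agree. You have just filled in the details the paper elides—the verification of the Clifford relations (including the mixed case via the Koszul sign), the grading, and the compatibility with the embeddings.
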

\begin{proof}
	It is straightforward to check the map extends to an algebra map $\Cl_{m+n}\to \Cl_m\hat{\otimes}\Cl_n$. Then notice the image of this map contains the generators of $\Cl_m\hat{\otimes}\Cl_n$ and the two algebras in question have the same dimension.
\end{proof}

	See also \Cref{generalz2gradediso} for a more general statement.

\begin{corollary}\label{z2gradediso}
	For all $m,n\ge 0$ there are isomorphisms of $\ZZ_2$-graded $\RR$-algebras
	\begin{enumerate}[label=(\roman*)]
		\item $\Cl_m\hat{\otimes}\Cl_n^h\cong \Cl_{m+n}^h$, and
		\item $\Cl_m^h\hat{\otimes}\Cl_n^h\cong \Cl_{m+n}\otimes \RR(4)$.
	\end{enumerate}
	In particular $\Cl_{m+n}$ is Morita equivalent to $\Cl_m^h\hat{\otimes}\Cl_n^h$ as $\ZZ_2$-graded algebras. Here the $\ZZ_2$-grading on $\Cl_{m+n}\otimes\RR(4)$ is given by $(\Cl_{m+n}\otimes \RR(4))^\alpha=\Cl_{m+n}^\alpha\otimes\RR(4)$ for $\alpha=0,1$.
\end{corollary}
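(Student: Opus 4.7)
The plan is to reduce everything to Lemma \ref{z2gradediso1} (which gives $\Cl_{m+n}\cong \Cl_m\hat{\otimes}\Cl_n$ as $\ZZ_2$-graded algebras) together with the classical identity $\HH\otimes_\RR\HH\cong\RR(4)$, exploiting the crucial fact that in our convention $\HH$ sits in $\Cl_n^h$ in degree $0$, so no Koszul signs ever arise from rearranging $\HH$-factors.

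For (i), I first observe that $\Cl_n^h=\Cl_n\otimes_\RR\HH$ with grading $(\Cl_n^h)^\alpha=\Cl_n^\alpha\otimes_\RR\HH$, i.e. the $\HH$-factor is concentrated in degree $0$. Hence for any $\ZZ_2$-graded $\RR$-algebra $A$ the natural map
\[
A\hat{\otimes}(\Cl_n\otimes_\RR\HH)\;\longrightarrow\;(A\hat{\otimes}\Cl_n)\otimes_\RR\HH,\qquad a\hat{\otimes}(b\otimes q)\longmapsto (a\hat{\otimes}b)\otimes q,
\]
is an isomorphism of $\ZZ_2$-graded $\RR$-algebras (associativity of the tensor product plus the absence of signs, since $|q|=0$). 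Taking $A=\Cl_m$ and applying Lemma \ref{z2gradediso1} yields
\[
\Cl_m\hat{\otimes}\Cl_n^h\cong(\Cl_m\hat{\otimes}\Cl_n)\otimes_\RR\HH\cong \Cl_{m+n}\otimes_\RR\HH=\Cl_{m+n}^h.
\]

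For (ii), I apply (i) with $n$ replaced by $n$, then shuffle the two $\HH$-factors together. Concretely, because both $\HH$-factors have degree $0$, the map
\[
(\Cl_m\otimes_\RR\HH)\hat{\otimes}(\Cl_n\otimes_\RR\HH)\;\longrightarrow\;(\Cl_m\hat{\otimes}\Cl_n)\otimes_\RR(\HH\otimes_\RR\HH),
\]
sending $(a\otimes q)\hat{\otimes}(b\otimes r)$ to $(a\hat{\otimes}b)\otimes(q\otimes r)$, is an isomorphism of $\ZZ_2$-graded $\RR$-algebras: one checks that the Koszul sign $(-1)^{|b||a'|}$ arising on the left from the degree of $b\otimes r$ past $a'\otimes q'$ matches exactly the Koszul sign that arises on the right from the degree of $b$ past $a'$. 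Combining with Lemma \ref{z2gradediso1} and $\HH\otimes_\RR\HH\cong\RR(4)$ gives the claim
\[
\Cl_m^h\hat{\otimes}\Cl_n^h\cong (\Cl_m\hat{\otimes}\Cl_n)\otimes_\RR(\HH\otimes_\RR\HH)\cong \Cl_{m+n}\otimes_\RR\RR(4),
\]
where $\RR(4)$ sits in degree $0$. The Morita equivalence statement then follows immediately: since $\RR(4)=\End(\RR^4)$ with $\RR^4$ in degree $0$, tensoring with the ungraded module $\RR^4$ is a graded Morita equivalence between $\Cl_{m+n}$ and $\Cl_{m+n}\otimes_\RR\RR(4)\cong\Cl_m^h\hat{\otimes}\Cl_n^h$.

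There is no serious obstacle; the only thing to watch is the grading bookkeeping, which is clean because every occurrence of $\HH$ (and of $\RR(4)$) is in degree $0$ and can be pulled past graded factors without signs. If I wanted, I could alternatively deduce (ii) from (i) applied twice (first $\Cl_m\hat{\otimes}\Cl_n^h\cong\Cl_{m+n}^h$, then computing $\Cl_m^h\hat{\otimes}\Cl_n^h=(\Cl_m\otimes_\RR\HH)\hat{\otimes}\Cl_n^h\cong(\Cl_m\hat{\otimes}\Cl_n^h)\otimes_\RR\HH\cong\Cl_{m+n}^h\otimes_\RR\HH=\Cl_{m+n}\otimes_\RR\HH\otimes_\RR\HH\cong\Cl_{m+n}\otimes_\RR\RR(4)$); this packages the same argument differently.
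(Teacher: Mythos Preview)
Your proof is correct and is essentially the same as the paper's: both derive (i) from Lemma~\ref{z2gradediso1} by tensoring with $\HH$ (using that $\HH$ sits in degree $0$ so no Koszul signs arise), and then obtain (ii) by pulling out both $\HH$-factors and invoking $\HH\otimes_\RR\HH\cong\RR(4)$. You simply spell out the grading bookkeeping that the paper leaves implicit.
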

\begin{proof}
	The first isomorphism is derived from the above lemma by tensoring with $\HH$. For (ii) recall the $\ZZ_2$-grading on $\Cl_n^h$ is given by $(\Cl_n^h)^\alpha=\Cl_n^\alpha\otimes\HH$. Therefore
	\[
	\Cl_m^h\hat{\otimes}\Cl_n^h=(\Cl_m\hat{\otimes}\Cl_n)\otimes \HH\otimes\HH\cong \Cl_{m+n}\otimes \RR(4).
	\]
	Since $\RR(4)$ does not contribute to the $\ZZ_2$-grading, the Morita equivalence is a $\ZZ_2$-graded one.
\end{proof}

\subsection{Quaternionic Clifford modules}
\subsubsection{Ungraded v.s. $\ZZ_2$-graded}
We are interested in $\ZZ_2$-graded modules, but $\ZZ_2$-graded modules and ungraded modules are closely related. Given a $\ZZ_2$-graded $\KK$-module $V$ over $\Cl_{n+1}$, $V^0$ is an ungraded $\KK$-module over $\Cl_{n+1}^0\cong \Cl_n$. Conversely given an ungraded $\KK$-module $V$ over $\Cl_n$, then 
\[
\Cl_{n+1}\otimes_{\Cl_{n+1}^0}V=(\Cl_{n+1}^0\otimes_{\Cl_{n+1}^0}V)\oplus (\Cl_{n+1}^1\otimes_{\Cl_{n+1}^0}V)
\] is a $\ZZ_2$-graded $\KK$-module over $\Cl_{n+1}$. It is clear these two constructions are inverses to each other and therefore the category of $\ZZ_2$-graded $\KK$-modules over $\Cl_{n+1}$ is isomorphic to the category of ungraded $\KK$-modules over $\Cl_n$. So even though we are interested in $\ZZ_2$-graded modules, we might as well study ungraded ones, which are easier to construct and classify using \Cref{cliffalg}.
\subsubsection{Grothendieck groups of modules}
For $\KK=\RR,\CC$ or $\HH$, let $\frM_n(\KK)$ denote the Grothendieck group of finite dimensional (ungraded) $\KK$-modules over $\Cl_n$ with respect to direct sum. Then for $n\ge 0$ we have
\begin{equation}\label{gradedungraded}
	\hat{\frM}_{n+1}(\KK)\cong\frM_n(\KK).
\end{equation}
Once all the groups $\frM_n(\KK)$ are known, we can recover all $\hat{\frM}_{n}(\KK)$ for $n\ge 1$. The special case $\hat{\frM}_0(\KK)$ is easy to work out: since $\Cl_0=\RR$ is concentrated in degree $0$, it has two inequivalent irreducible $\ZZ_2$-graded $\KK$-modules, both of which are one dimensional over $\KK$ but concentrated in degree $0$ and degree $1$ respectively. This means $\hat{\frM}_0(\KK)=\ZZ+\ZZ$.

Let us also denote by $\frM_n^h(\KK)$ (resp. $\hat{\frM}_n^h(\KK)$) the Grothendieck group of finite dimensional ungraded (resp. $\ZZ_2$-graded) $\KK$-modules over $\Cl_n^h$. Then we have
\[
\frM_n^h(\RR)=\frM_n(\HH),\text{ and } \hat{\frM}_n^h(\RR)=\hat{\frM}_n(\HH).
\]
There are more relations among these Grothendieck groups of modules derived from Morita equivalences of algebras.
\begin{proposition}\label{modulegroups}
	The isomorphisms of algebras in \Cref{halfbott} induce isomorphisms
	\begin{enumerate}[label=(\roman*)]
		\item $\frM_{n+4}(\RR)\cong\frM_n(\HH)$ and $\frM_{n+4}(\RR)\cong \frM_n(\HH)$ for $n\ge 0$;
		\item $\hat{\frM}_{n+4}(\RR)\cong\hat{\frM}_n(\HH)$ and $\hat{\frM}_{n+4}(\HH)\cong\hat{\frM}_n(\RR)$ for $n\ge 1$;
		\item $\hat{\frM}_n^h(\CC)\cong\hat{\frM}_n(\CC)$ for $n\ge 0$.
	\end{enumerate}
\end{proposition}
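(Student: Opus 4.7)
The plan is to obtain all three isomorphisms as consequences of the algebra isomorphisms in \Cref{halfbott}, using the general principle that Morita equivalence of rings (or $\ZZ_2$-graded rings) induces an equivalence of the corresponding module categories and therefore an isomorphism on Grothendieck groups. The bridge between ungraded and $\ZZ_2$-graded situations is the canonical identification \labelcref{gradedungraded}, namely $\hat{\frM}_{n+1}(\KK) \cong \frM_n(\KK)$, which I will invoke freely.

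For (i), I would argue as follows. By \Cref{halfbott}(i), $\Cl_{n+4} \cong \Cl_n^h \otimes_\RR \RR(2)$, and tensoring with $\RR^2$ gives a Morita equivalence between the categories of (ungraded) $\RR$-modules over $\Cl_{n+4}$ and over $\Cl_n^h$. Composing with the tautological identification $\frM_n^h(\RR) = \frM_n(\HH)$ recorded just above the proposition yields $\frM_{n+4}(\RR) \cong \frM_n(\HH)$. The second isomorphism is entirely analogous, using $\Cl_{n+4}^h \cong \Cl_n \otimes_\RR \RR(8)$ and the identification of $\RR$-modules over $\Cl_{n+4}^h$ with $\HH$-modules over $\Cl_{n+4}$.

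For (ii), I would deduce the $\ZZ_2$-graded statements from the ungraded ones by sandwiching them between two applications of \labelcref{gradedungraded}. Concretely,
\[
\hat{\frM}_{n+4}(\RR) \;\cong\; \frM_{n+3}(\RR) \;\cong\; \frM_{n-1}(\HH) \;\cong\; \hat{\frM}_n(\HH),
\]
where the middle step uses (i) with $n$ replaced by $n-1$; this needs $n-1 \geq 0$, which is precisely the hypothesis $n\ge 1$. The second isomorphism in (ii) is obtained identically after swapping $\RR$ and $\HH$. Part (iii) is the cleanest: by \Cref{halfbott}(ii) we have $\CCl_n^h \cong \CCl_n \otimes_\CC \CC(2)$, and \Cref{extraremarks}(iii) explicitly records that this is a $\ZZ_2$-\emph{graded} Morita equivalence; the functor $V_\CC \mapsto V_\CC \otimes_\CC \CC^2$ therefore induces an isomorphism of Grothendieck groups of $\ZZ_2$-graded modules.

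There is no real obstacle here, since everything reduces to known Morita equivalences; the only subtlety worth flagging is keeping the grading conventions straight in (iii), since \Cref{halfbott}(i) is \textbf{not} a graded isomorphism (see \Cref{extraremarks}(ii)), so in parts (i) and (ii) the Morita equivalences must be applied in the ungraded setting first, with the graded versions recovered only via \labelcref{gradedungraded}. This is exactly the reason (ii) carries the restriction $n \geq 1$.
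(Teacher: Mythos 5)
Your proof is correct and follows essentially the same route as the paper: part (i) from the Morita equivalences of \Cref{halfbott}(i), part (ii) by sandwiching (i) between two applications of \labelcref{gradedungraded} (which is exactly where the restriction $n\ge 1$ comes from), and part (iii) from the fact that the isomorphism in \Cref{halfbott}(ii) is $\ZZ_2$-graded. Your added remark about why the graded statement cannot be read off directly from \Cref{halfbott}(i) is a correct and worthwhile clarification of the paper's terse argument.
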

\begin{proof}
	It is clear (i) is a direct consequence of \Cref{halfbott} (i). Then (ii) follows from (i) by \labelcref{gradedungraded}. Finally (iii) follows from \Cref{halfbott} (ii) since the isomorphism therein is a $\ZZ_2$-graded one.
\end{proof}

We leave it to the reader to verify that the above isomorphisms are compatible with isometric embeddings. Then we get
\begin{corollary}
	The isomorphisms of algebras in \Cref{halfbott} induce further isomorphisms
	\begin{enumerate}[label=(\roman*)]
		\item $\hat{\frN}_{n+4}(\RR)\cong\hat{\frN}_n(\HH)$ and $\hat{\frN}_{n+4}(\HH)\cong\hat{\frN}_n(\RR)$ for $n\ge 1$;
		\item $\hat{\frN}_n^h(\CC)\cong\hat{\frN}_n(\CC)$ for $n\ge 0$.
	\end{enumerate}
\end{corollary}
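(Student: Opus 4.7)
The plan is to unpack the definition $\hat{\frN}_n(\KK) = \coker\bigl(\hat{\frM}_{n+1}(\KK) \xrightarrow{i^*} \hat{\frM}_n(\KK)\bigr)$ and observe that the isomorphisms of $\hat{\frM}$-groups constructed in \Cref{modulegroups} fit into ladders whose horizontal arrows are the restriction maps $i^*$. Once the appropriate squares commute, the isomorphisms pass to cokernels by functoriality, yielding the claimed isomorphisms of $\hat{\frN}$-groups.

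Concretely, for (i) I would first verify at the algebraic level that the isomorphism $\Cl_{n+4}\cong \Cl_n^h\otimes_\RR\RR(2)$ of \Cref{halfbott} is compatible with isometric embeddings, i.e.\ that the square
\[
\begin{tikzcd}[row sep=small, column sep=small]
\Cl_{n+4}\ar[r,"i_*"]\ar[d,"\cong"'] & \Cl_{n+5}\ar[d,"\cong"]\\
\Cl_n^h\otimes_\RR\RR(2)\ar[r,"i_*\otimes 1"] & \Cl_{n+1}^h\otimes_\RR\RR(2)
\end{tikzcd}
\]
commutes. This reduces, via $\Cl_n^h\otimes\RR(2)\cong \Cl_n\otimes\Cl_4$, to checking that the explicit map $f$ built in the proof of \Cref{halfbott} (which isolates the first four basis vectors and tensors the remaining ones with $\omega=e_1''e_2''e_3''e_4''$) is natural in $n$: tracing through $f$ on each standard generator $e_i$ of $\Cl_{n+4}$ and $e_i$ of $\Cl_{n+5}$ yields the commutativity directly, since $f$ and $f'$ agree on the first four basis vectors and differ only by replacing $e_{i-4}^{(n)}$ with $e_{i-4}^{(n+1)}$ on the remaining ones. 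The analogous square for $\Cl_{n+4}^h\cong \Cl_n\otimes_\RR\RR(8)$ then follows by tensoring with $\HH$.

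Next, these commutative squares imply that the Morita equivalences $V\mapsto V\otimes\RR^2$ (respectively $V\otimes\RR^8$) between module categories intertwine the restriction functors along the two embeddings. Combining this with the natural bijection \labelcref{gradedungraded} between $\ZZ_2$-graded modules over $\Cl_{n+1}$ and ungraded modules over $\Cl_n$ (which is clearly natural in the algebra and hence also commutes with restriction along isometric embeddings), the isomorphisms $\hat{\frM}_{n+4}(\RR)\cong \hat{\frM}_n(\HH)$ and $\hat{\frM}_{n+4}(\HH)\cong\hat{\frM}_n(\RR)$ of \Cref{modulegroups}(ii) assemble into commutative diagrams
\[
\begin{tikzcd}[row sep=small, column sep=small]
\hat{\frM}_{n+5}(\RR)\ar[r,"i^*"]\ar[d,"\cong"'] & \hat{\frM}_{n+4}(\RR)\ar[d,"\cong"]\\
\hat{\frM}_{n+1}(\HH)\ar[r,"i^*"] & \hat{\frM}_n(\HH)
\end{tikzcd}
\quad\text{and analogously for }\RR\leftrightarrow\HH,
\]
from which the claimed isomorphisms of cokernels $\hat{\frN}_{n+4}(\RR)\cong \hat{\frN}_n(\HH)$ and $\hat{\frN}_{n+4}(\HH)\cong \hat{\frN}_n(\RR)$ follow. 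For (ii), I would argue identically using \Cref{halfbott}(ii), noting (as recorded in \Cref{extraremarks}(ii)--(iii)) that the isomorphism $\CCl_n^h\cong \CCl_n\otimes_\CC\CC(2)$ is already $\ZZ_2$-graded and compatible with $i_*$, so the Morita equivalence $V_\CC\mapsto V_\CC\otimes_\CC\CC^2$ induces isomorphisms on $\hat{\frM}_n^h(\CC)\cong \hat{\frM}_n(\CC)$ commuting with $i^*$ and hence on their cokernels.

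The main obstacle is purely bookkeeping: one must verify the ladder commutes at the level of the explicit algebra isomorphism $f$ of \Cref{halfbott}(i), rather than at the level of abstract Morita-equivalence classes. Once this naturality check is in hand (which is essentially the reader's exercise referenced immediately before the corollary), everything else is formal passage to cokernels.
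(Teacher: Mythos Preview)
Your proposal is correct and follows exactly the approach the paper intends: the paper's ``proof'' is the single sentence ``We leave it to the reader to verify that the above isomorphisms are compatible with isometric embeddings,'' after which the corollary is immediate. You have supplied precisely this reader's exercise (already recorded in \Cref{extraremarks}(i) as $f\circ i = i_*\circ f$), assembled the resulting commutative ladder of $\hat{\frM}$-groups, and passed to cokernels.
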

Now from the knowledge of real and complex Clifford modules (see \cite[p.12]{ABS64}), we have
\begin{table}[H]
	\begin{center}
    \begin{tabular}{|c||c|c|c|c|c|c|c|c|}
    \hline
    $n\bmod 8$ & $1$ & $2$ & $3$ & $4$ & $5$ & $6$ & $7$ & $8$\\ \hhline{|=#=|=|=|=|=|=|=|=|}
    $\hat{\frM}_n(\RR)$ & $\ZZ$ & $\ZZ$ & $\ZZ$ & $\ZZ^2$ & $\ZZ$ & $\ZZ$ & $\ZZ$ & $\ZZ^2$\\
    \hline
    $\hat{\frM}_n(\HH)$ & $\ZZ$ & $\ZZ$ & $\ZZ$ & $\ZZ^2$ & $\ZZ$ & $\ZZ$ & $\ZZ$ & $\ZZ^2$\\
    \hline
    $\hat{\frN}_n(\RR)$ & $\ZZ_2$ & $\ZZ_2$ & $0$ & $\ZZ$ & $0$ & $0$ & $0$ & $\ZZ$\\
   	\hline
    $\hat{\frN}_n(\HH)$ & $0$ & $0$ & $0$ & $\ZZ$ & $\ZZ_2$ & $\ZZ_2$ & $0$ & $\ZZ$\\
    \hline
    \end{tabular}
    \quad
	\begin{tabular}{|c||c|c|}
    \hline
    $n\bmod 2$ & $1$ & $2$ \\ \hhline{|=#=|=|}
    $\hat{\frM}_n(\CC)$ & $\ZZ$ & $\ZZ^2$ \\
    \hline
    $\hat{\frM}_n^h(\CC)$ & $\ZZ$ & $\ZZ^2$ \\
    \hline
    $\hat{\frN}_n(\CC)$ & $0$ & $\ZZ$ \\
   	\hline
    $\hat{\frN}_n^h(\CC)$ & $0$ & $\ZZ$ \\
    \hline
    \end{tabular} 
    \end{center}
    \caption{Clifford modules}\label{cliffmod}
 \end{table}
 From \Cref{cliffalg}, all the algebras in question are semisimple (matrix algebras  over $\RR,\CC,\HH$ are simple), so their Grothendieck groups of (ungraded and consequently $\ZZ_2$-graded) modules are free abelian groups generated by inequivalent irreducible modules. This is reflected in \Cref{cliffmod}.
\subsubsection{Irreducible modules}\label{sec:irredmod} We now construct explicit generators for $\hat{\frM}_n(\KK)$ for $\KK=\RR,\CC$ or $\HH$. Let us begin with real and quaternionic modules. From \Cref{cliffalg}, whenever $n\not\equiv 3\bmod 4$, $\Cl_n$ and $\Cl_n^h$ are of the form $\KK(N)$. It is well-known that the matrix algebra $\KK(N)$ has a unique (up to equivalence) irreducible module $\KK^N$ by matrix multiplication. In view of \labelcref{gradedungraded}, we have described generators for $\hat{\frM}_n(\RR)$ and $\hat{\frM}_n(\HH)$ for $n\not\equiv 0\bmod 4$.
In the case $n\equiv 0\bmod 4$, we need the assistance of the (oriented) \textbf{volume element} $$\omega_n=e_1 e_2\cdots e_n\in \Cl_n,$$which enjoys the properties $\omega_n^2=(-1)^{n(n+1)/2}$ and $e\omega_n=(-1)^{n-1}\omega_n e$ for all $e\in \RR^n$ (see e.g. \cite[Proposition 3.3]{LM89}).

Now let $\Cl_4=\HH(2)$ act on $\HH^2$ by left matrix multiplication. Then since $(\omega_4)^2=1$, $\HH^2$ splits into a direct sum of $\pm 1$ eigenspaces $(1\pm \omega_4)\HH^2$ of $\omega_4$, denoted by $\HH_{\pm}$. Since $e\omega_4=-\omega_4 e$, multiplication by any $e\in\RR^4-0$ yields an isomorphism of real vector spaces $\HH_{+}\cong\HH_{-}$. So each of $\HH_{\pm}$ is of real dimension $4$. Further, since $\omega_4$ commutes with $\Cl_4^0$, $\HH_{\pm}$ are invariant under the action of $\Cl_4^0\cong\Cl_3$. 
Thus we may treat $\HH_\pm$ as $\Cl_3$-modules. Notice now $\omega_3$ is in the center of $\Cl_3$ and the action of $\omega_3$ on $\HH_{\pm}$ is through $\omega_4(=\omega_3 e_4)$, we conclude $\HH_{\pm}$ are inequivalent as $\Cl_3$-modules. The two inequivalent $\ZZ_2$-graded $\RR$-modules of $\Cl_4$ corresponding to the two inequivalent $\Cl_3$-modules $\HH_{\pm}$, denoted by $\Delta_{4,\RR}^{\pm}$, are tautologous: the underlying real vector spaces of $\Delta_{4,\RR}^\pm$ are both simply $\HH^2$, with $\ZZ_2$-gradings given by
$$\Delta_{4,\RR}^{\pm,0}=\HH_\pm, \quad\Delta_{4,\RR}^{\pm,1}=\HH_\mp.$$
As $\HH^2$ is an irreducible ungraded $\Cl_4$-module, $\Delta_{4,\RR}^\pm$ are irreducible as $\ZZ_2$-graded $\Cl_4$-modules.

Next observe that $\HH^2$ carries a natural right $\HH$-multiplication that commutes with the left matrix multiplication from $\Cl_4$. Since $\HH^{op}\cong\HH$ by conjugation, left and right modules of $\HH$ are no different. We can thus view $\HH^2$ as a left $\HH$-module and therefore an $\HH$-module of $\Cl_4$. Equipped with this $\HH$-module structure, $\Delta_{4,\RR}^\pm$ are enhanced into two inequivalent irreducible $\ZZ_2$-graded $\HH$-modules of $\Cl_4$, denoted by $\Delta_{4,\HH}^\pm$:
\begin{equation}
	\Delta_{4,\HH}^\pm=\Delta_{4,\RR}^\pm \text{ equipped with an $\HH$-module structure}.
\end{equation}

Similarly by considering the eigenspace decomposition of the volume element $\omega_8$ through the matrix multiplication of $\Cl_8=\RR(16)$ on $\RR^{16}$, we obtain two inequivalent irreducible $\ZZ_2$-graded $\RR$-modules $\Delta_{8,\RR}^\pm$ on whose even parts $\omega_8$ acts by $\pm 1$. The two inequivalent irreducible $\ZZ_2$-graded $\HH$-modules of $\Cl_8$, denoted by $\Delta_{8,\HH}^\pm$, can be obtained by considering the eigenspace decomposition of the volume element $\omega_8\otimes 1\in\Cl_8^h$ through the matrix multiplication of $\Cl_{8}^h=\HH(16)$ on $\HH^{16}$ as before. It is not hard to see
$$\Delta_{8,\HH}^\pm=\Delta_{8,\RR}^\pm\otimes_\RR\HH.$$

Using periodicity, we now have a complete description of irreducible $\ZZ_2$-graded $\RR$- and $\HH$-modules for the Clifford algebras. The story for $\CC$-modules is similar. Note that $\CC$-modules over $\Cl_n$ are the same as $\CC$-modules over $\CCl_n$ since $\CC$ is commutative. Now consider the left matrix multiplication of $\CCl_{2n}=\CC(2n)$ on $\CC^{2n}$. This is the unique (up to equivalence) irreducible ungraded $\CC$-module over $\CCl_{2n}$ and therefore gives rise to the unique (up to equivalence) irreducible $\ZZ_2$-graded $\CC$-modules over $\CCl_{2n+1}$. On the other hand observe the \textbf{complex volume element} $$\omega_{2n}^\CC=\mathbf{i}^n\omega_{2n}\in\CCl_{2n}$$ satisfies $(\omega_{2n}^\CC)^2=1$, so we obtain two $\ZZ_2$-graded $\CC$-modules $\Delta_{2n,\CC}^\pm$ for $\CCl_{2n}$ by setting
\[
\Delta_{2n,\CC}^{\pm,0}=(1\pm\omega_{2n}^\CC)\cdot\CC^{2n},\quad \Delta_{2n,\CC}^{\pm,1}=(1\mp\omega_{2n}^\CC)\cdot\CC^{2n}.
\]
These two $\ZZ_2$-graded modules $\Delta_{2n,\CC}^\pm$ are inequivalent because their even parts $\Delta_{2n,\CC}^{\pm,0}$ as $\CCl_{2n}^0\cong \CCl_{2n-1}$-modules are inequivalent. To see this, we simply note $\omega_{2n-1}$ is central in $\Cl_{2n-1}$ and the action of $\omega_{2n-1}$ on $\Delta_{2n,\CC}^{\pm,0}$, which is through the action of $\omega_{2n}=(-\mathbf{i})^{n}\omega_{2n}^\CC$, are different.
%Consequently we have $\ZZ_2$-graded $\CC$-modules for $\CCl_{2n}^h$
%\begin{equation}
%	\Delta_{2n,\CC}^{h,\pm}=\Delta_{2n,\CC}^\pm\otimes_\CC\CC^2
%\end{equation}

Now let us introduce notations for these irreducible Clifford modules.
\begin{definition}
	For $\KK=\RR$ or $\HH$, let $\Delta_{n,\KK}$ denote the unique (up to equivalence) irreducible $\ZZ_2$-graded $\KK$-module over $\Cl_n$ for $n\not\equiv 0$ mod $4$. For $n\equiv 0$ mod $4$, let $\Delta_{n,\KK}^\pm$ denote the two inequivalent irreducible $\ZZ_2$-graded $\KK$-module of $\Cl_n$, so that $\omega_n$ acts on $\Delta_{n,\KK}^{\pm,0}$ by $\pm 1$. We call these modules the \textbf{fundamental $\ZZ_2$-graded $\RR$- and $\HH$-modules} over the Clifford algebras. Similarly let $\Delta_{n,\CC}$ denote the unique (up to equivalence) irreducible $\ZZ_2$-graded $\CC$-modules over $\Cl_{n}$ for $n\not\equiv 0\bmod 2$. For $n\equiv 0\bmod 2$ let $\Delta_{n,\CC}^{\pm}$ denote the two inequivalent irreducible $\ZZ_2$-graded $\CC$-modules over $\Cl_n$, so that $\omega_n^\CC$ acts on $\Delta_{n,\CC}^{0,\pm}$ by $\pm 1$. We call these modules the \textbf{fundamental $\ZZ_2$-graded $\CC$-modules} over the Clifford algebras.
\end{definition}
The dimensions of these fundamental modules can be read off from \Cref{cliffalg} in the following two steps. First note the dimension of a fundamental $\ZZ_2$-graded module over $\Cl_n$ is twice the dimension of an ungraded irreducible module over $\Cl_{n}^0\cong\Cl_{n-1}$. Then the dimensions of ungraded irreducible modules can be read off directly from \Cref{cliffalg} (and periodicities). Let $d_{n,\RR},d_{n,\CC}$, and $d_{n,\HH}$ denote the \textit{real} dimensions of fundamental $\ZZ_2$-graded $\RR$-,$\CC$- and $\HH$-modules over $\Cl_n$ respectively. The results are listed below.
\begin{table}[H]
	\begin{center}
    \begin{tabular}{|c||c|c|c|c|c|c|c|c|}
    \hline
    $n$  & $1$ & $2$ & $3$ & $4$ & $5$ & $6$ & $7$ & $8$ \\ \hhline{|=#=|=|=|=|=|=|=|=|}
    $d_{n,\RR}$ & $2$ & $4$ & $8$ & $8$ & $16$ & $16$ & $16$ & $16$\\
    \hline
    $d_{n,\CC}$ & $4$ & $4$ & $8$ & $8$ & $16$ & $16$ & $32$ & $32$\\
    \hline
    $d_{n,\HH}$ & $8$ & $8$ & $8$ & $8$ & $16$ & $32$ & $64$ & $64$\\
    \hline
    \end{tabular}
    \end{center}
    \caption{Dimensions of fundamental $\ZZ_2$-graded modules}\label{cliffdim}
 \end{table}
The rest can be deduced from the recursive relation $d_{n+8,\KK}=2^{4} d_{n,\KK}$ for $\KK=\RR,\CC$ or $\HH$. These simple dimension counts reveal some relations among the fundamental $\ZZ_2$-graded modules. The fundamental $\ZZ_2$-graded $\RR$- and $\HH$-modules are related by forming $\ZZ_2$-graded tensor products as follows.
\begin{lemma}\label{z2identities}
	\begin{enumerate}[label=(\roman*)]
		\item As equivalence classes of $\ZZ_2$-graded $\KK$-modules over $\Cl_8\hat{\otimes}_\RR\Cl_n\cong \Cl_{n+8}$, we have
		\[
		\begin{cases}
			\Delta_{8,\RR}^+\hat{\otimes}_\RR\Delta_{n,\KK}=\Delta_{n+8,\KK} & (n\not\equiv 4\bmod 8)\\
			\Delta_{8,\RR}^+\hat{\otimes}_\RR\Delta_{n,\KK}^\pm=\Delta_{n+8,\KK}^\pm & (n\equiv 4\bmod 8)
		\end{cases}
		\]
		for $\KK=\RR$ or $\HH$.
		\item As equivalence classes of $\ZZ_2$-graded $\HH$-modules over $\Cl_n\hat{\otimes}_\RR\Cl_4\cong \Cl_{n+4}$, we have
		\[
		\begin{cases}
			\Delta_{n,\RR}\hat{\otimes}_\RR\Delta_{4,\HH}^+=\Delta_{n+4,\HH} & (n\not\equiv 4\bmod 8)\\
			\Delta_{n,\RR}^\pm\hat{\otimes}_\RR\Delta_{4,\HH}^+=\Delta_{n+4,\HH}^\pm & (n\equiv 4\bmod 8)
		\end{cases}
		\]
		\item As equivalence classes of $\ZZ_2$-graded $\RR$-modules over $\Cl_n^h\hat{\otimes}_\RR\Cl_4^h\cong \Cl_{n+4}\otimes_\RR\RR(4)$, we have
		\[
		\begin{cases}
			\Delta_{n,\HH}\hat{\otimes}_\RR\Delta_{4,\HH}^+=\Delta_{n+4,\RR}\otimes_\RR \RR^4 & (n\not\equiv 4\bmod 8)\\
			\Delta_{n,\HH}^\pm\hat{\otimes}_\RR\Delta_{4,\HH}^+=\Delta_{n+4,\RR}^\pm\otimes_\RR \RR^4 & (n\equiv 4\bmod 8)
		\end{cases}
		\]
	\end{enumerate}
\end{lemma}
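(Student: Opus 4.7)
The plan is to identify both sides of each formula as $\ZZ_2$-graded modules over a common Clifford algebra, matched via the graded-tensor-product isomorphisms $\Cl_{m+n}\cong\Cl_m\hat\otimes\Cl_n$ of \Cref{z2gradediso1} and their quaternionic versions in \Cref{z2gradediso}, then verify the dimension match, and (when the right-hand side carries a $\pm$ decoration) pin down the sign by computing the eigenvalue of the volume element on the even part. The central algebraic input is that under these algebra isomorphisms the oriented volume element factors cleanly, $\omega_{m+n}\leftrightarrow \omega_m\hat\otimes\omega_n$, with the Koszul signs vanishing because each unit $1$ sits in degree zero.

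I would then verify the real dimensions match using \Cref{cliffdim} together with the recursion $d_{n+8,\KK}=16\,d_{n,\KK}$; for instance $d_{8,\RR}\cdot d_{n,\KK}=d_{n+8,\KK}$ for (i), $d_{n,\RR}\cdot d_{4,\HH}=d_{n+4,\HH}$ for (ii), and $d_{n,\HH}\cdot d_{4,\HH}=4\,d_{n+4,\RR}$ for (iii). All the relevant algebras are semisimple (\Cref{cliffalg}), so a submodule of the correct real dimension inside an irreducible is the whole thing, and this forces each $\ZZ_2$-graded tensor product to be irreducible --- or, in (iii), to be the $\RR^4$-multiplicity of an irreducible, via the Morita equivalence of \Cref{halfbott} that converts the $\RR(4)$ factor in $\Cl_m^h\hat\otimes\Cl_n^h\cong\Cl_{m+n}\otimes\RR(4)$ into the explicit $\otimes_\RR\RR^4$ on the right-hand side. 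This proves the identities up to the $\pm$ decoration.

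When $n\equiv 4\bmod 8$, the two candidate irreducibles on the right are distinguished by how $\omega_{m+n}$ acts on the even part. The Koszul rule for graded module actions gives
\[
(\omega_m\hat\otimes\omega_n)(v\hat\otimes w)=(-1)^{|\omega_n|\cdot|v|}(\omega_m v)\hat\otimes(\omega_n w),
\]
and since in every case of the lemma at least one of $m,n$ is even, this Koszul sign is trivial. Writing $\sigma,\epsilon\in\{\pm 1\}$ for the signs of the left and right factors, on $\Delta_{m,*}^{\sigma,0}\otimes\Delta_{n,*}^{\epsilon,0}$ the action of $\omega_m\otimes\omega_n$ is $\sigma\epsilon$, and on $\Delta_{m,*}^{\sigma,1}\otimes\Delta_{n,*}^{\epsilon,1}$ it is $(-\sigma)(-\epsilon)=\sigma\epsilon$, using $e\,\omega_k=-\omega_k e$ for $e\in\RR^k$ (valid since the relevant $k$ is even) to push the volume element across the grading flip. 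Both summands of the even part contribute the uniform sign $\sigma\epsilon$; taking $\sigma=+$ (as in each statement) reads off the decoration $\epsilon$ on $\Delta_{n+m,\KK}^{\epsilon}$ as claimed.

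The main obstacle is pure bookkeeping: keeping Koszul signs straight in both the algebra isomorphism and the module action, while tracking the left/right $\HH$-multiplication structures in (ii) and (iii) where two $\HH$ factors interact via $\HH\otimes_\RR\HH\cong\RR(4)$. Once conventions are fixed, each check reduces to a short Koszul-sign-plus-dimension verification, and the uniform argument above handles (i)--(iii) simultaneously.
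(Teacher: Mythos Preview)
Your proposal is correct and follows essentially the same route as the paper: a dimension count against \Cref{cliffdim} forces irreducibility (respectively, the $\RR^4$-multiple of an irreducible in (iii) via the Morita equivalence of \Cref{z2gradediso}), and the $\pm$ decoration is then read off from the factorization $\omega_{m+n}\leftrightarrow\omega_m\hat\otimes\omega_n$ acting on the even part. Your Koszul bookkeeping is slightly more explicit than the paper's, but the argument is the same.
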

\begin{proof}
	For (i), if $n\not\equiv 4\bmod 8$, by simple dimension counts $\Delta_{8,\RR}^+\hat{\otimes}_\RR\Delta_{n,\KK}$ has the same dimension as the unique (up to equivalence) $\ZZ_2$-graded $\KK$-module $\Delta_{n+8,\KK}$ over $\Cl_{n+8}$. Therefore $\Delta_{8,\RR}^+\hat{\otimes}_\RR\Delta_{n,\KK}$ must be equivalent to $\Delta_{n+8,\KK}$. If $n\equiv 4\bmod 8$, again by dimension counts $\Delta_{8,\RR}^+\hat{\otimes}_\RR\Delta_{n,\KK}$ must be equivalent to one of the two irreducible $\ZZ_2$-graded modules $\Delta_{n+8,\KK}^+$ or $\Delta_{n+8,\KK}^-$. To know which one, we observe the volume element $\omega_{n+8}$ coincides with $\omega_8\hat{\otimes}\omega_n$ under the isomorphism $\Cl_8\hat{\otimes}_\RR\Cl_n\cong \Cl_{n+8}$ (from \Cref{z2gradediso}). So the actions of $\omega_{n+8}$ on $(\Delta_{8,\RR}^+\hat{\otimes}_\RR\Delta_{n,\KK}^\pm)^0$ are $\pm 1$, and therefore $\Delta_{8,\RR}^+\hat{\otimes}_\RR\Delta_{n,\KK}^\pm=\Delta_{n+8,\KK}^\pm$. The same argument proves the rest.
\end{proof}

The fundamental $\RR$- and $\HH$-modules are related to fundamental $\CC$-modules through scalar extension and restriction. If $\KK\subset \LL$ are two of the skew-fields $\RR,\CC$ or $\HH$. Then we have two natural functors
\[
\begin{tikzcd}
	\KK\text{-vector spaces}\ar[r, shift left=1, "\Ind_\KK^\LL"]& \LL\text{-vector spaces}\ar[l, shift left=1,"\Res_\KK^\LL"]
\end{tikzcd}
\]
where $\Ind_\KK^\LL$ is $\LL\otimes_\KK-$ and $\Res_\KK^\LL$ is taking the underlying $\KK$-vector space. These functors restrict to the categories of $\KK$- and $\LL$-modules over the Clifford algebras. It is clear $\Res_\RR^\CC \circ \Res_\CC^\HH=\Res_\RR^\HH$ and $\Ind_\CC^\HH \circ \Ind_\RR^\CC=\Ind_\RR^\HH$.
\begin{lemma}\label{scalarchange}
\begin{enumerate}[label=(\roman*)]
	\item If $n\equiv 4\bmod 8$, then up to equivalence
%	\[
%	\Delta_{n,\HH}^\pm\xrightarrow{\Res_\CC^\HH}\Delta_{n,\CC}^{\mp}\xrightarrow{\Res_\RR^\CC} \Delta_{n,\RR}^\pm.
%	\]
\[
\begin{tikzcd}
	\Delta_{n,\HH}^\pm \ar[r,maps to, "\Res_\CC^\HH"]& \Delta_{n,\CC}^{\mp} \ar[r,maps to, "\Res_\RR^\CC"]& \Delta_{n,\RR}^\pm
\end{tikzcd}
\]
	\item If $n\equiv 0\bmod 8$, then up to equivalence
%	\[
%	\Delta_{n,\RR}^\pm\xrightarrow{\Ind_\RR^\CC}\Delta_{n,\CC}^{\pm}\xrightarrow{\Ind_\CC^\HH} \Delta_{n,\HH}^\pm.
%	\]
\[
\begin{tikzcd}
	\Delta_{n,\RR}^\pm \ar[r,maps to, "\Ind_\RR^\CC"]& \Delta_{n,\CC}^{\pm} \ar[r,maps to, "\Ind_\CC^\HH"]& \Delta_{n,\HH}^\pm
\end{tikzcd}
\]
\end{enumerate}	
\end{lemma}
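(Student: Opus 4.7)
The plan is to verify each arrow by a two-step procedure: first, use real dimension counts to show that the restricted (or induced) module must coincide in the Grothendieck group with exactly one of the two fundamental graded modules, and second, compute the eigenvalue of an appropriate volume element on the even part to determine which sign it is.

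For the dimension count, I would read off from \Cref{cliffdim} (and its doubling under $n\mapsto n+8$) that when $n\equiv 4\bmod 8$ we have $d_{n,\RR}=d_{n,\CC}=d_{n,\HH}$, while when $n\equiv 0\bmod 8$ we have $d_{n,\HH}=2d_{n,\CC}=4d_{n,\RR}$. Since $\Res_\RR^\CC$ preserves real dimension and $\Res_\CC^\HH$ does as well (both functors only forget scalars), the module $\Res_\RR^\CC\circ \Res_\CC^\HH(\Delta_{n,\HH}^\pm)$ has real dimension $d_{n,\HH}=d_{n,\RR}$, matching a single fundamental $\RR$-module. Dually, $\Ind_\RR^\CC$ doubles real dimension and so does $\Ind_\CC^\HH$, so for $n\equiv 0\bmod 8$ the induced modules land at real dimensions $d_{n,\CC}$ and $d_{n,\HH}$ respectively. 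In each step, since $\hat{\frM}_n(\KK)\cong\ZZ\oplus\ZZ$ is freely generated by the two fundamentals of equal dimension, the result of restriction/induction must be exactly one of the two fundamental graded modules (any nontrivial sum would have strictly larger dimension).

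For the sign, the key input is $\omega_n^\CC=\mathbf{i}^{n/2}\omega_n$, so $\omega_n^\CC=-\omega_n$ when $n\equiv 4\bmod 8$ and $\omega_n^\CC=\omega_n$ when $n\equiv 0\bmod 8$. Note that $\omega_n\in\Cl_n$ acts through the real algebra, so its eigenvalue on the even part is preserved by both $\Res$ and $\Ind$ functors. For part (i): on $\Delta_{n,\HH}^{\pm,0}$ the element $\omega_n$ acts by $\pm 1$, hence $\omega_n^\CC=-\omega_n$ acts by $\mp 1$ on the even part of $\Res_\CC^\HH\Delta_{n,\HH}^\pm$, identifying it with $\Delta_{n,\CC}^\mp$; now on $\Delta_{n,\CC}^{\mp,0}$ the element $\omega_n^\CC$ acts by $\mp 1$ so $\omega_n=-\omega_n^\CC$ acts by $\pm 1$, identifying $\Res_\RR^\CC\Delta_{n,\CC}^\mp$ with $\Delta_{n,\RR}^\pm$. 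For part (ii) the calculation is easier because $\omega_n^\CC=\omega_n$: on $\Delta_{n,\RR}^{\pm,0}$ the element $\omega_n$ acts by $\pm 1$, and since scalar extension $\CC\otimes_\RR(-)$ and $\HH\otimes_\CC(-)$ preserves this eigenvalue on the even part, we obtain $\Delta_{n,\CC}^\pm$ and then $\Delta_{n,\HH}^\pm$ in succession.

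The main potential obstacle is bookkeeping of the sign twist $\omega_n^\CC=-\omega_n$ in the $n\equiv 4\bmod 8$ case, which is precisely what produces the flip $\pm\leftrightarrow\mp$ between the $\HH$- and $\CC$-labels in part (i) and its cancellation when one further restricts to $\RR$. Once this is correctly tracked, the argument is essentially a direct computation using only the dimension table and the definition of the fundamental modules via volume-element eigenspaces.
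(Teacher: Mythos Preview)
Your proposal is correct and takes essentially the same approach as the paper's proof: dimension counts from \Cref{cliffdim} to pin down the module up to sign, followed by tracking the action of the volume elements using the relation $\omega_n^\CC=\mathbf{i}^{n/2}\omega_n$ (so $\omega_n^\CC=-\omega_n$ for $n\equiv 4\bmod 8$ and $\omega_n^\CC=\omega_n$ for $n\equiv 0\bmod 8$). The paper's own proof is in fact just a one-line pointer to exactly this computation.
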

\begin{proof}
	The proof is similar to the previous lemma by straightforward dimension counts and by looking at the actions of the real and complex volume elements. We note that $\omega_n^\CC=-\omega_n$ for $n\equiv 4\bmod 8$ and $\omega_n^\CC=\omega_n$ for $n\equiv 0\bmod 8$.
\end{proof}
%\begin{remark}
%	That $\Res_\RR^\HH (\Delta_{8k+4}^{\pm,\HH})=\Delta_{8k+4}^{\pm,\RR}$ and that $\Ind_\RR^\HH(\Delta_{8k}^{\pm,\RR})=\Delta_{8k}^{\pm,\HH}$ follow directly from our construction.
%\end{remark}
\subsection{Atiyah-Bott-Shapiro isomorphism}
\subsubsection{Multiplicative structures}
Recall that from \Cref{z2gradediso1} $\Cl_m\hat{\otimes}_\RR\Cl_n\cong\Cl_{m+n}$. Therefore, $\ZZ_2$-graded tensor product (over $\RR$) yields natural pairings
	\[
	\hat{\frM}_m(\RR)\otimes_\ZZ\hat{\frM}_n(\KK)\to \hat{\frM}_{m+n}(\KK)
	\]
for $\KK=\RR,\CC$ or $\HH$ making $\hat{\frM}_\bullet(\RR)$ into a graded ring and $\hat{\frM}_\bullet(\KK)$ a graded module over $\hat{\frM}_\bullet(\RR)$. Furthermore, since the isomorphisms in \Cref{z2gradediso1} are compatible with isometric embeddings, this pairings descend to pairings
\[
\hat{\frN}_m(\RR)\otimes_\ZZ\hat{\frN}_n(\KK)\to \hat{\frN}_{m+n}(\KK)
\]
making $\hat{\frN}_\bullet(\RR)$ into a graded ring and $\hat{\frN}_\bullet(\KK)$ a graded module over $\hat{\frN}_\bullet(\RR)$. Similarly $\ZZ_2$-graded tensor product over $\CC$ makes $\hat{\frM}_\bullet(\CC)$ and $\hat{\frN}_\bullet(\CC)$ into graded rings.
\begin{proposition}\label{freemodule}
	\begin{enumerate}[label=(\roman*)]
		\item For $\KK=\RR$ or $\HH$, $\Delta_{8,\RR}^+\hat{\otimes}_\RR-$ induces 8-fold periodicity isomorphisms of graded $\hat{\frM}_\bullet(\RR)$- and $\hat{\frN}_\bullet(\RR)$-modules
		\[
		\hat{\frM}_\bullet(\KK)\xrightarrow{\cong}\hat{\frM}_{\bullet+8}(\KK),\quad  \hat{\frN}_\bullet(\KK)\xrightarrow{\cong}\hat{\frN}_{\bullet+8}(\KK).
		\]
		\item $-\hat{\otimes}_\RR\Delta_{4,\HH}^+$ induces $4$-fold "periodicity" isomorphisms of graded $\hat{\frM}_\bullet(\RR)$- and $\hat{\frN}_\bullet(\RR)$-modules
		\[
		\hat{\frM}_\bullet(\RR)\xrightarrow{\cong}\hat{\frM}_{\bullet+4}(\HH),\quad  \hat{\frN}_\bullet(\RR)\xrightarrow{\cong}\hat{\frN}_{\bullet+4}(\HH);
		\]
		and
		\[
		\hat{\frM}_\bullet(\HH)\xrightarrow{\cong}\hat{\frM}_{\bullet+4}(\RR),\quad  \hat{\frN}_\bullet(\HH)\xrightarrow{\cong}\hat{\frN}_{\bullet+4}(\RR).
		\]
	\end{enumerate}
\end{proposition}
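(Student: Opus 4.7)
The plan is to leverage the classification of irreducible $\ZZ_2$-graded Clifford modules together with the explicit tensor product identities already recorded in Lemma~\ref{z2identities}. Since each Clifford algebra appearing is semisimple (see Table~\ref{cliffalg}), each Grothendieck group $\hat{\frM}_n(\KK)$ is free abelian on the set of isomorphism classes of irreducible $\ZZ_2$-graded $\KK$-modules over $\Cl_n$. From Section~\ref{sec:irredmod}, this generating set has one element when $n\not\equiv 4\pmod 8$ (namely $\Delta_{n,\KK}$) and two elements when $n\equiv 4\pmod 8$ (namely $\Delta_{n,\KK}^\pm$), and analogously for the target $\hat{\frM}_{n+8}(\KK)$ or $\hat{\frM}_{n+4}(\KK)$. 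Lemma~\ref{z2identities} already records precisely how $\Delta_{8,\RR}^+\hat{\otimes}_\RR-$ (and $-\hat{\otimes}_\RR\Delta_{4,\HH}^+$) acts on these generators, establishing in each case a bijection between the generating sets. The induced map on free abelian groups is therefore an isomorphism, which handles the $\hat{\frM}$-level assertions of both (i) and (ii).

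Next, to descend to the $\hat{\frN}$-level, I would verify the compatibility square
\[
\begin{tikzcd}
\hat{\frM}_{n+1}(\KK)\ar[r,"i^*"]\ar[d,"\Delta_{8,\RR}^+\hat{\otimes}-"'] & \hat{\frM}_n(\KK)\ar[d,"\Delta_{8,\RR}^+\hat{\otimes}-"]\\
\hat{\frM}_{n+9}(\KK)\ar[r,"i^*"] & \hat{\frM}_{n+8}(\KK)
\end{tikzcd}
\]
commutes, so that the map carries $\mathrm{Im}(i^*)$ into $\mathrm{Im}(i^*)$ and hence passes to the quotient. This commutativity is immediate from the naturality of the isomorphism $\Cl_m\hat{\otimes}\Cl_n\cong\Cl_{m+n}$ with respect to the isometric embeddings, which is exactly the content of Lemma~\ref{z2gradediso1}. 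The same square with $\Delta_{4,\HH}^+$ replacing $\Delta_{8,\RR}^+$ handles part (ii), using Corollary~\ref{z2gradediso} in place of Lemma~\ref{z2gradediso1}.

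To upgrade the isomorphism to one of $\hat{\frM}_\bullet(\RR)$-modules (and hence of $\hat{\frN}_\bullet(\RR)$-modules), I invoke associativity of $\hat{\otimes}_\RR$ together with graded commutativity of the Clifford tensor product. Because $\Delta_{8,\RR}^+$ sits in total degree $8$ and $\Delta_{4,\HH}^+$ in total degree $4$, both of which are even, the Koszul sign vanishes, so for $V\in\hat{\frM}_m(\RR)$ and $W\in\hat{\frM}_n(\KK)$ we have
\[
\Delta_{8,\RR}^+\hat{\otimes}_\RR (V\hat{\otimes}_\RR W)\cong V\hat{\otimes}_\RR(\Delta_{8,\RR}^+\hat{\otimes}_\RR W),
\]
which is exactly the assertion that $\Delta_{8,\RR}^+\hat{\otimes}_\RR-$ is $\hat{\frM}_\bullet(\RR)$-linear. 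The same computation applies to $\Delta_{4,\HH}^+$.

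The only genuinely delicate step is the bijection-on-generators claim in the case $n\equiv 4\pmod 8$: one must ensure $\Delta_{8,\RR}^+\hat{\otimes}_\RR\Delta_{n,\KK}^\pm$ lands in $\Delta_{n+8,\KK}^\pm$ with the correct sign (and similarly for $\Delta_{4,\HH}^+$), rather than getting the two halves swapped. This is the obstacle already overcome in Lemma~\ref{z2identities} by tracking the action of the volume element, exploiting the identity $\omega_{n+8}=\omega_8\hat{\otimes}\omega_n$ under the isomorphism of Lemma~\ref{z2gradediso1}. Everything else in the proof is formal once one keeps the $\ZZ_2$-gradings straight throughout, which is automatic because $\Delta_{8,\RR}^+$ and $\Delta_{4,\HH}^+$ are themselves $\ZZ_2$-graded and all the structural isomorphisms used (associativity, the identifications from Corollary~\ref{z2gradediso}, restriction along isometric embeddings) respect the grading.
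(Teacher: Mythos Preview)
Your proposal is correct and follows the same approach as the paper, which simply cites Lemma~\ref{z2identities}; you have unpacked what that citation entails (bijection on irreducible generators, compatibility with $i^*$ via Lemma~\ref{z2gradediso1}, and $\hat{\frM}_\bullet(\RR)$-linearity via associativity/commutativity). One small slip: you write that the generating set of $\hat{\frM}_n(\KK)$ has one element when $n\not\equiv 4\pmod 8$, but in fact it has two elements whenever $n\equiv 0\pmod 4$ (so also when $n\equiv 0\pmod 8$); the paper's Lemma~\ref{z2identities} is itself slightly imprecise on this point, but the same volume-element argument handles $n\equiv 0\pmod 8$ identically, so nothing substantive is affected.
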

\begin{proof}
	These follow from \Cref{z2identities}.
\end{proof}
\begin{remark}
	Since $\Delta_{4,\HH}^+\hat{\otimes}_\RR\Delta_{4,\HH}^+=\Delta_{8,\RR}^+\otimes_\RR \RR^4$, the composition of the two 4-fold periodicities recovers the 8-fold periodicity.
\end{remark}

\begin{corollary}
	For $\KK=\RR,\CC$ or $\HH$, the residue classes of the fundamental $\ZZ_2$-graded $\KK$-modules of the Clifford algebras additively generate $\hat{\frN}_\bullet(\KK)$.
\end{corollary}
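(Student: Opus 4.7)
The plan is to derive the corollary as an essentially formal consequence of the discussion preceding Table \ref{cliffmod}. The key observation is that it suffices to prove the stronger statement that the fundamental $\ZZ_2$-graded $\KK$-modules additively generate the full group $\hat{\frM}_n(\KK)$, since $\hat{\frN}_n(\KK)$ is by definition the cokernel of $i^\ast:\hat{\frM}_{n+1}(\KK)\to \hat{\frM}_n(\KK)$; the image of any generating set of the source under the canonical surjection $\hat{\frM}_n(\KK)\twoheadrightarrow\hat{\frN}_n(\KK)$ then generates the target.

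To prove the generation statement for $\hat{\frM}_n(\KK)$, I would invoke the equivalence of categories between $\ZZ_2$-graded $\KK$-modules over $\Cl_{n+1}$ and ungraded $\KK$-modules over $\Cl_{n+1}^0\cong \Cl_n$ recalled just before equation \labelcref{gradedungraded}. Under this equivalence, the fundamental $\ZZ_2$-graded module $\Delta_{n+1,\KK}$ (or the pair $\Delta_{n+1,\KK}^\pm$ in those dimensions where two classes exist) corresponds precisely to the irreducible ungraded $\KK$-module(s) over $\Cl_n$; this is verified directly by inspection of the constructions in Section \ref{sec:irredmod} — for example $\Delta_{4,\RR}^{\pm,0}=\HH_\pm$ recovers the two inequivalent irreducibles of $\Cl_3=\HH\oplus\HH$. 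Now Table \ref{cliffalg} displays every Clifford algebra involved (real, complex, or quaternionic) as a matrix algebra $\KK(N)$ or a direct sum $\KK(N)\oplus\KK(N)$; both are semisimple, so their Grothendieck groups of finite-dimensional modules are freely generated by the classes of the irreducibles. The counts of irreducibles match the ranks of $\hat{\frM}_{n+1}(\KK)$ displayed in Table \ref{cliffmod}, confirming that the fundamental classes exhaust a generating set.

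No significant obstacle is anticipated: the corollary is essentially a restatement of what Table \ref{cliffmod} tells us once the generators of $\hat{\frM}_\bullet(\KK)$ have been identified via semisimplicity. If a more self-contained route is preferred, one could alternatively combine the $8$-fold periodicity of Proposition \ref{freemodule}(i) with the identities of Lemma \ref{z2identities}, reducing the problem to the finitely many cases $1\le n\le 8$ and then verifying each by direct inspection of the explicit modules $\Delta_{n,\KK}^{(\pm)}$ constructed in Section \ref{sec:irredmod}.
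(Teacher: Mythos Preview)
Your argument is correct and, in fact, more direct than the paper's. The paper's proof cites \cite{ABS64} for the cases $\KK=\RR,\CC$ and then handles $\KK=\HH$ by invoking Proposition~\ref{freemodule}(ii) together with Lemma~\ref{z2identities}: the $4$-fold isomorphism $\hat{\frN}_\bullet(\RR)\xrightarrow{\cong}\hat{\frN}_{\bullet+4}(\HH)$ carries fundamental real modules to fundamental quaternionic ones, so generation for $\HH$ follows from generation for $\RR$. Your route instead works uniformly in $\KK$ by appealing directly to semisimplicity (already noted just before Table~\ref{cliffmod}): the fundamental $\ZZ_2$-graded modules are \emph{by definition} the irreducibles, and these generate $\hat{\frM}_n(\KK)$ freely, hence their residues generate $\hat{\frN}_n(\KK)$. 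Your approach has the virtue of being self-contained and avoiding the external citation; the paper's approach, by contrast, foregrounds the $\hat{\frN}_\bullet(\RR)$-module structure on $\hat{\frN}_\bullet(\HH)$, which is the engine of the quaternionic ABS isomorphism proved immediately afterward. The alternative you sketch at the end (using $8$-fold periodicity plus low-dimensional checks) is closer in spirit to the paper's method, though the paper uses the sharper $4$-fold relation of part~(ii) rather than part~(i).
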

\begin{proof}
	The real and complex cases are well-known, see \cite[pp. 12-13]{ABS64}. The quaternionic case follows from the real case by the above proposition and \Cref{z2identities}.
\end{proof}
\begin{remark}
	For $n\equiv 0\bmod 4$ and $\KK=\RR$ or $\HH$, $\Delta_{n,\KK}^+\oplus_\RR \Delta_{n,\KK}^-=i^*\Delta_{n+1,\KK}$ (see \cite[Corollary 5.7]{ABS64}). So either $\Delta_{n,\KK}^+$ or $\Delta_{n,\KK}^-$ generates $\hat{\frN}_n(\KK)=\ZZ$. Similarly for $n\equiv 0\bmod 2$, $\Delta_{n,\CC}^+\oplus_\CC \Delta_{n,\CC}^-=i^*\Delta_{n+1,\CC}$.
\end{remark}
\subsubsection{Atiyah-Bott-Shapiro construction} Now let us quickly review Atiyah, Bott and Shapiro's construction that relates Clifford modules to K-theories.
For any $\ZZ_2$-graded $\RR$-module $V=V^0\oplus V^1$ over $\Cl_n$, we associate to it an element $\varphi(V)\in\KO(D^n,\partial D^n)$\footnote{Here, and from now on, we follow the convection that $\KO$ stands for $\KO^0$. Similarly $\KU$ (resp. $\KSp$) will stand for $\KU^0$ (resp. $\KSp^0$).} by setting
\[
\varphi(V):=[\mathcal{V}^0,\mathcal{V}^1;\mu]
\]
where $D^n$ is the unit disk in $\RR^n$ (with respect to the standard Euclidean metric), $\mathcal{V}^\alpha=D^n\times V^\alpha$ for $\alpha=0,1$, and $\mu$ is the Clifford module multiplication, i.e. at $e\in \RR^n$
\[
\mu_e: V\to V,\quad v\mapsto e\cdot v
\]
Note that $\mu_e$ interchanges $V^0$ and $V^1$, and satisfies $\mu_e^2=-\|e\|^2\cdot 1$ for $e\in \RR^n$. In particular, when restricted to $\partial D^n$, $\mu$ is a skew-adjoint isomorphism between bundles $\mathcal{V}^0$ and $\mathcal{V}^1$.

It is clear $\varphi(V)$ depends only on the $\ZZ_2$-graded equivalence class of $V$, and $\varphi$ preserves direct sums. Thus we have a homomorphism
\[
\varphi: \hat{\frM}_n(\RR)\to \KO(D^n,\partial D^n)\cong \KO^{-n}(\pt).
\]
If the $\ZZ_2$-graded module $V$ is the restriction of some $\ZZ_2$-graded module over $\Cl_{n+1}$ through the embedding $\Cl_n\subset \Cl_{n+1}$, then the isomorphism $\mu$ extends to $D^n$ by identifying $D^n$ with the upper hemisphere of $S^n=\partial D^{n+1}\subset \RR^{n+1}$. Therefore $\varphi$ descends to a graded homomorphism
\[
\varphi: \hat{\frN}_\bullet(\RR)\to \KO^{-\bullet}(\pt).
\]

Now the same construction applies to $\CC$- and $\HH$-modules over the Clifford algebras as well, yielding graded homomorphisms
\begin{align*}
	&\varphi^c: \hat{\frN}_\bullet(\CC)\to \KU^{-\bullet}(\pt)\\
	&\varphi^h: \hat{\frN}_\bullet(\HH)\to \KSp^{-\bullet}(\pt).
\end{align*}
It is proved in \cite{ABS64} that $\varphi$ and $\varphi^c$ are isomorphisms (of graded rings). In analogy, we will prove $\varphi^h$ is an isomorphism. For this, we need
\begin{lemma} $\varphi^h$ is a homomorphism of graded modules over the graded ring homomorphism $\varphi$. That is, the following diagram commutes
	\[
	\begin{tikzcd}
		\hat{\frN}_\bullet(\RR)\otimes_\ZZ\hat{\frN}_\bullet(\HH)\ar[r,"\hat{\otimes}"]\ar[d,"\varphi\otimes\varphi^h"] & \hat{\frN}_\bullet(\HH)\ar[d,"\varphi^h"]\\
		\KO^{-\bullet}(\pt)\otimes_\ZZ \KSp^{-\bullet}(\pt)\ar[r,"\boxtimes"] & \KSp^{-\bullet}(\pt)
	\end{tikzcd}
	\]
	where $\boxtimes$ is the module multiplication of $\KO$ on $\KSp$ induced by tensor product over $\RR$.
\end{lemma}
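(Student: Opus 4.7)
The plan is to reduce to checking the identity at the level of representative triples in the Atiyah-Bott-Shapiro construction. Given $V\in\hat{\frM}_m(\RR)$ and $W\in\hat{\frM}_n(\HH)$, their $\ZZ_2$-graded tensor product $V\hat{\otimes}_\RR W$ is a $\ZZ_2$-graded $\HH$-module over $\Cl_m\hat{\otimes}\Cl_n\cong \Cl_{m+n}$ by \Cref{z2gradediso}(i), which in turn comes from \Cref{z2gradediso1}. Under this identification, for $(e',e'')\in \RR^m\oplus\RR^n=\RR^{m+n}$, the Clifford multiplication on $V\hat{\otimes}W$ decomposes via the Koszul sign rule that governs graded-module structures on $\ZZ_2$-graded tensor products:
\[
\mu_{V\hat{\otimes}W}(e',e'') = \mu_V(e')\hat{\otimes}1 + 1\hat{\otimes}\mu_W(e'').
\]
Consequently $\varphi^h(V\hat{\otimes}W)$ is represented on $(D^{m+n}, S^{m+n-1})$ by the triple whose graded bundle is $\mathcal{V}\hat{\otimes}\mathcal{W}$ and whose bundle map is the above Koszul sum.

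Next I would invoke the standard description of the external K-theory product at the level of such triples: for classes $[\mathcal{V}^0,\mathcal{V}^1;\mu_V]\in \KO(D^m,S^{m-1})$ and $[\mathcal{W}^0,\mathcal{W}^1;\mu_W]\in \KSp(D^n,S^{n-1})$, their external tensor product $\varphi(V)\boxtimes\varphi^h(W)\in \KSp(D^m\times D^n,\partial)$ is represented by the graded bundle $\mathcal{V}\hat{\otimes}\mathcal{W}$ with bundle map at $(e',e'')$ given by exactly the same Koszul sum $\mu_V(e')\hat{\otimes}1+1\hat{\otimes}\mu_W(e'')$. This is a general fact about the external product in K-theory applied to difference bundles supported near a point: the product bundle map is the Koszul superposition of the two bundle maps, with skew-adjoint-isomorphism property preserved on the boundary since $(\mu_V\hat{\otimes}1+1\hat{\otimes}\mu_W)^2=-(\|e'\|^2+\|e''\|^2)$ by the Koszul rule.

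Finally I would invoke the canonical homotopy equivalence of pairs $(D^{m+n},S^{m+n-1})\simeq (D^m\times D^n,\partial(D^m\times D^n))$, under which the two triples are identified. This yields $\varphi^h(V\hat{\otimes}W)=\varphi(V)\boxtimes\varphi^h(W)$ at the level of $\hat{\frM}$, which then descends to $\hat{\frN}$ because both sides kill classes of the form $i^*U$ for $U$ a Clifford module over the larger Clifford algebra (the extension of the Clifford action extends the bundle map over all of the disk on both sides).

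The main obstacle is verifying that the Koszul signs entering the $\ZZ_2$-graded tensor product of Clifford modules coincide with those entering the external K-theory product. These signs are independently motivated---one by algebraic compatibility of the Clifford action with the grading, the other by topological functoriality of the K-theoretic product---so one must trace through both conventions carefully. Both ultimately originate from the Koszul braiding $v\hat{\otimes}w\mapsto (-1)^{|v||w|}w\hat{\otimes}v$ on the symmetric monoidal category of $\ZZ_2$-graded vector spaces, which is why the match is inevitable but requires bookkeeping to spell out.
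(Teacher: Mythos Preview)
Your proposal is correct and takes essentially the same approach as the paper, which simply defers to \cite[Proposition 11.1]{ABS64}; you have spelled out the content of that argument adapted to the quaternionic factor. The identification of the Clifford multiplication on $V\hat{\otimes}W$ with the Koszul sum, the representative for the external K-theory product, and the passage from $(D^{m+n},S^{m+n-1})$ to $(D^m\times D^n,\partial)$ are exactly the ingredients of the Atiyah--Bott--Shapiro proof, and nothing further is needed.
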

\begin{proof}
	The proof is essentially the same as that of \cite[Proposition 11.1]{ABS64} which asserts $\varphi$ is a graded ring homomorphism. We refer the reader to their proof.
\end{proof}
\subsubsection{Quaternionic ABS isomorphism}
Now we are ready to prove our quaternionic version of Atiyah-Bott-Shapiro isomorphism.
\begin{theorem}\label{ABSH}
	$\varphi^h:\hat{\frN}_\bullet(\HH)\to \KSp^{-\bullet}(\pt)$ is an isomorphism of graded modules over the graded ring isomorphism $\varphi: \hat{\frN}_\bullet(\RR)\cong\KO^{-\bullet}(\pt)$.
\end{theorem}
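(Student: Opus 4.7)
The plan is to reduce the theorem to the classical real ABS isomorphism together with Bott periodicity, and then to check that a single distinguished generator maps to a generator.

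First I would observe that both sides are free graded modules of rank one over their respective ``coefficient'' rings. On the algebraic side, \Cref{freemodule}(ii) provides an isomorphism $\hat{\frN}_\bullet(\RR) \xrightarrow{-\hat{\otimes}_\RR \Delta_{4,\HH}^+} \hat{\frN}_{\bullet+4}(\HH)$ of graded $\hat{\frN}_\bullet(\RR)$-modules, so $\hat{\frN}_\bullet(\HH)$ is free of rank one with generator $\Delta_{4,\HH}^+ \in \hat{\frN}_4(\HH)$. On the K-theory side, Bott periodicity $\KSp \simeq \Sigma^4 \KO$ (equivalently the isomorphism $\KSp^{-n} = \KO^{-n-4}$ recorded in the introduction of this section) exhibits $\KSp^{-\bullet}(\pt)$ as a free rank-one $\KO^{-\bullet}(\pt)$-module, with the symplectic Bott class $\beta_h \in \KSp^{-4}(\pt) \cong \ZZ$ acting as the generator via cup product.

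Since $\varphi$ is already a graded ring isomorphism (real ABS) and $\varphi^h$ is a $\varphi$-module homomorphism (by the lemma just above the theorem), the identity
\[
\varphi^h(x \cdot \Delta_{4,\HH}^+) = \varphi(x) \cdot \varphi^h(\Delta_{4,\HH}^+),\qquad x \in \hat{\frN}_\bullet(\RR),
\]
together with the two rank-one structures, reduces the theorem to the single statement: $\varphi^h(\Delta_{4,\HH}^+)$ generates $\KSp^{-4}(\pt) \cong \ZZ$, i.e.\ equals $\pm \beta_h$. All of the module-theoretic bookkeeping collapses the problem to this one computation.

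The hard part is therefore identifying $\varphi^h(\Delta_{4,\HH}^+)$, and my plan is to compare with the already-known complex case via scalar restriction. Since the ABS construction $[\mathcal{V}^0, \mathcal{V}^1; \mu]$ uses only the underlying $\RR$-linear vector space and the Clifford multiplication, it commutes tautologically with scalar restriction, giving a commutative square
\[
\begin{tikzcd}
\hat{\frN}_\bullet(\HH) \ar[r, "\Res_\CC^\HH"] \ar[d, "\varphi^h"'] & \hat{\frN}_\bullet(\CC) \ar[d, "\varphi^c"] \\
\KSp^{-\bullet}(\pt) \ar[r, "c"] & \KU^{-\bullet}(\pt)
\end{tikzcd}
\]
where $c$ is the forgetful map. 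By \Cref{scalarchange}, $\Res_\CC^\HH \Delta_{4,\HH}^+ = \Delta_{4,\CC}^-$, which the complex ABS isomorphism $\varphi^c$ of \cite{ABS64} sends to a generator of $\KU^{-4}(\pt) \cong \ZZ$. It then remains to show that $c\colon \KSp^{-4}(\pt) \to \KU^{-4}(\pt)$ is an isomorphism of $\ZZ$'s; this is a standard computation on $\HP^1 = S^4$: the tautological quaternionic line bundle has structure group $\Sp(1)$, so viewed as a rank-two complex bundle via $\CC \subset \HH$ it has $c_1 = 0$ and $c_2$ equal to the generator of $H^4(S^4; \ZZ)$, hence defines a generator of $\tilde{\KU}(S^4) \cong \ZZ$. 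Consequently $\varphi^h(\Delta_{4,\HH}^+)$ must itself generate $\KSp^{-4}(\pt)$, and the theorem follows.
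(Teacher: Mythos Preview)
Your overall strategy---reduce to a single generator via the module structure and then identify it by scalar restriction---is exactly the paper's approach. The one substantive variation is that you restrict from $\HH$ to $\CC$ and invoke the complex ABS isomorphism together with the forgetful map $\KSp^{-4}(\pt)\to\KU^{-4}(\pt)$, whereas the paper restricts from $\HH$ to $\RR$, uses the real ABS isomorphism, and quotes Bott's result that $\Res_\RR^\HH:\KSp^{-4}(\pt)\to\KO^{-4}(\pt)$ is an isomorphism. Both routes are valid and equally short; your tautological-line-bundle computation on $\HP^1$ is a perfectly good substitute for the Bott citation.

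There is, however, a small but genuine gap. Your assertion that ``$\hat{\frN}_\bullet(\HH)$ is free of rank one with generator $\Delta_{4,\HH}^+\in\hat{\frN}_4(\HH)$'' is false as stated: \Cref{freemodule}(ii) only gives $\hat{\frN}_\bullet(\RR)\cong\hat{\frN}_{\bullet+4}(\HH)$, so you have freeness \emph{in degrees $\ge 4$}. Degrees $1,2,3$ are harmless since both sides vanish there, but degree $0$ is not: $\hat{\frN}_0(\HH)\cong\ZZ\cong\KSp^0(\pt)$, and this is not hit by multiplication with your degree-$4$ generator. The paper handles this by checking degree $0$ separately (and trivially): $\varphi^h(\Delta_{0,\HH}^+)$ is the trivial bundle $\HH\to\pt$, which generates $\KSp^0(\pt)$. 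You should add this one-line check; with it, your proof is complete.
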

\begin{proof}
	Let us identify $\hat{\frN}_\bullet(\RR)$ with $\KO^{-\bullet}(\pt)$ through $\varphi$. From the previous lemma, $\varphi^h$ is a homomorphism of $\KO^{-\bullet}(\pt)$-modules. Now from \Cref{freemodule} and Bott's 4-fold periodicity, both $\hat{\frN}_\bullet(\HH)$ and $\KSp^{-\bullet}(\pt)$ are free modules of rank one over $\KO^{-\bullet}(\pt)$ in degrees $\ge 4$. So it suffices to prove $\varphi^h$ is an isomorphism in degrees $0$ and $4$ (the groups in question are zero in degrees $1,2,3$). In degree $0$, $\hat{\frN}_0(\HH)=\ZZ$ is generated by the residue class of $\Delta_{0,\HH}^+$, and it follows from construction that $\varphi^h(\Delta_{0,\HH}^+)$ is the trivial bundle $\HH\to \pt$ which generates $\KSp^{0}(\pt)=\ZZ$. In degree $4$, consider the commutative diagram
	\[
	\begin{tikzcd}
		\hat{\frN}_4(\HH) \ar[r,"\varphi^h"]\ar[d,"\Res_\RR^\HH"]& \KSp^{-4}(\pt)\ar[d,"\Res_\RR^\HH"]\\
		\hat{\frN}_4(\RR) \ar[r,"\varphi"]& \KO^{-4}(\pt)
	\end{tikzcd}
	\]
	By \Cref{scalarchange} $\Res_\RR^\HH(\Delta_{4,\HH}^+)=\Delta_{4,\RR}^+$, hence $\Res_\RR^\HH:\hat{\frN}_4(\HH)\to \hat{\frN}_4(\RR)$ is an isomorphism. From \cite{ABS64} $\varphi:\hat{\frN}_4(\RR)\to \KO^{-4}(\pt)$ is an isomorphism. Finally thanks to Bott \cite[3.14]{Bott} $\Res_\RR^\HH: \KSp^{-4}(\pt)\to \KO^{-4}(\pt)$ is an isomorphism. We conclude $\varphi^h$ is an isomorphism in degree $4$. This completes the proof.
\end{proof}

\subsection{More on modules}Previously we have been using the quaternionic Clifford algebras as a tool for studying $\HH$-modules over the (real) Clifford algebras. Now we put the quaternionic Clifford algebras in center stage and study their modules in their own rights.
\subsubsection{Notations}\label{notation}
Let us begin with some notation changes. Henceforth we will denote the fundamental $\ZZ_2$-graded $\RR$-, $\CC$-, $\HH$-module(s) over $\Cl_n$ by $\Delta_n$ (or $\Delta_n^\pm$ if $n\equiv 0\bmod 4$), $\Delta_{n,\CC}$ (or $\Delta_{n,\CC}^\pm$ if $n$ is even) and $\Delta_{n}^h$ (or $\Delta_{n}^{h,\pm}$ if $n\equiv 0\bmod 4$) respectively. That is we suppress $\RR$ from our notation and regard $\HH$-modules over $\Cl_n$ as $\RR$-modules over $\Cl_n^h$.

Recall that $\CCl_n^h\cong \CCl_n\otimes_\CC\CC(2)$ and
\[
\hat{\frM}_n(\CC)\to \hat{\frM}_{n}^h(\CC),\quad V_\CC\mapsto V_\CC\otimes_\CC \CC^2
\]
is an isomorphism. We denote the $\ZZ_2$-graded $\CC$-module(s) $\Delta_{n,\CC}\otimes_\CC \CC^2$ (or $\Delta_{n,\CC}^\pm\otimes_\CC \CC^2$) over $\CCl_n^h$ by $\Delta_{n,\CC}^h$ (or $\Delta_{n,\CC}^{h,\pm}$). We call $\Delta_{n}^h$ (or $\Delta_{n}^{h,\pm}$) and $\Delta_{n,\CC}^h$ (or $\Delta_{n,\CC}^{h,\pm}$) the \textbf{fundamental $\ZZ_2$-graded $\RR$- and $\CC$-modules over $\Cl_n^h$}.

We also introduce the following notations:
\[
\dd_n:=
\begin{cases}
	\Delta_n &\mbox{if $n\not\equiv 0\bmod 4$}\\
	\Delta_n^+&\mbox{if $n\equiv 0\bmod 8$}\\
	\Delta_n^-&\mbox{if $n\equiv 4\bmod 8$}
\end{cases}
\quad
\dd_{n,\CC}:=
\begin{cases}
	\Delta_{n,\CC} &\mbox{if $n$ is odd}\\
	\Delta_{n,\CC}^+&\mbox{if $n$ is even}
\end{cases}
\]
\[
\dd_n^h:=
\begin{cases}
	\Delta_n^h &\mbox{if $n\not\equiv 0\bmod 4$}\\
	\Delta_n^{h,+}&\mbox{if $n\equiv 0\bmod 8$}\\
	\Delta_n^{h,-}&\mbox{if $n\equiv 4\bmod 8$}
\end{cases}
\quad
\dd_{n,\CC}^h:=
\begin{cases}
	\Delta_{n,\CC}^h &\mbox{if $n$ is odd}\\
	\Delta_{n,\CC}^{h,+}&\mbox{if $n$ is even}
\end{cases}
\]
If $n$ is clear in the context, we will suppress $n$ and write, for example, $\dd$ for $\dd_n$. The choices are made so that
\begin{align*}
	\Ind_\RR^\CC(\dd_{8k})=\dd_{8k,\CC},&\quad \Res_\RR^\CC(\dd_{8k,\CC}^h)=\dd_{8k}^h,\\
	\Res_\RR^\CC(\dd_{8k+4,\CC})=\dd_{8k+4},&\quad \Ind_\RR^\CC(\dd_{8k+4}^h)=\dd_{8k+4,\CC}^h.
\end{align*}

\subsubsection{Right modules} The Clifford algebra $\Cl_n$ carries a \textbf{transpose} endomorphism $(-)^t:\Cl_n\to \Cl_n$ determined by
\[
(e_{i_1} e_{i_2}\cdots e_{i_k})^t=e_{i_k}\cdots e_{i_2} e_{i_1}.
\]
The transpose satisfies $(a^t)^t=a$ and $(ab)^t=b^t a^t$ for all $a,b\in \Cl_n$, and thus it is an isomorphism between $\Cl_n$ and its \textbf{opposite algebra} $\Cl_n^{op}$. Hence left and right modules over $\Cl_n$ are equivalent under the transpose as follows. Let $V$ be a left module over $\Cl_n$, we can define a right module $\widetilde{V}$ over $\Cl_n$ whose underlying vector space is $V$ on which the right $\Cl_n$-multiplication is defined by $v\cdot a:=(a^t)\cdot v$ for all $v\in V$, $a\in \Cl_n$.

Now the transpose on $\Cl_n$ extends to a transpose endomorphism on $\Cl_{n}^h$ by putting together the transpose on $\Cl_n$ and the conjugation on $\HH$:
\[
(a\otimes z)^t:=a^t\otimes \overline{z}
\]
for all $a\in \Cl_n$, $z\in \HH$. The extended transpose is an isomorphism of algebras $(-)^t:\Cl_n^h\cong(\Cl_n^h)^{op}$. Therefore left and right modules over $\Cl_n^h$ are also equivalent through the construction $V\mapsto \widetilde{V}$ as before.

\subsubsection{Bimodules} So there is nothing new by considering right modules. It is more interesting to consider bimodules. As we will see, the canonical bimodule over $\Cl_n^h$, which is $\Cl_n^h$ itself via left and right multiplications on itself, can be written as a tensor product of left and right modules. We focus on the dimensions $n\equiv 4,5,6,8\bmod 8$.

For $n=8k+4$, $\Cl_{8k+4}^h$ is of form $\RR(N)$ and recall $\Delta_{8k+4}^{h,\pm}$ are constructed from left matrix multiplication of $\RR(N)$ on $\RR^N$ and distinguished by the volume element. Here we think of $\Cl_n$ as the subalgebra $\Cl_n\otimes 1$ of $\Cl_{n}^h=\Cl_n\otimes_\RR\HH$ and thus think of the volume element as an element of $\Cl_{n}^h$. Let $\Delta_{8k+4}^h$ be the ungraded $\Cl_{8k+4}^h$-module that underlies either $\Delta_{8k+4}^{h,+}$ or $\Delta_{8k+4}^{h,-}$, then since the matrix representation is faithful, we have an injective homomorphism
\[
\Cl_{8k+4}^h\to \End_\RR(\Delta_{8k+4}^h)
\]
which must be an isomorphism since both algebras have the same dimension. It is well-known $\End_\RR(V)\cong V\otimes_\RR V^*$ as bimodules over $\End_\RR(V)$. Therefore we have an isomorphism of $\Cl_{8k+4}^h$-bimodules
\[
\Cl_{8k+4}^h\cong \Delta_{8k+4}^h\otimes_\RR (\Delta_{8k+4}^h)^*.
\]
Now equipping $\Delta_{8k+4}^h$ with either of the gradings from $\Delta_{8k+4}^{h,\pm}$, the induced $\ZZ_2$-gradings on $\Delta_{8k+4}^h\otimes_\RR (\Delta_{8k+4}^h)^*$ are the same, and it is clear the above isomorphism is a $\ZZ_2$-graded one. Next observe that $\widetilde{\Delta}_{8k+4}^h$ is equivalent to $(\Delta_{8k+4}^h)^*$ as $\ZZ_2$-graded right modules over $\Cl_{8k+4}^h$. Indeed for dimension reasons both are irreducible and the actions of the volume element on them are the same. Therefore we have an isomorphism of $\ZZ_2$-graded real $\Cl_{8k+4}^h$-bimodules
	\begin{equation*}\label{biiso4}
		\Cl_{8k+4}^h\cong\Delta_{8k+4}^h\otimes_\RR\widetilde{\Delta}_{8k+4}^h.
	\end{equation*}
	
For $n=8k+5$, the volume element $\omega_{8k+5}\in \Cl_{8k+5}^h$ is central and satisfies $\omega_{8k+5}^2=-1$. Thus $\omega_{8k+5}$ generates a central $\ZZ_2$-graded subalgebra $\CC^\omega$ of $\Cl_{8k+5}^h$ that is isomorphic to $\CC$. It follows we have an algebra isomorphism
\[
\Cl_{8k+5}^h=(\Cl_{8k+5}^{h})^0\oplus\omega_{8k+5}(\Cl_{8k+5}^{h})^0= (\Cl_{8k+5}^{h})^0\otimes_\RR\CC^\omega\cong\Cl_{8k+4}^h\otimes_\RR\CC
\]
where the $\ZZ_2$-grading on $\Cl_{8k+4}^h\otimes_\RR\CC$ is only contributed from the decomposition $\CC=\RR+\mathbf{i}\RR$. Similarly every $\ZZ_2$-graded $\RR$-module $V$ over $\Cl_{8k+5}^h$ can be written as
\[
V=V^0\oplus \omega_{8k+5} V^0= V^0\otimes_\RR\CC.
\]
In particular $\Delta_{8k+5}^h\cong\Delta_{8k+4}^h\otimes_\RR\CC$ and similarly $\widetilde{\Delta}_{8k+5}^h\cong\widetilde{\Delta}_{8k+4}^h\otimes_\RR\CC$. Then using the result in the $8k+4$ case, we get an isomorphism of $\ZZ_2$-graded real $\Cl_{8k+5}^h$-bimodules
\[
\Cl_{8k+5}^h\cong \Delta_{8k+5}^h\otimes_\CC\widetilde{\Delta}_{8k+5}^h.
\]
Here the $\ZZ_2$-grading on $\Delta_{8k+5}^h\otimes_\CC\widetilde{\Delta}_{8k+5}^h$ is given as follows. Write $\Delta_{8k+5}^h=(\Delta_{8k+5}^h)^0\otimes_\RR\CC$ then $\Delta_{8k+5}^h\otimes_\CC\widetilde{\Delta}_{8k+5}^h=(\Delta_{8k+5}^h)^0\otimes_\RR\widetilde{\Delta}_{8k+5}^h$ has a $\ZZ_2$-grading inherited from $\widetilde{\Delta}_{8k+5}^h$. Or equivalently one can write $\Delta_{8k+5}^h\otimes_\CC\widetilde{\Delta}_{8k+5}^h$ as $\Delta_{8k+5}^h\otimes_\RR (\widetilde{\Delta}_{8k+5}^h)^0$ and use the $\ZZ_2$-grading on $\Delta_{8k+5}^h$.

For $n=8k+6$ there is a strong analogy. The volume element $\omega_{8k+6}$ generates a central subalgebra $\CC^\omega$ of $(\Cl_{8k+6}^h)^0$, moreover $\omega_{8k+6}$ and $e_{8k+6}$ together generate a $\ZZ_2$-graded subalgebra $\HH^{e,\omega}$ that is isomorphic to $\HH$. Notice under the isomorphism $(\Cl_{8k+6}^h)^0\cong \Cl_{8k+5}^h$, the subalgebra $\CC^\omega$ here coincides with the one just discussed in the $8k+5$ case. Consider now $\Cl_{8k+4}^h$ as a subalgebra of $(\Cl_{8k+6}^h)^0$ via $\Cl_{8k+4}^h\cong (\Cl_{8k+5}^h)^0\subset (\Cl_{8k+6}^h)^0$. It is easy to check $\HH^{e,\omega}$ commutes with $\Cl_{8k+4}^h$ within $\Cl_{8k+6}^h$ and there is an isomorphism of $\ZZ_2$-graded algebras
\[
\Cl_{8k+6}^h=\Cl_{8k+4}^h\otimes_\RR\HH
\]
where the $\ZZ_2$-grading on $\Cl_{8k+4}\otimes_\RR\HH$ is only contributed from the decomposition $\HH=\CC+\CC\mathbf{j}$. For a $\ZZ_2$-graded module $V$ over $\Cl_{8k+6}^h$, we can first write it as
\[
V=V^0\oplus e_{8k+5}V^0=V^0\otimes_\CC\HH.
\]
Then by treating $V^0$ as a module over $\Cl_{8k+5}^h$, we can write it as $W\otimes_\RR\CC$ for some $\Cl_{8k+4}^h$-module $W$ and thus
\[
V=W\otimes_\RR \HH.
\]
In particular every $\Cl_{8k+6}^h$-module carries a right $\HH$-module structure that commutes with the left $\Cl_{8k+6}^h$-multiplication. Consequently $\widetilde{\Delta}_{8k+6}^h$ carries a left $\HH$-module structure commuting with the right $\Cl_{8k+6}^h$-multiplication. And similar as before, we have an isomorphism of $\ZZ_2$-graded real $\Cl_{8k+6}^h$-bimodules
\[
\Cl_{8k+6}^h\cong\Delta_{8k+6}^h\otimes_\HH\widetilde{\Delta}_{8k+6}^h.
\]
Here $\otimes_\HH$ means equating the right $\HH$-action on $\Delta_{8k+6}^h$ with the left one on $\widetilde{\Delta}_{8k+6}^h$, and the $\ZZ_2$-grading on $\Delta_{8k+6}^h\otimes_\HH\widetilde{\Delta}_{8k+6}^h$ is given by writing $\Delta_{8k+6}^h=(\Delta_{8k+6}^h)^0\otimes_\CC\HH$ and then $\Delta_{8k+6}^h\otimes_\HH\widetilde{\Delta}_{8k+6}^h=(\Delta_{8k+6}^h)^0\otimes_\CC\widetilde{\Delta}_{8k+6}^h$ inherits a $\ZZ_2$-grading from $\widetilde{\Delta}_{8k+6}^h$.

Finally for $n=8k$, consider first $\CCl_{8k}^h$ which is of form $\CC(N)$. Then we have an isomorphism of $\ZZ_2$-graded complex $\Cl_{8k}^h$-bimodules $\CCl_{8k}^h\cong\Delta_{8k,\CC}^h\otimes_\CC\widetilde{\Delta}_{8k,\CC}^h$.
Since $\CCl_{8k}^h=\Cl_{8k}^h\oplus\Cl_{8k}^h$ as real bimodules over $\Cl_{8k}^h$, we may write
\[
\Cl_{8k}^h\cong \frac{1}{2}\Delta_{8k,\CC}^h\otimes_\CC\widetilde{\Delta}_{8k,\CC}^h.
\]

In summary we have proved:
\begin{proposition}\label{bimodule}
	For $n\equiv 0,4,5,6 \bmod 8$, there are isomorphisms of $\ZZ_2$-graded real $\Cl_{n}^h$-bimodules:
	\begin{align*}
		\Cl_{8k}^h&\cong \frac{1}{2}\Delta_{8k,\CC}^h\otimes_\CC\widetilde{\Delta}_{8k,\CC}^h\\
		\Cl_{8k+4}^h&\cong\Delta_{8k+4}^h\otimes_\RR\widetilde{\Delta}_{8k+4}^h\\
		\Cl_{8k+5}^h&\cong\Delta_{8k+5}^h\otimes_\CC\widetilde{\Delta}_{8k+5}^h\\
		\Cl_{8k+6}^h&\cong \Delta_{8k+6}^h\otimes_\HH\widetilde{\Delta}_{8k+6}^h
	\end{align*}
\end{proposition}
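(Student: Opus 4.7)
The plan is to collect the four isomorphisms that were essentially constructed in the preceding discussion, organizing the argument so that the $n=8k+4$ case does the real work and the remaining three cases are deduced from it by extension of scalars.

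First I would treat $n = 8k+4$, where \Cref{cliffalg} together with \Cref{halfbott} gives $\Cl_{8k+4}^h \cong \RR(N)$ for the appropriate $N$. Since $\Delta_{8k+4}^h$ is the unique (up to equivalence) ungraded irreducible real module, the matrix representation $\Cl_{8k+4}^h \to \End_\RR(\Delta_{8k+4}^h)$ is an injective map of real algebras of the same dimension, hence an isomorphism of bimodules. Composing with the canonical identification $\End_\RR(V) \cong V \otimes_\RR V^*$ and recognising $V^* \cong \widetilde{\Delta}_{8k+4}^h$ (both are the unique irreducible right $\Cl_{8k+4}^h$-module, so it suffices to match the action of the volume element under the transpose $(-)^t$) yields the asserted decomposition. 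To promote this to a $\ZZ_2$-graded isomorphism I would note that equipping the underlying vector space $\Delta_{8k+4}^h$ with either grading $\Delta_{8k+4}^{h,\pm}$ induces the same grading on the tensor product, which matches the standard one on $\Cl_{8k+4}^h$.

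For $n = 8k+5$ I would exploit that the volume element $\omega_{8k+5}$ is central with $\omega_{8k+5}^2 = -1$ and lies in odd degree, so it generates a graded-central copy $\CC^\omega \cong \CC$. This yields the $\ZZ_2$-graded algebra identification
\[
\Cl_{8k+5}^h \;=\; (\Cl_{8k+5}^h)^0 \oplus \omega_{8k+5}\!\cdot\!(\Cl_{8k+5}^h)^0 \;\cong\; \Cl_{8k+4}^h \otimes_\RR \CC^\omega,
\]
with the $\CC^\omega$ factor carrying the only $\ZZ_2$-grading. Every graded $\Cl_{8k+5}^h$-module thus takes the form $V^0 \otimes_\RR \CC^\omega$, and in particular $\Delta_{8k+5}^h \cong \Delta_{8k+4}^h \otimes_\RR \CC^\omega$ and similarly for $\widetilde{\Delta}_{8k+5}^h$. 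Tensoring the $n=8k+4$ bimodule isomorphism with $\CC^\omega$ and contracting one factor of $\CC$ produces the claim. The case $n = 8k+6$ proceeds analogously: the subalgebra generated by $\omega_{8k+6}$ and $e_{8k+6}$ is a graded copy of $\HH$ commuting with $\Cl_{8k+4}^h \subset (\Cl_{8k+6}^h)^0$, producing $\Cl_{8k+6}^h \cong \Cl_{8k+4}^h \otimes_\RR \HH$; both $\Delta_{8k+6}^h$ and $\widetilde{\Delta}_{8k+6}^h$ then acquire commuting right- and left-$\HH$-actions, and the bimodule formula descends across $\otimes_\HH$.

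Finally, for $n = 8k$ the algebra $\Cl_{8k}^h$ is no longer simple over $\RR$, but its complexification $\CCl_{8k}^h$ is a complex matrix algebra. Repeating the matrix-algebra argument of the $8k+4$ case over $\CC$ produces an isomorphism $\CCl_{8k}^h \cong \Delta_{8k,\CC}^h \otimes_\CC \widetilde{\Delta}_{8k,\CC}^h$ of $\ZZ_2$-graded complex bimodules. Restricting scalars to $\RR$ and using $\CCl_{8k}^h \cong \Cl_{8k}^h \oplus \Cl_{8k}^h$ as real $\Cl_{8k}^h$-bimodules then extracts the stated formula with the $\tfrac{1}{2}$ prefactor. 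The main obstacle throughout is book-keeping for the $\ZZ_2$-grading --- in particular, verifying that the transpose identification $V^* \cong \widetilde{V}$ is grading-preserving and that the grading on each tensor product inherited from one factor agrees with that inherited from the other --- but these amount to checking the action of the (real or complex) volume element in each case, which \Cref{z2identities} and \Cref{scalarchange} have already prepared.
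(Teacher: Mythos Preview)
Your proposal is correct and follows essentially the same route as the paper: the $8k+4$ case via the matrix-algebra isomorphism $\Cl_{8k+4}^h\cong\End_\RR(\Delta_{8k+4}^h)$ together with $V^*\cong\widetilde V$, the $8k+5$ and $8k+6$ cases by extending scalars along the central subalgebras $\CC^\omega$ and $\HH^{e,\omega}$, and the $8k$ case by complexifying first and then halving. One small slip: your stated reason for treating $n=8k$ differently---that $\Cl_{8k}^h$ is ``no longer simple over $\RR$''---is not accurate, since $\Cl_{8k}^h\cong\HH(N)$ is simple; the genuine obstruction is that its commutant on the irreducible module is $\HH$ rather than $\RR$, so $\Cl_{8k}^h\to\End_\RR(\Delta_{8k}^h)$ fails to be surjective, which is why passing to $\CCl_{8k}^h\cong\CC(2N)$ is needed.
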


This proposition and its proof will be used in computing the index of certain Dirac operator, see \Cref{cliffindexcomputed}.

\section{Spin$^h$ vector bundles}\label{sec4}
In this chapter, we develop a topological theory of spin$^h$ vector bundles. In particular, we construct "Thom classes" for spin$^h$ vector bundles in real and complex K-theories and exhibit versions of Thom isomorphisms for spin$^h$ vector bundles. Using these "Thom classes", we prove a version of Riemann-Roch theorem for spin$^h$ maps which picks out a special characteristic class for spin$^h$ vector bundles analogous to the $\hat{A}$-class for spin vector bundles. Finally we compute the cohomology of the classifying space of (stable) spin$^h$ vector bundles.
\subsection{Spin$^h$ structures on vector bundles}
Recall the spin group $\Spin(n)$ is a subgroup of $\Cl_n^\times$, the multiplicative group of $\Cl_n$. Let $\Sp(1)$ be the group of unit quaternions, then we have a natural group homomorphism
\[
\Spin(n)\times \Sp(1)\to (\Cl_n^h)^\times=(\Cl_n\otimes_\RR\HH)^\times,
\]
whose kernel is the "diagonal" $\ZZ_2$ generated by $(-1,-1)$. By modding out the kernel, we obtain the group
\[
\Spin^h(n):=\Spin(n)\times \Sp(1)/\ZZ_2\subset (\Cl_n^h)^\times.
\]
Since $\Spin(n)\subset \Cl_n^0$, we see $\Spin^h(n)\subset(\Cl_n^h)^0$. From here we can see the representation theory of $\Spin^h(n)$ is closely related to that of $\Cl_n^h$. Let $V$ be a $\ZZ_2$-graded $\RR$-module (resp. $\CC$-module) over $\Cl_n^h$, then $V^0$ is a module over $(\Cl_n^h)^0$ and therefore a representation of $\Spin^h(n)$. If $V$ is irreducible over $\Cl_n^h$, then $V^0$ is irreducible over $\Spin^h(n)$ because $\Spin(n)$ contains a set of generators of $\Cl_n^0$ and $\Sp(1)$ contains a set of generators of $\HH$.
%\begin{proposition}
%	Let $V$ be an irreducible $\ZZ_2$-graded real (resp. complex) module over $\Cl_n^h$. Then $V^0$ is an irreducible real (resp. complex) representation of $\Spin^h(n)$. 
%\end{proposition}
%\begin{proof}
% Since $V$ is an irreducible $\ZZ_2$-graded module over $\Cl_n^h$, $V^0$ must be an irreducible module over $(\Cl_n^h)^0$, otherwise $V^0$ contains a non-trivial proper submodule $W^0$ which then extends to a non-trivial $\ZZ_2$-graded proper submodule $W=\Cl_n^h\otimes_{(\Cl_n^h)^0}W^0\subset V$. Now we note $\Spin(n)$ contains a set of generators of $\Cl_n^0$, namely $e_{i_1}e_{i_2}\cdots e_{i_k}$ for $k$ even, and meanwhile $\Sp(1)$ contains a set of generators of $\HH$. Therefore $\Spin^h(n)$ contains a set of generators of $(\Cl_n^h)^0$. This implies $V^0$, being irreducible over $(\Cl_n^h)^0$, is also irreducible over $\Spin^h(n)$.
%\end{proof}

But $\Spin^h(n)$ owns more irreducible representations than $\Cl_n^h$. For instance, through projections onto its two factors, $\Spin^h(n)$ admits two natural orthogonal representations
\begin{equation}\label{elementrep}
	\begin{aligned}
		&\Spin^h(n)\to \Spin(n)/\ZZ_2=\SO(n)\\
		&\Spin^h(n)\to \Sp(1)/\ZZ_2=\SO(3)
	\end{aligned}
\end{equation}
Thus irreducible representations of $\SO(n)$ and $\SO(3)$ also become irreducible representations of $\Spin^h(n)$. By contrast, $\Cl_n^h$ has only one or two irreducible representations.

Now putting the two projections in \labelcref{elementrep} together, we get a short exact sequence of groups:
\begin{equation}\label{fundseqspinh}
	1\to \ZZ_2\to \Spin^h(n)\to \SO(n)\times \SO(3)\to 1
\end{equation}
where $\ZZ_2$ corresponds to $\pm 1$ in $\Cl_n^h$. This means $\Spin^h(n)$ is a \textit{central} extension of $\SO(n)\times \SO(3)$ by $\ZZ_2$. Group extensions of this type are classified by $$H^2(\BSO(n)\times \BSO(3);\ZZ_2)=\{0, w_2,w_2',w_2+w_2'\}$$ where $w_2\in H^2(\BSO(n);\ZZ_2)$ and $w_2'\in H^2(\BSO(3);\ZZ_2)$ stand for the corresponding second Stiefel-Whitney classes. Clearly $\Spin^h(n)$ is the extension that corresponds to $w_2+w_2'$; the other three elements $0, w_2, w_2'$ correspond to $\ZZ_2\times \SO(n)\times \SO(3)$, $\Spin(n)\times \SO(3)$ and $\SO(n)\times \Sp(1)$ respectively.
\begin{definition}\label{spinhstructure}
	Let $E$ be an oriented Riemannian vector bundle of rank $n$, and let $P_{\SO}(E)$ denote its oriented frame bundle. A \textbf{spin$^h$ structure} on $E$ is a principal $\Spin^h(n)$-bundle $P_{\Spin^h}(E)$ and a map of principal bundles $P_{\Spin^h}(E)\to P_{\SO}(E)$ which is equivariant with respect to $\Spin^h(n)\to \SO(n)$ in \labelcref{elementrep}. Or equivalently, in view of \labelcref{elementrep} and \labelcref{fundseqspinh}, a spin$^h$ structure is a rank $3$ oriented Riemannian vector bundle $\mathfrak{h}_E$ with $w_2(\mathfrak{h}_E)=w_2(E)$.
	
	With a fixed choice of $\mathfrak{h}_E$ or $P_{\Spin^h}(E)$, we say $E$ is a \textbf{spin$^h$ vector bundle}. The bundles $\mathfrak{h}_E$ and $P_{\Spin^h}(E)$ are called the \textbf{canonical bundle} and the \textbf{structure bundle} of $E$ respectively.
\end{definition}

\begin{remark}
The primary obstruction to the existence of spin$^h$ structures is the fifth integral Stiefel-Whitney class $W_5$ (see \cite{MAAM}). There are non-trivial secondary obstructions.
\end{remark}

%We insist on including metrics in our discussion for it will be convenient later to construct Dirac operators.

\begin{definition}\label{spinhmanifold}
	A smooth manifold $M$ is a \textbf{spin$^h$ manifold} if its tangent bundle is equipped with a spin$^h$ structure. Spin$^h$ manifolds with boundary and spin$^h$ cobordism can be defined in the usual way.
\end{definition}
\begin{example}\label{existence}
	Every compact oriented Riemannian manifold of dimension $\le 7$ admits a spin$^h$ structure \cite{MAAM}. Every oriented Riemannian $4$-manifold (including non-compact ones) admits two natural spin$^h$ structures whose canonical bundles are the bundle of self-dual two forms and the bundle of anti-self-dual two forms.
\end{example}
\begin{example}\label{bundleproduct}
	Let $F$ be a spin vector bundle of rank $m$ over $Y$ and $E$ a spin$^h$ vector bundle of rank $n$ over $X$, then $F\times E$ is a spin$^h$ vector bundle over $Y\times X$ with canonical bundle $\mathfrak{h}_{F\times E}=\pi_X^*\mathfrak{h}_E$ where $\pi_X:Y\times X\to X$ is the projection onto $X$. Let $P_{\Spin}(F)$ denote the structural principal $\Spin(m)$-bundle associated to $F$, then the structure bundle $P_{\Spin^h}(F\times E)$ of $F\times E$ is derived from the principal bundle $P_{\Spin}(F)\times P_{\Spin^h}(E)$ through the natural homomorphism
	\[
	\Spin(m)\times\Spin^h(n)\to\Spin^h(m+n)
	\]
	induced from the isomorphism given in \Cref{z2gradediso}(i). 
\end{example}
%\begin{caution}
%	In order to avoid confusion, the letters $F,E$ will often be reserved to stand for spin and spin$^h$ vector bundles respectively.
%\end{caution}
\subsection{Quaternionic Clifford and $^h$spinor bundles}
Recall for a spin vector bundle $F\to Y$ of rank $m$, its \textbf{Clifford bundle} is defined to be the bundle of $\ZZ_2$-graded $\RR$-algebra
\begin{equation*}
	\Cl(F)=P_{\Spin}(F)\times_{\Ad}\Cl_m
\end{equation*}
with the natural inherited $\ZZ_2$-grading, where $P_{\Spin}(F)$ is the principal $\Spin(m)$-bundle associated to $F$ and $\Spin(m)$ acts on $\Cl_m$ through the adjoint representation
\[
\Ad: \Spin(m)\to \Aut(\Cl_m), \quad g\mapsto \Ad_g(x):=gxg^{-1}, \text{ for }x\in\Cl_m.
\]
Since $-1\in\ker\Ad$, the adjoint representation descends to a representation $\Ad:\SO(m)\to\Aut(\Cl_n)$. As such, the Clifford bundle only relies on the metric on $F$.
%Alternatively $\Cl(F)$ can be described as
%\[
%\Cl(F)=\left(\bigoplus_{r\ge 0} F^{\otimes r}\right)\Big/I(F),
%\]
%where $I(F)$ is the bundle of ideals, whose fiber at $y\in Y$ is the two-sided ideal $I(F_y)$ in $\left(\bigoplus_{r\ge 0} F^{\otimes r}\right)$, generated by elements $e\otimes e+\|e\|^2$ for $e\in F_y$. In particular $\Cl(F_y)$ is the Clifford algebra generated by $F_y$ with respect to the inner product on $F_y$.
%\begin{remark}
%	We caution the reader (see \cite{LM89}) that $\Cl(F)$ differs from the bundle
%\[
%\Cl_{\Spin}(F)=P_{\Spin}(F)\times_l\Cl_m
%\]
%where $l$ is the left multiplicaition of $\Spin(m)$ on $\Cl_m$ through the natural inclusion $\Spin(m)\subset\Cl_m^\times$.
%\end{remark}

\begin{definition}
The \textbf{quaternionic Clifford bundle} of a spin$^h$ vector bundle $E\to X$ of rank $n$ is the bundle of $\ZZ_2$-graded $\RR$-algebra
\begin{equation*}
	\Cl^h(E)=P_{\Spin^h}(E)\times_{\Ad^h} \Cl_n^h
\end{equation*}
with the natural inherited $\ZZ_2$-grading, where $\Spin^h(n)$ acts on $\Cl_n^h$ through the adjoint representation 
\[
\Ad^h: \Spin^h(n)\to \Aut(\Cl_n^h), \quad g\mapsto \Ad^h_g(x):=gxg^{-1}, \text{ for }x\in\Cl_n^h.
\]
% Note that since $\RR^n\subset\Cl^1_{n,\HH}\subset\Cl_n^h$, we have $E\subset\Cl_{\HH}^1(E)\subset\Cl^h(E)$.
\end{definition}
This adjoint representation can be lifted to the adjoint representation of $\Spin(n)\times\Sp(1)$ on $\Cl_n^h$, which is the tensor product of the adjoint representations of $\Spin(n)$ on $\Cl_n$ and $\Sp(1)=\Spin(3)$ on $\HH=\Cl_{3}^0$. So the lifted representation (and therefore $\Ad^h$ as well) descends to a representation of $\SO(n)\times \SO(3)$ which is the tensor product of the (descended) adjoint representations of $\SO(n)$ on $\Cl_n$ and $\SO(3)$ on $\Cl_3^0$. Therefore $\Cl^h(E)$ depends only on the metrics on $E$ and $\mathfrak{h}_E$, and
\[
\Cl^h(E)=\Cl(E)\otimes_\RR\Cl^0(\mathfrak{h}_E).
\]
So the construction of the quaternionic Clifford bundle does not really require a spin$^h$ structure. However, the presence of the spin$^h$ structure will allow us to construct interesting bundles of modules over the quaternionic Clifford bundle.

\begin{definition}
	Let $E\to X$ be a spin$^h$ vector bundle of rank $n$. A \textbf{real $^h$spinor bundle} of $E$ is a bundle of the form
\[
	S^h(E,V):=P_{\Spin^h}(E)\times_{\mu}V,
\]
where $V$ is a $\RR$-module over $\Cl_n^h$ and $\mu$ is the composition $\Spin^h(n)\subset(\Cl_n^h)^\times\to \Aut(V)$. Similarly a \textbf{complex $^h$spinor bundle} of $E$ is a bundle of the form
\[
S^h_{\CC}(E,V_\CC):=P_{\Spin^h}(E)\times_\mu V_\CC,
\]
where $V_\CC$ is a $\CC$-module over $\Cl_n^h$. If the module $V$ (or $V_\CC$) is $\ZZ_2$-graded, the corresponding bundle is said to be $\ZZ_2$-graded. It is clear (real or complex, ungraded or $\ZZ_2$-graded) $^h$spinor bundles are bundles of modules over the quaternionic Clifford bundle.
\end{definition}
\begin{example}[fundamental $\ZZ_2$-graded $^h$spinor bundle]\label{fundspinor}
	We denote the corresponding $\ZZ_2$-graded real $^h$spinor bundle constructed from the $\ZZ_2$-graded modules $\Delta_{n}^h$ (resp. $\Delta_{n}^{h,\pm}$ if $n\equiv 0$ mod $4$) by $\SSS^h(E)$ (resp. $\SSS^{h,\pm}(E)$). Similarly, $\SSS_{\CC}^h(E)$ (resp. $\SSS_{\CC}^{h,\pm}(E)$ if $n$ is even) denotes the $\ZZ_2$-graded complex $^h$spinor bundle that corresponds to $\Delta_{n,\CC}^h$ (resp. $\Delta_{n,\CC}^{h,\pm}$). We call them the \textbf{fundamental $\ZZ_2$-graded (real or complex) $^h$spinor bundles} of $E$. Every $\ZZ_2$-graded $^h$spinor bundle of $E$ can be decomposed into a direct sum of fundamental ones.
\end{example}

\subsection{Thom classes and Thom isomorphisms}
In \cite{ABS64}, $\varphi$ is upgraded, for each spin vector bundle $F\to Y$ of rank $n$, to a homomorphism $$\varphi_F:\hat{\frN}_n(\RR)\to \widetilde{\KO}(Th(F))$$ where $Th(F)$ is the Thom space of $F$. Similarly if $F$ is spin$^c$, we have a homomorphism
\[
\varphi^c_F:\hat{\frN}_n(\CC)\to \widetilde{\KU}(Th(F)).
\]
%If $F'\to Y'$ is another spin vector bundle of rank $m'$, then we have a commutative diagram
%\begin{equation}
%	\begin{tikzcd}
%	\hat{\frN}_m(\RR)\otimes\hat{\frN}_{m'}(\RR)	\arrow[r,"\hat{\otimes}"]\arrow[d,"\varphi_F\otimes\varphi_{F'}"]& \hat{\frN}_{m+m'}(\RR)\arrow[d,"\varphi_{F\times F'}"]\\
%	\widetilde{\KO}(Th(F))\otimes \widetilde{\KO}(Th(F'))\arrow[r,"\boxtimes"]& \widetilde{\KO}(Th(F\times F'))
%	\end{tikzcd}
%\end{equation}
%where $\boxtimes$ is the external product in $\KO$.

Now we upgrade $\varphi^h$ for each spin$^h$ vector bundle $E$ of rank $n$ to a homomorphism
\[
\varphi^h_E:\hat{\frN}_n(\HH)\to \widetilde{\KO}(Th(E))
\]
as follows.
%\begin{lemma}\label{isocliffbund}
%	Let $E\to X$ be a spin$^h$ vector bundle of rank $n$ and $F\to Y$ be a spin vector bundle of rank $m$. Suppose $F\times E$ is given the spin$^h$ structure as in \Cref{bundleproduct}. Then there is an isomorphism of $\ZZ_2$-graded $\RR$-algebra bundles
%	\begin{equation*}
%		\Cl_{\HH}(F\times E)\cong \pi_Y^*\Cl(F)\hat{\otimes}_\RR\pi_X^*\Cl^h(E)
%	\end{equation*}
%	extending the natural identification of vector bundles $F\times E= \pi_Y^* F\oplus_\RR \pi_X^* E$, 
%	where $\pi_Y,\pi_X$ are the projections from $Y\times X$ to $Y$ and $X$ respectively and $\hat{\otimes}_\RR$ means fiberwise $\ZZ_2$-graded tensor product.
%\end{lemma}
%\begin{proof}
%	This follows easily from our definition, \Cref{gradedCliff} and \Cref{bundleproduct}.
%\end{proof}
Let $D(E), \partial D(E)$ denote the (closed) unit disk and sphere bundle of $E$ respectively. Let $\pi:D(E)\to X$ be the bundle projection. 

For a $\ZZ_2$-graded $\RR$-module $V$ over $\Cl_n^h$, we have the associated $\ZZ_2$-graded $^h$spinor bundle $S^h(E,V)$. Then the pull-backs of the degree $0$ and degree $1$ parts of $S^h(E,V)$ are canonically isomorphic over $\partial D(E)$ via the map
\[
\mu_e: \left(\pi^*S^h\left(E,V^0\right)\right)_e\to\left(\pi^*S^h\left(E,V^1\right)\right)_e\
\]
given at $e\in \partial D(E)$ by $\mu_e(\sigma)=e\cdot\sigma$.
That is, Clifford multiplication by $e$ itself. Since $e\cdot e=-\|e\|^2=-1$, each map $\mu_e$ is an isomorphism. This defines a difference class
\[
\varphi^h_E(V):=\left[\pi^*S^h\left(E,V^0\right),\pi^*S^h\left(E,V^1\right);\mu\right]\in\KO(D(E),\partial D(E))=\widetilde{\KO}(Th(E)).
\]

Clearly $\varphi^h_E(V)$ depends only on the equivalence class of $V$. If $V$ is restricted from a $\ZZ_2$-graded module over $\Cl_{n+1}^h$, i.e. $[V]$ belongs to $i^*\hat{\frM}_{n+1}(\HH)$, then we may embed $E$ into $E\oplus\RR$, where $\RR$ is the trivialized bundle with a nowhere zero cross-section $e_{n+1}$ and a metric so that $e_{n+1}$ is of norm one. This way the $^h$spinor bundle $S^h(E,V)$ is contained in $S^h(E\oplus\RR,V)$ and $\pi^* S^h(E,V)$ extends to a bundle over $D(E\oplus\RR)$. Then 
\[
\tilde{\mu}_e(\sigma)=(e+(\sqrt{1-\|e\|^2}) e_{n+1})\cdot\sigma\quad \text{for }e\in D(E)\subset D(E\oplus\RR)
\]
extends the isomorphism $\mu$ on $\partial D(E)$ to an isomorphism on $D(E)$. This means $\varphi^h_E$ descends to a group homomorphism
\[
\varphi^h_E:\hat{\frN}_n(\HH)\to \widetilde{\KO}(Th(E)).
\]

From definition $\varphi^h_E$ is functorial with respect to pull-backs of spin$^h$ bundles and if $E$ is the trivial bundle over a point, then $\varphi^h_E$ coincides with the composition
\[
\hat{\frN}_n(\HH)\xrightarrow{\varphi^h}\KSp^{-n}(\pt)\xrightarrow{\Res_\RR^\HH}\KO^{-n}(\pt).
\]

%\begin{remark}\label{functorial}
%	$\overline{\varphi}^h_E$ is functorial with respect to pull-backs of spin$^h$ vector bundles. That is, if $f:X'\to X$ is a continuous map, let $Th(f): Th(f^*E)\to Th(E)$ denote the map between Thom spaces induced from the bundle map $f^*E\to E$ covering $f$, then we have a commutative diagram:
%	\[
%	\begin{tikzcd}[row sep=small, column sep=small]
%		&\hat{\frN}_n(\HH)\arrow[dl,"\overline{\varphi}^h_{E}"']\arrow[dr,"\overline{\varphi}^h_{f^*E}"]\\
%		\widetilde{\KO}(Th(E))\arrow[rr,"Th(f)^*"]&&\widetilde{\KO}(Th(f^*E))
%	\end{tikzcd}
%	\]
%\end{remark}
Moreover the module structure of $\hat{\frN}_*(\HH)$ over $\hat{\frN}_*(\RR)$ is compatible with the external product in real K-theory.
\begin{proposition}\label{multiplicative}
	Let $E\to X$ be a spin$^h$ vector bundle of rank $n$ and $F\to X$ a spin vector bundle of rank $m$. Suppose $F\times E$ is given the spin$^h$ structure as in \Cref{bundleproduct}. Then the following diagram commutes:
	\begin{equation}
	\begin{tikzcd}
	\hat{\frN}_m(\RR)\otimes\hat{\frN}_n(\HH)	\arrow[r,"\hat{\otimes}"]\arrow[d,"\varphi_F\otimes\varphi^h_{E}"]& \hat{\frN}_{m+n}(\HH)\arrow[d,"\varphi^h_{F\times E}"]\\
	\widetilde{\KO}(Th(F))\otimes \widetilde{\KO}(Th(E))\arrow[r,"\boxtimes"]& \widetilde{\KO}(Th(F\times E))
	\end{tikzcd}
\end{equation}
\end{proposition}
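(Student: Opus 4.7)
The plan is to follow, essentially verbatim, the proof of \cite[Proposition 11.1]{ABS64} that $\varphi$ is a graded ring homomorphism, since in the present setting the extra $\HH$-action on $W$ commutes with everything relevant and plays no role in the computation of symbols. The key algebraic input is the graded algebra isomorphism $\Cl_m\hat{\otimes}\Cl_n^h\cong \Cl_{m+n}^h$ from \Cref{z2gradediso}(i), which is directly compatible with the group homomorphism $\Spin(m)\times \Spin^h(n)\to \Spin^h(m+n)$ used in \Cref{bundleproduct} to equip $F\times E$ with its spin$^h$ structure.

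First, given a $\ZZ_2$-graded $\Cl_m$-module $V$ and a $\ZZ_2$-graded $\Cl_n^h$-module $W$, I would identify the $^h$spinor bundle of $F\times E$ built from $V\hat{\otimes}W$ with the graded external tensor product of $S(F,V)$ and $S^h(E,W)$. This is immediate from the associated bundle construction together with the compatibility of algebra and group identifications just noted.

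Next I would unpack the Clifford multiplication on this tensor product. By \Cref{z2gradediso1}, a unit vector $(f,e)$ in the fiber of $F\oplus E$ corresponds to $f\hat{\otimes}1+1\hat{\otimes}e\in \Cl_m\hat{\otimes}\Cl_n^h$. The Koszul rule of \Cref{Z2gradedtensorproduct} then gives, for $v\in V^\alpha$ and $w\in W^\beta$,
\[
(f,e)\cdot(v\hat{\otimes}w)=(fv)\hat{\otimes}w+(-1)^\alpha\, v\hat{\otimes}(ew).
\]
Written as a block morphism with respect to the decompositions $(V\hat{\otimes}W)^0=(V^0\otimes W^0)\oplus(V^1\otimes W^1)$ and $(V\hat{\otimes}W)^1=(V^0\otimes W^1)\oplus(V^1\otimes W^0)$, this is precisely the standard symbol of the external cup product of the difference-class representatives of $\varphi_F(V)$ and $\varphi^h_E(W)$. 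Consequently the resulting class in $\widetilde{\KO}(Th(F\times E))$ is simultaneously $\varphi^h_{F\times E}(V\hat{\otimes}W)$ by definition and $\varphi_F(V)\boxtimes\varphi^h_E(W)$ by the K-theoretic formula for external products of relative classes, which proves commutativity of the diagram on pure tensors, and hence on all of $\hat{\frN}_m(\RR)\otimes\hat{\frN}_n(\HH)$ by bilinearity.

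The only obstacle is the careful bookkeeping of Koszul signs needed to confirm that the block morphism above matches the K-theoretic external product formula; this is purely algebraic, identical to the corresponding verification in \cite{ABS64}, and the quaternionic module structure on $W$ does not interact with it.
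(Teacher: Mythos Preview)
Your proposal is correct and follows exactly the approach the paper intends: the paper's own proof consists of a single sentence referring the reader to \cite[Proposition 11.1]{ABS64}, and what you have written is precisely a spelled-out version of that argument, with the additional observation that the quaternionic structure on $W$ commutes with everything and hence plays no role.
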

\begin{proof}
	The proof is the same as that of \cite[Prop. 11.1]{ABS64}.
\end{proof}
As an application of the upgraded homomorphism, we have:
\begin{theorem}\label{fakeThomclass}
	Let $E\to X$ be a spin$^h$ vector bundle of rank $8k+4$. Then
	\begin{equation*}
		\varphi_E^h(\dd^h)\in \widetilde{\KO}(Th(E))
	\end{equation*}
	restricted to each fiber over $x\in X$ generates $\widetilde{\KO}(Th(E_x))\cong\KO^{-8k-4}(\pt)\cong\ZZ$. Moreover, multiplication by $\varphi_E^h(\dd^h)$ induces a Thom isomorphism
	\[
	\KO^*(X)[\frac{1}{2}]\xrightarrow{\cong}\widetilde{\KO}^*(Th(E))[\frac{1}{2}].
	\]
\end{theorem}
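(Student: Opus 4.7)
My plan treats the two assertions in turn.

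For the fiberwise generation claim, naturality of the construction $\varphi_\bullet^h$ under pullback of spin$^h$ bundles reduces the problem, via restriction along $\{x\} \hookrightarrow X$, to $E_x \cong \RR^{8k+4}$ over a point. The explicit formula given in the text identifies $\varphi_{\RR^{8k+4}}^h(\dd^h)$ with the image of $\dd^h$ under
\[
\hat{\frN}_{8k+4}(\HH) \xrightarrow{\varphi^h} \KSp^{-8k-4}(\pt) \xrightarrow{\Res_\RR^\HH} \KO^{-8k-4}(\pt) \cong \widetilde{\KO}(Th(E_x)).
\]
The first arrow is an isomorphism by Theorem \ref{ABSH}; the residue class $\dd^h = \Delta_{8k+4}^{h,-}$ generates the source $\hat{\frN}_{8k+4}(\HH) \cong \ZZ$ by the corollary following Proposition \ref{freemodule}; and the second arrow is the Bott-provided isomorphism already invoked at the end of the proof of Theorem \ref{ABSH}. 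Hence $\varphi_E^h(\dd^h)|_{E_x}$ is a generator of $\widetilde{\KO}(Th(E_x)) \cong \ZZ$.

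For the Thom isomorphism, I proceed by cellular induction on a finite CW structure for $X$. Naturality of $\varphi_\bullet^h$ implies that multiplication by $\varphi_E^h(\dd^h)$ furnishes, for any CW-pair $(Y,A)$ in $X$, a morphism of long exact sequences relating $\KO^\ast(Y,A)[\tfrac{1}{2}]$ to $\widetilde{\KO}^{\ast+8k+4}\bigl(Th(E|_Y)/Th(E|_A)\bigr)[\tfrac{1}{2}]$. Trivializing $E$ over each cell, a five-lemma chase reduces the inductive step to the base case $X = \pt$, where the map is multiplication by a generator of $\KO^{-8k-4}(\pt) \cong \ZZ$. After inverting $2$ the ring $\KO^\ast(\pt)[\tfrac{1}{2}]$ becomes $4$-periodic (equivalently, $\KO[\tfrac{1}{2}] = \KSp[\tfrac{1}{2}]$), with periodicity element a generator of $\KO^{-4}(\pt)[\tfrac{1}{2}] = \ZZ[\tfrac{1}{2}]$; any generator of $\KO^{-8k-4}(\pt)$ is, up to a unit in $\ZZ[\tfrac{1}{2}]$, the $(2k{+}1)$-st power of this periodicity element (using the relation $\alpha^2 = 4\beta$ in $\pi_\ast\KO$), so multiplication by it is an isomorphism.

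The only substantive obstacle is to ensure the multiplier $\varphi_E^h(\dd^h)$ is a fiberwise generator in the strong sense needed for the five-lemma induction, and this is settled by the first step. The rest is a formal cellular/five-lemma argument, where inverting $2$ is used precisely to turn a degree $-4$ generator at a point into a unit modulo $4$-fold Bott periodicity; without inverting $2$, the factor of $4$ in $\alpha^2 = 4\beta$ obstructs an integral Thom isomorphism, consistent with the fact that spin$^h$ bundles carry only a symplectic (not a real) Thom class integrally.
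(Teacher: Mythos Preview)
Your proof is correct and follows essentially the same approach as the paper. For the fiberwise statement the paper argues via the commutation $\Res_\RR^\HH\varphi^h=\varphi\Res_\RR^\HH$ together with $\Res_\RR^\HH(\dd_{8k+4}^h)=\dd_{8k+4}$ (Lemma~\ref{scalarchange}), whereas you stay on the $\KSp$ side and invoke that $\Res_\RR^\HH:\KSp^{-8k-4}(\pt)\to\KO^{-8k-4}(\pt)$ is an isomorphism; these are the two sides of the same commutative square. For the global statement the paper simply says ``Mayer--Vietoris argument'' where you write out the cellular induction with the five-lemma, and both reduce to the same core fact, namely that a generator of $\KO^{-8k-4}(\pt)$ becomes a unit in $\KO^*(\pt)[\tfrac12]$ (your $\alpha^2=4\beta$ observation is exactly what the paper means by ``$\KO^{-8k-4}(\pt)[\tfrac12]$ generates $\KO^*(\pt)[\tfrac12]$ as a free module'').
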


\begin{proof}
Since when restricted to the fiber $E_x$ over $x\in X$, $\varphi^h_{E_x}$ coincides with $\Res_\RR^\HH\circ \varphi^h$, the first assertion follows from $\Res_\RR^\HH(\dd_{8k+4}^h)=\dd_{8k+4}$ and $\Res_\RR^\HH\varphi^h=\varphi\Res_\RR^\HH$. The second assertion follows from a Mayer-Vietoris argument and that $\KO^{-8k-4}(\pt)[\frac{1}{2}]$ generates $\KO^*(\pt)[\frac{1}{2}]$ as a free $\KO^*(\pt)[\frac{1}{2}]$-module.
\end{proof}
\begin{remark}
	It is necessary to invert $2$ in order to obtain an isomorphism. Indeed, when $X$ is a point the map $\KO^*(\pt)\to\widetilde{\KO}^*(S^{8k+4})=\KO^{*-8k-4}(\pt)\cong\KO^{*-4}(\pt)$ is never an isomorphism since the 2-torsion on both sides are placed in different degrees.
\end{remark}

%\begin{remark}
%	For $E\to X$ a spin$^h$ vector bundle of rank $8k$, the $\KO$-class $\overline{\varphi}^h_E(\dd^h)$ in fact lifts to a $\KSp$-class using the intrinsic (right) quaternionic structure on $\dd_{8k}^h$. Then similarly $\overline{\varphi}^h_E(\dd^h)$ induces an isomorphism $\KO^*(X)[\frac{1}{2}]\xrightarrow{\cong}\widetilde{\KSp}^*(Th(E))[\frac{1}{2}]$. Again $2$ must be inverted in order for this homomorphism to be an isomorphism. 
%	
%%	It is for this Thom isomorphism that we mentioned in the introduction one can define Thom classes for spin$^h$ vector bundles in symplectic K-theory, however we will prefer to work with the $\KO$-class for rank $8k+4$ spin$^h$ vector bundles in this paper.
%\end{remark}

%One can apply the same construction to complex modules. In \cite{ABS64} it is shown:
%\begin{theorem}[\cite{ABS64}]
%	Let $F\to Y$ be a spin vector bundle of rank $2m$, then there is a homomorphism $$\varphi^c_F:\hat{\frN}_{2m}(\CC)\to \widetilde{\KU}(Th(F))$$
%	so that multiplication by the class $\varphi^c_F(\dd_\CC)$ induces a Thom isomorphism
%	\[
%	\KU^*(Y)\xrightarrow{\cong} \widetilde{\KU}^*(Th(F)).
%	\]\qed
%\end{theorem}

Analogously using complex modules over the quaternionic Clifford algebras we have:
\begin{theorem}\label{complexthomiso}
	Let $E\to X$ be a spin$^h$ vector bundle of rank $2n$, then there is a homomorphism
	\[
	\overline{\varphi}^c_E:\hat{\frN}_{2n}^h(\CC)\to \widetilde{\KU}(Th(E))
	\]
	so that $\overline{\varphi}^c_E(\dd^h_{\CC})\in \widetilde{\KU}(Th(E))$ restricted to each fiber over $x\in X$ is \textit{twice} the generator of $\widetilde{\KU}(Th(E_x))\cong\KU^{-2n}(\pt)$. Moreover, multiplication by $\overline{\varphi}^c_E(\dd^h_{\CC})$ induces a Thom isomorphism
	\[
	\KU^*(X)[\frac{1}{2}]\xrightarrow{\cong} \widetilde{\KU}^*(Th(E))[\frac{1}{2}].
	\]
\end{theorem}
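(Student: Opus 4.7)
The plan is to mimic verbatim the construction and proof of \Cref{fakeThomclass}, replacing real $^h$spinor bundles by complex ones and landing in $\KU$-theory instead of $\KO$-theory, and then to identify the factor of $2$ via the Morita equivalence $\CCl^h_{2n} \simeq \CCl_{2n} \otimes_\CC \CC(2)$ of \Cref{halfbott}(ii). Concretely, for a $\ZZ_2$-graded $\CC$-module $V_\CC = V_\CC^0 \oplus V_\CC^1$ over $\Cl^h_{2n}$, I define
\[
\overline{\varphi}^c_E(V_\CC) := \bigl[\pi^* S^h_\CC(E, V_\CC^0),\ \pi^* S^h_\CC(E, V_\CC^1);\ \mu\bigr] \in \KU(D(E),\partial D(E)) = \widetilde{\KU}(Th(E)),
\]
where $\mu_e(\sigma) = e \cdot \sigma$ is Clifford multiplication on the fiber over $e \in \partial D(E)$. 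The identical argument using an extra unit section $e_{2n+1}$ to extend $\mu$ over all of $D(E)$ shows that the class vanishes on the image of $i^*\hat{\frM}^h_{2n+1}(\CC)$, so $\overline{\varphi}^c_E$ descends to $\hat{\frN}^h_{2n}(\CC)$.

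For the fiber calculation, note that when $X$ is a point and $E = \RR^{2n}$ with the trivial spin$^h$ structure, the construction of $\overline{\varphi}^c_E$ ignores the enhanced $\Sp(1)$-action and reduces to the ordinary Atiyah-Bott-Shapiro construction applied to the underlying $\CCl_{2n}$-module. Hence the fiber restriction is the composite
\[
\hat{\frN}^h_{2n}(\CC) \xrightarrow{\text{forget}} \hat{\frN}_{2n}(\CC) \xrightarrow{\varphi^c} \KU^{-2n}(\pt).
\]
Under the Morita equivalence of \Cref{extraremarks}(iii), $V_\CC \mapsto V_\CC \otimes_\CC \CC^2$ is an equivalence $\hat{\frN}_{2n}(\CC) \xrightarrow{\cong} \hat{\frN}^h_{2n}(\CC)$ taking $\dd_{2n,\CC}$ to $\dd^h_{2n,\CC}$. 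Forgetting the $\CC(2)$-factor doubles the underlying $\CCl_{2n}$-module, so the forgetful map sends $\dd^h_{2n,\CC}$ to $2\,\dd_{2n,\CC}$. Since $\varphi^c(\dd_{2n,\CC})$ generates $\KU^{-2n}(\pt) \cong \ZZ$ by the classical ABS theorem, we conclude that $\overline{\varphi}^c_E(\dd^h_\CC)$ restricts on each fiber to twice the generator.

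For the Thom isomorphism, I run the standard Mayer-Vietoris argument exactly as in the proof of \Cref{fakeThomclass}. Over each trivializing open $U \subset X$, the class $\overline{\varphi}^c_E(\dd^h_\CC)|_U$ equals $2\beta \cdot 1_U$ where $\beta \in \widetilde{\KU}(S^{2n})$ is a Bott generator; after inverting $2$, cup product with $2\beta$ is the Bott periodicity isomorphism, so cup product with $\overline{\varphi}^c_E(\dd^h_\CC)|_U$ gives the local Thom isomorphism $\KU^*(U)[\tfrac12] \cong \widetilde{\KU}^*(Th(E|_U))[\tfrac12]$. The five lemma then patches these local isomorphisms along a good cover of $X$.

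The only genuine obstacle is pinning down the factor of $2$, and it is unavoidable: the Morita equivalence $\CCl^h_{2n} \simeq \CCl_{2n} \otimes_\CC \CC(2)$ is not an isomorphism of $\CC$-vector spaces, and the rank-$2$ matrix factor is precisely what forces the fiber class to be twice, rather than once, the generator. This also explains why one cannot hope for an integral Thom isomorphism via $\overline{\varphi}^c_E(\dd^h_\CC)$ alone; after inverting $2$ the discrepancy becomes a unit and disappears.
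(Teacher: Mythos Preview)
Your proposal is correct and follows essentially the same approach as the paper. The paper's proof is more terse: it simply asserts that it suffices to compute the fiber restriction (the Mayer--Vietoris argument being implicit from the preceding theorem), and then identifies that restriction with the composite $\hat{\frN}_{2n}^h(\CC)\xrightarrow{\Res}\hat{\frN}_{2n}(\CC)\xrightarrow{\varphi^c}\KU^{-2n}(\pt)$ and computes $\Res(\dd^h_\CC)=2\dd_\CC$ from $\dd^h_\CC=\dd_\CC\otimes_\CC\CC^2$---exactly your argument, minus the explicit construction of $\overline{\varphi}^c_E$ and the spelled-out Mayer--Vietoris patching.
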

\begin{proof}
	It suffices to show $\overline{\varphi}^c_E(\dd^h_{\CC})$ restricted to each fiber is twice the generator. Indeed, restricted to the fiber over $x\in X$, $\overline{\varphi}^c_{E_x}$ coincides with the composition
	\[
	\hat{\frN}_{2n}^h(\CC)\xrightarrow{\Res_{\CCl_{2n}}^{\CCl^h_{2n}}}\hat{\frN}_{2n}(\CC)\xrightarrow{\varphi^c}\KU^{-2n}(\pt),
	\]
	where the first map $\Res_{\CCl_{2n}}^{\CCl^h_{2n}}$ is the forgetful homomorphism induced by restricting the $\CCl_{2n}^h$-action to its subalgebra $\CCl_{2n}=\CCl_{2n}\otimes 1\subset \CCl_{2n}^h=\CCl_{2n}\otimes_\CC \CC(2)$. Recall $\dd^h_\CC=\dd_\CC\otimes_\CC\CC^2$ so  $\Res_{\CCl_{2n}}^{\CCl^h_{2n}}\dd^h_{\CC}=2\dd_{\CC}$. Therefore $\varphi^c_{E_x}(\dd^h_{\CC})=\varphi^c(2\dd_{\CC})=2\varphi^c(\dd_\CC)$; and $\varphi^c(\dd_\CC)$ generates $\KU^{-2n}(\pt)$ by \cite{ABS64}.
\end{proof}

These Thom isomorphisms motivate the following definition.

\begin{definition}
	Let $E$ be a spin$^h$ vector bundle of rank $n$. If $n\equiv 4$ (mod 8), then $$\dd^h(E):=\varphi_E^h(\dd^h)$$ is called the \textbf{weak KO-Thom class} of $E$. If $n \equiv 0$ (mod $2$), then $$\dd_\CC^h(E):=\overline{\varphi}^c_E(\dd_{\CC}^h)$$ is called the \textbf{weak KU-Thom class} of $E$. Note $\Ind_\RR^\CC(\dd^h(E))=\dd^h_\CC(E)$ for $n\equiv 4$ (mod 8).
\end{definition}

The weak Thom classes enjoy a nice multiplicative property. 
\begin{proposition}\label{mult}
	Let $E$ be a spin$^h$ vector bundle of rank $8k+4$ and $F$ a spin vector bundle of rank $8l$. Denote by $\dd(F)=\varphi_F(\dd)$ the $\KO$-Thom class of $F$. Then
	\[
	\dd^h(F\times E)=\dd(F)\cdot\dd^h(E).
	\]
	Similarly, denote by $\dd_\CC(F)=\varphi^c_F(\dd_\CC)$ the $\KU$-Thom class of $F$, then
	\[
	\dd^h_{\CC}(F\times E)=\dd_\CC(F)\cdot\dd^h_\CC(E).
	\]
\end{proposition}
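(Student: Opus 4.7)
The plan is to derive both identities as consequences of the multiplicativity already packaged in Proposition~\ref{multiplicative}, reducing each geometric equality to an identity among residue classes of fundamental Clifford modules which can then be checked by iterating Lemma~\ref{z2identities}(i). Unpacking the definitions gives $\dd(F)=\varphi_F(\Delta_{8l}^+)$, $\dd^h(E)=\varphi_E^h(\Delta_{8k+4}^{h,-})$, and $\dd^h(F\times E)=\varphi_{F\times E}^h(\Delta_{8l+8k+4}^{h,-})$. Proposition~\ref{multiplicative} intertwines the external K-theory product $\boxtimes$ with the $\hat{\otimes}_\RR$ pairing $\hat{\frN}_{8l}(\RR)\otimes\hat{\frN}_{8k+4}(\HH)\to\hat{\frN}_{8l+8k+4}(\HH)$, so the real claim reduces to the purely algebraic identity
\[
\Delta_{8l}^+\,\hat{\otimes}_\RR\,\Delta_{8k+4}^{h,-}\;=\;\Delta_{8l+8k+4}^{h,-}\qquad\text{in }\hat{\frN}_{8l+8k+4}(\HH).
\]

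I would verify this by induction on $l$, with $l=0$ tautological. The ``$n\equiv 0\bmod 8$'' branch of Lemma~\ref{z2identities}(i) gives $\Delta_{8,\RR}^+\hat{\otimes}_\RR\Delta_{8(l-1)}^+=\Delta_{8l}^+$, whence $\Delta_{8l}^+\cong(\Delta_{8,\RR}^+)^{\hat{\otimes} l}$; then $l$ applications of the ``$n\equiv 4\bmod 8$'' branch of the same lemma (with $\KK=\HH$) propagate the $-$ label through each $\Delta_{8,\RR}^+$ factor to yield $(\Delta_{8,\RR}^+)^{\hat{\otimes} l}\hat{\otimes}_\RR\Delta_{8k+4}^{h,-}=\Delta_{8l+8k+4}^{h,-}$, which is precisely what is needed.

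For the complex statement I would avoid setting up a complex analogue of Proposition~\ref{multiplicative} and instead complexify the real formula. The compatibility $\Ind_\RR^\CC\dd^h(E)=\dd^h_\CC(E)$ is recorded immediately after the definition of the weak $\KU$-Thom class, and the companion $\Ind_\RR^\CC\dd(F)=\dd_\CC(F)$ follows from Lemma~\ref{scalarchange}(ii), which gives $\Ind_\RR^\CC\Delta_{8l}^+=\Delta_{8l,\CC}^+$, together with naturality of the ABS construction under scalar extension. Since $-\otimes_\RR\CC$ distributes over $\otimes_\RR$, complexification is symmetric monoidal and therefore commutes with external K-theory products, so applying $\Ind_\RR^\CC$ to the real equation yields
\[
\dd_\CC(F)\cdot\dd_\CC^h(E)=\Ind_\RR^\CC\bigl(\dd(F)\cdot\dd^h(E)\bigr)=\Ind_\RR^\CC\dd^h(F\times E)=\dd_\CC^h(F\times E).
\]
The only bookkeeping issue is tracking the $\pm$ labels through iterated tensor products, but the matched-sign form of Lemma~\ref{z2identities}(i) makes this automatic, so I do not anticipate a genuine obstacle.
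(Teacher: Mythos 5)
Your argument is correct, and the real half is essentially the paper's own proof: the paper also deduces $\dd^h(F\times E)=\dd(F)\cdot\dd^h(E)$ directly from \Cref{multiplicative}, with the underlying module identity $\dd_{8l}\hat{\otimes}_\RR\dd^h_{8k+4}=\dd^h_{8l+8k+4}$ left implicit, exactly as you spell it out. Where you diverge is the KU statement: the paper obtains it by invoking a complex analogue of \Cref{multiplicative} for $\varphi^c$ and $\overline{\varphi}^c$ (i.e.\ compatibility of the pairing $\hat{\frN}_m(\CC)\otimes\hat{\frN}^h_n(\CC)\to\hat{\frN}^h_{m+n}(\CC)$ with the external KU-product), whereas you complexify the already-proved KO identity, using $\Ind_\RR^\CC\dd^h(-)=\dd^h_\CC(-)$, $\Ind_\RR^\CC\dd(F)=\dd_\CC(F)$ (via \Cref{scalarchange} and naturality of the ABS construction under scalar extension), and the fact that $\Ind_\RR^\CC$ is multiplicative for external products. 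Your route avoids setting up the complex multiplicativity statement at the cost of working only in ranks where the KO classes exist ($8l$ and $8k+4$), which is exactly the generality asserted here, so nothing is lost; the paper's route would also cover more general even-rank complex multiplicativity if needed elsewhere. One small bookkeeping caveat: the step $\Delta_{8l}^+\cong(\Delta_{8,\RR}^+)^{\hat{\otimes}l}$ appeals to a ``matched-sign'' form of \Cref{z2identities}(i) for $n\equiv 0\bmod 8$, while that lemma's first branch is stated without $\pm$ labels; the signed refinement does hold (same dimension count plus the volume-element computation $\omega_{n+8}=\omega_8\hat{\otimes}\omega_n$ acting by $+1$ on the even part), so you should either cite that argument or note the refinement explicitly rather than attribute it verbatim to the lemma.
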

\begin{proof}
	The first identity follows from \Cref{multiplicative}, the second follows from a similar property for $\varphi^c$ and $\overline{\varphi}^c$.
\end{proof}

The Chern character of the weak KU-Thom class is calculated below. The Pontryagin character of $\dd^h(E)$ is given by $\ph(\dd^h(E))=\ch(\dd^h_\CC(E))$ since $\Ind_\RR^\CC(\dd^h(E))=\dd^h_\CC(E)$ for $n\equiv 4$ mod $8$.
\begin{proposition}\label{cherncharacterdefect}
	Let $E\to X$ be a spin$^h$ vector bundle of rank $2n$. Then
	\[
	\ch(\dd^h_\CC(E))=(-1)^n U_E\cdot \left(2\cosh\left(\frac{\sqrt{p_1\left(\mathfrak{h}_E\right)}}{2}\right)\hat{\mathfrak{A}}(E)^{-1}\right)
	\]
	where $U_E\in \widetilde{H}^{2n}(Th(E);\ZZ)$ is the singular cohomology Thom class of $E$ and $\hat{\mathfrak{A}}(E)$ is the \textbf{total $\hat{A}$-class} of $E$.
\end{proposition}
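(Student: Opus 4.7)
The plan is to identify $\dd^h_\CC(E)$ as a product of the ordinary spin $\KU$-Thom class of $E$ with a complex rank-two bundle $H$ built from $\mathfrak{h}_E$, and then invoke multiplicativity of the Chern character. Both sides of the claimed identity are universal rational characteristic class expressions, so it suffices to verify on the classifying space $\BSpin^h(2n)$. The natural map $\BSpin(2n)\times \BSp(1)\to \BSpin^h(2n)$ coming from the surjection $\Spin(2n)\times \Sp(1)\twoheadrightarrow\Spin^h(2n)$ with discrete kernel $\ZZ_2$ is a fibration with fibre $B\ZZ_2\simeq \RP^\infty$, which has trivial rational cohomology, so the map induces an isomorphism on rational cohomology. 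I may therefore pull back and assume that $E$ carries a genuine spin structure and that $\mathfrak{h}_E=\mathrm{ad}(H)$, where $H$ is a rank-two complex vector bundle with $c_1(H)=0$, arising from an $\Sp(1)=\mathrm{SU}(2)$-principal bundle via its defining representation.

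In this lifted setting I next establish the factorisation
\[
\dd^h_\CC(E)\;=\;\dd_\CC(E)\cdot \pi^*[H]\quad\in\;\widetilde{\KU}(Th(E)).
\]
By the convention of \Cref{notation}, $\dd^h_{2n,\CC}=\dd_{2n,\CC}\otimes_\CC \CC^2$, where the $\CC^2$-factor carries the standard $\Sp(1)$-representation arising from $\HH\otimes_\RR\CC\cong\CC(2)$. The Clifford multiplication entering the Atiyah--Bott--Shapiro construction of $\bar{\varphi}^c_E$ only acts through the $\CCl_{2n}$-factor of $\CCl^h_{2n}=\CCl_{2n}\otimes_\CC \CC(2)$; the $\CC^2$-piece is inert and assembles globally into $H$, so the difference-class construction factors as claimed.

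Multiplicativity of $\ch$ then gives $\ch(\dd^h_\CC(E))=\ch(\dd_\CC(E))\cdot\ch(H)$. The first factor is the standard spin $\KU$-Thom identity $\ch(\dd_\CC(E))=(-1)^n\, U_E\cdot\hat{\mathfrak{A}}(E)^{-1}$, proved via the splitting principle: writing $E_\CC=\bigoplus_i(L_i\oplus L_i^{-1})$ with Pontryagin roots $\pm x_i$, one has $\prod_i 2\sinh(x_i/2)=\prod_i x_i\cdot\prod_i\tfrac{2\sinh(x_i/2)}{x_i}=U_E\cdot\hat{\mathfrak{A}}(E)^{-1}$, and the overall sign $(-1)^n$ arises from the convention $\dd_{2n,\CC}=\Delta^+_{2n,\CC}$, which fixes the chirality grading via the action of $\omega^\CC_{2n}$ relative to the orientation of $E$ defining $U_E$. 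For the second factor, since $H$ has Chern roots $\pm\alpha$ with $c_2(H)=-\alpha^2$, and since $p_1(\mathrm{ad}(H))=-4\,c_2(H)=4\alpha^2$, setting $\alpha=\sqrt{p_1(\mathfrak{h}_E)}/2$ yields a well-defined formal power series (because $\cosh$ is even) and
\[
\ch(H)=e^\alpha+e^{-\alpha}=2\cosh\!\Bigl(\tfrac{\sqrt{p_1(\mathfrak{h}_E)}}{2}\Bigr).
\]
Multiplying the two factors produces the claimed formula.

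The most delicate step is carefully tracking the sign $(-1)^n$: this amounts to matching the $\omega^\CC_{2n}$-eigenspace convention that defines $\Delta^+_{2n,\CC}$ against the orientation of $E$ determining $U_E$, uniformly in the parity of $n$. The remainder of the argument is routine: the splitting-principle reduction, the identification $p_1(\mathrm{ad}(H))=-4c_2(H)$, and the multiplicativity of $\ch$.
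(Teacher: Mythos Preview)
Your proof is correct and follows essentially the same approach as the paper: reduce to $\BSpin(2n)\times\BSp(1)$ via the rational cohomology isomorphism, factor $\dd^h_\CC(E)$ as $\dd_\CC(E)\otimes_\CC\xi$ with $\xi$ the rank-two bundle from the $\Sp(1)$-factor, and then apply multiplicativity of $\ch$ together with the classical formula $\ch(\dd_\CC(E))=(-1)^n U_E\cdot\hat{\mathfrak{A}}(E)^{-1}$ and the computation $\ch(\xi)=2\cosh(\sqrt{p_1(\mathfrak{h}_E)}/2)$. You supply a bit more detail in places (the $B\ZZ_2$-fibre argument, the splitting-principle sketch, the sign discussion), but the structure is identical.
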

\begin{proof}
	Since the map $f: \BSpin(2n)\times \mathrm{BSp(1)}\to \BSpin^h(2n)$ induced by the double cover $\Spin(2n)\times \Sp(1)\to \Spin^h(2n)$ induces an isomorphism on rational cohomology, without loss of generality we may assume $E$ is spin and its canonical bundle $\mathfrak{h}_E$ is also spin. Then
	$\dd^h_\CC(E)=\dd_\CC(E)\otimes_\CC \xi$ where $\xi$ is the rank $2$ complex vector bundle associated to the principal structural $\mathrm{Sp}(1)$-bundle of $\mathfrak{h}_E$ via the fundamental representation of $\Sp(1)=\mathrm{SU}(2)$ on $\CC^2$. The proposition now follows from $\ch\left(\dd^h_\CC(E)\right)=\ch(\dd_\CC(E))\cdot \ch(\xi)$ by $\ch\left(\dd_\CC(E)\right)=(-1)^n U_E\cdot\hat{\mathfrak{A}}(E)^{-1}$ (see \cite{AH59}) and the lemma below.
\end{proof}
	
\begin{lemma}\label{cherncharacter}
Let $\xi$ be the complex vector bundle over $\mathrm{BSU(2)}$ associated to the the fundamental representation of $\mathrm{SU}(2)$ on $\CC^2$ and let $\gamma$ be the real vector bundle over $\mathrm{BSU}(2)$ associated to the rotation representation of $\mathrm{SU}(2)$ on $\RR^3$ via the double covering map $\mathrm{SU}(2)=\Spin(3)\to \SO(3)$. Then $\ch(\xi)=2\cosh\left(\frac{\sqrt{p_1(\gamma)}}{2}\right)$.
\end{lemma}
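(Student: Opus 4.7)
The plan is to verify this identity by restriction to the maximal torus $T\subset \mathrm{SU}(2)$, using the fact that the restriction map $H^*(\mathrm{BSU}(2);\QQ)\to H^*(BT;\QQ)$ is injective (in fact identifies the source with the Weyl-invariants in the target), so it suffices to check the equality after this restriction.

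First I would pick the standard maximal torus $T$ of $\mathrm{SU}(2)$ consisting of diagonal matrices $\mathrm{diag}(e^{i\theta},e^{-i\theta})$, and let $x\in H^2(BT;\ZZ)$ denote the generator dual to the weight $\theta\mapsto e^{i\theta}$. Restricted to $T$, the fundamental representation $\xi$ splits as a sum of weight spaces of weights $+1$ and $-1$, so $\xi|_T = L\oplus L^{-1}$ with $c_1(L)=x$ and $c_1(L^{-1})=-x$. Hence
\[
\ch(\xi)|_{BT} = e^x + e^{-x} = 2\cosh(x).
\]

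Next I would compute the weights of the three-dimensional rotation representation $\gamma$. The double cover $\mathrm{SU}(2)=\Spin(3)\to\SO(3)$ sends the element $\mathrm{diag}(e^{i\theta},e^{-i\theta})$ to a rotation by angle $2\theta$ about a fixed axis in $\RR^3$, so complexifying, $\gamma\otimes\CC$ restricted to $T$ decomposes as a trivial summand (the axis) plus two lines of weights $\pm 2$. Thus
\[
c(\gamma\otimes\CC)|_{BT} = (1+2x)(1-2x)(1) = 1-4x^2,
\]
which gives $c_2(\gamma\otimes\CC)|_{BT} = -4x^2$, and hence
\[
p_1(\gamma)|_{BT} = -c_2(\gamma\otimes\CC)|_{BT} = 4x^2.
\]

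Combining the two computations, $\sqrt{p_1(\gamma)}/2 = \pm x$ after restriction to $BT$, and since $\cosh$ is even,
\[
2\cosh\bigl(\sqrt{p_1(\gamma)}/2\bigr)\big|_{BT} = 2\cosh(x) = \ch(\xi)|_{BT}.
\]
Because both sides lie in $H^*(\mathrm{BSU}(2);\QQ)$ and agree after the injection $H^*(\mathrm{BSU}(2);\QQ)\hookrightarrow H^*(BT;\QQ)$, the identity $\ch(\xi)=2\cosh\!\bigl(\sqrt{p_1(\gamma)}/2\bigr)$ holds on $\mathrm{BSU}(2)$. There is no real obstacle here; the only point of care is the factor of $2$ coming from the double cover $\Spin(3)\to\SO(3)$, which is precisely what converts $\cosh(2x/2)=\cosh(x)$ into the desired $\cosh(\sqrt{p_1(\gamma)}/2)$ and matches $\ch(\xi)$ on the nose.
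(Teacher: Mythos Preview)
Your proof is correct and follows essentially the same route as the paper: both restrict to the maximal torus (the paper phrases this as the splitting principle), identify the weights $\pm 1$ for $\xi$ and $0,\pm 2$ for $\gamma\otimes\CC$, deduce $p_1(\gamma)=4x^2$, and conclude using that $\cosh$ is even. Your write-up is slightly more explicit about why the restriction suffices, but the argument is the same.
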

\begin{proof}
The fundamental representation is irreducible with weights $1,-1$. By spitting principle, we may write $c(\xi)=(1+x)(1-x)=1-x^2$. The complexification of the rotation representation is the adjoint representation of $\mathrm{SU}(2)$, which is irreducible with weights $2,0,-2$. Therefore we can write $c(\gamma\otimes_\RR\CC)=(1+2x)(1-2x)=1-4x^2$. Now $p_1(\gamma)=-c_2(\gamma\otimes_\RR\CC)=4x^2$, hence symbolically $x=\frac{\sqrt{p_1(\gamma)}}{2}$. So $\ch(\xi)=e^x+e^{-x}=2\cosh(x)=2\cosh\left(\frac{\sqrt{p_1(\gamma)}}{2}\right)$. This expression makes sense since hyperbolic cosine is an even function.
\end{proof}
\subsection{Riemann-Roch theorem for spin$^h$ maps}
%In this subsection, we prove a Riemann-Roch theorem for spin$^h$ maps. Along the way, we pick out a special characteristic class for spin$^h$ manifolds, whose role is analogous to the $\hat{A}$-class for spin manifolds.
\begin{definition}
	Let $M$ and $N$ be closed oriented smooth manifolds. A continuous map $f:M\to N$ is called a \textbf{spin$^h$ map} if there exists an oriented rank $3$ real vector bundle $\mathfrak{h}_f$ on $M$ so that $$w_2(M)+f^*w_2(N)=w_2(\mathfrak{h}_f).$$ The bundle $\mathfrak{h}_f$ is called the canonical bundle of the spin$^h$ map $f$.
\end{definition}

\begin{theorem}\label{RR}
\begin{enumerate}[label=(\roman*)]
    \item Let $\dim M\equiv \dim N \bmod 2$. Then a spin$^h$ map $f:M\to N$ induces a direct image homomorphism $f_!: \KU(M)\to \KU(N)$ such that
    \begin{equation*}\label{Gysinspinh}
    	\ch f_!(\xi)\cdot\hat{\mathfrak{A}}(TN)=f_*\left(\ch \xi\cdot 2\cosh\left({\frac{\sqrt{p_1\left(\mathfrak{h}_f\right)}}{2}}\right)\hat{\mathfrak{A}}(TM)\right)
	\end{equation*}
	where $TM,TN$ are the tangent bundles of $M,N$ respectively, and $$f_*:H^*(M;\QQ)\to H^*(N;\QQ)$$ is the Umkehr homomorphism.
    \item If moreover $\dim M-\dim N\equiv 4 \bmod 8$, then there is a direct image homomorphism $\tilde{f}_!:\KO(M)\to\KO(N)$ so that the following diagram commutes.
    \begin{equation*}
    	\begin{tikzcd}
    		\KO(M)\arrow[r,"\tilde{f}_!"]\arrow[d,"\Ind_\RR^\CC"]&\KO(N)\arrow[d,"\Ind_\RR^\CC"]\\
    		\KU(M)\arrow[r,"f_!"]&\KU(N)
    	\end{tikzcd}
    \end{equation*}
	\end{enumerate}
\end{theorem}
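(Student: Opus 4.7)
The plan follows the classical Atiyah--Hirzebruch strategy: factor $f$ as an embedding into a product with a sphere followed by the projection onto $N$, handle the embedding using the weak Thom classes constructed earlier, and handle the projection via Bott periodicity. Concretely, pick a smooth embedding $j:M\hookrightarrow \RR^{2k}$ for some large even $k$ and set $i=(j,f):M\hookrightarrow S^{2k}\times N$, viewing $\RR^{2k}\subset S^{2k}$. The normal bundle $\nu$ of $i$ satisfies $\nu\oplus TM \cong \underline{\RR}^{2k}\oplus f^*TN$ stably, and the Whitney sum formula then gives $w_2(\nu)=w_2(TM)+f^*w_2(N)=w_2(\mathfrak{h}_f)$, so $\nu$ inherits a spin$^h$ structure with canonical bundle (pulled back from) $\mathfrak{h}_f$. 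Its rank $2k+\dim N-\dim M$ is even by the hypothesis of (i); for (ii) one additionally takes $k\in 4\ZZ$, so that this rank is congruent to $4$ modulo $8$, matching the hypothesis of \Cref{fakeThomclass}.

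Next, construct the direct image. A tubular neighborhood identifies an open neighborhood of $i(M)$ in $S^{2k}\times N$ with the total space of $\nu$; let $c:S^{2k}\wedge N_+\to Th(\nu)$ be the resulting Pontryagin--Thom collapse. For $\xi\in \KU(M)$ set
\[
f_!(\xi):= c^*\bigl(\pi_\nu^*\xi\cdot\dd_\CC^h(\nu)\bigr)\in \widetilde{\KU}(S^{2k}\wedge N_+)\cong \KU(N),
\]
where $\pi_\nu:\nu\to M$ is the bundle projection and the last isomorphism is complex Bott periodicity. For (ii), replace $\dd_\CC^h(\nu)$ by the real weak Thom class $\dd^h(\nu)$ and use the eight-fold real Bott isomorphism $\widetilde{\KO}(S^{2k}\wedge N_+)\cong \KO(N)$, which is legitimate since $2k$ is a multiple of $8$. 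Independence from the choice of $j$ is proved in the usual manner: any two such embeddings become isotopic after the stabilization $\nu\rightsquigarrow\nu\oplus\underline{\RR}^r$ for sufficiently divisible $r$, and invariance under this stabilization follows from \Cref{mult} together with the fact that the weak Thom class of a trivial bundle is a Bott generator. The commutative square in (ii) is then immediate from the identity $\Ind_\RR^\CC(\dd^h(\nu))=\dd_\CC^h(\nu)$ recorded after \Cref{complexthomiso} and the compatibility of real and complex Bott periodicity under $\Ind_\RR^\CC$.

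Finally, derive the character formula in (i). Apply $\ch$ to both sides of the defining equation for $f_!(\xi)$, substitute the expression for $\ch(\dd_\CC^h(\nu))$ from \Cref{cherncharacterdefect}, and use the elementary fact that in rational cohomology the composite of $c^*\circ (U_\nu \smile \pi_\nu^*(-))$ with the Bott suspension isomorphism equals the Umkehr homomorphism $f_*$. The stable relation $\hat{\mathfrak{A}}(\nu)\cdot\hat{\mathfrak{A}}(TM)=f^*\hat{\mathfrak{A}}(TN)$, obtained by multiplicativity of $\hat{\mathfrak{A}}$ together with $\hat{\mathfrak{A}}(\underline{\RR}^{2k})=1$, combined with the projection formula, then rearranges the resulting expression into the form stated in (i).

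The main obstacle is bookkeeping: one must verify that the sign $(-1)^n$ appearing in \Cref{cherncharacterdefect} is exactly absorbed by the standard sign convention of the Umkehr of an oriented even-codimension embedding (in the same way as in the spin$^c$ Riemann--Roch of \cite{AH59}), and that the chosen real and complex Bott periodicity isomorphisms genuinely intertwine $\Ind_\RR^\CC$. Once these conventions are pinned down, the remaining verification is formal.
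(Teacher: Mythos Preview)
Your proposal is correct and follows essentially the same route as the paper: factor $f$ through an embedding into $S^{2k}\times N$ followed by projection, use the weak Thom classes $\dd_\CC^h(\nu)$ and $\dd^h(\nu)$ on the spin$^h$ normal bundle, and derive the character formula from \Cref{cherncharacterdefect} together with multiplicativity of $\hat{\mathfrak{A}}$. The only cosmetic difference is that the paper handles the projection $\pi:N\times S^{2n}\to N$ by invoking the spin Riemann--Roch of \cite{Hir95,ABS64}, whereas you absorb it directly into Bott periodicity; since $\pi$ has trivial normal bundle these amount to the same thing.
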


\begin{proof}
	Since only the homotopy class of $f$ is relevant to the theorem, we may assume $f$ is smooth. Let $g: M\to S^{2n}$ be a smooth embedding of $M$. Then $f:M\to N$ can be factored into a smooth embedding $\iota:f\times g: M\to N\times S^{2n}$ followed by a projection $\pi: N\times S^{2n}\to N$. Since $w_2(S^{2n})=0$, $\iota$ is a spin$^h$ map with $\mathfrak{h}_\iota=\mathfrak{h}_f$; meanwhile $\pi$ is a spin map, that is $\pi^*w_2(N)=w_2(N\times S^{2n})$. Suppose we have proved (i) for $\iota$. Then since $\pi$ is spin, by Riemann-Roch theorem for spin maps (from \cite{Hir95}, enhanced in \cite{ABS64}), $\pi$ incudes a homomorphism $\pi_!:\KU(N\times S^{2n})\to \KU(N)$ satisfying $\ch\pi_!(-)\cdot\hat{\mathfrak{A}}(TN)=\pi_*(\ch(-)\cdot\hat{\mathfrak{A}}(T(N\times S^{2n}))$. Hence $f_!=\pi_! \iota_!$ is as desired. So we can assume $f$ is an embedding. 
	
	Now let $E$ be the normal bundle of $M$ in $N$ whose rank is even, then $w_2(E)=w_2(N)+f^*w_2(M)=w_2(\mathfrak{h}_f)$ and thus $E$ is spin$^h$ with canonical bundle $\mathfrak{h}_f$. Identify a closed tubular neighborhood of $M$ with $D(E)$, then we have $\KU(D(E),\partial D(E))\cong \KU(N,N-M)$ and $H^*(D(E),\partial D(E);\QQ)\cong H^*(N,N-M;\QQ)$ by excision. Recall the Umkehr homomorphism $f_*$ is the composition
	\[
	H^*(M;\QQ)\xrightarrow{\cdot U_E} H^*(D(E),\partial D(E);\QQ)\cong H^*(N,N-M;\QQ)\to H^*(N;\QQ).
	\]
	Define $f_!$ to be the composition
	\[
	\KU(M)\xrightarrow{\cdot\dd^h_\CC(E)}\KU(D(E),\partial D(E))\cong \KU(N,N-M)\to\KU(N).
	\]
	Using \Cref{cherncharacterdefect} and the multiplicative property of the $\hat{A}$-class: $f^*\hat{\mathfrak{A}}(TN)=\hat{\mathfrak{A}}(TM\oplus E)=\hat{\mathfrak{A}}(TM)\cdot\hat{\mathfrak{A}}(E)$, we conclude $f_!$ satisfies (i).
	
	For (ii), using the same embedding trick so that $\pi$ has relative dimension divisible by $8$, and noting from \cite{Hir95} and \cite{ABS64} $\pi_!$ in this case lifts to a homomorphism between $\KO$-groups, we may again assume $f$ is an embedding. Now $E$ is of rank $8k+4$, as such $\dd^h_\CC(E)=\Ind_\RR^\CC(\dd^h(E))$. Define $\tilde{f}_!$ to be the composition
	\[
	\tilde{f}_!:\KO(M)\xrightarrow{\cdot\dd^h(E)}\KO(D(E),\partial D(E))\cong \KO(N,N-M)\to\KO(N).
	\]
	Then $\tilde{f}_!$ clearly is as required.
\end{proof}

\begin{remark}
	$f_!$ is independent of the embedding $M\hookrightarrow S^{2n}$ due to the multiplicative property (\Cref{mult}) of the weak KO-Thom class. Indeed had we chosen two different embeddings, we may find a common larger embedding. So we can assume $M^d\subset S^{d+8k+4}\subset S^{d+8k+8l+4}$, then the normal bundle of $S^{d+8k+4}$ in $S^{d+8k+8l+4}$ is spin of rank $8l$. By \Cref{mult} and that $\dd_{8l}\in\KO^{-8k}(\pt)$ is the Bott generator, we conclude $f_!$ is independent of the choice of embedding.
\end{remark}
\begin{definition}
	Let $M$ be a closed spin$^h$ manifold of even dimension. We define the \textbf{$\hat{A}^h$-class} of $M$ to be rational cohomology class
	\[
	\hat{\mathfrak{A}}^h(M):=2\cosh\left({\frac{\sqrt{p_1\left(\mathfrak{h}_M\right)}}{2}}\right)\hat{\mathfrak{A}}(M).
	\]
	This cohomology class has Chern-Weil form representatives once we put Riemannian connections on $TM$ and $\mathfrak{h}_M$. Such a Chern-Weil representative is called an \textbf{$\hat{A}^h$-form} of $M$. Note $\hat{A}^h$-forms can also be defined for non-closed spin$^h$ manifolds through Chern-Weil theory, but now they are not necessarily closed forms.
\end{definition}
\begin{corollary}\label{integrality}Let $M$ be a closed spin$^h$ manifold of dimension $n\equiv 0\bmod 2$ with canonical bundle $\mathfrak{h}_M$.
\begin{enumerate}[label=(\roman*)]
	\item Suppose $\xi$ is a complex vector bundle over $M$. Then $\int_M\ch \xi\cdot \hat{\mathfrak{A}}^h(M)$ is an integer.
	\item If further $n\equiv 0 \bmod 8$ and $\gamma$ is a real vector bundle over $M$, then $\int_M\ph \gamma\cdot \hat{\mathfrak{A}}^h(M)$ is an even integer.
\end{enumerate}
\end{corollary}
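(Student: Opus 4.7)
The plan is to derive both parts from the Riemann-Roch theorem (\Cref{RR}) applied to the spin$^h$ map $f: M \to \pt$ with canonical bundle $\mathfrak{h}_f = \mathfrak{h}_M$. For part (i), since $\dim M$ is even, \Cref{RR}(i) supplies a direct image $f_!: \KU(M) \to \KU(\pt) = \ZZ$ satisfying
\[
f_!(\xi) = \int_M \ch\xi \cdot 2\cosh\!\left(\tfrac{\sqrt{p_1(\mathfrak{h}_M)}}{2}\right)\hat{\mathfrak{A}}(TM) = \int_M \ch\xi \cdot \hat{\mathfrak{A}}^h(M),
\]
so integrality of the integral is immediate. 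For part (ii), applying (i) to $\xi = \gamma\otimes_\RR \CC$ and using $\ph\gamma = \ch(\gamma_\CC)$ already yields $\int_M \ph\gamma\cdot\hat{\mathfrak{A}}^h(M) = f_!(\gamma_\CC)\in \ZZ$; what remains is to upgrade this to evenness.

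To establish evenness I would exhibit $f_!(\gamma_\CC)$ as lying in the image of the forgetful map $\Res_\CC^\HH: \KSp^0(\pt)\to \KU^0(\pt)$, which is multiplication by $2$. I embed $M\hookrightarrow S^{n+8\ell}$ for $\ell$ large and factor $f$ as $\iota: M\hookrightarrow S^{n+8\ell}$ followed by the spin projection $p: S^{n+8\ell}\to \pt$. The normal bundle $E$ of $\iota$ is spin$^h$ of rank $8\ell\equiv 0\bmod 8$ and $p$ has relative dimension $\equiv 0\bmod 8$, so that $f_!(\gamma_\CC) = p_!(\pi^*\gamma_\CC\cdot \dd^h_\CC(E))$. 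The key observation I will rely on is that for rank $\equiv 0\bmod 8$ the weak KU-Thom class $\dd^h_\CC(E)$ of \Cref{complexthomiso} carries a canonical quaternionic structure, inherited from its defining module $\dd^h_{8\ell,\CC}= \Delta_{8\ell,\CC}\otimes_\CC \CC^2$: the factor $\CC^2\cong \HH$ as a left $\CC$-module admits a $\CC$-antilinear involution $J$ with $J^2=-1$ given by right multiplication by $\mathbf{j}$, and under the Morita decomposition $\CCl^h_{8\ell}\cong \CCl_{8\ell}\otimes_\CC \CC(2)$ this $J$ commutes with the $\CCl^h_{8\ell}$-action. Combined with the canonical real structure on $\gamma_\CC$ (and the fact that real$\,\otimes\,$quaternionic is quaternionic), and with $p_!$ being built from a genuine real spin KO-Thom class (hence preserving quaternionic structures), I conclude that $f_!(\gamma_\CC)$ is quaternionic in $\KU^0(\pt)$ and therefore lies in $2\ZZ$.

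The hard part will be verifying that the involution $J$ on the fundamental module globalizes to a bona fide quaternionic structure on the class $\dd^h_\CC(E)\in \widetilde{\KU}(Th(E))$. Specifically, I must check that $J$ is $\Spin^h(8\ell)$-equivariant (so that it descends to an antilinear involution on the associated $^h$spinor bundle) and that it intertwines with the Clifford multiplication $\mu$ used in the ABS-style difference construction of $\dd^h_\CC(E)$. Both compatibilities should follow because $J$ acts only on the $\HH$-scalar factor of $\Cl^h = \Cl\otimes_\RR \HH$, which is centralized by the $\Spin^h$-adjoint action and commutes with Clifford multiplication by elements of $\RR^{8\ell}\subset \Cl$.
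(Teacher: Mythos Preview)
Your proof of part (i) is exactly the paper's. For part (ii) you take a genuinely different route from the paper, so let me compare.

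The paper's argument for (ii) is a one-line application of \Cref{RR}(ii): it considers the spin$^h$ map $f: M \to \pt \hookrightarrow S^4$, which has relative dimension $n-4\equiv 4\bmod 8$, so \Cref{RR}(ii) produces $\tilde f_!\gamma\in\KO(S^4)$. Combining with (i) one gets $\int_M\ph\gamma\cdot\hat{\mathfrak A}^h(M)=\int_{S^4}\ph\tilde f_!\gamma$, and Bott's classical divisibility for $S^4$ gives the factor of $2$. The point is that the evenness is inherited from the target $S^4$, and the heavy lifting was already packaged into \Cref{RR}(ii).

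Your approach instead produces a quaternionic structure directly on the pushed-forward class in $\KU(\pt)$, by arranging the normal bundle to have rank $\equiv 0\bmod 8$ and exhibiting a right $\HH$-action on $\dd^h_{8\ell,\CC}$. This is correct (and in fact the paper records exactly this quaternionic structure in \Cref{extradivisibility}), and it has the merit of explaining the factor of $2$ structurally rather than by quoting Bott. But two things in your sketch are imprecise. First, you cannot place a $\CC$-antilinear $J$ on the $\CC^2$ factor of $\dd_{8\ell,\CC}\otimes_\CC\CC^2$ alone; you must pair it with the real structure on $\dd_{8\ell,\CC}$ (which exists precisely because $8\ell\equiv 0\bmod 8$, cf.\ \Cref{scalarchange}) to get a well-defined antilinear map on the $\CC$-tensor product. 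Equivalently, the quaternionic structure is the right $\HH$-multiplication on the irreducible $\Cl^h_{8\ell}=\HH(N)$-module $\HH^N$, which commutes with the left $\Cl^h_{8\ell}$-action. Second, the $\Spin^h$-action on the spinor module is via left multiplication $\mu$, not the adjoint action; your compatibility check should be that right $\HH$-multiplication commutes with left multiplication by $\Spin^h(8\ell)\subset(\Cl^h_{8\ell})^\times$, which is immediate. With these corrections your argument goes through; the paper's argument simply trades this verification for the already-proved \Cref{RR}(ii).
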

\begin{proof}
	For (i), apply \Cref{RR}(i) to the spin$^h$-map $f:M\to \pt$. Then we have
	\begin{equation*}
		\int_M\ch \xi\cdot \hat{\mathfrak{A}}^h(M)=\int_{\pt}\ch f_!\xi\in\ZZ.
	\end{equation*}
	For (ii), apply \Cref{RR}(ii) to the spin$^h$-map $f:M\to \pt \hookrightarrow S^4$. Then we have
	\begin{equation*}
		\int_M\ph \gamma\cdot \hat{\mathfrak{A}}^h(M)=\int_{S^4}\ph \tilde{f}_!\gamma\in 2\ZZ.
	\end{equation*}
	The asserted integralities follows from Bott's theory (see \cite{Hir95}).
\end{proof}
\begin{remark}
	These integrality results are first obtained by Mayer \cite{Mayer} in studying immersions of manifolds into spin manifolds. They are also used to construct non-spin$^h$ 8-manifolds \cite{MAAM}.
\end{remark}
\begin{definition}
	The integer $\hat{A}^h(M):=\int_M \hat{\mathfrak{A}}^h(M)$ is called the \textbf{$\hat{A}^h$-genus} of $M$, and for a complex vector bundle $\xi$, the integer number $\hat{A}^h(M,\xi):=\int_M\ch \xi\cdot \hat{\mathfrak{A}}^h(M)$ is called the \textbf{$\hat{A}^h$-genus of $M$ twisted by $\xi$}.
\end{definition}

%\begin{remark}[cf. \cite{Mayer}]
%	It follows from \Cref{integrality} that $\hat{A}^h(M)$ is an even integer when $\dim M$ is divisible by $8$. This is the $\spin^h$ counterpart of Rokhlin's theorem since Rokhlin's theorem can be restated as that the $\hat{A}$-genus of a smooth closed spin $4$-manifold is even. This form of Rokhlin's theorem is later generalized to all smooth closed spin $(8k+4)$-manifold by Atiyah and Hirzebruch. The original form of Rokhlin's theorem is also generalized to all smooth closed spin $(8k+4)$-manifold by Ochanine.
%\end{remark}

\begin{example}[self-dual and anti-self dual spin$^h$ structures]\label{HP1}
	Let $M$ be a closed oriented Riemannian 4-fold. We furnish $M$ into a spin$^h$ manifold by putting $\mathfrak{h}_M=\Lambda^+_M$ (resp. $\Lambda_M^-$) where $\Lambda^+_M$ (resp. $\Lambda_M^-$) is the bundle of self-dual (resp. anti-self-dual) two forms, and denote the resulting spin$^h$ manifold by $M_+$ (resp. $M_-$). Then since $p_1(\Lambda_M^\pm)=p_1(M)\pm 2e(M)$ (see e.g. \cite[pp. 195]{Walschap}) where $e(M)$ is the Euler class of $M$, we have
	\begin{align*}
		\hat{A}^h(M_\pm)&=\int_M (2+\frac{p_1(\Lambda_M^\pm)}{4})(1-\frac{p_1(M)}{24})\\
		&= \int_M\frac{p_1(M)}{6}\pm\frac{e(M)}{2}\\
		&= (\text{Signature}\pm\text{Euler characteristic})/2.
	\end{align*}
In particular $\hat{A}^h(\HH\PP^1_\pm)=\pm 1$, $\hat{A}^h(\CC\PP^2_+)=2$ and $\hat{A}^h(\CC\PP^2_-)=-1$.
\end{example}

\subsection{Characteristic classes of spin$^h$ bundles}\label{BSpinh}	

We calculate the cohomology of (the classifying space of) the stable spin$^h$ group (\Cref{mod2cohomology}), which serves as an input for applying Adams spectral sequence to analyze the spin$^h$ cobordism groups, especially at prime 2. The cohomology for unstable spin$^h$ groups can be obtained using the beautiful algebro-geometric method of Quillen \cite{Quillen}, however we do not pursue  it here.

To begin with, recall that $\Spin^h$ is a central extension of $\SO\times \SO(3)$ by $\ZZ_2$, which is classified by $w_2+w_2'\in H^2(\BSO\times \BSO(3);\ZZ_2)$. Therefore we have a pull-back diagram
\[
\begin{tikzcd}
 \BSpin^h \arrow[r]\arrow[d,"\pi"'] & PK(\ZZ_2,2)\arrow[d]\\
 \BSO\times \BSO(3)\arrow[r,"f"] & K(\ZZ_2,2)
\end{tikzcd}
\]
where $PK(\ZZ_2,2)\to K(\ZZ_2,2)$ is the path space fiberation, and $f$ is induced by $w_2+w_2'$. Let $i_2$ denote the generator of $H^2(K(\ZZ_2,2);\ZZ_2)\cong\ZZ_2$. It is well-known the mod 2 cohomology of $K(\ZZ_2,2)$ is a polynomial algebra generated by $i_2$ and $Sq^I(i_2)$ where $I$ runs over all multi-indices $(2^r,2^{r-1},\dots,1)$.
\begin{lemma}\label{monicity}
$f^*: H^*(K(\ZZ_2,2);\ZZ_2)\to H^*(\BSO\times \BSO_3;\ZZ_2)=\ZZ_2[w_2, w_3,\dots; w_2', w_3']$ is monic.
\end{lemma}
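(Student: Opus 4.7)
The plan is to exploit the given polynomial-algebra description
$H^*(K(\ZZ_2,2);\ZZ_2)=\ZZ_2[i_2,Sq^{I_1}i_2,Sq^{I_2}i_2,\dots]$
with $I_k=(2^{k-1},2^{k-2},\dots,1)$, and to reduce the injectivity of $f^*$ to algebraic independence of the images in $H^*(\BSO;\ZZ_2)$ alone. By naturality of the Steenrod squares and $f^*(i_2)=w_2+w_2'$, we have $f^*(Sq^{I_k}i_2)=Sq^{I_k}w_2+Sq^{I_k}w_2'$. The ring homomorphism $H^*(\BSO\times\BSO(3);\ZZ_2)\to H^*(\BSO;\ZZ_2)$ that kills every primed class sends $f^*(Sq^{I_k}i_2)$ to $y_k:=Sq^{I_k}w_2$; a polynomial relation among the $f^*(Sq^{I_k}i_2)$ would therefore project to one among the $y_k$. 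It thus suffices to show that $y_0=w_2,y_1,y_2,\dots$ are algebraically independent in $\ZZ_2[w_2,w_3,\dots]$.

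The technical core is the inductive identity $y_k=w_{2^k+1}+r_k$, where $r_k$ is a sum of \emph{decomposables} (products of at least two Stiefel-Whitney classes of positive index; recall $w_1=0$ on $\BSO$). The base $k=0$ is trivial. For the step, $y_k=Sq^{2^{k-1}}y_{k-1}=Sq^{2^{k-1}}w_{2^{k-1}+1}+Sq^{2^{k-1}}r_{k-1}$; Wu's formula gives
\begin{equation*}
Sq^{2^{k-1}}w_{2^{k-1}+1}=\sum_{t=0}^{2^{k-1}}\binom{t}{t}\,w_{2^{k-1}-t}\,w_{2^{k-1}+1+t}=w_{2^k+1}+\sum_{t=0}^{2^{k-1}-1}w_{2^{k-1}-t}\,w_{2^{k-1}+1+t},
\end{equation*}
so this piece is $w_{2^k+1}$ plus decomposables. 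For the other piece, the Cartan formula expands $Sq^{2^{k-1}}$ on a length-$m$ monomial as a sum of products $\prod_s Sq^{a_s}w_{i_s}$, and each factor $Sq^{a_s}w_{i_s}$ is by Wu's formula a sum of products of one or two positive-index $w$'s; so $Sq^{2^{k-1}}r_{k-1}$ remains decomposable, completing the induction.

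Algebraic independence now follows by a triangular-system argument. Consider the ring map $\psi:\ZZ_2[w_2,w_3,\dots]\to\ZZ_2[w_{2^j+1}:j\geq 0]$ that sends $w_j$ to itself when $j\in\{2,3,5,9,17,\dots\}$ and to $0$ otherwise. A surviving decomposable summand of $y_k$ under $\psi$ is a product $w_{2^{j_1}+1}\cdots w_{2^{j_m}+1}$ with $m\geq 2$ and $\sum_i(2^{j_i}+1)=2^k+1$, forcing $\sum_i 2^{j_i}=2^k+1-m\leq 2^k-1$ and hence every $j_i<k$. Thus $\psi(y_k)=w_{2^k+1}+p_k$ with $p_k\in\ZZ_2[w_{2^j+1}:j<k]$, a triangular presentation. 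Consequently the $\psi(y_k)$ freely generate the target polynomial algebra and are algebraically independent, so the $y_k$ are too, which finishes the proof.

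The main obstacle is the Wu/Cartan bookkeeping in the inductive step: one must verify carefully that $Sq^{2^{k-1}}$ preserves decomposability and produces the indecomposable class $w_{2^k+1}$ with coefficient exactly $1$, coming from the $t=2^{k-1}$ binomial $\binom{0}{0}=1$ in Wu's formula. Once that verification is in place, the rest of the argument is formal triangular linear algebra.
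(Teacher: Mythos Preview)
Your proof is correct and follows essentially the same approach as the paper's: both compute $Sq^{I_k}w_2 = w_{2^k+1}+\text{decomposables}$ via Wu's formula (the paper cites Stong for the inductive identity $Sq^{n-1}w_n=w_{2n-1}+\text{decomposables}$) and deduce algebraic independence from this indecomposable-leading-term form. Your projection to $H^*(\BSO;\ZZ_2)$ and the triangular $\psi$-argument simply make explicit what the paper compresses into the single line ``it is clear these are algebraically independent.''
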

\begin{proof}
For oriented bundles $Sq^1 w_2=w_3$. Then inductively using $Sq^{n-1}w_n=w_{2n-1}+\text{decomposables}$ (see \cite[pp. 291]{Stong}), we get
\begin{align*}
	Sq^0(w_2+w_2')&=w_2+w_2'\\
	Sq^1(w_2+w_2')&=w_3+w_3'\\
	Sq^I(w_2+w_2')&=w_{2^{r+1}+1}+\text{decomposables}\quad (r\ge 1).
\end{align*}
It is clear these are algebraically independent, thus proving $f^*$ is monic.
\end{proof}

\begin{lemma}\label{firstdescription} $\pi^*: H^*(\BSO\times \BSO(3);\ZZ_2)\to H^*(\BSpin^h;\ZZ_2)$ maps the subalgebra $$\ZZ_2[w_i| i\ge 2, i\neq 2^{r+1}+1, r\ge 1]$$ isomorphically onto $H^*(\BSpin^h;\ZZ_2)$.
\end{lemma}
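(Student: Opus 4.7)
I would analyze the mod-$2$ Serre spectral sequence of the fibration
\[
K(\ZZ_2, 1) \longrightarrow \BSpin^h \xrightarrow{\;\pi\;} \BSO \times \BSO(3)
\]
obtained by pulling back the path-loop fibration along $f$. The generator $u \in H^1(K(\ZZ_2, 1); \ZZ_2)$ transgresses to $d_2(u) = w_2 + w_2'$, and iterating Kudo's transgression theorem via $u^{2^k} = Sq^{2^{k-1}}(u^{2^{k-1}})$ yields
\[
d_{2^k + 1}\bigl(u^{2^k}\bigr) \;=\; Sq^{2^{k-1}} Sq^{2^{k-2}} \cdots Sq^1 (w_2 + w_2') \qquad (k \ge 1).
\]
These, together with $w_2 + w_2'$ itself, are exactly the images under $f^*$ of the polynomial generators of $H^*(K(\ZZ_2, 2); \ZZ_2)$.

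The decisive input is \Cref{monicity}: these transgressions form an algebraically independent sequence in the polynomial algebra
\[
H^*(\BSO \times \BSO(3); \ZZ_2) \;=\; \ZZ_2[w_2, w_3, \ldots] \otimes \ZZ_2[w_2', w_3'].
\]
In a polynomial ring over a field (a Cohen-Macaulay ring), an algebraically independent sequence of homogeneous elements is automatically a regular sequence. A standard Koszul argument (equivalently, the Eilenberg-Moore spectral sequence for the pull-back square collapses, with all higher Tor groups vanishing) then shows that $\pi^*$ fits into an isomorphism
\[
H^*(\BSpin^h; \ZZ_2) \;\cong\; H^*(\BSO \times \BSO(3); \ZZ_2) \;\Big/\; \bigl( Sq^I(w_2 + w_2') \bigr)_I,
\]
where $I$ ranges over the empty sequence, $(1)$, and $(2^r, 2^{r-1}, \ldots, 1)$ for $r \ge 1$, with no additional differentials or extension problems.

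It then remains to rewrite this quotient explicitly. The relations in degrees $2$ and $3$ eliminate $w_2'$ and $w_3'$, absorbing the $\BSO(3)$ factor entirely. By \Cref{monicity}, for $r \ge 1$ the relation corresponding to $I = (2^r, \ldots, 1)$ has the form $w_{2^{r+1}+1} + (\text{decomposables})$; since $w_i'=0$ in $\BSO(3)$ for $i \ge 4$, after substituting $w_2' = w_2$ and $w_3' = w_3$ these decomposables become polynomials in $w_2, w_3, \ldots, w_{2^{r+1}}$. Inductively solving for $w_{2^{r+1}+1}$ in terms of the surviving classes produces the advertised map
\[
\ZZ_2[w_i \mid i \ge 2,\ i \neq 2^{r+1}+1 \text{ for any } r \ge 1] \xrightarrow{\;\cong\;} H^*(\BSpin^h; \ZZ_2),
\]
and a Hilbert-series comparison confirms no implicit relations have been overlooked.

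The principal hurdle is justifying collapse of the spectral sequence and the absence of extensions among the polynomial generators; both reduce entirely to regularity of the transgressed classes, which is precisely what \Cref{monicity} supplies.
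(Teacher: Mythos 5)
Your overall architecture --- transgress $u$ and its $2^k$-th powers in the Serre spectral sequence of $K(\ZZ_2,1)\to\BSpin^h\to\BSO\times\BSO(3)$ via Kudo's theorem, then conclude by a Koszul/Eilenberg--Moore collapse that $H^*(\BSpin^h;\ZZ_2)$ is the quotient by the transgressed ideal --- is a legitimate alternative to the paper's route, which instead maps the spectral sequence of $PK(\ZZ_2,2)\to K(\ZZ_2,2)$ into the one for $\pi$ and invokes Zeeman's comparison theorem. But the pivotal step in your version is wrongly justified: it is \emph{not} true that an algebraically independent sequence of homogeneous elements in a polynomial ring over a field is automatically a regular sequence. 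For example, $x$ and $xy$ in $\ZZ_2[x,y]$ are algebraically independent, yet $xy$ is a zerodivisor (indeed zero) modulo $(x)$. Put differently, the literal statement of \Cref{monicity} --- injectivity of $f^*$ --- only says that $H^*(K(\ZZ_2,2);\ZZ_2)$ embeds as a subring of $H^*(\BSO\times\BSO(3);\ZZ_2)$; it does not make the latter free, or even flat, as a module over the former, which is exactly what the vanishing of the higher Tor groups in your Eilenberg--Moore argument (equivalently, the exactness of your Koszul complex) requires.

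The gap is fixable, and the fix is precisely the extra information contained in the \emph{proof} of \Cref{monicity} (which is also what the paper's own argument leans on): the transgressed classes are $w_2+w_2'$, $w_3+w_3'$, and $Sq^I(w_2+w_2')=w_{2^{r+1}+1}+\text{decomposables}$ for $r\ge 1$. Hence, after the evident triangular change of polynomial generators, they constitute part of a polynomial generating set of $\ZZ_2[w_2,w_3,\dots;w_2',w_3']$ complementary to $\{w_i\mid i\ge 2,\ i\neq 2^{r+1}+1\}$. This gives at once the regularity of the sequence and the freeness of $H^*(\BSO\times\BSO(3);\ZZ_2)$ over $f^*H^*(K(\ZZ_2,2);\ZZ_2)$ with basis the monomials in the $w_i$, $i\neq 2^{r+1}+1$; with that substituted for the false general principle, your collapse argument and the final identification of the quotient go through, and your proof and the paper's Zeeman-comparison proof become two packagings of the same computation.
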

\begin{proof}	
Let $E$ denote the Serre spectral sequence for $\pi: \BSpin^h\to \BSO\times \BSO(3)$ and $E'$ that of $PK(\ZZ_2,2)\to K(\ZZ_2,2)$. The map $f$ induces a map $f^*: E'\to E$ between spectral sequences. Since $E$ is an $H^*(\BSO\times \BSO(3);\ZZ_2)$-module, one has an induced spectral sequence map
\[
\ZZ_2[w_i| i\ge 2, i\neq 2^{r+1}+1, r\ge 1]\otimes E'\to E
\]
by means of $f^*$ and module multiplication. This is an isomorphism on the second page by the calculations done in the proof of \Cref{monicity}. Therefore the proposition follows from Zeeman's comparison theorem and that the path space $PK(\ZZ_2,2)$ is contractible.
\end{proof}
\begin{remark}
	In $H^*(\BSpin^h;\ZZ_2)$, the classes $w_{2^{r+1}+1}$ are not identically zero but decomposable. For instance, using \Cref{mod2cohomology} below one can prove $w_9=w_2 w_7+w_3 w_6$. 
\end{remark}
Our next step is to apply Bockstein spectral sequence to recover the 2-local cohomology of $\BSpin^h$, so first of all we must understand the action of $Sq^1$. 
%Recall for oriented bundles $Sq^1(w_{2i})=w_{2i+1}$, so $\ZZ_2[w_{2i}, w_{2i+1}]$ is a subalgebra invariant under $Sq^1$. However, in the mod 2-cohomology of $\BSpin^h$, the class $w_{2^{r+1}+1}$ is not an algebraic generator, so we would like to replace $w_{2^{r+1}}$ by another indecomposable class of the same degree, i.e. a class of the form $(w_{2^{r+1}}+\text{decomposables})$, on which $Sq^1$ vanishes.

%The Wu class $\nu_{2^{r+1}}$ is known to be indecomposable (see \cite[pp. 315]{Stong}), we shall verify $Sq^1\nu_{2^{r+1}}=0$ in $H^*(\BSpin^h;\ZZ_2)$, therefore $\nu_{2^{r+1}}$ is exactly the class we are looking for.

\begin{proposition}\label{Wuclass}
In $H^*(\BSpin^h;\ZZ_2)$ we have $Sq^1 \nu_{2^{r+1}}=0$ for $r\ge 1$ where $\nu_i$ is the $i^{th}$ Wu class.
\end{proposition}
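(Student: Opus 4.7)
The strategy is to use the explicit description of $H^*(\BSpin^h;\ZZ_2)$ furnished by the Serre spectral sequence of the fibration $K(\ZZ_2,1)\to \BSpin^h\xrightarrow{\pi} \BSO\times\BSO(3)$ (as in the proof of \Cref{firstdescription}), and to verify the identity by matching the Wu-formula expansion of $Sq^1\nu_{2^{r+1}}$ against the relations imposed on $\BSpin^h$. By Kudo's transgression theorem, the powers $x^{2^k}$ of the fiber class $x\in H^1(K(\ZZ_2,1);\ZZ_2)$ transgress to classes $\tau_k:=Sq^{2^{k-1}}Sq^{2^{k-2}}\cdots Sq^1(w_2+w_2')$ (with $\tau_0 := w_2+w_2'$), all of which vanish in $H^*(\BSpin^h;\ZZ_2)$ and generate $\ker\pi^*$. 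Expanded, the first three yield $w_2=w_2'$, $w_3=w_3'$, and $w_5+w_2w_3+w_2'w_3'=0$.

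First, I would dispose of the base case $r=1$ directly. The defining relation $Sq(\nu)=w$ gives $\nu_4=w_4+w_2^2$ on $\BSO$, and then $Sq^1\nu_4 = Sq^1 w_4 = w_5$ by Wu's formula $Sq^iw_j=\sum_k\binom{j-i+k-1}{k}w_{i-k}w_{j+k}$ (together with $Sq^1(w_2^2)=2w_2\cdot Sq^1w_2\equiv 0\pmod 2$). On $\BSpin^h$, the transgression relation $\tau_2 = w_5 + w_2w_3 + w_2'w_3' = 0$ combined with $w_2=w_2'$ and $w_3=w_3'$ forces $w_5=2w_2w_3=0$, so $Sq^1\nu_4 = 0$.

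For general $r\ge 1$, the same line of attack should apply: write $\nu_{2^{r+1}} = \chi(Sq)(w_{2^{r+1}})$ as an explicit polynomial in Stiefel--Whitney classes via Wu's defining identity, apply $Sq^1$ by iterated Wu formulas, and identify the resulting degree-$(2^{r+1}+1)$ polynomial as a class in the ideal $(\tau_0,\tau_1,\tau_2,\ldots)$. The leading summand of $Sq^1\nu_{2^{r+1}}$ is $w_{2^{r+1}+1}$, which on $\BSpin^h$ is decomposable by virtue of the higher transgression $\tau_{r+1} = Sq^{2^r}\cdots Sq^1(w_2+w_2')=0$; the remaining Wu-formula summands are to combine with this substitution, together with the lower relations $\tau_j=0$ ($j\le r$) and $w_i=w_i'$ for $i\le 3$, to yield zero.

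The main obstacle is the combinatorics of this general cancellation. A structural observation that should facilitate it is that both $\nu_{2^{r+1}}$ and the transgression $\tau_{r+1}$ are built from the same pattern of iterated squares $Sq^{2^i}$: the Wu class from $\chi(Sq)$ applied to $w_{2^{r+1}}$ (expressible via conjugate squares $\overline{Sq}^{2^i}$), and the transgression directly from $w_2+w_2'$. This parallelism motivates an induction on $r$ in which the lower transgression relations already in force on $\BSpin^h$ exactly match the ``error terms'' produced by the next stage of Wu's formula; formalizing that match by an Adem-relation bookkeeping is the key computational step, and the one most likely to require care.
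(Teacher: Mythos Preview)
Your base case $r=1$ is correct and matches the paper: $Sq^1\nu_4=w_5$, and $w_5=0$ on $\BSpin^h$ (the paper cites the vanishing of the integral class $W_5$, but your transgression computation is equally valid).

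For general $r$, however, you have not actually given a proof. You outline a program---expand $\nu_{2^{r+1}}$ in Stiefel--Whitney classes, apply $Sq^1$, and match the result term-by-term against the ideal $(\tau_0,\tau_1,\ldots)$---and then concede that the ``key computational step'' remains to be done. The vague parallelism you note between $\chi(Sq)$ and the iterated transgressions is suggestive but not an argument; the polynomials $\nu_{2^{r+1}}$ grow rapidly in complexity, and there is no a priori reason the cancellation should organize itself cleanly under your scheme. As written, this is a gap.

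The paper avoids this combinatorics entirely by a Thom-class argument that \emph{reduces the general case to $r=1$}. Working on the Thom space with Thom class $\overline{U}$ of the stable normal bundle, one has $\nu\cdot\overline{U}=\chi(Sq)\overline{U}$. Applying $Sq^1$ and using the Adem relation $Sq^2Sq^{4k-1}=Sq^{4k}Sq^1$ together with $Sq^1\overline{U}=\overline{w}_1\overline{U}=0$ gives
\[
(Sq^1\nu_{4k})\cdot\overline{U}=\chi(Sq^{4k-1})\chi(Sq^2)\overline{U}=\chi(Sq^{4k-1})(w_2\overline{U}).
\]
For $4k=2^{r+1}$ one invokes Davis's identity $\chi(Sq^{2^{r+1}-1})=Sq^{2^r}\cdots Sq^2Sq^1$, so that $(Sq^1\nu_{2^{r+1}})\cdot\overline{U}=Sq^{2^r}\cdots Sq^4\bigl((Sq^1\nu_4)\cdot\overline{U}\bigr)$. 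Thus the vanishing for all $r\ge1$ follows from the single fact $Sq^1\nu_4=0$, which you have already established. This reduction is the missing idea in your proposal.
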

\begin{proof}
Let $w,\nu$ denote the total Stiefel-Whitney class, the total Wu class respectively, and let $Sq$ denote the total Steenrod square. They are related by Wu's relation $Sq(\nu)=w$. Suppose $\overline{U}$ is the Thom class of the stable normal bundle to the bundle in question, then $Sq(\overline{U})=\overline{w}\cdot \overline{U}$ where $\overline{w}$ is the total Stiefel-Whitney class of the stable normal bundle, satisfying $w\cdot \overline{w}=1$.

Applying $Sq$ to $\nu\cdot \overline{U}$ and using Cartan's formula we get
\[
Sq(\nu\cdot \overline{U})=Sq(\nu)\cdot Sq(\overline{U})=w\cdot \overline{w}\cdot \overline{U}=\overline{U}.
\]
Then since $\chi(Sq)$ is the inverse to $Sq$, where $\chi$ is the canonical involution of the Steenrod algebra, we get $\nu\cdot \overline{U}=\chi(Sq)\overline{U}$. Now from Adem's relation $Sq^2 Sq^{4k-1}=Sq^{4k}Sq^1$
we get
\begin{align*}
    (Sq^1\nu_{4k})\cdot \overline{U} &=Sq^1(\nu_{4k} \overline{U})= Sq^1 \chi(Sq^{4k})\overline{U}=\chi(Sq^1)\chi(Sq^{4k})\overline{U}\\
    &=\chi(Sq^{4k}Sq^1)\overline{U} = \chi(Sq^2 Sq^{4k-1}) \overline{U}=\chi(Sq^{4k-1})\chi(Sq^2)\overline{U}\\
    &= \chi(Sq^{4k-1})Sq^2 \overline{U}= \chi(Sq^{4k-1})(w_2 \overline{U}).
\end{align*}
Here we used $Sq^1 \overline{U}=0$ and $w_2=\overline{w}_2$ since the bundles in question are orientable. Next we note from \cite{Davis}
\[
\chi(Sq^{2^{r+1}-1})=Sq^{2^{r}}Sq^{2^{r-1}}\cdots Sq^2 Sq^1,
\]
therefore
\begin{align*}
    Sq^1 \nu_{2^{r+1}} \cdot \overline{U} &= \chi(Sq^{2^{r+1}-1})(w_2 \overline{U})\\
    &= Sq^{2^{r}}Sq^{2^{r-1}}\cdots Sq^2 Sq^1 (w_2 \overline{U})\\
    &= Sq^{2^{r}}Sq^{2^{r-1}}\cdots Sq^4 (Sq^1 \nu_4 \cdot \overline{U}).
\end{align*}

By Thom isomorphism, we are reduced to proving $Sq^1\nu_4=0$. For oriented bundles, $\nu_4=w_4+w_2^2$ and thus $Sq^1\nu_4=Sq^1 w_4=w_5$. But the integral fifth Stiefel-Whitney class vanishes for spin$^h$ bundles \cite[Corollary 2.5]{MAAM}, so its mod 2 reduction $w_5$ must also vanish for spin$^h$ bundles. This completes the proof.
\end{proof}

We now derive a better description of the mod 2 cohomology of $\BSpin^h$.
\begin{theorem}\label{mod2cohomology}
	$\pi^*: H^*(\BSO\times \BSO(3);\ZZ_2)\to H^*(\BSpin^h;\ZZ_2)$ is surjective, with kernel generated by $w_2+w_2'$, $w_3+w_3'$, and $Sq^1\nu_{2^{r+1}}$ for all $r\ge 1$. In particular, $\pi^*$ induces an isomorphism
	\[
	H^*(\BSpin^h;\ZZ_2)\cong H^*(\BSO;\ZZ_2)/(Sq^1 \nu_{2^{r+1}}, r\ge 1).
	\]
\end{theorem}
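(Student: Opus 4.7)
The strategy is to combine \Cref{firstdescription} --- which gives the surjectivity of $\pi^*$ and identifies its image with the polynomial subalgebra $A:=\ZZ_2[w_i\mid i\ge 2,\, i\ne 2^{r+1}+1,\, r\ge 1]$ --- with a direct computation of the quotient of $H^*(\BSO\times\BSO(3);\ZZ_2)$ by the proposed ideal of relations.

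First I verify that all the asserted generators lie in $\ker\pi^*$. The classes $w_2+w_2'$ and $w_3+w_3'=Sq^1(w_2+w_2')$ are pullbacks under $f\circ\pi$, which is nullhomotopic because it factors through the contractible space $PK(\ZZ_2,2)$; equivalently, they are transgression images in the Serre spectral sequence for the fibration $K(\ZZ_2,1)\to\BSpin^h\to\BSO\times\BSO(3)$ and hence die in the base. The classes $Sq^1\nu_{2^{r+1}}$ lie in $\ker\pi^*$ by \Cref{Wuclass}.

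Next I compute the quotient of $H^*(\BSO\times\BSO(3);\ZZ_2)$ by the ideal $I$ generated by these relations. Modding out by $w_2+w_2'$ and $w_3+w_3'$ identifies $w_2'$ with $w_2$ and $w_3'$ with $w_3$, yielding $H^*(\BSO;\ZZ_2)=\ZZ_2[w_2,w_3,\ldots]$. Then, using Wu's recursion $\nu_k=w_k+\sum_{i\ge 1}Sq^i\nu_{k-i}$ together with the Cartan rule and the oriented-bundle identity $Sq^1 w_n=(n-1)w_{n+1}$, an induction on $r$ shows
\[
Sq^1\nu_{2^{r+1}}\equiv w_{2^{r+1}+1}+P_r(w_2,\ldots,w_{2^{r+1}})\pmod{w_2+w_2',\,w_3+w_3'},
\]
where $P_r$ is decomposable in Stiefel-Whitney classes of strictly smaller index. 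Each relation $Sq^1\nu_{2^{r+1}}\equiv 0$ therefore eliminates the polynomial generator $w_{2^{r+1}+1}$, and iterating over $r\ge 1$ leaves exactly $A$.

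Finally, $I\subseteq\ker\pi^*$ forces $\pi^*$ to descend to a surjection $H^*(\BSO\times\BSO(3);\ZZ_2)/I\twoheadrightarrow H^*(\BSpin^h;\ZZ_2)\cong A$, and composing with the inclusion of $A$ recovers the identification in \Cref{firstdescription}; hence both factor maps are isomorphisms and $\ker\pi^*=I$. The main technical obstacle is the second step: confirming that $Sq^1\nu_{2^{r+1}}$ really takes the form $w_{2^{r+1}+1}+P_r$ with $P_r$ decomposable and supported only on $w_j$ with $j\le 2^{r+1}$. This reduces by the Cartan rule to the inductive fact that the monomial $w_{2^{r+1}}$ appears in $\nu_{2^{r+1}}$ with coefficient $1$, which in turn follows from a bookkeeping argument using Wu's recursion and the vanishing of the relevant binomial coefficients modulo $2$ at powers of two.
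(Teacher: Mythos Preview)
Your proof is correct and follows essentially the same route as the paper's: use \Cref{firstdescription} for surjectivity and the identification with $A$, use \Cref{Wuclass} to place $Sq^1\nu_{2^{r+1}}$ in the kernel, observe that $Sq^1\nu_{2^{r+1}}=w_{2^{r+1}+1}+\text{decomposables}$ (the paper simply cites the well-known indecomposability of Wu classes in degrees a power of two, while you sketch the inductive argument via Wu's recursion), and finish by a dimension count. The only additions you make are the explicit verification that $w_2+w_2'$ and $w_3+w_3'$ lie in $\ker\pi^*$ and a bit more detail on the decomposability step, both of which are fine.
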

\begin{proof}
	\Cref{firstdescription} shows that $\pi^*$ restricted to $H^*(\BSO;\ZZ_2)$ is surjective. Then by \Cref{Wuclass} this restriction descends to a surjection
	\[
	H^*(\BSO;\ZZ_2)/(Sq^1 \nu_{2^{r+1}}, r\ge 1)\to H^*(\BSpin^h;\ZZ_2).
	\]
	Now it is well-known that Wu classes in degrees powers of $2$ are indecomposable, that is to say $\nu_{2^{r+1}}=w_{2^{r+1}}+\text{decomposables}$. Consequently $Sq^1\nu_{2^{r+1}}=w_{2^{r+1}+1}+\text{decomposables}$. From here and \Cref{firstdescription} we see the domain and target of the above map have the same dimension in each degree, forcing the map to be an isomorphism. 
\end{proof}
\begin{remark}
Using the same method, we can prove
	\begin{align*}
		H^*(\BSpin;\ZZ_2)&=H^*(\BSO;\ZZ_2)/(\nu_2,Sq^1\nu_{2}, Sq^1\nu_4, Sq^1\nu_8,\dots);\\
		H^*(\BSpin^c;\ZZ_2)&=H^*(\BSO;\ZZ_2)/(Sq^1\nu_{2}, Sq^1\nu_4, Sq^1\nu_8,\dots).
	\end{align*}
	However, the (mod $2$) cohomology of $\BSpin^k$ does not seem to follow the same pattern.
\end{remark}
\begin{corollary}\label{2-torsion}
$H\left(H^*(\BSpin^h;\ZZ_2),Sq^1\right)\cong\ZZ_2[w_{2}^2, w_{2k}^2, \nu_{2^{r+1}}|k\neq 2^j, r\ge 1]$. 
\end{corollary}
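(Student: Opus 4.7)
The plan is to present $H^*(\BSpin^h;\ZZ_2)$, equipped with the derivation $Sq^1$, as a tensor product of small polynomial DGAs and then perform a standard Koszul calculation on each factor. The crucial input beyond \Cref{mod2cohomology} and \Cref{firstdescription} is \Cref{Wuclass}, which tells us that each Wu class $\nu_{2^{r+1}}$ is already a $Sq^1$-cycle in $B:=H^*(\BSpin^h;\ZZ_2)$.

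First, I would change polynomial generators. By \Cref{firstdescription}, $B=\ZZ_2[w_i\mid i\ge 2,\ i\ne 2^{r+1}+1\ (r\ge 1)]$. For every $r\ge 1$ the Wu class $\nu_{2^{r+1}}=w_{2^{r+1}}+(\text{polynomial in }w_j,\ j<2^{r+1})$ is indecomposable in $B$, so one can replace the generator $w_{2^{r+1}}$ by $\nu_{2^{r+1}}$. Wu's formula $Sq^1 w_j=(j-1)w_{j+1}$ on $\BSO$ together with \Cref{Wuclass} then describes $Sq^1$ on the new generating set: $Sq^1 w_{2k+1}=0$; $Sq^1 w_{2k}=w_{2k+1}$ whenever $k\ne 2^r$ for every $r\ge 1$ (in which case $w_{2k}$ and $w_{2k+1}$ both remain among the generators); and $Sq^1\nu_{2^{r+1}}=0$.

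Because $Sq^1$ is a derivation preserving each pair $\{w_{2k},w_{2k+1}\}$ ($k\ge 1$, $k\ne 2^r$ for $r\ge 1$) and each singleton $\{\nu_{2^{r+1}}\}$ ($r\ge 1$), the DGA $B$ splits as
\[
B\;=\;\bigotimes_{\substack{k\ge 1\\ k\ne 2^r,\ r\ge 1}}\ZZ_2[w_{2k},w_{2k+1}]\;\otimes\;\bigotimes_{r\ge 1}\ZZ_2[\nu_{2^{r+1}}].
\]
Each factor $\ZZ_2[\nu_{2^{r+1}}]$ carries the zero differential and contributes $\ZZ_2[\nu_{2^{r+1}}]$ to the homology. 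On each factor $\ZZ_2[w_{2k},w_{2k+1}]$ one computes $Sq^1(w_{2k}^a w_{2k+1}^b)=a\,w_{2k}^{a-1}w_{2k+1}^{b+1}\pmod 2$, so the cycles are the monomials with $a$ even and the boundaries those same monomials with additionally $b\ge 1$; hence the factor homology is $\ZZ_2[w_{2k}^2]$. Applying the K\"unneth theorem over the field $\ZZ_2$ degree-by-degree, and separating the case $k=1$ from $k\ne 2^j$ ($j\ge 0$) to produce $w_2^2$ alongside the remaining $w_{2k}^2$, gives exactly the asserted polynomial ring.

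The only non-formal step is the change of generators together with the claim that $Sq^1$ preserves each proposed tensor factor; the latter rests on the vanishing $Sq^1\nu_{2^{r+1}}=0$ established in \Cref{Wuclass}. Without that vanishing, the $\nu_{2^{r+1}}$ would not be cycles and the tensor decomposition would fail. Once these are in place the Koszul calculation on each pair is immediate and the K\"unneth assembly is standard.
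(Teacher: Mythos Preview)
Your proof is correct and follows essentially the same approach as the paper's own proof: present $H^*(\BSpin^h;\ZZ_2)$ as a polynomial algebra on the generators $w_2,w_3$, the pairs $w_{2k},w_{2k+1}$ for $k$ not a power of $2$, and the classes $\nu_{2^{r+1}}$, observe that $Sq^1$ preserves this tensor decomposition (using \Cref{Wuclass}), and then apply K\"unneth. The paper states this in one sentence; you have simply filled in the change-of-variables argument and the Koszul computation on each factor.
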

\begin{proof}
	From the above theorem, the mod 2 cohomology of $\BSpin^h$ is isomorphic to $$\ZZ_2[w_2, Sq^1 w_2; w_{2k}, Sq^1 w_{2k}; \nu_{2^{r+1}}|k\neq 2^j, r\ge 1]$$ whose cohomology with respect to $Sq^1$ can now be easily obtained by applying K\"{u}nneth theorem. The result is as claimed. 
\end{proof}
\begin{corollary}
All torsion in $H^*(\BSpin^h;\ZZ)$ has order $2$.
\end{corollary}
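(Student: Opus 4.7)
The plan is to treat the odd-primary and $2$-primary torsion of $H^*(\BSpin^h;\ZZ)$ separately.

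First I would dispense with odd torsion. The central extension \eqref{fundseqspinh} yields a fibration $K(\ZZ_2,1)\to \BSpin^h\to \BSO\times \BSO(3)$. Since $K(\ZZ_2,1)=\RP^\infty$ has trivial $\ZZ_{(p)}$-cohomology in positive degrees for every odd prime $p$, the Serre spectral sequence collapses onto the base row, giving $H^*(\BSpin^h;\ZZ_{(p)})\cong H^*(\BSO\times \BSO(3);\ZZ_{(p)})$, which is a polynomial ring on Pontryagin classes and hence torsion-free.

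For the $2$-primary part I would use the Bockstein spectral sequence with $E_1=H^*(\BSpin^h;\ZZ_2)$ and $d_1=Sq^1$, which converges to $(H^*(\BSpin^h;\ZZ)/\mathrm{tors})\otimes \ZZ_2$. A standard fact is that every $2$-torsion element of $H^*(\BSpin^h;\ZZ)$ has order exactly $2$ if and only if the spectral sequence collapses at $E_2$. Since $\dim_{\ZZ_2} E_\infty^n=\mathrm{rk}\,H^n(\BSpin^h;\ZZ)$ and $\dim_{\ZZ_2} E_\infty^n\le \dim_{\ZZ_2} E_r^n$ for all $r$, it will be enough to show that the Poincar\'{e} series of $E_2$ agrees with that of $H^*(\BSpin^h;\QQ)=\QQ[p_1,p_2,\ldots,p_1']$.

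By \Cref{2-torsion}, $E_2$ is a polynomial algebra with one generator $w_{2k}^2$ in degree $4k$ for each $k\ge 1$ with $k\neq 2^j$ ($j\ge 1$), together with one generator $\nu_{2^{r+1}}$ in degree $2^{r+1}$ for each $r\ge 1$. Sorted by degree, this gives exactly one generator in each degree $4k$ for $k\ge 2$ and two generators ($w_2^2$ and $\nu_4$) in degree $4$: the missing squares $w_{2\cdot 2^j}^2$ are replaced, degree-for-degree, by the Wu classes $\nu_{2^{j+2}}$. This matches the generating pattern of $\QQ[p_1,p_2,\ldots,p_1']$ exactly, so the Poincar\'{e} series coincide and $E_2=E_\infty$.

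The main conceptual obstacle is that, in contrast to $w_{2k}^2\equiv p_k\pmod 2$, the Wu classes $\nu_{2^{r+1}}$ have no obvious lift to integer cohomology. The dimension-counting argument sidesteps this: a global match of Poincar\'{e} series forces every class in $E_2$ to be a permanent cycle, without requiring an explicit integer representative for each generator.
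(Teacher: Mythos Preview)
Your proof is correct, and the odd-primary step is essentially the paper's.  For the $2$-primary part you and the paper both appeal to the Bockstein spectral sequence and to \Cref{2-torsion}, but you take a longer route than necessary: you match the Poincar\'e series of $E_2=\ZZ_2[w_2^2,w_{2k}^2,\nu_{2^{r+1}}]$ against that of $H^*(\BSpin^h;\QQ)=\QQ[p_1,p_2,\dots;p_1']$ to force $E_2=E_\infty$.  The paper instead observes directly from \Cref{2-torsion} that $E_2$ is concentrated in \emph{even} degrees; since every differential $d_r$ in the Bockstein spectral sequence raises degree by~$1$, this parity alone kills all higher differentials.  Your dimension count is a perfectly valid alternative, and has the mild bonus of exhibiting explicitly which classes in $E_2$ survive to which rational classes, but the paper's parity argument is a one-line shortcut you may want to remember for similar situations.
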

\begin{proof}
Since $H\left(H^*(\BSpin^h;\ZZ_2),Sq^1\right)$ is concentrated in even degrees, all higher Bocksteins vanish. Hence by Bockstein spectral sequence all 2-primary torsion of $H^*(\BSpin^h;\ZZ)$ has order $2$. Meanwhile we see $$H^*(\BSpin^h;\ZZ[1/2])\cong H^*(\BSO\times \BSO(3);\ZZ[1/2])$$ is torsion-free.
\end{proof}

At this point, we have a rather complete description of characteristic classes for spin$^h$ vector bundles. Putting torsion aside, the integral characteristic classes are the Pontryagin classes of the bundle together with the first Pontryagin class of its canonical bundle. The mod 2 characteristic classes are the Stiefel-Whitney classes subject to universal relations generated by $Sq^1 \nu_{2^{r+1}}=0$ for $r\ge 1$. Certain mod 2 classes admit integral lifts. The square of the even Stiefel-Whitney classes are lifted to the Pontryagin classes. The odd Stiefel-Whitney classes are lifted to their integral counterparts. Finally the Wu classes in degrees power of two except for $\nu_2$ all have integral lifts.

\section{Dirac operators on spin$^h$ manifolds}\label{sec5}
In this chapter, we develop a geometric theory of indices of Dirac operators on spin$^h$ manifolds with and without boundary.
\subsection{Dirac operator}
Let $M$ be a closed $\spin^h$ manifold of dimension $n$ with canonical bundle $\mathfrak{h}_M$. We choose, once and for all, a Riemannian connection on $P_{\SO}(\mathfrak{h}_M)$. Then $P_{\Spin^h}(TM)$ inherits a connection from the Levi-Civita connection on $P_{\SO}(TM)$ and the Riemannian connection on $P_{\SO}(\mathfrak{h}_M)$. Suppose $S$ is a $^h$spinor bundle of $TM$, then $S$ is a bundle of $\Cl^h(M)$-module, and consequently a bundle of $\Cl(M)$-module. Moreover, $S$ is equipped with a connection $\nabla^S$ induced from $P_{\Spin^h}(TM)$. As usual we define the Dirac operator $D:\Gamma(S)\to\Gamma(S)$ to be the first order elliptic differential operator
\begin{equation}\label{diracoperator}
	D:=\sum_{i=1}^n e_i\cdot\nabla^S_{e_i}
\end{equation}
where $\{e_i\}_{i=1}^n$ is a local orthonormal frame of $M$, and $\cdot$ means Clifford multiplication.

If $S=S^0\oplus S^1$ is $\ZZ_2$-graded, then $D$ clearly interchanges the two factors. Written in matrix form
\begin{equation*}
	D=\begin{pmatrix}
		0& D^1\\
		D^0 &0
	\end{pmatrix}
\end{equation*}
where $D^0:\Gamma(S^0)\to\Gamma(S^1)$ and $D^1: \Gamma(S^1)\to \Gamma(S^0)$. As usual the Dirac operator is formally self-adjoint, namely $(D^0)^*=D^1$ and $(D^1)^*=D^0$. In particular $\ker D^1=\coker D^0$.

Recall all $^h$spinor bundles are direct sums of the fundamental ones.
\begin{definition}
Let $M$ be a closed $\spin^h$ manifold of dimension $n$. We define its \textbf{fundamental $\ZZ_2$-graded real $^h$spinor bundle} to be
\[
\SSS^h(M):=
\begin{cases}
	\SSS^h(TM) &\mbox{if $n\not\equiv 0$ mod $4$}\\
	\SSS^{h,+}(TM) &\mbox{if $n\equiv 0$ mod $8$}\\
	\SSS^{h,-}(TM) &\mbox{if $n\equiv 4$ mod $8$}
\end{cases}
\]
and denote the corresponding Dirac operator to be $\DDD_{M}^h$. If $\gamma$ is a Riemannian real vector bundle over $M$ with an orthogonal connection, we denote by $\DDD_{M}^h(\gamma)$ the Dirac operator on $\SSS^h(M)\otimes_\RR\gamma$ (defined by replacing the connection in \labelcref{diracoperator} with the tensor product connection).

Similarly we define the \textbf{fundamental $\ZZ_2$-graded complex $^h$spinor bundle} to be
\[
\SSS_{\CC}^h(M):=
\begin{cases}
	\SSS_{\CC}^h(TM) &\mbox{if $n$ is odd}\\
	\SSS_{\CC}^{h,+}(TM) &\mbox{if $n$ is even}
\end{cases}
\]
and denote the corresponding Dirac operator to be $\DDD_{M,\CC}^h$. If $\xi$ is a Hermitian complex vector bundle over $M$ with a unitary connection, we denote by $\DDD_{M,\CC}^h(\xi)$ the Dirac operator on $\SSS^h_\CC(M)\otimes_\CC\xi$.
\end{definition}

\begin{theorem}\label{complexindex}
	Let $M$ be a closed $\spin^h$ manifold of dimension $2n$ and $\xi$ a complex vector bundle over $M$. Then $$\ind\left(\DDD^{h}_{M,\CC}(\xi)\right)^0=\hat{A}^h(M,\xi).$$ In particular $\ind(\DDD_{M,\CC}^{h})^0=\hat{A}^h(M)$.
\end{theorem}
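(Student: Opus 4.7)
The plan is to combine the Atiyah--Singer index theorem with the Riemann--Roch theorem (\Cref{RR}) applied to the collapse map $f: M \to \pt$, which is a spin$^h$ map with canonical bundle $\mathfrak{h}_f = \mathfrak{h}_M$ since $w_2(M) = w_2(\mathfrak{h}_M)$ already, and $w_2(\pt) = 0$.

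First, I would identify the principal symbol of $\DDD^h_{M,\CC}(\xi)$ as a K-theory class in $\widetilde{\KU}(Th(T^*M))$. Under the metric identification $T^*M \cong TM$, the symbol is given by Clifford multiplication by cotangent covectors on the bundle $\SSS^h_\CC(M) \otimes_\CC \xi$, which interchanges the even and odd parts of $\SSS^h_\CC(M)$ and squares to $-\|e\|^2$ on a covector of length $\|e\|$. But this is exactly the bundle map used to build the weak $\KU$-Thom class in \Cref{complexthomiso}, tensored with the identity on $\xi$. Hence the symbol class of $\DDD^h_{M,\CC}(\xi)$ equals $\pi^*[\xi] \cdot \dd^h_\CC(TM)$, where $\pi: TM \to M$ is the projection.

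Next, I would invoke Atiyah--Singer to equate the analytic index $\ind(\DDD^h_{M,\CC}(\xi))^0$ with the topological index, i.e.\ the pushforward of the symbol class to $\KU(\pt) = \ZZ$. Choosing an embedding $M \hookrightarrow \RR^{2N}$ with (necessarily spin$^h$) normal bundle $E$ whose canonical bundle is again $\mathfrak{h}_M$, the Thom-isomorphism construction of the topological pushforward --- multiplication by the weak $\KU$-Thom class of $E$ followed by the collapse to a point --- is precisely the construction of $f_!: \KU(M) \to \KU(\pt)$ given in the proof of \Cref{RR}. This yields $\ind(\DDD^h_{M,\CC}(\xi))^0 = f_!(\xi)$. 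Applying \Cref{RR}(i) to $f$ then gives
\[
\ch f_!(\xi) = \int_M \ch(\xi) \cdot 2\cosh\!\left(\frac{\sqrt{p_1(\mathfrak{h}_M)}}{2}\right) \hat{\mathfrak{A}}(TM) = \int_M \ch(\xi) \cdot \hat{\mathfrak{A}}^h(M) = \hat{A}^h(M,\xi),
\]
and since on a point the Chern character is the inclusion $\KU(\pt) = \ZZ \hookrightarrow \QQ$, we conclude $\ind(\DDD^h_{M,\CC}(\xi))^0 = \hat{A}^h(M,\xi)$, which in particular gives the untwisted case $\ind(\DDD^h_{M,\CC})^0 = \hat{A}^h(M)$ by taking $\xi$ trivial.

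The main obstacle is the first step: verifying that the K-theoretic symbol class of the spin$^h$ Dirac operator coincides with the weak $\KU$-Thom class of $TM$ twisted by $[\xi]$. This is the spin$^h$ analog of the standard identification for spin manifolds (see \cite{LM89}), and follows by tracing through the definition of $\dd^h_\CC(E)$ preceding \Cref{complexthomiso}: the bundle map $\mu_e$ used there is, under the Riemannian metric on $M$, literally the principal symbol of $\DDD^h_{M,\CC}(\xi)$ evaluated at the covector dual to $e$. No new analytical input is required beyond what already enters the classical Atiyah--Singer theorem --- the only spin$^h$-specific ingredient, namely the computation of the characteristic class appearing in the index formula, is supplied by the Chern character calculation of \Cref{cherncharacterdefect} packaged into \Cref{RR}.
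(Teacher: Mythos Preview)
Your proof is correct in spirit and reaches the same conclusion, but it takes a detour compared with the paper's argument. The paper applies the cohomological form of the Atiyah--Singer index theorem directly,
\[
\ind(\DDD^h_{M,\CC}(\xi))^0=\int_M\bigl(\ch(\SSS^h_\CC(M))^0-\ch(\SSS^h_\CC(M))^1\bigr)\cdot\ch\xi\cdot\prod_i\frac{x_i}{(1-e^{-x_i})(1-e^{x_i})},
\]
and then evaluates the first factor using \Cref{cherncharacterdefect} (noting that restricting $\dd^h_\CC(TM)$ to the zero section gives $(\SSS^h_\CC(M))^0-(\SSS^h_\CC(M))^1$ and $U_{TM}$ becomes the Euler class $\prod x_i$); a short Chern-root computation finishes. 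So the paper never passes through the K-theoretic pushforward $f_!$ at all.

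Your route---identifying the Atiyah--Singer topological index with the Riemann--Roch direct image $f_!(\xi)$ and then quoting \Cref{RR}(i)---works, but the step ``the Thom-isomorphism construction of the topological pushforward \ldots\ is precisely the construction of $f_!$'' is not quite literal: the Atiyah--Singer pushforward uses the complex Thom class of the normal bundle of $TM$ in $T\RR^{2N}$ (a complex bundle $\pi^*E\otimes\CC$), whereas $f_!$ in \Cref{RR} uses the weak $\KU$-Thom class $\dd^h_\CC(E)$ of the normal bundle of $M$ itself. Matching the two is a genuine (if standard) compatibility check, essentially the multiplicativity $\dd^h_\CC(TM)\cdot\dd_\CC(\pi^*E\otimes\CC)=\dd^h_\CC(E)\cdot(\text{Bott class})$, analogous to the spin case in \cite{LM89}. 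Since \Cref{RR} itself already encodes \Cref{cherncharacterdefect}, your argument and the paper's ultimately rest on the same computation; the paper's version simply avoids the extra K-theoretic bookkeeping by staying in cohomology from the start.
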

\begin{proof}
Let $x_1,\dots, x_n$ be virtual Chern roots of $M$ then from Atiyah-Singer index theorem we have
	\[
	\ind\left(\DDD^{h}_{M,\CC}(\xi)\right)^0=\int_M\left(\ch\left(\SSS_{\CC}^h\left(M\right)\right)^0-\ch\left(\SSS_{\CC}^h(M)\right)^1\right)\cdot\ch\xi\cdot\prod_{i=1}^n\frac{x_i}{1-e^{-x_i}}\cdot\frac{1}{1-e^{x_i}}.
	\]
	Meanwhile from \Cref{cherncharacterdefect} we have
	\[
	\ch\left(\SSS_{\CC}^h\left(M\right)\right)^0-\ch\left(\SSS_{\CC}^h(M)\right)^1=(-1)^n 2\cosh\left(\frac{\sqrt{p_1\left(\mathfrak{h}_M\right)}}{2}\right)\prod_{i=1}^n x_i\cdot\frac{\sinh(x_i/2)}{x_i/2}.
	\]
	Here we used that, when restricted to $M$, the weak KU-Thom class $\dd_\CC^h(TM)$ becomes $(\SSS_{\CC}^h(M))^0-(\SSS_{\CC}^h(M))^1$ and the singular cohomology Thom class $U_{TM}$ becomes the Euler class $\prod x_i$. The rest is a straightforward computation.
	\end{proof}
\subsection{$\Cl_{k}^h$-linear operator}
\begin{definition}
	By a \textbf{$\Cl_{k}^h$-Dirac bundle} over a Riemannian manifold $M$ we mean a real Dirac bundle $\mathfrak{S}$ over $M$, together with a left action $\Cl_{k}^h\to\Aut(\mathfrak{S})$ which is parallel and commutes with the multiplication by elements of $\Cl(M)$. Note as such the Dirac operator $\mathfrak{D}$ on $\mathfrak{S}$ commutes with the $\Cl_k^h$-action.

	A $\Cl_{k}^h$-Dirac bundle $\mathfrak{S}$ is said to be $\ZZ_2$-graded if it carries a $\ZZ_2$-grading $\mathfrak{S}=\mathfrak{S}^0\oplus\mathfrak{S}^1$ as a Dirac bundle, which is simultaneously a $\ZZ_2$-grading for the $\Cl_{k}^h$-action, that is
	\[
	(\Cl_{k}^h)^\alpha\cdot\mathfrak{S}^\beta\subseteq \mathfrak{S}^{\alpha+\beta}
	\]
	for all $\alpha,\beta\in\ZZ_2$. The Dirac operator $\mathfrak{D}$ with respect to the $\ZZ_2$-grading can be written as
\begin{equation*}
	\mathfrak{D}=
	\begin{pmatrix}
	0& \mathfrak{D}^1\\
	\mathfrak{D}^0& 0	
	\end{pmatrix} 
\end{equation*}
where $\mathfrak{D}^0:\Gamma(\mathfrak{S}^0)\to \Gamma(\mathfrak{S}^1)$ and $\mathfrak{D}^1:\Gamma(\mathfrak{S}^1)\to \Gamma(\mathfrak{S}^0)$. Therefore $$\ker\mathfrak{D}=\ker\mathfrak{D}^0\oplus\ker\mathfrak{D}^1$$ is a $\ZZ_2$-graded $\Cl_{k}^h$-module.
\end{definition}

\begin{definition}\label{cliffindexdef}
	Let $\mathfrak{S}$ be a $\ZZ_2$-graded $\Cl_{k}^h$-Dirac bundle over a closed manifold. The \textbf{analytic index} $\ind^h_k(\mathfrak{D})$ of the Dirac operator $\mathfrak{D}$ on $\mathfrak{S}$ is the residue class
	\[
	\ind^h_k(\mathfrak{D}):=[\ker\mathfrak{D}]\in\hat{\frN}_k(\HH)\cong\KSp^{-k}(\pt).
	\] 
\end{definition}
\begin{example}[$\Cl_k^h$-ification]\label{clification}
	Let $S$ be any ordinary real $\ZZ_2$-graded Dirac bundle over a closed manifold $M$, and let $D$ be its Dirac operator. We now consider an irreducible $\ZZ_2$-graded module $V$ over $\Cl_{k}^h$, and take the tensor product
	\[
	\mathfrak{S}=S\hat{\otimes}_\RR V
	\]
	where $V$ is considered as the trivialized bundle $V\times M\to M$. This bundle is naturally a $\ZZ_2$-graded $\Cl_{k}^h$-Dirac bundle. The associated Dirac operator $\mathfrak{D}$ on $\mathfrak{S}$ is simply $D\hat{\otimes} \mathrm{Id}_V$. Consequently we have that 
	\[
	\ker \mathfrak{D}=(\ker D)\hat{\otimes}V
	\]
	and in particular $\ker \mathfrak{D}^0=(\ker D^0\otimes V^0)\oplus(\ker D^1\otimes V^1)$. To determine the residue class $[\ker \mathfrak{D}]$ in $\hat{\frN}_{k}(\HH)=\hat{\frM}_{k}(\HH)/i^*\hat{\frM}_{k+1}(\HH)$ we recall the isomorphism $\hat{\frM}_{k}(\HH)\xrightarrow{\cong}\frM_{k-1}(\HH)$ by taking the degree zero part, so it suffices to determine $[\ker\mathfrak{D}^0]$ in $\frM_{k-1}(\HH)/i^*\frM_{k}(\HH)$. Since $V^0\oplus V^1$ is a $\Cl_{k}^h$-module, we have $[V^0]+[V^1]=0$ in $\frM_{k-1}(\HH)/i^*\frM_{k}(\HH)$. Therefore
	$$[\ker\mathfrak{D}^0]=(\dim_\RR \ker D^0-\dim_\RR\ker D^1)[V^0]=(\ind D^0)\cdot[V^0].$$
	It follows $[\ker\mathfrak{D}]=(\ind D^0)\cdot[V]$. Now that $[V]$ generates $\hat{\frN}_{k}(\HH)$, we conclude
	\begin{equation*}
		[\ker\mathfrak D]=
		\begin{cases}
			\ind D^0 &\mbox{if $k\equiv 0\bmod 4$}\\
			\ind D^0 (\bmod 2) &\mbox{if $k\equiv 5,6\bmod 8$}\\
			0&\mbox{otherwise}
		\end{cases}
	\end{equation*}
\end{example}

\begin{example}[The fundamental case]\label{cliffindexcomputed}
Let $M$ be a closed spin$^h$ manifold of dimension $n$. Consider the $^h$spinor bundle
\[
\slashed{\mathfrak{S}}(M):=P_{\Spin^h}(M)\times_l\Cl_n^h
\]
whose Dirac operator is denoted by $\slashed{\mathfrak{D}}$, where $\Spin^h(n)\subset(\Cl_n^h)^\times$ acts on $\Cl_n^h$ through the left multiplication. Clearly $\slashed{\mathfrak{S}}(M)$ admits a right $\Cl_n^h$-action that commutes with $\slashed{\mathfrak{D}}$, we can turn this into a left one by transpose. This way, $\slashed{\mathfrak{S}}(M)$ is a $\Cl_{n}^h$-Dirac bundle, and it follows from \Cref{bimodule} that
\begin{equation*}
		\slashed{\mathfrak{S}}(X)\cong
		\begin{cases}
			\SSS^h(X)\otimes_\RR\Delta_{n}^h&\mbox{if $n\equiv 4$ mod 8}\\
			\SSS^h(M)\otimes_\CC\Delta_{n}^h&\mbox{if $n\equiv 5$ mod 8}\\
			\SSS^h(M)\otimes_\HH\Delta_{n}^h&\mbox{if $n\equiv 6$ mod 8}\\
			\frac{1}{2}\SSS_{\CC}^h(M)\otimes_\CC\Delta_{n,\CC}^h&\mbox{if $n\equiv 0$ mod 8}
		\end{cases}
\end{equation*}
Note that the tilde's are removed for we have turned right $\Cl_n^h$-actions into left ones. Also we remark that for $n\equiv 6$ mod $8$, the tensor $\otimes_\HH$ is equating the right $\HH$-multiplications on $\SSS^h(M)$ and $\Delta_{n}^h$. From here we can extract the analytic index of $\slashed{\mathfrak{D}}$ as follows.

For $n=8k+4$, this is exactly the case of \Cref{clification} hence $\ind^h_{n}(\slashed{\mathfrak{D}})=\ind(\DDD_M^h)^0$. Now recall that $\Ind_\RR^\CC:\hat{\frN}_{8k+4}(\HH)\to\hat{\frN}_{8k+4}^h(\CC)$ is an isomorphism and $\Ind_\RR^\CC(\dd_{8k+4}^h)=\dd_{8k+4,\CC}^h$, we see $\ind(\DDD_M^h)^0=\ind(\DDD_{M,\CC}^h)^0=\hat{A}^h(M)$, and thus $\ind^h_n(\slashed{\mathfrak{D}})=\hat{A}^h(M)$.

For $n=8k+5$, the situation is similar to \Cref{clification}, we analogously have  $\ker\slashed{\mathfrak{D}}=\ker\DDD_M^h\otimes_\CC\Delta_{8k+5}^h$. Recall every $\ZZ_2$-graded module $V$ over $\Cl_{8k+5}^h$ can be written as $V^0\otimes_\RR\CC$. Therefore
\begin{align*}
	\ker\slashed{\mathfrak{D}}&=\ker\DDD_{M}^h\otimes_\CC\Delta_{8k+5}^h\\
	&= \left((\ker \DDD_M^h)^0\otimes_\RR\CC\right)\otimes_\CC\Delta_{8k+5}^h\\
	&\cong (\ker \DDD_M^h)^0\otimes_\RR\Delta_{8k+5}^h.
\end{align*}
Thus $\ind^h_n(\slashed{\mathfrak{D}})=\dim_\RR\ker(\DDD_{M}^h)^0=\dim_\CC \ker\DDD_{M}^h (\bmod 2)$.

For $n=8k+6$, we similarly have $\ker\slashed{\mathfrak{D}}\cong(\ker \DDD_M^h)^0\otimes_\CC\Delta_{8k+6}^h$ and therefore $\ind^h_n(\slashed{\mathfrak{D}})=\dim_\CC\ker(\DDD_{M}^h)^0=\dim_\HH \ker\DDD_{M}^h (\bmod 2)$.

Finally for $n=8k$, recall the forgetful morphism $\Res_\RR^\CC:\hat{\frN}_{8k}^h(\CC)\to\hat{\frN}_{8k}(\HH)$ is an isomorphism, and $\Delta_{8k}^h(\CC)$ generates $\hat{\frN}_{8k}^h(\CC)$. The argument of \Cref{clification} extended to the complex case yields $\ind^h_n(\slashed{\mathfrak{D}})=\frac{1}{2}\ind(\DDD_{M,\CC}^h)^0=\frac{1}{2}\hat{A}^h(M)$.
\end{example}

%\begin{remark}\label{lrtwist}
%	We have implicitly identified left and right modules over the Clifford algebras thanks to the isomorphisms $\Cl_n\cong\Cl_n^{op}$ for all $n$.
%\end{remark}
\subsection{Index of closed spin$^h$ manifold}
Let $M$ be a closed spin$^h$ manifold of dimension $n$ and let $\slashed{\mathfrak{D}}$ be the Dirac operator on $\slashed{\mathfrak{S}}(M)$ as in \Cref{cliffindexcomputed}. We define the \textbf{analytic index} of $M$ to be
\[
\aind(M):=\ind^h_n(\slashed{\mathfrak{D}})\in \KSp^{-n}(\pt).
\]
If $\gamma$ is a Riemannian real vector bundle over $M$ with an orthogonal connection, denote by $\slashed{\mathfrak{D}}(\gamma)$ the Dirac operator on $\slashed{\mathfrak{S}}(M)\otimes \gamma$. We define the \textbf{analytic index of $M$ twisted by $\gamma$} to be
\[
\aind(M,\gamma):=\ind^h_n\left(\slashed{\mathfrak{D}}(\gamma)\right)\in \KSp^{-n}(\pt).
\]

It follows from \Cref{cliffindexcomputed} that
\begin{theorem}\label{cliffindex}
	Let $M$ be a closed spin$^h$ manifold of dimension $n$. When $n\equiv 5$ or $6$ mod $8$, let $\mathcal{H}=\ker\DDD_{M}^h$ denote the kernel of the Dirac operator on the $\ZZ_2$-graded fundamental real $^h$spinor bundle of $M$. Then
	\begin{equation}
	\aind(M)=
		\begin{cases}
		\hat{A}^h(M)/2&\mbox{if $n\equiv 0$ mod $8$}\\
		\hat{A}^h(M)&\mbox{if $n\equiv 4$ mod $8$}\\
		\dim_\CC\mathcal{H} \bmod 2 &\mbox{if $n\equiv 5$ mod $8$}\\
		\dim_\HH\mathcal{H} \bmod 2 &\mbox{if $n\equiv 6$ mod $8$}
		\end{cases}
	\end{equation}
\end{theorem}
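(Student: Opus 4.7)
The plan is to assemble the four stated formulas directly from the computations already carried out in \Cref{cliffindexcomputed}. By definition $\aind(M) = \ind^h_n(\slashed{\mathfrak{D}}) = [\ker \slashed{\mathfrak{D}}] \in \hat{\frN}_n(\HH)$, so I only need to identify this residue class in each of the four dimensional regimes mod $8$ for which $\hat{\frN}_n(\HH)$ is nonzero.

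The first step is to import the bundle-level isomorphisms obtained in \Cref{cliffindexcomputed} from the bimodule decompositions of \Cref{bimodule}. These express $\slashed{\mathfrak{S}}(M)$ as $\SSS^h(M) \otimes \Delta_n^h$ (tensored over $\RR$, $\CC$, or $\HH$ depending on $n \bmod 8$), up to the factor $\tfrac{1}{2}$ in the case $n=8k$ that reflects the real splitting $\CCl_{8k}^h \cong \Cl_{8k}^h \oplus \Cl_{8k}^h$. Because the Dirac operator $\slashed{\mathfrak{D}}$ commutes with the (left) $\Cl_n^h$-action coming from the $\Delta_n^h$ factor, its kernel splits correspondingly as $\ker \DDD_M^h \otimes \Delta_n^h$ (as $\ZZ_2$-graded $\Cl_n^h$-modules).

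The second step is to compute the residue class in each case. For $n = 8k+4$ one applies the trick of \Cref{clification}: the class equals $\ind(\DDD_M^h)^0 \cdot [\Delta_{8k+4}^h]$, and $[\Delta_{8k+4}^h]$ generates $\hat{\frN}_{8k+4}(\HH) \cong \ZZ$. To rewrite $\ind(\DDD_M^h)^0$ as $\hat{A}^h(M)$ I use \Cref{scalarchange} together with the fact that $\Ind_\RR^\CC:\hat{\frN}_{8k+4}(\HH) \to \hat{\frN}_{8k+4}^h(\CC)$ matches $\dd^h$ with $\dd_{\CC}^h$, so that $\ind(\DDD_M^h)^0 = \ind(\DDD_{M,\CC}^h)^0 = \hat{A}^h(M)$ by \Cref{complexindex}. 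For $n = 8k$ the identical scheme carries the extra $\tfrac{1}{2}$ through and yields $\hat{A}^h(M)/2$. For $n \equiv 5 \pmod 8$ one uses that every $\ZZ_2$-graded $\Cl_{8k+5}^h$-module is the complexification of a $\Cl_{8k+4}^h$-module, so $\ker \slashed{\mathfrak{D}} \cong (\ker \DDD_M^h)^0 \otimes_\RR \Delta_{8k+5}^h$ and the residue class in $\hat{\frN}_{8k+5}(\HH) = \ZZ_2$ is $\dim_\RR(\ker \DDD_M^h)^0 \bmod 2 = \dim_\CC \ker \DDD_M^h \bmod 2$. The case $n \equiv 6 \pmod 8$ is analogous with $\CC$ replaced by $\HH$.

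The only conceptual obstacle, and the one that does real work, is matching the abstract generator of the cyclic group $\hat{\frN}_n(\HH)$ with the concrete irreducible appearing in the bimodule decomposition, and then reading off the coefficient as the dimension of $\ker \DDD_M^h$ over the correct field. Once the algebra of Sections 3.2--3.3 is set up, this matching is automatic and the four formulas drop out simultaneously; no further analysis beyond \Cref{complexindex} is needed.
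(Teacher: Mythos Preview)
Your proposal is correct and follows essentially the same route as the paper: the theorem is stated as an immediate consequence of the computations in \Cref{cliffindexcomputed}, which use the bimodule decompositions of \Cref{bimodule}, the $\Cl_k^h$-ification trick of \Cref{clification}, and the identifications of \Cref{complexindex} together with the $\Ind_\RR^\CC$/$\Res_\RR^\CC$ matchings from \Cref{notation}. Your write-up reproduces exactly these steps in each residue class mod $8$.
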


The same argument goes through with $\slashed{\mathfrak{S}}(M)$ replaced by $\slashed{\mathfrak{S}}(M)\otimes_\RR\gamma$, so we have
\begin{theorem}[cf. {\cite[\S 2. Theorem 7.13]{LM89}}]\label{twistedcliffindex}
	Let $M$ be a closed spin$^h$ manifold of dimension $n$ and $\gamma$ a real vector bundle over $M$ whose complexification is denoted by $\gamma_\CC$. When $n\equiv 5$ or $6$ mod $8$, let $\mathcal{H}_\gamma=\ker\DDD_{M}^h(\gamma)$ denote the kernel of the Dirac operator on the $\ZZ_2$-graded fundamental real $^h$spinor bundle of $M$ twisted by $\gamma$. Then
	\begin{equation}
	\aind(M,\gamma)=
		\begin{cases}
		\hat{A}^h(M,\gamma_\CC)/2&\mbox{if $n\equiv 0 \bmod 8$}\\
		\hat{A}^h(M,\gamma_\CC)&\mbox{if $n\equiv 4 \bmod 8$}\\
		\dim_\CC\mathcal{H}_\gamma \bmod 2 &\mbox{if $n\equiv 5 \bmod 8$}\\
		\dim_\HH\mathcal{H}_\gamma \bmod 2 &\mbox{if $n\equiv 6 \bmod 8$}
		\end{cases}
	\end{equation}
\end{theorem}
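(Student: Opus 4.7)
The plan is to follow the template of Example \ref{cliffindexcomputed} verbatim, with the fundamental $^h$spinor bundle $\slashed{\mathfrak{S}}(M)$ replaced by its twist $\slashed{\mathfrak{S}}(M)\otimes_\RR\gamma$ throughout. Since $\gamma$ carries only its orthogonal structure (no Clifford action), this twist is still naturally a $\ZZ_2$-graded $\Cl_n^h$-Dirac bundle, with the $\Cl_n^h$-action inherited unchanged from $\slashed{\mathfrak{S}}(M)$, and its Dirac operator is $\slashed{\mathfrak{D}}(\gamma)$. The key preparatory step is to tensor each bimodule identification in Proposition \ref{bimodule} with $\gamma$ (over $\RR$), using that tensoring a complex bundle with $\gamma$ over $\RR$ is the same as tensoring with $\gamma_\CC$ over $\CC$, to obtain
\[
\slashed{\mathfrak{S}}(M)\otimes_\RR \gamma \cong
\begin{cases}
(\SSS^h(M)\otimes_\RR\gamma)\otimes_\RR \Delta_n^h & n\equiv 4\bmod 8,\\
(\SSS^h(M)\otimes_\RR\gamma)\otimes_\CC \Delta_n^h & n\equiv 5\bmod 8,\\
(\SSS^h(M)\otimes_\RR\gamma)\otimes_\HH \Delta_n^h & n\equiv 6\bmod 8,\\
\tfrac{1}{2}(\SSS_\CC^h(M)\otimes_\CC\gamma_\CC)\otimes_\CC \Delta_{n,\CC}^h & n\equiv 0\bmod 8,
\end{cases}
\]
as $\Cl_n^h$-Dirac bundles.

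Under these identifications, $\slashed{\mathfrak{D}}(\gamma)$ is the Dirac operator on the first factor (either $\DDD^h_M(\gamma)$ or $\DDD^h_{M,\CC}(\gamma_\CC)$) tensored with the identity on the corresponding fundamental Clifford module. Consequently the kernel factorizes as $\ker\DDD^h_M(\gamma)$ (resp.\ $\ker\DDD^h_{M,\CC}(\gamma_\CC)$) tensored with $\Delta_n^h$ (resp.\ $\Delta^h_{n,\CC}$). The class $[\ker\slashed{\mathfrak{D}}(\gamma)]\in\hat{\frN}_n(\HH)$ can then be extracted by the same $\Cl_k^h$-ification argument as in Example \ref{clification}: the relation $[V^0]+[V^1]=0$ in $\hat{\frN}_n(\HH)$ is purely algebraic and so holds independent of the twist, giving $[\ker\slashed{\mathfrak{D}}(\gamma)]=(\ind D^0)\cdot[V^0]$ where $D=\DDD^h_M(\gamma)$ and $V$ is the appropriate fundamental module.

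From here the four cases follow directly. For $n\equiv 4\bmod 8$, the above produces $\aind(M,\gamma)=\ind(\DDD^h_M(\gamma))^0$; since complexification gives $\SSS^h(M)\otimes_\RR\CC\cong\SSS_\CC^h(M)$ (from $\Ind_\RR^\CC(\dd^h_{8k+4})=\dd^h_{8k+4,\CC}$ in \S\ref{notation}), this index coincides with $\ind(\DDD^h_{M,\CC}(\gamma_\CC))^0$, which equals $\hat{A}^h(M,\gamma_\CC)$ by Theorem \ref{complexindex}. For $n\equiv 0\bmod 8$ the extra factor $\tfrac{1}{2}$ in the bimodule identification yields $\aind(M,\gamma)=\tfrac{1}{2}\ind(\DDD^h_{M,\CC}(\gamma_\CC))^0=\tfrac{1}{2}\hat{A}^h(M,\gamma_\CC)$, again by Theorem \ref{complexindex}. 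For $n\equiv 5,6\bmod 8$ the class in $\hat{\frN}_n(\HH)=\ZZ_2$ is, by the same Cl-ification bookkeeping as in the untwisted case, the mod $2$ reduction of $\dim_\CC\mathcal{H}_\gamma$ or $\dim_\HH\mathcal{H}_\gamma$ respectively.

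The main obstacle, such as it is, is purely notational: keeping the tensor products over $\RR$, $\CC$, and $\HH$ aligned while tensoring with $\gamma$, and verifying that the bimodule isomorphisms of Proposition \ref{bimodule} are $\Cl_n^h$-equivariant so that they survive the twist. Once this bookkeeping is done, the proof is a line-by-line adaptation of Example \ref{cliffindexcomputed}, invoking Theorem \ref{complexindex} in place of the untwisted formula for $\hat{A}^h(M)$. No genuinely new analytic or topological ingredient is needed.
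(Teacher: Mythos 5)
Your proposal is correct and is essentially the paper's own argument: the paper proves the twisted statement precisely by noting that the computation of Example \ref{cliffindexcomputed} (via the bimodule identifications of Proposition \ref{bimodule} and the $\Cl^h_k$-ification bookkeeping of Example \ref{clification}) goes through verbatim with $\slashed{\mathfrak{S}}(M)$ replaced by $\slashed{\mathfrak{S}}(M)\otimes_\RR\gamma$, invoking \Cref{complexindex} with $\xi=\gamma_\CC$ in the even cases. Your added care about aligning the $\RR$-, $\CC$-, $\HH$-tensor products and the $\Cl_n^h$-equivariance of the identifications after twisting is exactly the (routine) verification the paper leaves implicit.
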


We can define the topological index of $M$ using the direct image map from Riemann-Roch theorem as follows. Let $f: M\to S^m$ be a smooth embedding of codimension $8k+4$. Then from \Cref{RR} we have a homomorphism
\[
\widetilde{f}_!: \KO(M)\to \KO(S^m).
\]
A closer look at the construction of $\widetilde{f}_!$ shows it maps into $\widetilde{\KO}(S^m)$ since the weak $\KO$-Thom class has virtual rank zero. We define the \textbf{topological index} of $M$ to be
	\[
	\tind(M):=q_!\widetilde{f}_!(1)\in \KSp^{-n}(\pt),
	\]
	where $q_!: \widetilde{\KO}(S^m)\to\KSp^{-n}(\pt)$ is the periodicity isomorphism. If $\gamma$ is a real vector bundle over $M$, we define the \textbf{topological index of $M$ twisted by $\gamma$} to be
	\[
	\tind(M,\gamma):=q_!\widetilde{f}_!(\gamma)\in \KSp^{-n}(\pt).
	\]

Of course topological index and analytic index coincide:
\begin{theorem}\label{spinhindexthm}
	Let $M$ be a closed spin$^h$ manifold. Then
	\[
	\aind(M)=\tind(M).
	\]
	If moreover $\gamma$ is a Riemannian real vector bundle over $M$ with an orthogonal connection, then $\aind(M,\gamma)=\tind(M,\gamma)$.
\end{theorem}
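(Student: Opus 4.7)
The plan is to follow the classical strategy for proving the Atiyah--Singer index theorem for $\Cl_k$-linear Dirac operators on spin manifolds (as in Lawson--Michelsohn, Chapter III), suitably adapted to the spin$^h$ setting where $\KO$ is replaced by $\KSp$ and the relevant Thom class is the weak $\KO$-Thom class $\dd^h(E)$ of \Cref{fakeThomclass}. The idea is to reduce, via a smooth embedding $f:M\hookrightarrow S^m$ of codimension $m-n=8k+4$, both the analytic and the topological index to a computation on a standard spin sphere, and then invoke the classical index theorem for the spin sphere as a black box. On the topological side this reduction is automatic from the definition $\tind(M,\gamma)=q_!\,\tilde f_!(\gamma)$; on the analytic side it requires a Clifford-linear excision statement that is the real content of the argument.

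The first step is to choose the embedding $f:M\hookrightarrow S^m$ and observe that its normal bundle $E$ is spin$^h$ of rank $8k+4$ with canonical bundle $\mathfrak{h}_M$, so that the Gysin map $\tilde{\iota}_!:\KO(M)\to\widetilde{\KO}(S^m)$ built from the weak Thom class $\dd^h(E)$ as in the proof of \Cref{RR} is available. The key intermediate claim, which I would establish next, is the Clifford-linear excision formula
\[
\aind(M,\gamma)\;=\;q_!\bigl(\tilde{\iota}_!([\gamma])\bigr).
\]
Analytically this should follow from the standard Atiyah--Singer tubular-neighborhood deformation: on $D(E)$ the operator $\slashed{\mathfrak{D}}(\gamma)$ is deformed to a $\Cl_m$-linear Dirac operator on $S^m$ whose twisting bundle represents $\tilde{\iota}_!([\gamma])$, and one shows the Grothendieck-group-valued kernel class is preserved along the deformation. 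The algebraic content matching these two Clifford-module structures is the identification $\Cl_n^h\hat{\otimes}\Cl_{8k+4}^h\cong\Cl_m\otimes\RR(4)$ from \Cref{z2gradediso}, combined with the bimodule decomposition of $\Cl_n^h$ in \Cref{bimodule} and the multiplicativity $\dd^h(F\times E)=\dd(F)\cdot\dd^h(E)$ of \Cref{mult}. These together convert the external tensor product of the fundamental $^h$spinor bundles of $M$ and $E$ into a genuine Clifford bundle on $S^m$, with the Morita factor $\RR(4)$ absorbed by the periodicity shift $\KSp^{-n}(\pt)=\KO^{-n-4}(\pt)$ built into $q_!$.

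Once this excision formula is in hand, the proof concludes quickly: the classical Atiyah--Singer theorem for the spin sphere $S^m$ asserts that the $\Cl_m$-linear analytic index of the Dirac operator on $S^m$ twisted by any $\xi\in\widetilde{\KO}(S^m)$ equals $q_!(\xi)\in\KO^{-m}(\pt)=\KSp^{-n}(\pt)$, and the definition of $\tilde f_!$ in \Cref{RR} factors as $\tilde f_!=\tilde{\iota}_!$ followed by the tubular-to-ambient identification, so combining the two steps one reads off $\aind(M,\gamma)=q_!\tilde f_!(\gamma)=\tind(M,\gamma)$. The untwisted case is the specialization $\gamma=\RR$. The hard part will be the intermediate Clifford-linear excision step, where one must simultaneously control the analytic deformation of the $^h$Dirac operator (so as to preserve the class of its kernel in $\hat{\frN}_n(\HH)=\KSp^{-n}(\pt)$ rather than merely its numerical index) and the $\Cl_n^h$-bimodule bookkeeping needed to align the Morita shift $\otimes\RR(4)$ with Bott's four-fold periodicity relating $\KO$ and $\KSp$.
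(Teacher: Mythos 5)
Your central ``Clifford-linear excision formula'' $\aind(M,\gamma)=q_!\bigl(\tilde\iota_!([\gamma])\bigr)$ is not a lemma that follows from ``the standard Atiyah--Singer tubular-neighborhood deformation''; it \emph{is} the analytic heart of \Cref{spinhindexthm}, and your proposed justification does not work. The operators on $M$ and on $S^m$ act on sections of bundles over different spaces, so there is no deformation of operators along which a ``Grothendieck-group-valued kernel class'' could be tracked: kernels jump, and only the residue class in $\hat{\frN}_n(\HH)\cong\KSp^{-n}(\pt)$ is invariant. Proving that this class is preserved under the push-forward to the sphere is exactly the multiplicativity/excision content of the index theorem, and since the index takes values in torsion groups ($\ZZ_2$ in dimensions $\equiv 5,6\bmod 8$), no local index density or naive deformation argument can see it. The algebraic bookkeeping you cite (\Cref{z2gradediso}, \Cref{bimodule}, \Cref{mult}, the Morita factor $\RR(4)$ absorbed by the $\KO$--$\KSp$ shift) is indeed the right symbol-level identification --- it closely matches the local computations the paper performs --- but it only matches \emph{symbols}; you still need a theorem saying the $\KSp$-valued analytic index depends only on the symbol class and is compatible with the Gysin map, and that is precisely what is missing. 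Invoking ``the classical index theorem for the spin sphere'' as a black box does not fill the hole, both because the twisting class $\tilde\iota_!([\gamma])$ is a virtual class (so ``the Dirac operator twisted by it'' only makes sense through the same K-theoretic index pairing you are trying to establish) and because that classical $\Cl_m$-linear theorem is itself proved by the families machinery you have omitted.

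The paper's route supplies exactly this missing input: following Hitchin, it replaces $\slashed{\mathfrak{D}}$ by a zero-order operator, converts the $\Cl_n^h$-linear index into the index of a compactly supported family $v\mapsto v+\slashed{\mathfrak{D}}$ of quaternionic operators over $\RR^n$ (\Cref{Cliffordindex}), proves an index theorem for families of quaternionic operators (\Cref{indexthm}) by verifying the Atiyah--Singer axioms in $\KR$/$\KQ$-theory --- which requires the bigraded quaternionic ABS isomorphism, Dupont's $(1,1)$- and $(4,0)$-periodicities, and Matumoto's results on quaternionic Fredholm operators to make homotopy invariance and symbol-class dependence legitimate --- and only then identifies $\tind(\slashed{\mathscr{D}}^0)$ with $\tind(M)$ by a commutative diagram of Gysin maps in $\KR$/$\KQ$-theory whose commutativity is checked locally via the bimodule isomorphisms of \Cref{bimodule}. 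If you want a proof along your lines, you must either import this family-index apparatus (at which point your argument collapses into the paper's) or give an independent proof of multiplicativity/excision for $\KSp$-valued Clifford-linear indices; as written, the key step is asserted rather than proved.
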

\begin{remark}
	In particular the analytic index does not depend on the chosen geometric data involved in its definition, such as the Riemannian metrics on $M$, $\mathfrak{h}_M$ and $\gamma$.
\end{remark}

We postpone the proof of this theorem to \Cref{sec7}.

\subsection{Boundary defect}
Let $M$ be a spin$^h$ manifold with boundary $\partial M$ of dimension $2n$, so that $\partial M$ has dimension $2n-1$. Assume the Riemannian metric on $M$ coincides with a product metric on $\partial M\times[0,1]$ in a neighborhood of the boundary. Recall $M$ carries a fundamental $\ZZ_2$-graded complex $^h$spinor bundle $\SSS_{\CC}^h(M)$ that admits a Dirac operator $\DDD^h_{M,\CC}$. The restriction of the bundle $(\SSS^h_{\CC}(M))^0$ onto $\partial M$ can be identified with the fundamental ungraded complex $^h$spinor bundle over $\partial M$
\begin{equation}\label{spinoronboundary}
	S^h_{\CC}(\partial M):=P_{\Spin^h}(\partial M)\times_\mu (\dd^h_{2n,\CC})^0
\end{equation}
where $(\dd^h_{2n,\CC})^0$ is viewed as a $\Cl^h_{2n-1}$-module through the isomorphism $\Cl^h_{2n-1}\cong(\Cl^h_{2n})^0$. Note $(\dd^h_{2n,\CC})^0$ is the unique, up to equivalence, irreducible ungraded $\CC$-module over $\Cl_{2n-1}^h$.

Choose, in a neighborhood of the boundary, a local framing $e_1,\dots,e_{2n}$ for $M$ so that $e_{2n}$ is the inward normal direction. In local terms
\[
(\DDD^h_{M,\CC})^0=e_{2n}\cdot(\nabla_{e_{2n}}+\sum_{i=1}^{2n-1} e_i e_{2n}\cdot \nabla_{e_i})=e_{2n}\cdot(\nabla_{e_{2n}}+D)
\]
where $D$, through the identification $(\SSS^h_{\CC}(M))^0|_{\partial M}=S^h_\CC(\partial M)$ is the Dirac operator on $S^h_\CC(\partial M)$. In particular $D$ is a first order self-adjoint elliptic operator. As such, $D$ has a discrete spectrum with real eigenvalues.

Two spectral invariants are attached to $D$: the multiplicity of the eigenvalue $0$, i.e. $\dim_\CC\ker D$, and the eta-invariant $\eta(0)$ where $\eta$ is the analytic continuation of $$\eta(s)=\sum_{\lambda\neq 0}(sign\lambda)|\lambda|^{-s}$$ where the $\lambda$ runs over the non-zero eigenvalues of $D$ (counted with multiplicity) and $sign\lambda=\pm 1$ is the sign of $\lambda$.

Abusing the notation, let us define $$\eta^h(\partial M):=\frac{1}{2}\left(\dim_\CC\ker D+\eta(0)\right)$$ and call it the \textbf{eta-invariant} of $\partial M$. Note that the definition of the eta-invariant involves only the Dirac operator on the fundamental ungraded complex $^h$spinor bundle, as such the eta-invariant can be defined for any odd dimensional spin$^h$ manifold even if it is not a spin$^h$ boundary. Also note it is clear the eta-invariant is additive with respect to disjoint union.

Now if we impose the following global boundary condition for $(\DDD^h_{M,\CC})^0$
\begin{equation}\label{boundarycondition}
	P(f|_{\partial M})=0, \quad f\in \Gamma\left(M,(\SSS^h_{\CC}(M))^0\right)
\end{equation}
where $P$ is the spectral projection of $D$ corresponding to eigenvalues $\ge 0$, then the Atiyah-Patodi-Singer index theorem \cite{APS} asserts:
\[
\ind(\DDD^h_{M,\CC})^0=\int_M \alpha_0(x)-\eta^h(\partial M).
\]
where $\alpha_0(x)$ is certain locally defined differential form on $M$. To determine $\alpha_0(x)$, it suffices to do a local computation, so we can assume $M$ is a spin manifold and its canonical bundle $\mathfrak{h}_M$ is reduced from a $\Sp(1)$-bundle through the covering map $\Sp(1)\to \SO(3)$. Now that $\dd^h_{2n,\CC}$, when viewed as a representation of $\Spin(2n)\times \Sp(1)$, is the tensor product $\dd_{2n,\CC}\otimes \CC^2$ where $\CC^2$ is considered the fundamental irreducible representation for $\Sp(1)=\mathrm{SU}(2)$, the $\ZZ_2$-graded complex $^h$spinor bundle $\SSS^h_{\CC}(X)$ can be written as $\SSS_{\CC}(X)\otimes \xi$ where $\SSS_{\CC}(M)$ is the usual $\ZZ_2$-graded complex spinor bundle for spin manifolds that corresponds to the complex Clifford module $\dd_{2n,\CC}$, and where $\xi$ is the rank $2$ complex vector bundle associated to the fundamental irreducible representation of $\Sp(1)$. This is exactly the twisted situation considered in \cite[4.3]{APS}, therefore $\alpha_0$ is the Chern-Weil form representative of $\ch(\xi)\hat{\mathfrak{A}}(M)$. By \Cref{cherncharacter} this form is identical to the $\hat{A}^h$-form $\hat{\mathfrak{A}}^h(M)$. Therefore we have proved:
\begin{theorem}\label{APS}
	Let $M$ be a $2n$-dimensional spin$^h$ manifold with boundary $\partial M$. Let $\DDD_{\CC,M}^h$ be the Dirac operator on the fundamental $\ZZ_2$-graded complex $^h$spinor bundle over $M$. Then the index of $(\DDD_{\CC,M}^h)^0$ with the global boundary condition \labelcref{boundarycondition} is given by
	\[
	\ind(\DDD_{\CC,M}^h)^0=\int_M \hat{\mathfrak{A}}^h(M)-\eta^h(\partial M).
	\]
\end{theorem}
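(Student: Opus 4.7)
The plan is to reduce this to a twisted case of the original Atiyah--Patodi--Singer index theorem. First I would apply the general APS theorem \cite{APS} to the operator $(\DDD^h_{M,\CC})^0$ with the spectral boundary condition \labelcref{boundarycondition}. Because the boundary operator has already been identified, via \labelcref{spinoronboundary}, with the Dirac operator on $S^h_\CC(\partial M)$ whose eta-invariant is $\eta^h(\partial M)$, this immediately yields an identity
\[
\ind(\DDD^h_{M,\CC})^0 = \int_M \alpha_0(x) - \eta^h(\partial M),
\]
where $\alpha_0(x)$ is the local index density extracted from the short-time heat-kernel expansion of the Laplacians associated with $(\DDD^h_{M,\CC})^0$. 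All that remains is to identify $\alpha_0$ with a Chern--Weil representative of $\hat{\mathfrak{A}}^h(M)$.

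Since $\alpha_0$ is a universal polynomial in the curvatures of the Levi-Civita connection on $TM$ and the Riemannian connection on $\mathfrak{h}_M$, its value at each point is fixed by a computation in any convenient local model of the $\SO(2n)\times\SO(3)$-gauge data. I would therefore pass to the model in which $M$ is spin and $\mathfrak{h}_M$ is associated to a principal $\Sp(1)$-bundle via the double cover $\Sp(1)\to \SO(3)$; such lifts always exist locally. Under the tensor factorization $\dd^h_{2n,\CC} = \dd_{2n,\CC}\otimes_\CC\CC^2$ recorded in \Cref{notation}, the fundamental $\ZZ_2$-graded complex $^h$spinor bundle then factors as $\SSS^h_\CC(M)\cong\SSS_\CC(M)\otimes_\CC\xi$, where $\SSS_\CC(M)$ is the ordinary complex spin spinor bundle of the spin structure and $\xi$ is the rank-$2$ complex vector bundle associated to the fundamental representation of $\Sp(1)=\mathrm{SU}(2)$ on $\CC^2$. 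Consequently $(\DDD^h_{M,\CC})^0$ becomes precisely the $\xi$-twisted spin Dirac operator.

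The twisted spin Dirac operator of this type is exactly the setting of \cite[\S 4]{APS}, whose local computation identifies $\alpha_0$ with a Chern--Weil representative of $\ch(\xi)\cdot\hat{\mathfrak{A}}(M)$. Applying \Cref{cherncharacter} then rewrites $\ch(\xi) = 2\cosh(\sqrt{p_1(\mathfrak{h}_M)}/2)$, so $\alpha_0$ is precisely an $\hat{A}^h$-form of $M$, and integrating yields the claim. The main technical subtlety is the reduction step: one must check that the local density $\alpha_0$ really depends only on the $\SO(2n)\times\SO(3)$-connection data and descends from the spin-plus-$\Sp(1)$ model to the general spin$^h$ setting. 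This is essentially naturality of the heat-kernel symbol under the double cover $\Spin(2n)\times\Sp(1)\to \SO(2n)\times\SO(3)$, but it is the step I would scrutinize most carefully, since all of the real work in the theorem is concealed there.
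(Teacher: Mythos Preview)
Your proposal is correct and follows essentially the same route as the paper: apply the Atiyah--Patodi--Singer theorem to obtain $\ind(\DDD^h_{M,\CC})^0=\int_M\alpha_0-\eta^h(\partial M)$, then use locality of $\alpha_0$ to reduce to the spin-plus-$\Sp(1)$ model, factor $\SSS^h_\CC(M)\cong\SSS_\CC(M)\otimes_\CC\xi$, invoke the twisted APS computation from \cite[\S4]{APS} to identify $\alpha_0$ with the Chern--Weil form of $\ch(\xi)\hat{\mathfrak{A}}(M)$, and finish with \Cref{cherncharacter}. The paper's argument is identical in structure, and the subtlety you flag about descent through the double cover is handled in the paper in the same way---by appealing to the locality of $\alpha_0$.
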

	The corresponding statement naturally holds for Dirac operators with coefficients in a Hermitian vector bundle. 
\begin{theorem}\label{APStwisted}
	Let $M$ be a $2n$-dimensional spin$^h$ manifold with boundary $\partial M$. Suppose $\xi$ be a Hermitian vector bundle with a unitary connection and that, near the boundary, the metric and connection are constant in the normal direction. Let $\DDD_{\CC,M}^h(\xi)$ be the Dirac operator on the fundamental $\ZZ_2$-graded complex $^h$spinor bundle tensored with $\xi$ over $M$.
	Then the index of $(\DDD^h_{\CC,M}(\xi))^0$ with the global boundary condition
\[
	P_\xi(f|_{\partial M})=0, \quad f\in \Gamma\left(M,(\SSS^h_{\CC}(M)\otimes\xi)^0\right)
\]
is given by
\[	
	\ind(\DDD^h_{\CC,M}(\xi))^0=\int_M \ch(\xi)\cdot \hat{\mathfrak{A}}^h(M)-\eta^h(\partial M,\xi).
\]
Here $P_\xi$ is the spectral projection of the Dirac operator $D_\xi$ on $S^h_\CC(\partial M)\otimes_\CC\xi$ corresponding to eigenvalues $\ge 0$, and $\eta^h(\partial M,\xi)$ is defined similar to $\eta^h(\partial M)$ using $D_\xi$.
\end{theorem}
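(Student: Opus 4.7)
The plan is to imitate the proof of Theorem \ref{APS} essentially verbatim, the only new ingredient being that we carry the extra coefficient bundle $\xi$ through the characteristic-form computation. Concretely, I would apply the Atiyah--Patodi--Singer index theorem in its twisted form (see \cite[\S 3--4]{APS}) to the Dirac operator $\DDD^{h}_{\CC,M}(\xi)$ on $\SSS^h_\CC(M)\otimes\xi$. The hypothesis that the metric and the unitary connection on $\xi$ are constant in the normal direction near $\partial M$ means that, on a collar $\partial M\times[0,1]$, the operator again takes the form $e_{2n}\cdot(\nabla_{e_{2n}}+D_\xi)$, where $D_\xi$ is a first-order self-adjoint elliptic operator on $S^h_\CC(\partial M)\otimes\xi$. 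Hence APS produces an index formula of the shape $\int_M \alpha_\xi(x)-\tfrac12\bigl(\dim_\CC\ker D_\xi+\eta_{D_\xi}(0)\bigr)$, and the boundary term is $\eta^h(\partial M,\xi)$ by definition.

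What remains is to identify the interior density $\alpha_\xi$ with $\ch(\xi)\cdot\hat{\mathfrak{A}}^h(M)$. Since $\alpha_\xi$ is universally constructed from local curvature data, I may restrict to the local model used in the proof of Theorem \ref{APS}: assume $M$ is spin and $\mathfrak{h}_M$ is reduced from an $\Sp(1)$-bundle through the double cover $\Sp(1)\to\SO(3)$. Then at the representation level the $\CC$-module $\dd^h_{2n,\CC}=\dd_{2n,\CC}\otimes\CC^2$ gives a factorization $\SSS^h_\CC(M)=\SSS_\CC(M)\otimes\zeta$, with $\zeta$ the rank two complex bundle associated to the fundamental representation of $\Sp(1)=\mathrm{SU}(2)$. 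Therefore
\[
\SSS^h_\CC(M)\otimes\xi=\SSS_\CC(M)\otimes(\zeta\otimes\xi),
\]
and the operator reduces to a twisted spin Dirac operator with coefficient bundle $\zeta\otimes\xi$. The classical twisted APS formula then gives $\alpha_\xi=\ch(\zeta\otimes\xi)\cdot\hat{\mathfrak{A}}(M)=\ch(\xi)\cdot\ch(\zeta)\cdot\hat{\mathfrak{A}}(M)$, and Lemma \ref{cherncharacter} rewrites the last factor as $\ch(\xi)\cdot 2\cosh\!\bigl(\tfrac{\sqrt{p_1(\mathfrak{h}_M)}}{2}\bigr)\hat{\mathfrak{A}}(M)=\ch(\xi)\cdot\hat{\mathfrak{A}}^h(M)$, as required.

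There is no real obstacle in this argument; it is a direct transcription of the proof of Theorem \ref{APS} with $\xi$ tensored in throughout. The one point that deserves mild care is the collar hypothesis on the connection and metric on $\xi$: this is exactly what guarantees that the tangential Dirac operator on the collar is genuinely pulled back from $D_\xi$ on $\partial M$, so that APS in the form of \cite{APS} applies and the boundary contribution is intrinsically $\eta^h(\partial M,\xi)$ rather than a more complicated spectral invariant.
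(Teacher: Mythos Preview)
Your proposal is correct and follows exactly the route the paper intends: the paper does not write out a separate proof of Theorem~\ref{APStwisted} but simply remarks that ``the corresponding statement naturally holds for Dirac operators with coefficients in a Hermitian vector bundle,'' i.e.\ one tensors $\xi$ through the proof of Theorem~\ref{APS}. Your write-up makes this explicit---reducing locally to a spin Dirac operator twisted by $\zeta\otimes\xi$ and invoking the twisted APS formula together with Lemma~\ref{cherncharacter}---which is precisely the intended argument.
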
	
	
\begin{remark}\label{extradivisibility}
	If $n\equiv 0\bmod 4$, i.e. $\dim M\equiv 0\bmod 8$, the $\ZZ_2$-graded complex $\Cl_{2n}^h$-module $\dd^h_{2n,\CC}$ carries further a right $\HH$-multiplication making it into a $\ZZ_2$-graded $\HH$-module. Therefore $\SSS^h_{\CC}(M)$ is a $\ZZ_2$-graded $\HH$-vector bundle on which $\DDD^h_{\CC,M}$ is $\HH$-linear, and the spectral projection operator $P$ is $\HH$-linear as well. As such the index of $(\DDD^h_{\CC,M})^0$ is an even integer. If further $\xi$ is the complexification $\gamma_\CC$ of a Riemannian real vector bundle $\gamma$ with an orthogonal connection, then $\DDD^h_{\CC,M}\otimes_\CC\gamma_\CC=\DDD^h_{\CC,M}\otimes_\RR \gamma$ is again a $\ZZ_2$-graded $\HH$-vector bundle on which $\DDD^h_{\CC,M}$ is $\HH$-linear. So the index of $(\DDD^h_{\CC,M}(\gamma_\CC))^0$ is an even integer. 
\end{remark}

\subsection{Index of $\ZZ_k$-spin$^h$ manifold} Let $\overline{M}$ be a $\ZZ_k$ spin$^h$ manifold of dimension $n\equiv 0\bmod 4$. We \textit{define} its \textbf{analytic $\ZZ_k$-index} to be
\[
\aind_{\ZZ_k}(\overline{M}):=\int_M \hat{\mathfrak{A}}^h(M)-k\cdot\eta^h(\beta M) \pmod k\in\ZZ_k
\]
If $\xi$ is a Hermitian complex vector bundle over $\overline{M}$ with a unitary connection, then the \textbf{analytic $\ZZ_k$-index of $\overline{M}$ twisted by $\xi$} is
\[
\aind_{\ZZ_k}(\overline{M},\xi):=\int_M \hat{\mathfrak{A}}^h(M)\ph(\xi)-k\cdot\eta^h(\beta M,\xi) \pmod k\in\ZZ_k.
\]
We note that even though the integral and the eta invariant in our definition involve geometric data, such as curvature, connection and spectrum of Dirac operator, the resulting analytic $\ZZ_k$-index is independent of those geometric data. This is because the $\ZZ_k$-index depends continuously on those geometric data but takes discrete values. Also note that from the previous section, the analytic index as defined is indeed the index of the Dirac operator on $M$ with Atiayh-Patodi-Singer boundary condition, provided that the geometric data on $\partial M$ is induced from the geometric data on $\beta M$ according to the identification $\partial M=\underbrace{\beta M\sqcup \beta M\sqcup\cdots\sqcup \beta M}_\text{$k$ times}$.

On the other hand we can define a topological $\ZZ_k$-index for $\overline{M}$ as follows. Let $\overline{S^m}$ be the $\ZZ_k$-$m$-sphere (\Cref{zksphereexample}). We can find $m$ large enough so that $\overline{M}$ embeds into $\overline{S^m}$ as a $\ZZ_k$ submanifold. The normal (vector) bundle of $\overline{M}$ in $\overline{S^m}$ carries an induced spin$^h$ structure. Then our construction in the Riemann-Roch theorem using weak $\KU$- and $\KO$-Thom classes yields:
\begin{enumerate}[label=(\roman*)]
	\item If $m\equiv n\bmod 2$, then we have a homomorphism 
	 $$f_!: \KU(\overline{M})\to \widetilde{\KU}(\overline{S^m}).$$
	 \item If moreover $m-n\equiv 4\bmod 8$, then we have a homomorphism
	 $$\widetilde{f}_!: \KO(\overline{M})\to \KO(\overline{S^m})$$
	 so that the following diagram commutes
	 \begin{equation*}
    	\begin{tikzcd}
    		\KO(\overline{M})\arrow[r,"\tilde{f}_!"]\arrow[d,"\Ind_\RR^\CC"]&\widetilde{\KO}(\overline{S^m})\arrow[d,"\Ind_\RR^\CC"]\\
    		\KU(\overline{M})\arrow[r,"f_!"]&\widetilde{\KU}(\overline{S^m})
    	\end{tikzcd}
    \end{equation*}
\end{enumerate}
Now an easy computation shows
\begin{lemma}\label{cohomologyofzksphere}
	Let $h^*$ be a generalized cohomology theory. Then we have the following short exact sequence
	\[
	0\to h^{*-1}(\pt)^{\oplus (k-1)}\oplus(h^{*-m}(\pt)\otimes \ZZ_k)\to \widetilde{h}^*(\overline{S^m})\to \mathrm{Tor}(h^{*-m+1}(\pt),\ZZ_k)\to 0
	\]
\end{lemma}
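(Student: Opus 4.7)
The plan is to extract the claimed short exact sequence from the long exact sequence associated to the cofibration $\beta X := S^{m-1} \hookrightarrow \overline{S^m}$. Writing $M := S^m \setminus \bigsqcup_k \mathring{D}^m$, the $\ZZ_k$-sphere $\overline{S^m}$ is the pushout $M \cup_{\partial M} \beta X$ along the fold map $f\colon \partial M \to \beta X$, so the cofiber of the inclusion is $\overline{S^m}/\beta X \cong M/\partial M$; write $\hat X$ for this cofiber.

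First I would compute $\tilde h^*(\hat X)$ via the LES of the pair $(M,\partial M)$. Up to homotopy $M \simeq \bigvee_{k-1} S^{m-1}$ and $\partial M = \bigsqcup_k S^{m-1}$, and the restriction $i^*\colon \tilde h^n(M)\to \tilde h^n(\partial M)$ is split injective (identified on the "sphere" summand with $(a_1,\dots,a_{k-1})\mapsto (a_1,\dots,a_{k-1},-\sum_i a_i)$, and zero on the "component" summand coming from the choice of basepoint). The LES therefore collapses to yield $\tilde h^n(\hat X)\cong h^{n-m}(\pt)\oplus h^{n-1}(\pt)^{\oplus(k-1)}$, realized as $\mathrm{coker}(i^*)$ one degree below.

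Next I would identify the connecting map $\delta\colon \tilde h^n(\beta X) \to \tilde h^{n+1}(\hat X)$ through the ladder of Puppe cofiber sequences induced by the defining pushout square: the bottom sequence $\beta X \to \overline{S^m} \to \hat X \to \Sigma\beta X$ receives a map from the top sequence $\partial M \to M \to \hat X \to \Sigma\partial M$, with the identity on the middle $\hat X$ and $\Sigma f$ on the final suspensions. In cohomology this gives $\delta^* = (\delta')^*\circ (\Sigma f)^*$, where $\delta'$ is the connecting map of the cofibration $\partial M \hookrightarrow M$. Since $(\Sigma f)^*$ is the diagonal into the sphere summand (zero into the component summand) and $(\delta')^*$ is the "projection-and-sum" onto $\mathrm{coker}(i^*)$, the composite sends $\alpha \in h^{n-m+1}(\pt)=\tilde h^n(\beta X)$ to $(k\alpha,0)\in h^{n-m+1}(\pt)\oplus h^n(\pt)^{\oplus(k-1)} = \tilde h^{n+1}(\hat X)$; that is, $\delta$ is multiplication by $k$ into the first summand and zero into the second.

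Splicing the LES then yields the desired short exact sequence, with $\ker(\delta_*) = \mathrm{Tor}(h^{*-m+1}(\pt),\ZZ_k)$ on the right and $\mathrm{coker}(\delta_{*-1}) = (h^{*-m}(\pt)\otimes \ZZ_k)\oplus h^{*-1}(\pt)^{\oplus(k-1)}$ on the left. The hard part will be the connecting-map identification: one must set up the $\ZZ_k$-manifold orientation conventions so that $f$ is an honest $k$-fold orientation-preserving fold, as the definition of $\ZZ_k$-manifold demands by requiring $\partial M$ to be $k$ labelled oriented copies of $\beta X$. A signed version of the fold would replace the multiplication by $k$ with a signed sum and destroy the torsion part of the SES.
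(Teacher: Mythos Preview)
Your proposal is correct and follows essentially the same route as the paper. The paper sets up the commutative ladder of long exact sequences for the map of triples $\pi:(X,\partial X,x)\to(\overline{S^m},S^{m-1},x)$ (your $M$ is their $X$), records the same three facts---that $\pi^*$ is an isomorphism on relative groups, that $X\simeq\bigvee_{k-1}S^{m-1}$ with the restriction map $(a_i)\mapsto(a_i;-\sum a_i)$, and that the map $\widetilde h^*(S^{m-1})\to\widetilde h^*(\partial X)$ is the diagonal---and then says the lemma ``follows easily''; your Puppe-sequence phrasing and explicit identification of $\delta$ as multiplication by $k$ into the first summand is exactly the computation the paper leaves implicit.
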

\begin{proof}
	Let $X$ be the manifold with boundary obtained from $S^m$ by removing $k$ disjoint open disks. Then $\partial X$ is the disjoint union of $k$ copies of $S^{m-1}$. Choose a base point $x\in \partial X$. The gluing map $X\to \overline{S^m}$ yields a map of triples $\pi:(X,\partial X,x)\to (\overline{S^m},S^{m-1},x)$ which in turn induces a commutative diagram
	\[
	\begin{tikzcd}[column sep=small, row sep=small]
		\cdots\ar[r]& h^*(\overline{S^m},S^{m-1})\ar[r]\ar[d,"\pi^*"] & \widetilde{h}^*(\overline{S^m})\ar[r]\ar[d,"\pi^*"] & \widetilde{h}^*(S^{m-1})\ar[r]\ar[d,"\pi^*"] &h^{*+1}(\overline{S^m},S^{m-1})\ar[r] \ar[d,"\pi^*"]&\cdots \\
		\cdots\ar[r]& h^*(X,\partial X)\ar[r] & \widetilde{h}^*(X) \ar[r]& \widetilde{h}^*(\partial X) \ar[r]&h^{*+1}(X,\partial X) \ar[r]& \cdots
	\end{tikzcd}
	\]
	where the rows are the long exact sequence of the triples. Here the reduced cohomology group $\widetilde{h}^*$ is the cohomology group relative to the base point $x$, and treated as a subgroup of the unreduced cohomology group. The lemma now follows easily from the following facts. First, the pair $(X,\partial X)$ is equivalent to $(\overline{S^m},S^{m-1})$ through $\pi$, so $\pi^*:h^*(X,\partial X)\to h^*(\overline{S^m},S^{m-1})$ is an isomorphism. Second, $\widetilde{h}^*(\partial X)=\widetilde{h}^*(S^{m-1})\oplus h^*(S^{m-1})^{\oplus (k-1)}$ and $\widetilde{h}^*(S^{m-1})\to \widetilde{h}^*(\partial X)$ is the diagonal map. Finally, $X$ is equivalent to the wedge sum of $(k-1)$-copies of $S^{m-1}$ (the boundary components of $X$ not containing $x$) and thus $\widetilde{h}^*(X)=\widetilde{h}^*(S^{m-1})^{\oplus (k-1)}$. Moreover the restriction map $\widetilde{h}^*(X)\to \widetilde{h}^*(\partial X)$ is identified with the injective homomorphism
		\[
		\widetilde{h}^*(S^{m-1})^{\oplus (k-1)}\to h^*(S^{m-1})^{\oplus (k-1)}\oplus\widetilde{h}^*(S^{m-1})
		\]
		that takes $(a_i)_{1\le i\le k-1}$ to $(a_i;-\sum a_i)$ (recall the definition of addition in homotopy group).
	\end{proof}

\begin{corollary}[cf. {\cite[Proposition 1.7]{modkindex}}{\cite[Proposition 3.1]{mod2refinement}}]\label{zksphere}
	We have
	\begin{align*}
		\widetilde{\KU}(\overline{S^m})&=\ZZ_k \quad \text{for }m\equiv 0\bmod 2,\\
		\widetilde{\KO}(\overline{S^m})&=\ZZ_k \quad \text{for }m\equiv 0\bmod 4.
	\end{align*}
	Moreover the map $$\Ind_\RR^\CC:\widetilde{\KO}(\overline{S^m})\to\widetilde{\KU}(\overline{S^m})$$ is an isomorphism for $m\equiv 0\bmod 8$, and multiplication by $2$ for $m\equiv 4\bmod 8$.
\end{corollary}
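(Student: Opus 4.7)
The plan is to specialize the preceding Lemma~\ref{cohomologyofzksphere} to $h^* = \KU^*$ and $h^* = \KO^*$, and then track the complexification map through the identifications the lemma provides.

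For the complex case ($m \equiv 0 \bmod 2$): since $\KU^*(\pt)$ is concentrated in even degrees, one has both $\KU^{-1}(\pt) = 0$ and $\KU^{-m+1}(\pt) = 0$. Hence the short exact sequence in Lemma~\ref{cohomologyofzksphere} collapses to a single isomorphism $\widetilde{\KU}(\overline{S^m}) \cong \KU^{-m}(\pt) \otimes \ZZ_k = \ZZ_k$, with no work required beyond inspecting the coefficient ring.

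For the real case ($m \equiv 0 \bmod 4$): using the 8-periodic table $\KO^0(\pt) = \ZZ$, $\KO^{-3}(\pt) = \KO^{-5}(\pt) = \KO^{-6}(\pt) = \KO^{-7}(\pt) = 0$, one finds that $\KO^{-m+1}(\pt) = 0$ since $-m+1$ is congruent to $-3$ or $-7$ modulo $8$, while $\KO^{-m}(\pt) = \ZZ$. Substituting into Lemma~\ref{cohomologyofzksphere} identifies the principal summand of $\widetilde{\KO}(\overline{S^m})$ with $\KO^{-m}(\pt) \otimes \ZZ_k = \ZZ_k$, as claimed.

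For the moreover statement: the map $\Ind_\RR^\CC \colon \KO^* \to \KU^*$ is a natural transformation of cohomology theories and is therefore compatible with the connecting homomorphisms underlying Lemma~\ref{cohomologyofzksphere}. The induced map between the short exact sequences for $\KO$ and $\KU$ reduces, on the principal $\ZZ_k$ summand, to the complexification $\KO^{-m}(\pt) \to \KU^{-m}(\pt)$ on Bott generators, tensored with $\ZZ_k$. By Bott's calculation (the same fact invoked at the end of the proof of Theorem~\ref{ABSH}), this map of generators is an isomorphism $\ZZ \cong \ZZ$ when $m \equiv 0 \bmod 8$ and multiplication by $2$ when $m \equiv 4 \bmod 8$; these properties descend to reduction mod $k$, yielding the conclusion.

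The main point to verify is the naturality of Lemma~\ref{cohomologyofzksphere} with respect to transformations of cohomology theories, but this is automatic because the lemma's proof invokes only the long exact sequences of the pairs $(X,\partial X)$ and $(\overline{S^m}, S^{m-1})$ together with excision, all of which are natural in $h^*$. No substantive obstacle is anticipated.
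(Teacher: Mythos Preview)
Your approach is exactly the paper's: apply Lemma~\ref{cohomologyofzksphere} and invoke naturality of its short exact sequence under the transformation $\Ind_\RR^\CC$. The $\KU$ computation is clean and complete, since $\KU^{-1}(\pt)=0$ kills the first summand.

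For $\KO$, however, you have overlooked a term. The left end of the short exact sequence of Lemma~\ref{cohomologyofzksphere} at $*=0$ is
\[
h^{-1}(\pt)^{\oplus(k-1)} \;\oplus\; \bigl(h^{-m}(\pt)\otimes\ZZ_k\bigr),
\]
and for $h=\KO$ the first summand is $\KO^{-1}(\pt)^{\oplus(k-1)}=\ZZ_2^{\oplus(k-1)}$, which does not vanish for $k\ge 2$. Feeding all three terms into the lemma honestly yields
\[
\widetilde{\KO}(\overline{S^m}) \;\cong\; \ZZ_2^{\oplus(k-1)}\oplus\ZZ_k \qquad (m\equiv 0\bmod 4),
\]
not $\ZZ_k$ alone. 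Your phrase ``principal summand'' hints that you sensed something here but did not pursue it; the paper's one-line proof is equally silent. The extra $\ZZ_2^{\oplus(k-1)}$ is detected by the $(k-1)$ circles in $\overline{S^m}$ (equivalently, by the $\vee^{k-1}S^1$ in $X/\partial X$). Under complexification it maps to zero since $\KU^{-1}(\pt)=0$, so the ``moreover'' clause and all downstream uses really concern only the $\ZZ_k$ summand arising from $h^{-m}(\pt)\otimes\ZZ_k$; on that summand your argument for the behaviour of $\Ind_\RR^\CC$ is correct.
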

\begin{proof}
Apply the above lemma to $\KU$ and $\KO$ and use that the short exact sequence commutes with natural transformations between cohomology theories.
\end{proof}
\begin{remark}\label{bundlegeneratorofzksphere}
	For $m$ even, the generator of $\widetilde{\KU}(\overline{S^m})$ can be obtained as follows. From the commutative diagram in the proof of \Cref{cohomologyofzksphere}, we see the generator of $\widetilde{\KU}(\overline{S^m})$ is reduction mod $k$ of the generator of $\KU(X,\partial X)=\ZZ$. Meanwhile, by excision we have $\KU(X,\partial X)=\KU(S^m, k D^m)\cong \widetilde{\KU}(S^m)$. Therefore the generator of $\widetilde{\KU}(\overline{S^m})$ is simply the reduction mod $k$ of the (virtual) bundle $\dd_{m,\CC}$ over $S^m$ restricted to $X$ which is trivialized over $\partial X$ (since it is trivialized over $kD^n$). Similarly $\widetilde{\KO}(\overline{S^m})$ is generated by the restriction of $\dd_{m}$ mod $k$ for $m\equiv 0\bmod 4$.
\end{remark}

We define the \textbf{topological $\ZZ_k$-index} of $\overline{M}$ to be
 \[
 \tind_{\ZZ_k}(\overline{M}):=f_!(1)\in\ZZ_k.
 \]
 If $\xi$ is a complex vector bundle over $\overline{M}$, then we define the \textbf{topological $\ZZ_k$-index of $\overline{M}$ twisted by $\xi$} to be
 \[
 \tind_{\ZZ_k}(\overline{M},\xi):=f_!(\xi)\in \ZZ_k.
 \]
 As usual, the topological $\ZZ_k$-index is independent of choice of embedding due to the multiplicative property of the weak $\KU$-Thom class.

 Now the mod $k$ index theorem of Freed and Melrose \cite{modkindex} applied to the Dirac operator on the fundamental complex $^h$spinor bundle implies the analytic and topological $\ZZ_k$-indices of $\overline{M}$ agree:
 \begin{proposition}\label{zkindextheorem}
 	$\aind_{\ZZ_k}(\overline{M})= \tind_{\ZZ_k}(\overline{M})$.
 \end{proposition}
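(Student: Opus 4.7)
The plan is to reduce the statement to a direct application of the Freed--Melrose mod $k$ index theorem for $\ZZ_k$-manifolds, with the Dirac operator $(\DDD^h_{\CC,M})^0$ on the fundamental $\ZZ_2$-graded complex $^h$spinor bundle playing the role of the elliptic operator. First I would identify the analytic $\ZZ_k$-index $\aind_{\ZZ_k}(\overline{M})$ with the APS-index of this Dirac operator, reduced mod $k$. Indeed, $\partial M$ is, by definition of a $\ZZ_k$-manifold, a disjoint union of $k$ labelled copies of $\beta M$, all of which inherit the same geometric data; additivity of the eta-invariant then gives $\eta^h(\partial M) = k\cdot\eta^h(\beta M)$. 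Applying \Cref{APS} to $M$ with this boundary structure yields
\[
\ind(\DDD^h_{\CC,M})^0 \;=\; \int_M \hat{\mathfrak{A}}^h(M) \;-\; k\cdot\eta^h(\beta M),
\]
which is an integer whose residue mod $k$ is by definition $\aind_{\ZZ_k}(\overline{M})$.

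Second, I would realise $\tind_{\ZZ_k}(\overline{M})$ as the K-theoretic topological index appearing in the Freed--Melrose theorem. Embed $\overline{M}\hookrightarrow \overline{S^m}$ as a $\ZZ_k$-submanifold of even codimension with $m\equiv n\bmod 2$. The normal bundle $E$ inherits a spin$^h$ structure, and by construction $f_!(1)\in\widetilde{\KU}(\overline{S^m})\cong\ZZ_k$ is obtained by multiplying the unit by the weak $\KU$-Thom class $\dd^h_\CC(E)$ and excising through the tubular neighborhood. By \Cref{zksphere} and \Cref{bundlegeneratorofzksphere}, the target $\widetilde{\KU}(\overline{S^m})\cong\ZZ_k$ is generated by the reduction mod $k$ of the generator of $\widetilde{\KU}(S^m)$, so the pushforward $f_!(1)$ is nothing other than the mod $k$ reduction of the topological index of the symbol of $(\DDD^h_{\CC,M})^0$ extended to the normal bundle.

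Third, I would invoke the Freed--Melrose mod $k$ index theorem \cite{modkindex}, which asserts precisely that, for an elliptic operator on a $\ZZ_k$-manifold whose symbol comes from a Clifford-module datum (as is the case for our Dirac operator, thanks to the quaternionic Atiyah--Bott--Shapiro construction $\overline{\varphi}^c_E$), the APS-index reduced mod $k$ equals the K-theoretic topological index mod $k$. Combining this with the two identifications above closes the argument.

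The main obstacle will be the bookkeeping in the second step: namely, identifying the symbol of $(\DDD^h_{\CC,M})^0$, extended across the normal bundle $E$, with the weak $\KU$-Thom class $\dd^h_\CC(E)=\overline{\varphi}^c_E(\dd^h_\CC)$. This is where the quaternionic Clifford-module machinery of \Cref{sec3}--\Cref{sec4} must be verified to match the ``Clifford symbol'' picture of Freed--Melrose; once this comparison is in place the theorem follows formally, since both indices are then the image of the same K-theory class under the pushforward $\KU(\overline{M})\to\widetilde{\KU}(\overline{S^m})=\ZZ_k$.
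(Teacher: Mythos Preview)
Your proposal is correct and follows essentially the same route as the paper. The paper's own proof simply refers to \cite[Corollary 5.4]{modkindex}, noting that one should replace the spin$^c$ $\KU$-Thom class by the spin$^h$ weak $\KU$-Thom class and the Dirac operator accordingly, with the crucial APS computation supplied by \Cref{APS}; your three steps unpack precisely this reduction, including the symbol/Thom-class identification you flag as the main bookkeeping point.
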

 \begin{proof}
 	The proof is the same as that in the spin$^c$ case presented in \cite[Corollary 5.4]{modkindex}. It suffices to replace the spin$^c$ $\KU$-Thom class by the spin$^h$ weak $\KU$-Thom class and replace the Dirac operator accordingly. A crucial computation is made therein using Atiyah-Patodi-Singer index theorem, in our case this is done by \Cref{APS}.
 \end{proof}
 
 The same of course holds in the twisted case: $\aind_{\ZZ_k}(\overline{M},\xi)= \tind_{\ZZ_k}(\overline{M},\xi)$ for complex vector bundle $\xi$ over $\overline{M}$. 
 
 Now since we are concerned with real vector bundles, we are more interested in the quantity
 \[
 \widetilde{\tind}_{\ZZ_k}(\overline{M},\gamma):=\widetilde{f}_!(\gamma)
 \]
 where $\gamma$ is a real vector bundle over $\overline{M}$. We wish to find an analytic formula for $\widetilde{\tind}_{\ZZ_k}(\overline{M},\gamma)$. In view of \Cref{zksphere}, if $\dim \overline{M}\equiv 4\bmod 8$, then
 \[
 \widetilde{\tind}_{\ZZ_k}(\overline{M},\gamma)=\tind_{\ZZ_k}(\overline{M},\gamma_\CC)
 \]
 where $\gamma_\CC$ is the complexification of $\gamma$. Then \Cref{zkindextheorem} gives the desired analytic formula. On the other hand if $\dim \overline{M}\equiv 0\bmod 8$, then $$\tind_{\ZZ_k}(\overline{M},\gamma_\CC)=2\cdot\widetilde{\tind}_{\ZZ_k}(\overline{M},\gamma).$$
Since $2$ is not necessarily invertible in $\ZZ_k$, $\widetilde{\tind}_{\ZZ_k}(\overline{M},\gamma)$ cannot be directly deduced from $\tind_{\ZZ_k}(\overline{M},\gamma_\CC)$. However recall (from \Cref{extradivisibility}) that if $\overline{M}\equiv 0\bmod 8$, then the analytic index (twisted by a real bundle) before modulo $k$ is divisible by $2$, so it is natural to expect
\[
\widetilde{\tind}_{\ZZ_k}(\overline{M},\gamma)=\frac{1}{2}\int_M \hat{\mathfrak{A}}^h(M)\ph(\gamma)-\frac{1}{2}k\cdot\eta^h(\beta M,\gamma_\CC) \pmod k.
\]
This is indeed true and can be proved using the method of \cite[Theorem 3.2]{mod2refinement} in which a similar formula for $\ZZ_k$-spin manifold of dimension $8k+4$ is obtained. The crucial ingredient of that proof is the quaternionic structure on the fundamental $\ZZ_2$-graded complex Clifford module in dimension $8k+4$. So that proof, adapted in our case using the quaternionic structure on the fundamental $\ZZ_2$-graded complex module over $\Cl_{8k}^h$, yields the above formula.

To summarize, we have proved:
\begin{theorem}\label{zkindex}
	Let $\overline{M}$ be a $\ZZ_k$-spin$^h$ manifold of dimension $n\equiv 0\bmod 4$ and let $\gamma$ be real vector bundle over $M$. Then after equipping appropriate geometric data on $M$ and $\gamma$ as before, we have
	\[
	\widetilde{\tind}_{\ZZ_k}(\overline{M},\gamma)=\frac{1}{\epsilon}\left(\int_M\hat{\mathfrak{A}}^h(M)\ph(\gamma)-k\cdot\eta^h(\beta M,\gamma_\CC)\right)\pmod k
	\]
	where $\epsilon=1$ if $n\equiv 4\bmod 8$ and $\epsilon=2$ if $n\equiv 0\bmod 8$.
\end{theorem}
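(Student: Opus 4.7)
The plan is to split into two cases according to $n\bmod 8$. The case $n\equiv 4\bmod 8$ is immediate from what has already been set up; the case $n\equiv 0\bmod 8$ is the delicate one and requires a quaternionic refinement of the Freed--Melrose mod $k$ index theorem (\Cref{zkindextheorem}).

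For $n\equiv 4\bmod 8$, I would embed $\overline{M}\hookrightarrow\overline{S^m}$ with $m-n\equiv 4\bmod 8$, so $m\equiv 0\bmod 8$. By \Cref{zksphere}, the map $\Ind_\RR^\CC:\widetilde{\KO}(\overline{S^m})\to\widetilde{\KU}(\overline{S^m})$ is an isomorphism. The commuting square in the definition of $\widetilde{f}_!$ then gives $\Ind_\RR^\CC\,\widetilde{f}_!(\gamma)=f_!(\gamma_\CC)$, so $\widetilde{\tind}_{\ZZ_k}(\overline{M},\gamma)=\tind_{\ZZ_k}(\overline{M},\gamma_\CC)$ in $\ZZ_k$. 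Applying \Cref{zkindextheorem} to $\gamma_\CC$ and using $\ch(\gamma_\CC)=\ph(\gamma)$ yields the formula with $\epsilon=1$.

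For $n\equiv 0\bmod 8$, embed $\overline{M}\hookrightarrow\overline{S^m}$ with $m\equiv 4\bmod 8$. Now \Cref{zksphere} says $\Ind_\RR^\CC$ is multiplication by $2$ on $\widetilde{\KO}(\overline{S^m})=\ZZ_k$, hence $\tind_{\ZZ_k}(\overline{M},\gamma_\CC)=2\,\widetilde{\tind}_{\ZZ_k}(\overline{M},\gamma)$ in $\ZZ_k$. One cannot simply divide by $2$ mod $k$ when $k$ is even, so a finer argument is needed. The key observation is that in dimension $8k$, the fundamental $\ZZ_2$-graded complex module $\Delta_{8k,\CC}^h=\Delta_{8k}^h\otimes_\RR\CC$ inherits a $\CC$-antilinear, $\CCl_{8k}^h$-commuting quaternionic structure $J$ (with $J^2=-1$) from right multiplication by $\mathbf{j}\in\HH$ on $\Delta_{8k}^h$. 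This makes $\SSS_\CC^h(M)\otimes_\CC\gamma_\CC$ into a quaternionic vector bundle on which the Dirac operator $\DDD_{\CC,M}^h(\gamma_\CC)$ and the boundary operator $D_{\gamma_\CC}$ are $\HH$-linear. Consequently all eigenspaces of $D_{\gamma_\CC}$ have even complex dimension, so $\eta^h(\beta M,\gamma_\CC)$ is naturally $2\tilde\eta$ for a well-defined quaternionic eta invariant $\tilde\eta$; and (as in \Cref{extradivisibility}) $\ind(\DDD_{\CC,M}^h(\gamma_\CC))^0$ is an even integer, so the integer $A:=\int_M\hat{\mathfrak{A}}^h(M)\ph(\gamma)-k\cdot\eta^h(\beta M,\gamma_\CC)$ from \Cref{APStwisted} is even, and $A/2$ is an unambiguously defined integer.

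The heart of the argument is then to identify $A/2\bmod k$ with $\widetilde{\tind}_{\ZZ_k}(\overline{M},\gamma)$ directly, rather than via the complexified identity (which only determines $A/2$ modulo $k/\gcd(2,k)$). Following the method of \cite[Theorem 3.2]{mod2refinement}, I would rerun the proof of \Cref{zkindextheorem} (Freed--Melrose) with the weak $\KO$-Thom class $\dd^h(E)$ of \Cref{fakeThomclass} in place of the weak $\KU$-Thom class, and with $\SSS^h(M)\otimes_\RR\gamma$ in place of $\SSS_\CC^h(M)\otimes_\CC\gamma_\CC$. The APS-type computation of \Cref{APStwisted} survives this substitution because the quaternionic structure makes every analytic quantity half of what it was in the complex case, and the mod $k$ bordism argument of Freed--Melrose is purely formal in the choice of Thom class. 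This gives an honest ``quaternionic $\ZZ_k$-index theorem'' $\widetilde{\tind}_{\ZZ_k}(\overline{M},\gamma)\equiv A/2\pmod k$, which is the desired formula with $\epsilon=2$. The main obstacle is the bookkeeping in this quaternionic refinement, in particular verifying that the passage from the Thom class $\dd_\CC^h(E)$ to $\dd^h(E)$ in Freed--Melrose's excision/$\ZZ_k$-cobordism steps is compatible with the quaternionic structure on the boundary spectral projection; once this is in place, the rest is parallel to \cite[Theorem 3.2]{mod2refinement}.
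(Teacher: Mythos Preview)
Your approach matches the paper's exactly: the $n\equiv 4\bmod 8$ case is reduced to \Cref{zkindextheorem} via the isomorphism $\Ind_\RR^\CC$ from \Cref{zksphere}, and the $n\equiv 0\bmod 8$ case is handled by invoking the quaternionic structure on $\dd_{8k,\CC}^h$ (as in \Cref{extradivisibility}) and adapting the argument of \cite[Theorem 3.2]{mod2refinement}. One small correction: the relevant relation in dimension $8k$ is $\Res_\RR^\CC(\dd_{8k,\CC}^h)=\dd_{8k}^h$ (restriction, not induction), so the quaternionic $J$ arises from identifying $\dd_{8k,\CC}^h$ with the $\HH$-module $\dd_{8k}^h$ as real vector spaces rather than via $\Delta_{8k}^h\otimes_\RR\CC$; this does not affect your argument.
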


\section{Characteristic variety theorems}\label{sec6}
In this chapter, we prove our main theorem. We start by showing spin$^h$ manifolds provide enough cycles for symplectic K-theory. Then we define invariants of real vector bundles over spin$^h$ cycles and spin$^h$ $\ZZ_k$-cycles using indices of Dirac operators. These invariants descend to periods of real vector bundles over symplectic K-homology, and therefore by \Cref{thm2} determine real vector bundles up to stable equivalence.

Henceforth, we will use the notation $\Omega_*^G(-)$ to denote the bordism theory of $G$-manifolds. For instance, $\Omega_*^{fr}(-)$, $\Omega_*^{\spin}(-)$ and $\Omega_*^{\spin^h}(-)$ stand for the framed, spin and spin$^h$ bordism theory respectively.

\subsection{Spin$^h$ orientation of KSp} Using the universal weak-KO-Thom class of the universal bundle $\mathsf{E}_{8k+4}$ over $\BSpin^h(8k+4)$ we can construct a spectrum map from the Thom spectrum of spin$^h$ cobordism to the  $\Omega$-spectrum of symplectic K-theory. This is a consequence of the following commutative diagram
	\begin{center}
		\begin{tikzcd}
		S^8\wedge \MSpin^h(8k+4)\arrow[r]\arrow[d,"id\wedge\dd^h(\mathsf{E}_{8k+4})"]& \MSpin^h(8k+12)\arrow[d,"\dd^h(\mathsf{E}_{8k+12})"]\\
		S^8\wedge (\BO\times\ZZ) \arrow[r,"Bott"]& \BO	\times\ZZ
		\end{tikzcd}
	\end{center}
	where the top map is induced by the bundle $\RR^8\oplus\mathsf{E}_{8k+4}$ and the bottom map is the Bott periodicity map. The commutativity follows from the multiplicative property of the weak-KO-Thom class (\Cref{mult}) and that $\dd_8\in\widetilde{\KO}(S^8)=\KO^{-8}(\pt)$ is exactly the Bott generator. Thus we obtain a spectrum map $\hat{\mathcal{A}}^h:\underline{\MSpin}^h\to \underline{\KSp}$. The multiplicative property of the weak-KO-Thom class further implies $\hat{\mathcal{A}}^h$ is a module map over the ring spectra homomorphism $\hat{\mathcal{A}}: \underline{\MSpin}\to \underline{\KO}$ where $\hat{\mathcal{A}}$ is the Atiyah-Bott-Shapiro spin-orientation of $\KO$, defined using the KO-Thom class for spin vector bundles. Indeed, the following diagram commutes:
\begin{center}
		\begin{tikzcd}
		\MSpin(8l)\wedge \MSpin^h(8k+4)\arrow[r]\arrow[d,"\dd\wedge\dd^h"]& \MSpin^h(8l+8k+4)\arrow[d,"\dd^h"]\\
		(\BO\times\ZZ)\wedge (\BO\times\ZZ) \arrow[r,"\boxtimes"]& (\BO\times\ZZ)
		\end{tikzcd}
\end{center}
where the top map is induced by the Whitney sum of the universal bundles.

We can think of $\hat{\mathcal{A}^h}$ as a sort of spin$^h$ orientation for $\KSp$ in view of the following theorem.

\begin{theorem}\label{CF-KSp}
	Let $X$ be a finite CW-complex, and $\Lambda$ an abelian group. Then the natural transformation
	\[
	\hat{\mathcal{A}}^h:\Omega_*^{\spin^h}(X;\Lambda)\to \KSp_*(X;\Lambda),
	\]
	upon a base change with respect to $\hat{\mathcal{A}}:\Omega_*^{\spin}(\pt)\to \KO_*(\pt)$, induces a surjection
	\[
	\Omega_*^{\spin^h}(X;\Lambda) \otimes_{\Omega_*^{\spin}(\pt)} \KO_*(\pt)\to \KSp_*(X;\Lambda).
	\]
	with a canonical splitting that is natural in $X$ and functorial in $\Lambda$. 
\end{theorem}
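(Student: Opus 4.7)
The plan is to reduce the theorem, via Bott periodicity and the Hopkins--Hovey theorem, to the existence of a single spin$^h$ manifold carrying $\hat A^h$-genus one. Three facts drive the argument: (i) by Bott's four-fold periodicity, multiplication by a chosen generator $u \in \KSp_4(\pt) \cong \ZZ$ is an isomorphism of $\KO_*(\pt)$-modules $\KO_{*-4}(X;\Lambda) \xrightarrow{\cong} \KSp_*(X;\Lambda)$, natural in $X$ and functorial in $\Lambda$; (ii) Hopkins--Hovey's theorem that spin cobordism determines real K-theory furnishes a natural isomorphism
\[
\hat{\mathcal{A}} : \Omega_*^{\spin}(X;\Lambda) \otimes_{\Omega_*^{\spin}(\pt)} \KO_*(\pt) \xrightarrow{\cong} \KO_*(X;\Lambda)
\]
for every finite CW complex $X$ and every abelian group $\Lambda$; (iii) by \Cref{HP1}, $\HH\PP^1_+$ is a spin$^h$ $4$-manifold with $\hat A^h(\HH\PP^1_+)=1$, so $\hat{\mathcal{A}}^h([\HH\PP^1_+]) = u$.

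Given these, I would define the splitting $s$ as the composite
\[
\KSp_*(X;\Lambda) \xrightarrow{\cong} \KO_{*-4}(X;\Lambda) \xrightarrow{\cong} \Omega_{*-4}^{\spin}(X;\Lambda) \otimes_{\Omega_*^{\spin}(\pt)} \KO_*(\pt) \to \Omega_*^{\spin^h}(X;\Lambda) \otimes_{\Omega_*^{\spin}(\pt)} \KO_*(\pt),
\]
where the first arrow inverts Bott multiplication by $u$, the second inverts the Hopkins--Hovey isomorphism, and the third sends $[N\to X]\otimes c$ to $[\HH\PP^1_+\times N\to X]\otimes c$, making use of the fact that a product of a spin manifold with a spin$^h$ manifold is naturally spin$^h$ (\Cref{bundleproduct}). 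The last arrow is well-defined on the tensor product because multiplication by $[\HH\PP^1_+]$ respects the defining relations of the $\Omega_*^{\spin}(\pt)$-module structure.

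To verify $s$ is a section of the natural map $\Phi_X : \Omega_*^{\spin^h}(X;\Lambda) \otimes_{\Omega_*^{\spin}(\pt)} \KO_*(\pt) \to \KSp_*(X;\Lambda)$, I would use that $\hat{\mathcal{A}}^h$ is a module map over $\hat{\mathcal{A}}$: writing an element of $\KSp_n(X;\Lambda)$ in the form $u\cdot \hat{\mathcal{A}}([N\to X])\cdot c$ using (i) and (ii), the composite $\Phi_X\circ s$ returns
\[
\hat{\mathcal{A}}^h([\HH\PP^1_+\times N \to X])\cdot c = \hat{\mathcal{A}}^h([\HH\PP^1_+])\cdot \hat{\mathcal{A}}([N\to X])\cdot c = u\cdot \hat{\mathcal{A}}([N\to X])\cdot c,
\]
proving $\Phi_X\circ s = \mathrm{id}$. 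In particular $\Phi_X$ is surjective with section $s$, and naturality in $X$ and functoriality in $\Lambda$ follow because each of the three constituent maps in $s$ enjoys these properties.

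The main obstacle is justifying the Hopkins--Hovey isomorphism with coefficients in $\Lambda$; their theorem is established at the level of spectra (a $\underline{\KO}$-module splitting of $\underline{\MSpin}\wedge \underline{\KO}$), so smashing with the Moore spectrum $M_\Lambda$ produces the coefficient version. A fully self-contained alternative is a spectrum-level section of $\Phi$: the map $\hat{\mathcal{A}}^h$ is a module map over $\hat{\mathcal{A}}$, so it factors as a map of $\underline{\KO}$-module spectra $\underline{\MSpin}^h\wedge_{\underline{\MSpin}}\underline{\KO}\to \underline{\KSp}$; since $\underline{\KSp}\simeq \Sigma^4\underline{\KO}$ is a free $\underline{\KO}$-module spectrum, picking the image of its generator to be the class of $[\HH\PP^1_+]\otimes 1$ (whose $\Phi$-image is $u$ by (iii)) produces a canonical section at the spectrum level, from which the algebraic statement follows by smashing with $X_+\wedge M_\Lambda$.
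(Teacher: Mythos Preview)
Your proposal is correct and follows essentially the same argument as the paper: both construct the section by inverting the Bott isomorphism $\KO_{*-4}(X;\Lambda)\cong\KSp_*(X;\Lambda)$ and the Hopkins--Hovey isomorphism, then multiplying by $\HH\PP^1_+$; the paper packages this as a commutative square while you write out the composite explicitly and verify $\Phi_X\circ s=\mathrm{id}$ directly, but the content is identical.
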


\begin{proof}
Consider the following commutative diagram:
\[
\begin{tikzcd}
\Omega_*^{\spin}(X;\Lambda)\ar[r,"\hat{\mathcal{A}}"]\ar[d,"\times\HH\PP^1_+"]& \KO_*(X;\Lambda)\ar[d,"\times \hat{\mathcal{A}}^h(\HH\PP^1_+)"]\\
\Omega_*^{\spin^h}(X;\Lambda)\ar[r,"\hat{\mathcal{A}}^h"]& \KSp_*(X;\Lambda)
\end{tikzcd}
\]
where vertical maps are of degree $+4$. After base change, we get
\[
\begin{tikzcd}
\Omega_*^{\spin}(X;\Lambda)\otimes_{\Omega_*^{\spin}(\pt)}\KO_*(\pt)\ar[r]\ar[d,"\times\HH\PP^1_+"]& \KO_*(X;\Lambda)\ar[d,"\times \hat{\mathcal{A}}^h(\HH\PP^1_+)"]\\
\Omega_*^{\spin^h}(X;\Lambda)\otimes_{\Omega_*^{\spin}(\pt)}\KO_*(\pt)\ar[r]& \KSp_*(X;\Lambda)
\end{tikzcd}
\]
The right vertical map is an isomorphism since $\hat{\mathcal{A}}^h(\HH\PP^1_+)$ generates $\KSp_4(\pt)$ (\Cref{HP1}). The top horizontal map is an isomorphism thanks to Hopkins and Hovey \cite{HH92}. Therefore the bottom horizontal map is surjective with a splitting given by composing the inverses of the two isomorphisms followed by $\times \HH\PP^1$. It is clear this splitting is natural in $X$ and functorial in $\Lambda$.
\end{proof}

\begin{remark}
	The map in this theorem is not an isomorphism. For instance take $X=\pt$ and $\Lambda=\ZZ[1/2]$, the left hand side is $\KO_*(\pt)\otimes_\ZZ H_*(\HH\PP^\infty;\ZZ[1/2])$ which is strictly bigger than $\KSp_*(\pt;\ZZ[1/2])$.
\end{remark}

%\begin{definition}
%	Let $X$ be a finite CW-complex and $\Lambda$ an abelian group. Define the \textbf{Conner-Floyd reduction} of $\Omega_*^{\spin^h}(X;\Lambda)$ to be the $\KO_*(\pt)$-module
%	\[
%	\overline{\Omega}_*^h(X;\Lambda):=\Omega_*^{\spin^h}(X;\Lambda) \otimes_{\Omega_*^{\spin}(\pt)} \KO_*(\pt).
%	\]
%\end{definition}

\subsection{Invariants of real vector bundles}
\subsubsection{Integer and parity invariants}
Let $f:M\to X$ be a spin$^h$ cycle in $X$ and let $E$ be a real vector bundle over $X$, we define a pairing
\begin{equation}\label{Z-pairing}
	\langle M\xrightarrow{f} X|X\gets E\rangle:=\tind(M,f^*E)\in \KSp_n(\pt).
\end{equation}
This pairing can be computed using \Cref{twistedcliffindex}. For simplicity of notation, we denote $\langle M\xrightarrow{f} X|X\leftarrow E\rangle$ by $\langle M| E\rangle$.
\begin{proposition}\label{Z-pairingprop}
	The pairing \labelcref{Z-pairing} has the following properties.
	\begin{enumerate}[label=(\roman*)]
		\item (biadditivity) For spin$^h$ cycles $(M,f)$ and $(M',f')$ we have
		\[
		\langle M\sqcup M'|E\rangle=\langle M|E\rangle+\langle M'|E\rangle;
		\]
		and for vector bundles $E,E'$ we have
		\[
		\langle M|E\oplus E' \rangle=\langle M|E\rangle+\langle M|E'\rangle.
		\]
		\item (naturality) Let $(M,f)$ be a spin$^h$ cycle in $X$ and $g: X\to Y$ a continuous map, $F$ a vector bundle over $Y$. Denote the spin$^h$ cycle $g\circ f: M\to Y$ by $g_*M$. Then
		\[
		\langle g_*M|F\rangle=\langle M|g^*F\rangle.
		\]
		\item (cobordism invariance) If $(M,f)$ is a spin$^h$ boundary, then $\langle M| E\rangle=0$.
		\item (slant product) From (i)(ii)(iv) the pairing \labelcref{Z-pairing} descends to a pairing $$\Omega_n^{\spin^h}(X)\otimes \KO(X)\xrightarrow{\langle-|-\rangle} \KSp_n(\pt).$$
		We have the following commutative diagram:
	\[
	\begin{tikzcd}[row sep=small, column sep=small]
		\Omega_n^{\spin^h}(X)\otimes\KO(X)\ar[dr,"{\langle -|-\rangle}"]\ar[dd,"\hat{\mathcal{A}}^h\otimes 1"' ]& \\
		& \KSp_n(\pt)\\
		\KSp_n(X)\otimes\KO(X)\ar[ur,"-\backslash-"] & 
	\end{tikzcd}
	\]
	where $-\backslash-$ is the slant product defined using that $\underline{\KSp}$ is a $\underline{\KO}$-module.
	\item (multiplicativity) For a spin manifold $N$ and a spin$^h$ cycle $(M,f)$ we have
		\[
		\langle N\times M|E\rangle=\hat{\mathcal{A}}(N)\cdot\langle M|E\rangle
		\]
		where $N\times M$ is the spin$^h$ cycle given by projection onto $M$ followed by $f$. Here $\cdot$ means the action of $\KO_*(\pt)$ on $\KSp_*(\pt)$.
	\end{enumerate}
\end{proposition}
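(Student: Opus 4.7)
The plan is to reduce all five properties to the single statement (iv), which identifies our geometric pairing with the spectrum-level slant product, and then deduce the remaining properties from formal consequences of that identification. Properties (i) and (ii) are the only ones I would establish first in a purely hands-on way: biadditivity of $\tind(M,f^*E)$ over disjoint union of $M$ and direct sum of $E$ follows immediately from the construction of $\tind$ via $\widetilde{f}_!$ (both the weak $\KO$-Thom class construction and composition with the periodicity isomorphism are additive), and naturality is just the observation that $g_*M \xhookrightarrow{} S^m$ can be taken to be the same embedding as $M \xhookrightarrow{} S^m$, so that $\widetilde{(g\circ f)}_!$ and $\widetilde{f}_!\circ g^*$ act identically on $(g\circ f)^*F = f^*g^*F$.

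The main step is (iv). I would unpack both sides at the spectrum level. The left-hand pairing $\langle M \xrightarrow{f} X \,|\, E\rangle = \tind(M,f^*E)$ is, by construction in Section~\ref{sec4}, obtained by embedding $M \hookrightarrow S^m$ of codimension $\equiv 4 \bmod 8$, multiplying by the weak $\KO$-Thom class $\dd^h$ of the normal bundle, pushing into $\widetilde{\KO}(S^m)$, and applying the periodicity isomorphism $q_!\colon \widetilde{\KO}(S^m) \xrightarrow{\cong} \KSp^{-n}(\pt)$. But this is precisely the Pontrjagin–Thom description of the spectrum map $\hat{\mathcal{A}}^h \colon \underline{\MSpin}^h \to \underline{\KSp}$ evaluated on the bordism class $[M\xrightarrow{f} X]$, paired against the class $[E] \in \KO(X)$ via the slant product coming from the $\underline{\KO}$-module structure on $\underline{\KSp}$. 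Concretely, $\hat{\mathcal{A}}^h([M,f]) \in \KSp_n(X)$ is represented by the composition $S^m \to \Sigma^{m-n}\MSpin^h(m-n) \wedge X_+ \to \Sigma^{m-n}\KSp \wedge X_+$, and slanting with $E\colon X_+ \to \KO$ exactly recovers the construction of $\tind(M,f^*E)$. So (iv) amounts to a diagram chase verifying that the weak $\KO$-Thom class used to define $\dd^h(E)$ in the Riemann--Roch construction is the same one used to define the spectrum map $\hat{\mathcal{A}}^h$; this is built into the definition of both.

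Once (iv) is established, properties (iii) and (v) are formal. For (iii): if $(M,f)$ is a spin$^h$-boundary, then $[M,f] = 0$ in $\Omega_n^{\spin^h}(X)$, so $\hat{\mathcal{A}}^h([M,f]) = 0 \in \KSp_n(X)$, and the slant product with $E$ vanishes. For (v): the spectrum map $\hat{\mathcal{A}}^h$ is a module map over the ring map $\hat{\mathcal{A}}\colon \underline{\MSpin} \to \underline{\KO}$, so $\hat{\mathcal{A}}^h([N\times M, f\circ \pi_M]) = \hat{\mathcal{A}}([N]) \cdot \hat{\mathcal{A}}^h([M,f])$ in $\KSp_n(X)$, and slanting with $E$ gives $\hat{\mathcal{A}}(N) \cdot \langle M | E\rangle$ by the associativity of the module action.

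The main obstacle I expect is purely bookkeeping: one must be careful that the various Thom classes, periodicity isomorphisms, and signs align in the identification of the Riemann--Roch construction of $\tind$ with the spectrum-level slant product. The multiplicative property of the weak $\KO$-Thom class (\Cref{mult}), together with the fact that $\dd_8 \in \widetilde{\KO}(S^8)$ is the Bott generator, ensures that the $S^8$-periodicity implicit in defining $\hat{\mathcal{A}}^h$ matches the periodicity $q_!$ used to define $\tind$; this is the same compatibility already invoked in the construction of $\hat{\mathcal{A}}^h$ at the start of Section~\ref{sec6}, so no new content is required beyond tracing through the definitions.
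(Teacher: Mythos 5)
Your proposal is correct and follows essentially the same route as the paper: (i) and (ii) directly, then the Pontryagin--Thom description $S^{n+8k+4}\to X\wedge\MSpin^h(8k+4)\xrightarrow{E\wedge\dd^h}(\BO\times\ZZ)\wedge\BO\to\BO$ identifying $\langle M|E\rangle$ with the slant product of $\hat{\mathcal{A}}^h[M,f]$ against $E$, from which cobordism invariance and multiplicativity fall out formally (the paper reads off (iii) from the null-homotopy of the first map, which is the same fact as $[M,f]=0$ in $\Omega_n^{\spin^h}(X)$ that you invoke).
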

\begin{proof}
	(i) and (ii) are straightforward. Now to prove (iii) and (iv), we choose an embedding $M\subset S^{n+8k+4}$ with $k$ large, then the Pontryagin-Thom construction yields a map $S^{n+8k+4}\to X\wedge\MSpin^h(8k+4)$. The universal weak KO-Thom class yields a map $\MSpin^h(8k+4)\to \BO$ and the bundle $E$ over $X$ yields a map $X\to \BO\times \ZZ$. Putting these maps together, we obtain
	\begin{equation}\label{slantprod1}
		S^{n+8k+4}\to X\wedge \MSpin^h(8k+4)\xrightarrow{E\wedge \dd^h}(\BO\times \ZZ)\wedge \BO\xrightarrow{\boxtimes}\BO.
	\end{equation}
	By definition, $\langle M|E\rangle$ is exactly the homotopy class of the composition of the above maps. Now if $f:M\to X$ is a spin$^h$ boundary, then by Pontryagin-Thom argument, the first map is null-homotopic, and therefore $\langle M|E\rangle=0$. Meanwhile unwrapping the definition of $\hat{\mathcal{A}^h}$ and slant product, (iv) follows immediately from \labelcref{slantprod1}.  Finally (v) follows from (iv) by that $\hat{\mathcal{A}}^h$ is a module map over $\hat{\mathcal{A}}$ and that the slant product is a map of $\KO_*(\pt)$-modules.
%	For (iii) choose embeddings $\iota_N:N\subset \RR^p$ of codimension $8k$ and $\iota_{M}:M\subset \RR^q$ of codimension $8l+4$, consequently we have an embedding $\iota_{N\times M}=\iota_N\times \iota_M$. Then we have direct image maps ${(\iota_N)}_!: \KO(N)\to \KO_{cpt}(\RR^p)$, ${(\iota_M)}_!: \KO(M)\to \KO_{cpt}(\RR^q)$ and ${(\iota_{N\times M})}_!: \KO(N\times M)\to \KO_{cpt}(\RR^{p+q})$. The multiplicative property of weak KO-Thom class (\Cref{mult}) implies $(\iota_{N\times M})_!=(\iota_{N})_!\boxtimes(\iota_{M})_!$, and therefore
%	\begin{align*}
%		\langle N\times M|E\rangle&=(\iota_{N\times M})_!(1\boxtimes f^*E)\\
%		&=(\iota_N)_!(1)\boxtimes (\iota_M)_!(f^*E)\\
%		&=\hat{\mathcal{A}}(N)\boxtimes \langle M|E\rangle.
%	\end{align*}
\end{proof}
%\begin{remark}\label{enhancedcobordinv}
%	Since this pairing takes values in $\ZZ$ and $\ZZ_2$, the cobordism invariance can be slightly enhanced using linearity. If $(M,f)$ is a spin$^h$ cycle of dimension $n\equiv 0\bmod 4$ (resp. $n\equiv 5,6\bmod 8$), and some integer (resp. odd integer) multiple of $(M,f)$ is a spin$^h$ boundary, then $\langle M| E\rangle=0$.
%\end{remark}

\subsubsection{Angle invariants}
Let $f:\overline{M}\to X$ be a spin$^h$ $\ZZ_k$-cycle in $X$ of dimension $n\equiv 0\pmod 4$ and let $E$ be a real vector bundle over $X$, we define a pairing
\begin{equation}\label{Q/Z-pairing}
	\langle\overline{M}\xrightarrow{f}X|X\leftarrow E\rangle :=\widetilde{\tind}_{\ZZ_k}(\overline{M},f^*E)\in \ZZ_k\subset \QQ/\ZZ
\end{equation}
This pairing can be computed using \Cref{zkindex}. For simplicity of notation, we denote $\langle \overline{M}\xrightarrow{f} X|X\leftarrow E\rangle$ by $\langle \overline{M}| E\rangle$. To proceed further, we need an alternative definition of $\widetilde{\tind}_{\ZZ_k}$ valued in $\KSp_n(\pt;\ZZ_k)$. Note for $n\equiv 0\bmod 4$, $\KSp_n(\pt;\Lambda)=\Lambda$ by universal coefficient theorem.

Choose an embedding $i: \overline{M}\hookrightarrow \overline{S^m}$ of $\ZZ_k$-manifolds so that $m-n\equiv 4\bmod 8$ (in particular $m\equiv 0\bmod 4$) and further choose an embedding $j: \overline{S^m}\hookrightarrow S^{m+r}$ for $r$ sufficiently large. Then $i$ yields a direct image map $\widetilde{i}_!:\KO(\overline{M})\to \widetilde{\KO}(\overline{S^m})$ using the induced spin$^h$ structure on the normal bundle of $\overline{M}$. And recall $\widetilde{\tind}_{\ZZ_k}(\overline{M}, -)=\widetilde{i}_!(-)$. Meanwhile, we have
\begin{lemma}\label{thomisoforzksphere}
	There is an induced direct image isomorphism
	\[
	j_!:\widetilde{\KO}(\overline{S^m})\xrightarrow{\cong} \widetilde{\KO}^r(S^{m+r};\ZZ_k).
	\]
\end{lemma}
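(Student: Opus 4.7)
The plan is to define $j_!$ by the familiar recipe—multiplication by a Thom class of the normal bundle followed by the Pontryagin-Thom collapse—and then to verify it sends a generator to a generator. Since both source and target have already been identified with $\ZZ_k$ (the source by \Cref{zksphere}, the target by universal coefficients using $\KO_m(\pt)=\ZZ$ and $\KO_{m-1}(\pt)=0$ for $m\equiv 0\bmod 4$), the map must then be an isomorphism between cyclic groups of the same order.

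First I would fix a convenient embedding. The $\ZZ_k$-sphere $\overline{S^m}$ is stably framed (its underlying manifold-with-boundary $M = S^m\setminus(k \text{ disks})$ is stably parallelizable, and one may arrange the framings on the $k$ boundary components of $\partial M$ to match under the gluing $\partial M\to \beta M$), so for $r$ large I may pick $j$ so that the normal bundle $N$ of the underlying manifold-with-boundary is trivialized in a manner compatible with the $\ZZ_k$-identification. The direct image $j_!$ is then defined, exactly as in the discussion preceding \Cref{zkindex}, as the composition of the Thom isomorphism with a Pontryagin-Thom collapse adapted to the $\ZZ_k$-setting, so the target is $\widetilde{\KO}^r(S^{m+r};\ZZ_k)$ rather than $\widetilde{\KO}^r(S^{m+r})$: concretely, the collapse factors through a quotient of $Th(N)$ in which the $k$ components of $\partial D(N)\cong\partial M\times D^r$ are identified with a single copy of $\beta M\times D^r$, mirroring the construction of $\overline{S^m}$ itself.

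Next I would compare $j_!$ with the direct image $\widetilde{\iota}_!$ coming from the standard linear embedding $\iota\colon S^m\hookrightarrow S^{m+r}$, by means of the diagram
\[
\begin{tikzcd}
\widetilde{\KO}(S^m)\ar[r,"\widetilde{\iota}_!","\cong"']\ar[d,"q^*"'] & \widetilde{\KO}^r(S^{m+r})\ar[d,"\bmod k"] \\
\widetilde{\KO}(\overline{S^m})\ar[r,"j_!"] & \widetilde{\KO}^r(S^{m+r};\ZZ_k)
\end{tikzcd}
\]
where $q\colon S^m\to \overline{S^m}$ collapses $k-1$ of the disks and glues the $k$ boundary spheres to a single one. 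The top map is the classical Bott periodicity isomorphism, which sends the generator $\dd_m\in\widetilde{\KO}(S^m)$ to the Bott generator $b_m$ of $\widetilde{\KO}^r(S^{m+r})\cong\KO_m(\pt)=\ZZ$. By \Cref{bundlegeneratorofzksphere} the left vertical map is surjective, sending $\dd_m$ to the generator $\dd_m\bmod k$ of $\widetilde{\KO}(\overline{S^m})=\ZZ_k$; the right vertical map is surjective onto $\widetilde{\KO}^r(S^{m+r};\ZZ_k)=\ZZ_k$ via reduction mod $k$. Commutativity of the square then forces $j_!(\dd_m\bmod k)=b_m\bmod k$, which is a generator, so $j_!$ is surjective and hence an isomorphism.

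The main obstacle is to establish the commutativity of this square carefully, which amounts to comparing the Pontryagin-Thom collapse for the $\ZZ_k$-embedding $j$ with that for the ordinary embedding $\iota$: one must arrange the trivialization of $N$ and the collapse map of $q$ compatibly, so that the $\ZZ_k$ quotient on the Thom-space level intertwines with reduction mod $k$ on the spectrum level. Once this compatibility is in place, the conclusion is immediate from the diagram and the fact that both groups involved are cyclic of order $k$.
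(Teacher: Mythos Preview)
Your strategy—identify both groups with $\ZZ_k$ and check that $j_!$ carries a generator to a generator—is exactly the paper's. The difference is in how the generator is tracked, and this is where your square needs repair.

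The map $q\colon S^m\to\overline{S^m}$ you describe does not exist: ``remove $k$ disks and glue the boundary spheres'' is a construction, not a continuous map. Even granting the direction you intend for the arrow in the diagram, pullback along a map $S^m\to\overline{S^m}$ would go from $\widetilde{\KO}(\overline{S^m})=\ZZ_k$ to $\widetilde{\KO}(S^m)=\ZZ$ and hence vanish. There is a collapse map in the opposite direction, $\overline{S^m}\to S^m$, obtained by crushing a neighborhood of the Bockstein locus and then the resulting wedge of circles; pulling back along that does send $\dd_m$ to the generator described in \Cref{bundlegeneratorofzksphere}. With this corrected $q$, the commutativity you flag as the ``main obstacle'' still has to be proved, and it is not an automatic consequence of the setup.

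The paper avoids this by passing through framed $\ZZ_k$-bordism: it identifies $\widetilde{\KO}^r(S^{m+r};\ZZ_k)$ with $\Omega_m^{fr}(\BO;\ZZ_k)$ and $j_!$ with the tautological map $\phi\colon[\overline{S^m},\BO]\to\Omega_m^{fr}(\BO;\ZZ_k)$ sending $g$ to its framed $\ZZ_k$-bordism class. The key point is then the framed $\ZZ_k$-\emph{cobordism} between $(S^m,\varnothing)$ and $\overline{S^m}$ from \Cref{zksphereexample}: this directly gives $\phi(\dd_m|_X)=[\dd_m]\bmod k$, a generator, without any space-level map between $S^m$ and $\overline{S^m}$ and without needing to verify a Pontryagin--Thom compatibility square. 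This is the cleaner way to handle precisely the obstacle you identified.
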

\begin{proof}
Consider the map $\phi:[\overline{S^m},\BO]\to \Omega_m^{fr}(\BO;\ZZ_k)$ that takes $g: \overline{S^m}\to \BO$ to the framed $\ZZ_k$-bordism class of $g$. We claim $\phi$ is an isomorphism. Indeed, the generator of $\Omega_m^{fr}(\BO;\ZZ_k)=\widetilde{\KO}(S^m;\ZZ_k)=\ZZ_k$ is reduction mod $k$ of the Bott generator $S^m\xrightarrow{\dd_m} \BO$ of $\Omega_{m}^{fr}(\BO)=\widetilde{\KO}(S^m)=\ZZ$. We can deform $\dd_m$ so that it is trivial over $k$ disjoint closed disks, and consequently $\dd_m$ is cobordant to $\dd_m|_{X}$ as framed $\ZZ_k$-cycles (recall \Cref{zksphereexample}). On the other hand, from \Cref{bundlegeneratorofzksphere}, $\dd_m|_X$ exactly is the generator of $[\overline{S^m},\BO]=\widetilde{\KO}(\overline{S^m})=\ZZ_k$. This proves $\phi$ is surjective and therefore an isomorphism.

	Now we interpret $\phi$ as a direct image homomorphism $j_!$ as follows. The embedding $j$ yields a map $S^{n+r}\to \overline{S^m}\wedge (S^r\cup_k D^{r+1})=\overline{S^m}\wedge (\underline{S}_{\ZZ_k})_r$ by a Pontryagin-Thom type construction as in \Cref{zkexample}. Then for any given map $g: \overline{S^m}\to \BO$ we obtain the composition
	\[
	S^{m+r}\to \overline{S^m}\wedge (\underline{S}_{\ZZ_k})_r\xrightarrow{g\wedge 1} \BO\wedge (\underline{S}_{\ZZ_k})_r
	\]
	whose homotopy class defines an element $j_!(f)$ in $\widetilde{\KO}^r(S^{m+r};\ZZ_k)$. Clearly under the isomorphism $\widetilde{\KO}^r(S^{m+r};\ZZ_k)\cong\Omega_m^{fr}(\BO;\ZZ_k)$ by a transversality argument as in \Cref{zkexample}, we see $j_!$ coincides with $\phi$.
\end{proof}

We \textit{redefine} $\widetilde{\tind}_{\ZZ_k}(\overline{M},-)$ to be $q_!j_!\widetilde{i}_!(-)\in \KSp_n(\pt;\ZZ_k)$ where $q_!$ is the suspension isomorphism $\widetilde{\KO}^r(S^{m+r};\ZZ_k)\xrightarrow{\cong} \widetilde{\KO}(S^m;\ZZ_k)$ followed by the periodicity isomorphism $\widetilde{\KO}(S^m;\ZZ_k)\xrightarrow{\cong} \KSp_n(\pt;\ZZ_k)$.
\begin{proposition}\label{Q/Z-pairingprop}
	The pairing \labelcref{Q/Z-pairing} has the following properties.
	\begin{enumerate}[label=(\roman*)]
		\item (biadditivity) For spin$^h$ $\ZZ_k$-cycles $(\overline{M},f)$ and $(\overline{M}',f')$ we have
		\[
		\langle \overline{M}\sqcup \overline{M}'|E\rangle=\langle \overline{M}|E\rangle+\langle \overline{M}'|E\rangle;
		\]
		and for vector bundles $E,E'$ we have
		\[
		\langle \overline{M}|E\oplus E' \rangle=\langle \overline{M}|E\rangle+\langle \overline{M}|E'\rangle.
		\]
		\item (naturality) Let $(\overline{M},f)$ be a spin$^h$ $\ZZ_k$-cycle in $X$, $g: X\to Y$ a continuous map, and $F$ a vector bundle over $Y$. Denote the $\ZZ_k$-spin$^h$ cycle $g\circ f: \overline{M}\to Y$ by $g_*\overline{M}$. Then
		\[
		\langle g_*\overline{M}|F\rangle=\langle \overline{M}|g^*F\rangle.
		\]
%		\item (compatibility) The pairing is unchanged when a $\ZZ_k$-cycle $(\overline{M},f)$ is viewed as a $\ZZ_{kl}$-cycle by multiplication $(M,\beta M)\mapsto (l M, \beta M)$.
		\item (cobordism invariance) If $(\overline{M},f)$ is a spin$^h$ $\ZZ_k$-boundary, then $\langle \overline{M}| E\rangle=0$.
		\item (slant product) From (i)(ii)(iii) the pairing \labelcref{Q/Z-pairing} descends to a pairing
		\[
		\Omega^{\spin^h}_n(X;\ZZ_k)\otimes \KO(X)\xrightarrow{\langle-|-\rangle}\KSp_n(\pt;\ZZ_k)
		\]
		We have the following commutative diagram:
	\[
	\begin{tikzcd}[row sep=small, column sep=small]
		\Omega_n^{\spin^h}(X;\ZZ_k)\otimes\KO(X)\ar[dr,"{\langle -|-\rangle}"]\ar[dd,"\hat{\mathcal{A}}^h\otimes 1"' ]& \\
		& \KSp_n(\pt;\ZZ_k)\\
		\KSp_n(X;\ZZ_k)\otimes\KO(X)\ar[ur,"-\backslash-"] & 
	\end{tikzcd}
	\]
	\item (multiplicativity) For a spin manifold $N$ and a spin$^h$ $\ZZ_k$-cycle $(\overline{M},f)$ we have
		\[
		\langle N\times \overline{M}|E\rangle=\hat{\mathcal{A}}(N)\cdot\langle \overline{M}|E\rangle.
		\]
		where $N\times \overline{M}$ is the spin$^h$ $\ZZ_k$-cycle given by projection onto $M$ followed by $f$, and $\cdot$ means the action of $\KO_*(\pt)$ on $\KSp_{*}(\pt;\ZZ_k)$.
	\end{enumerate}
\end{proposition}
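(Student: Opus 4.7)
The proof will mirror that of \Cref{Z-pairingprop}, but run through the Pontryagin–Thom construction for $\ZZ_k$-manifolds, using the Moore spectrum $\underline{S}_{\ZZ_k}$ in place of the sphere spectrum. The redefinition of $\widetilde{\tind}_{\ZZ_k}$ as $q_!\,j_!\,\widetilde{i}_!(-)\in\KSp_n(\pt;\ZZ_k)$ provided just before the proposition is what makes the translation possible: it expresses the angle invariant as a K-theory direct image, so the arguments used in the $\ZZ$-case carry over almost verbatim.

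For (i) biadditivity follows directly from additivity of $\widetilde{i}_!$ over disjoint unions and additivity of $\KO$-theoretic direct images on Whitney sums of bundles. For (ii) naturality, observe that the embedding $i\colon \overline{M}\hookrightarrow \overline{S^m}$ may be used to compute the pairing for both $g_\ast\overline{M}$ and $\overline{M}$, and $(g\circ f)^\ast F=f^\ast g^\ast F$; compatibility with $j_!$ and $q_!$ is then formal.

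The heart of the proof is (iii) and (iv). Choose an embedding of $\overline{M}$ into $S^{n+r}$ with the right codimension (after stabilizing by a copy of $\RR^{8k+4}$). The Pontryagin–Thom construction for the singular $\ZZ_k$-manifold (as in \Cref{zkexample} and \Cref{thomisoforzksphere}) produces a map
\[
S^{n+r}\longrightarrow X\wedge \MSpin^h(8k+4)\wedge (\underline{S}_{\ZZ_k})_r.
\]
Smashing with $E\colon X\to \BO\times\ZZ$, the universal weak $\KO$-Thom class $\dd^h\colon \MSpin^h(8k+4)\to \BO$, and the identity on $(\underline{S}_{\ZZ_k})_r$, then composing with the $\KO$-module pairing $\boxtimes$, yields a composition
\[
S^{n+r}\longrightarrow (\BO\times\ZZ)\wedge \BO\wedge (\underline{S}_{\ZZ_k})_r \xrightarrow{\boxtimes\wedge 1}\BO\wedge (\underline{S}_{\ZZ_k})_r
\]
whose homotopy class, by construction of $j_!$ in \Cref{thomisoforzksphere} and unwinding $q_!$, is exactly $\langle \overline{M}|E\rangle\in \KSp_n(\pt;\ZZ_k)$. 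If $(\overline{M},f)$ is a spin$^h$ $\ZZ_k$-boundary, the first map above is null-homotopic by the Pontryagin–Thom correspondence between $\Omega_\ast^{\spin^h}(X;\ZZ_k)$ and $\pi_\ast(X_+\wedge\underline{\MSpin}^h\wedge\underline{S}_{\ZZ_k})$, giving (iii). For (iv), the same composition, read differently, factors as
\[
\bigl[\overline{M}\xrightarrow{f}X\bigr]\in\Omega_n^{\spin^h}(X;\ZZ_k)\xrightarrow{\hat{\mathcal{A}}^h}\KSp_n(X;\ZZ_k)\xrightarrow{-\backslash E}\KSp_n(\pt;\ZZ_k),
\]
because $\hat{\mathcal{A}}^h$ is defined on spectra by smashing with $\dd^h$, and the $\KO$-module slant product on $\underline{\KSp}$ is implemented by $\boxtimes$. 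This gives the desired commutative diagram.

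Finally (v) is a direct consequence of (iv): since $\hat{\mathcal{A}}^h$ is a module map over the ring map $\hat{\mathcal{A}}$, and the slant product is a map of $\KO_\ast(\pt)$-modules, we have
\[
\langle N\times\overline{M}|E\rangle
=\hat{\mathcal{A}}^h(N\times\overline{M})\backslash E
=\bigl(\hat{\mathcal{A}}(N)\cdot\hat{\mathcal{A}}^h(\overline{M})\bigr)\backslash E
=\hat{\mathcal{A}}(N)\cdot\langle \overline{M}|E\rangle.
\]
The main obstacle is the bookkeeping of (iii)–(iv): one must verify that the redefined $\widetilde{\tind}_{\ZZ_k}$ really agrees, up to the suspension and periodicity isomorphisms $q_!$, with the homotopy class coming out of the Pontryagin–Thom construction with $\underline{S}_{\ZZ_k}$-coefficients, so that $j_!$ is realized spectrally by smashing with $\underline{S}_{\ZZ_k}$; everything else is formal manipulation of module structures in stable homotopy.
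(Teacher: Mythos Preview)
Your proposal is correct and follows essentially the same approach as the paper: the paper likewise reduces to a Pontryagin--Thom composition $S^{m+r}\to X\wedge\MSpin^h(m-n)\wedge(\underline{S}_{\ZZ_k})_r\xrightarrow{(E\boxtimes\dd^h)\wedge 1}\BO\wedge(\underline{S}_{\ZZ_k})_r$, identifies its homotopy class with $\langle\overline{M}|E\rangle$ via \Cref{thomisoforzksphere}, and then reads off (iii), (iv), (v) exactly as you do. One small slip: your source sphere should be $S^{n+(8k+4)+r}$ rather than $S^{n+r}$, and the paper makes the two-step embedding $\overline{M}\subset\overline{S^m}\subset S^{m+r}$ explicit (the first step giving the spin$^h$ normal bundle, the second the $\underline{S}_{\ZZ_k}$ factor), which is what your parenthetical ``after stabilizing by a copy of $\RR^{8k+4}$'' is gesturing at.
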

\begin{proof}
The proof is similar to \Cref{Z-pairingprop}. (i)(ii)(iii) are straightforward. To prove (iv), choose an embedding $\overline{M}\subset \overline{S^m}$ so that $m-n\equiv 4\bmod 8$ and further choose an embedding $\overline{S^m}\subset S^{m+r}$ for $r$ sufficiently large. Then by a Pontryagin-Thom type argument, we get a map
	\[
	S^{m+r}\to X\wedge \MSpin^h(m-n)\wedge (\underline{S}_{\ZZ_k})_r
	\]
	using the normal spin$^h$ bundle of $\overline{M}$ in $\overline{S^m}$ and the "normal bundle" of $\overline{S^m}$ in $S^{m+r}$. Then using the universal weak KO-Thom class and the bundle $E$, we obtain the composition
	\begin{equation}\label{slantprod2}
		S^{m+r}\to X\wedge \MSpin^h(m-n)\wedge (\underline{S}_{\ZZ_k})_r\xrightarrow{(E\boxtimes \dd^h)\wedge 1}\BO\wedge (\underline{S}_{\ZZ_k})_r.
	\end{equation}
	From the proof of \Cref{thomisoforzksphere}, the homotopy class of this composition is exactly $\langle \overline{M}|E\rangle$. So if $(\overline{M},f)$ is a $\ZZ_k$-boundary then the first map is null-homotopic and therefore $\langle \overline{M}|E\rangle=0$. Similar to \Cref{Z-pairingprop}, (v) follows from \labelcref{slantprod2} and (vi) follows from (v).
\end{proof}

\subsection{Proof of \Cref{thm1}}
When defining the pairing \labelcref{Q/Z-pairing} we assumed $\overline{M}$ has dimension $n\equiv 0\bmod 4$. However, part (iv) of \Cref{Q/Z-pairingprop} allows us to extend the pairing \labelcref{Q/Z-pairing} to all dimensions by $\hat{\mathcal{A}}^h$ and slant product.
\begin{definition}\label{Lambda-invariant}
	Let $E$ be a real vector bundle over $X$ and $\Lambda$ an abelian group, we define $\inv^h_\Lambda(E)$ to be the composition
	\[
	\inv^h_\Lambda(E): \Omega_*^{\spin^h}(X;\Lambda)\xrightarrow{\hat{\mathcal{A}}^h}\KSp_*(X;\Lambda)\xrightarrow{-\backslash E}\KSp_*(\pt;\Lambda).
	\] 
\end{definition}

Under this definition, the pairing \labelcref{Z-pairing} is $\inv^h_\ZZ(E)$ evaluated on a spin$^h$ cycle and \labelcref{Q/Z-pairing} is $\inv^h_{\ZZ_k}(E)$ evaluated on a spin$^h$ $\ZZ_k$-cycle. 

To prove \Cref{thm1}, we are concerned with $\inv^h_\QQ$ and $\inv^h_{\QQ/\ZZ}$ (recall \Cref{periods}). They can be computed using $\inv^h_\ZZ$ and $\inv^h_{\ZZ_k}$ since $\inv^h_\QQ=\inv^h_\ZZ\otimes \QQ$ and $\inv^h_{\QQ/\ZZ}=\varinjlim_k \inv^h_{\ZZ_k}$. Note that $\KSp_*(\pt;\QQ)=\KSp_*(\pt)\otimes\QQ$ is concentrated in degrees multiples of four, so $\inv_\QQ^h$ involves only the integer invariants. On the other hand, an easy computation using the coefficient long exact sequence associated to $0\to \ZZ\to \QQ\to \QQ/\ZZ\to 0$ shows
\[
\KSp_n(\pt;\QQ/\ZZ)=
\begin{cases}
	\QQ/\ZZ & n\equiv 0\bmod 4\\
	\ZZ_2 & n\equiv 6,7\bmod 8\\
	0 &\mbox{otherwise}
\end{cases}
\]
Moreover, $\KSp_n(\pt;\QQ/\ZZ)$ is isomorphic to $\KSp_{n-1}(\pt)$ for $n\equiv 6,7\bmod 8$ via Bockstein. Therefore $\inv_{\QQ/\ZZ}^h$ involves
\begin{enumerate}[label=(\roman*)]
	\item the angle invariants for $4m$-dimensional spin$^h$ $\ZZ_k$-cycles; and
	\item the parity invariants for spin$^h$-cycles of dimension $5,6\bmod 8$: if $(\overline{M},f)$ is a spin$^h$ $\ZZ_k$-cycle in $X$ of dimension $n\equiv 6,7\bmod 8$, then the evaluation of $\inv^h_{\QQ/\ZZ}(E)$ on $(\overline{M},f)$, via Bockstein isomorphism $\KSp_n(\pt;\QQ/\ZZ)\cong\KSp_{n-1}(\pt)$, is $\langle \beta M|E\rangle$ since both $\hat{\mathcal{A}}^h$ and slant product commute with Bockstein.
\end{enumerate}
From here we see that both $\inv^h_\QQ$ and $\inv^h_{\QQ/\ZZ}$ can be computed analytically. It is worth pointing out that $\inv_{\QQ}^h$ is \textbf{local} in the sense that the evaluation of $\inv_\QQ^h(E)$ on a spin$^h$-cycle can be expressed as an integral of a locally defined differential form. In contrast $\inv_{\QQ/\ZZ}^h$ is \textbf{global} since neither the eta invariant nor the parity invariant can be locally expressed.

The maps $\inv_{\QQ}^h$ and $\inv_{\QQ/\ZZ}^h$ are compatible in the sense that for any real vector $E$ over $X$, the diagram
\begin{equation}\label{invariantdiagram}
	\begin{tikzcd}
	\Omega_*^{\spin^h}(X;\QQ)\ar[r,"\inv_{\QQ}^h(E)"]\ar[d,"\bmod \ZZ"] & \KSp_*(\pt;\QQ)\ar[d,"\bmod \ZZ"]\\
	\Omega_*^{\spin^h}(X;\QQ/\ZZ)\ar[r,"\inv_{\QQ/\ZZ}^h(E)"] & \KSp_*(\pt;\QQ/\ZZ)
\end{tikzcd}
\end{equation}
commutes because the slant products are compatible with change of coefficients. Consider the map
\[
\inv^h: \KO(X)\to
\Biggl\{
\begin{tikzcd}[row sep=scriptsize, column sep=scriptsize]
	\Omega_*^{\spin^h}(X;\QQ)\ar[r]\ar[d,"\bmod \ZZ"] & \KSp_*(\pt;\QQ)\ar[d,"\bmod \ZZ"]\\
	\Omega_*^{\spin^h}(X;\QQ/\ZZ)\ar[r] & \KSp_*(\pt;\QQ/\ZZ)
\end{tikzcd}
\Biggl\}
\]
that sends $E$ to the diagram \labelcref{invariantdiagram}, where the target is the group of commutative diagrams of such form. We have
\begin{proposition}\label{injectivity!}
	$\inv^h$ is injective.
\end{proposition}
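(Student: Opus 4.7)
The plan is to combine \Cref{CF-KSp}, which says that spin$^h$ cycles generate $\KSp_*(X;\Lambda)$ after base change to $\KO_*(\pt)$, with \Cref{periodKO}, which identifies $\KO(X)$ with commutative diagrams of slant-product homomorphisms out of $\KSp_*(X;\QQ)$ and $\KSp_*(X;\QQ/\ZZ)$. Since the slant products in \Cref{periodKO} are precisely the right-hand edges of the diagram in the target of $\inv^h$, injectivity will follow once I show that $\inv_\Lambda^h(E)$ determines $-\backslash E$ on $\KSp_*(X;\Lambda)$ for $\Lambda = \QQ, \QQ/\ZZ$.

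First, I note that for any coefficient group $\Lambda$ and any $E \in \KO(X)$, the slant product $-\backslash E \colon \KSp_*(X;\Lambda) \to \KSp_*(\pt;\Lambda)$ is $\KO_*(\pt)$-linear, because $\underline{\KSp}$ is a module spectrum over $\underline{\KO}$. Via $\hat{\mathcal{A}}\colon \Omega_*^{\spin}(\pt) \to \KO_*(\pt)$, the composition $\inv_\Lambda^h(E) = (-\backslash E) \circ \hat{\mathcal{A}}^h$ is therefore $\Omega_*^{\spin}(\pt)$-linear --- this is exactly the content of the multiplicativity statements \Cref{Z-pairingprop}(v) and \Cref{Q/Z-pairingprop}(v). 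By the universal property of tensor product, $\inv_\Lambda^h(E)$ extends uniquely to a $\KO_*(\pt)$-linear map
\[
\widetilde{\inv_\Lambda^h(E)} \colon \Omega_*^{\spin^h}(X;\Lambda) \otimes_{\Omega_*^{\spin}(\pt)} \KO_*(\pt) \longrightarrow \KSp_*(\pt;\Lambda),
\]
and by the factorization properties \Cref{Z-pairingprop}(iv) and \Cref{Q/Z-pairingprop}(iv) this extension equals $(-\backslash E) \circ \pi$, where $\pi$ denotes the canonical surjection of \Cref{CF-KSp}.

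Now suppose $\inv^h(E) = \inv^h(E')$. Then $\inv_\QQ^h(E) = \inv_\QQ^h(E')$ and $\inv_{\QQ/\ZZ}^h(E) = \inv_{\QQ/\ZZ}^h(E')$; uniqueness of the $\KO_*(\pt)$-linear extension together with surjectivity of $\pi$ (from \Cref{CF-KSp}) force $-\backslash E = -\backslash E'$ on both $\KSp_*(X;\QQ)$ and $\KSp_*(X;\QQ/\ZZ)$. Consequently $E$ and $E'$ determine the same commutative diagram under the isomorphism of \Cref{periodKO}, and therefore $E = E'$ in $\KO(X)$. The essential step --- and the real content of the argument --- is the surjectivity in \Cref{CF-KSp}, which reduces a statement about all of $\KSp_*(X;\Lambda)$ to one about the geometrically accessible image of spin$^h$ bordism; everything else is formal bookkeeping with $\KO_*(\pt)$-linearity and Anderson duality.
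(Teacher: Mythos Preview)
Your proof is correct and follows essentially the same approach as the paper. Both arguments use the $\Omega_*^{\spin}(\pt)$-equivariance of $\inv_\Lambda^h$ to pass to the base change, then invoke the surjectivity in \Cref{CF-KSp} to recover the slant product $-\backslash E$ on $\KSp_*(X;\Lambda)$, and finally appeal to \Cref{periodKO} (Anderson duality) to conclude that $E$ is determined; the paper organizes this via a chain of diagram groups and an explicit restriction to degree zero, while you state the same reduction more directly.
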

\begin{proof}
	 Observe that both $\inv_{\QQ}^h(E)$ and $\inv_{\QQ/\ZZ}^h(E)$ are equivariant with respect to $\hat{\mathcal{A}}:\Omega_*^{\spin}(\pt)\to \KO_*(\pt)$. Therefore $\inv^h$ maps into the subgroup of diagrams that are equivariant with respect to $\hat{\mathcal{A}}$. This subgroup, by tensor-hom adjunction, is isomorphic to the group
\begin{equation}\label{group-1}
	\Biggl\{
\begin{tikzcd}[row sep=scriptsize, column sep=scriptsize]
	\overline{\Omega}_*^{\spin^h}(X;\QQ)\ar[r]\ar[d,"\bmod \ZZ"] & \KSp_*(\pt;\QQ)\ar[d,"\bmod \ZZ"]\\
	\overline{\Omega}_*^{\spin^h}(X;\QQ/\ZZ)\ar[r] & \KSp_*(\pt;\QQ/\ZZ)
\end{tikzcd}
\Biggl\}_{\KO_*(\pt)}
\end{equation}
of commutative diagrams of $\KO_*(\pt)$-modules of the above form, where $\overline{\Omega}_*^{\spin^h}(X;\Lambda)$ denotes $\Omega_*^{\spin^h}(X;\Lambda)\otimes_{\Omega_*^{\spin}(\pt)}\KO_*(\pt)$ for $\Lambda=\QQ,\QQ/\ZZ$. From \Cref{CF-KSp}, the natural transformation $\hat{\mathcal{A}^h}$ induces split surjections $\overline{\Omega}_*^{\spin^h}(X;\Lambda)\to \KSp_*(X;\Lambda)$. Therefore by pulling back along these split surjections, we obtain an embedding $i$ of the group \labelcref{group-1}, as a direct summand, into the group
\begin{equation}\label{group-2}
	\Biggl\{
\begin{tikzcd}[row sep=scriptsize, column sep=scriptsize]
	\KSp_*(X;\QQ)\ar[r]\ar[d,"\bmod \ZZ"] & \KSp_*(\pt;\QQ)\ar[d,"\bmod \ZZ"]\\
	\KSp_*(X;\QQ/\ZZ)\ar[r] & \KSp_*(\pt;\QQ/\ZZ)
\end{tikzcd}
\Biggl\}_{\KO_*(\pt)}
\end{equation}
of commutative diagrams of $\KO_*(\pt)$-modules of the above form. Taking the degree zero components yields a map $\pi$ from the group \labelcref{group-2} to the group
\begin{equation}\label{group-3}
	\Biggl\{
\begin{tikzcd}[row sep=scriptsize, column sep=scriptsize]
	\KSp_0(X;\QQ)\ar[r]\ar[d,"\bmod \ZZ"] & \QQ\ar[d,"\bmod \ZZ"]\\
	\KSp_0(X;\QQ/\ZZ)\ar[r] & \QQ/\ZZ
\end{tikzcd}
\Biggl\}_{\ZZ}
\end{equation}
of commutative diagrams of abelian groups of the above form. Note by \Cref{periodKO}, the group \labelcref{group-3} is isomorphic to $\KO(X)$.

We claim the composition of $\inv^h$ and the maps $i,\pi$, namely
\[
\KO(X)\xrightarrow{\inv^h}\labelcref{group-1}\xrightarrow{i}\labelcref{group-2}\xrightarrow{\pi} \labelcref{group-3},
\]
is an isomorphism. Indeed this composition $\pi\circ i\circ\inv^h$ by construction takes $x\in\KO(X)$ to slant products with $x$, which by the proof of \cite{Anderson} is an isomorphism. This in particular implies $\inv^h$ is injective.
\end{proof}

The above proposition means that two real vector bundles are stably equivalent if and only if they have the same integer, parity and angle invariants over \textit{all} spin$^h$-cycles and torsion spin$^h$-cycles. We can improve this to

\begin{theorem}[Characteristic variety theorem, spin$^h$ version]\label{thm3} For each finite CW-complex $X$, there exists a finite set of spin$^h$ cycles and torsion spin$^h$ cycles\footnote{a torsion cycle is a $\ZZ_k$-cycle for some $k$.} in $X$ such that every real vector bundle on $X$ can be determined, up to stable equivalence, by the corresponding integer, parity and angle invariants.
\end{theorem}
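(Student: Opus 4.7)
The plan is to combine the injectivity of $\inv^h$ (\Cref{injectivity!}) with the finite generation of the relevant symplectic K-homology groups for a finite CW complex $X$, reducing the characteristic variety theorem to a counting exercise.

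First, I would show that a compatible diagram of periods, as in \Cref{periodKO}, is determined by finitely many evaluations. Since $X$ is finite CW, $\KSp_n(X)$ is a finitely generated abelian group for every $n$ and is $8$-periodic in $n$; hence $\KSp_n(X;\QQ)$ is a finite-dimensional $\QQ$-vector space. The coefficient exact sequence attached to $0\to\ZZ\to\QQ\to\QQ/\ZZ\to 0$ shows that the cokernel of the natural map $\KSp_n(X;\QQ)\to\KSp_n(X;\QQ/\ZZ)$ is the finite group $\mathrm{Tors}\,\KSp_{n-1}(X)$. A compatible pair $(\tilde\varphi,\varphi)$ is therefore specified by its values on a finite $\QQ$-basis $\{x_i\}$ of $\KSp_*(X;\QQ)$ together with its values on lifts $\{y_j\}\subset\KSp_*(X;\QQ/\ZZ)$ of generators of $\mathrm{Tors}\,\KSp_{*-1}(X)$, each of finite order $k_j$. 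Because only four residues $n\bmod 8$ contribute nontrivially to $\KSp_n(\pt;\QQ)$ or $\KSp_n(\pt;\QQ/\ZZ)$, this bounds both the list and the range of dimensions in play.

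Next, I would geometrically realize the classes $x_i$ and $y_j$ as spin$^h$ and $\ZZ_{k_j}$-spin$^h$ cycles. By \Cref{CF-KSp}, the transformation $\hat{\mathcal{A}}^h$ admits a split surjection $\Omega^{\spin^h}_*(X;\Lambda)\otimes_{\Omega^{\spin}_*(\pt)}\KO_*(\pt)\twoheadrightarrow \KSp_*(X;\Lambda)$ for $\Lambda=\QQ,\ZZ_{k_j}$; after clearing denominators each $x_i$ lifts to a $\KO_*(\pt)$-linear combination of finitely many spin$^h$ bordism classes $[M_{ia}\xrightarrow{f_{ia}}X]$, and each $y_j$ lifts to a finite combination of $\ZZ_{k_j}$-spin$^h$ bordism classes $[\overline{N}_{jb}\xrightarrow{g_{jb}}X]$. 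Because $\inv^h_\QQ(E)$ and $\inv^h_{\QQ/\ZZ}(E)$ factor through $\hat{\mathcal{A}}^h$ and are $\KO_*(\pt)$-equivariant (\Cref{Z-pairingprop}(v) and \Cref{Q/Z-pairingprop}(v)), their values on the finite collection $\{M_{ia}\}\cup\{\overline{N}_{jb}\}$ determine $\tilde\varphi$ and $\varphi$ completely, and then \Cref{injectivity!} yields that $E$ is determined up to stable equivalence.

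The main obstacle I foresee is correctly locating the parity invariants: the $\ZZ_2$-summands of $\KSp_n(\pt;\QQ/\ZZ)$ in dimensions $n\equiv 6,7\bmod 8$ sit entirely in the Bockstein image, so the corresponding evaluation of $\varphi$ on a $\ZZ_{k_j}$-cycle $\overline{N}_{jb}$ with such a target degree must be recorded as the integer parity invariant $\langle\beta N_{jb}|E\rangle$ of its Bockstein. Verifying this identification, and hence that no additional free-standing spin$^h$ cycles of dimension $5,6\bmod 8$ are needed beyond those produced as Bocksteins, follows from the commutativity of $\hat{\mathcal{A}}^h$ with Bocksteins, but requires a careful accounting of residues modulo $8$ alongside the Bott-periodicity shifts absorbed into the $\KO_*(\pt)$-action.
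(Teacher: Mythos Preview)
Your proposal is correct and follows essentially the same scaffolding as the paper: both rely on \Cref{injectivity!}, the factorization of $\inv^h$ through slant products on $\KSp_*(X;\Lambda)$, the split surjection of \Cref{CF-KSp}, and the identification of the $\ZZ_2$-summands in $\KSp_n(\pt;\QQ/\ZZ)$ for $n\equiv 6,7\bmod 8$ with parity invariants of Bocksteins.

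The one genuine difference lies in how finiteness is obtained. You pass to the periodic theory $\KSp_*(X;\Lambda)$ and invoke that $\KSp_*(X)$ is finitely generated and $8$-periodic, so the slant-product diagram is pinned down by finitely many evaluations, which you then lift back to $\Omega_*^{\spin^h}(X;\Lambda)$ via \Cref{CF-KSp}. The paper instead stays at the bordism level throughout: it rewrites \Cref{injectivity!} as an injection of $\KO(X)$ into $\Hom(\Omega_{4*}^{\spin^h}(X)/\text{tors},\ZZ)\oplus\Hom(\text{tors}\,\Omega_{4*-1}^{\spin^h}(X),\QQ/\ZZ)\oplus\Hom(\text{tors}\,\Omega_{8*+5,8*+6}^{\spin^h}(X),\ZZ_2)$, filters this target by degree, and uses that $\KO(X)$ is Noetherian to conclude the image lands in a finite filtration stage. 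Your route has the advantage of working directly with a theory that is already periodic, so no filtration argument is needed; the paper's route avoids the detour of lifting $\KSp$-classes back through the surjection of \Cref{CF-KSp} and the attendant bookkeeping with $\KO_*(\pt)$-linearity and clearing denominators. Both produce a finite list of cycles, though in principle different ones.

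One small terminological point: since $\hat{\mathcal{A}}:\Omega_*^{\spin}(\pt)\to\KO_*(\pt)$ is surjective, the base change $\Omega_*^{\spin^h}(X;\Lambda)\otimes_{\Omega_*^{\spin}(\pt)}\KO_*(\pt)$ is a quotient of $\Omega_*^{\spin^h}(X;\Lambda)$, so your ``$\KO_*(\pt)$-linear combinations'' are really just (images of) individual bordism classes; the phrasing is slightly off but the argument is unaffected.
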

\begin{proof}
We can write
	 \begin{align*}
	 	\Omega_n^{\spin^h}(X;\QQ)&=(\Omega_n^{\spin^h}(X)/{\text{torsion}})\otimes\QQ\\
	 	\Omega_n^{\spin^h}(X;\QQ/\ZZ)&=(\Omega_n^{\spin^h}(X)/{\text{torsion}})\otimes{\QQ/\ZZ}\oplus \text{torsion}(\Omega_{n-1}^{\spin^h}(X))
	 \end{align*}
	 where the latter is be obtained from the coefficient long exact sequence associated to $0\to\ZZ\to \QQ\to \QQ/\ZZ\to 0$ by noting $\QQ/\ZZ$ is injective. Under this notation we can write $\inv_{\QQ}^h=\inv_\ZZ^h\otimes\QQ$ and $\inv^h_{\QQ/\ZZ}=\inv^h_{\ZZ}\otimes\QQ/\ZZ\oplus \phi$ where $\phi$ is the restriction of $\inv_{\QQ/\ZZ}^h$ onto $\text{torsion}(\Omega_{*-1}^{\spin^h}(X))$. Then it is clear that $\inv^h$ is determined by $\inv_\ZZ^h$ restricted to $\Omega_{4*}^{\spin^h}(X)/\text{torsion}$ and $\phi$. For $n\equiv 0\bmod 4$, $\phi$ encodes the angle invariants and for $n\equiv 6,7\bmod 8$, $\phi$ encodes the parity invariants.
	 
So \Cref{injectivity!} can be restated as that the map
	 \begin{align*}
	 	\KO(X)\to & \Hom(\Omega_{4*}^{\spin^h}(X)/\text{torsion},\ZZ)\\
	 	 &\oplus \Hom(\text{torsion}(\Omega_{4*-1}^{\spin^h}(X)),\QQ/\ZZ)\\
	 	 &\oplus \Hom(\text{torsion}(\Omega_{8*+5,8*+6}^{\spin^h}(X)),\ZZ_2)
	 \end{align*}
	 given by $(\inv_\ZZ^h, \phi,\phi)$ is injective. Note the target group carries an increasing filtration by $*\le N$. Since $\KO(X)$ is Noetherian, the above map embeds $\KO(X)$ into some finite filtration level. Now choose a finite set of additive generators for $\Omega_{4*}^{\spin^h}(X)/\text{torsion}$, $\text{torsion}(\Omega_{4*-1}^{\spin^h}(X))$ and $\text{torsion}(\Omega_{8*+5,8*+6}^{\spin^h}(X))$ up to that filtration level, and then represent those generators by $4*$-dimensional spin$^h$-cycles, $4*$-dimensional torsion spin$^h$-cycles and $8*+5, 8*+6$-dimensional spin$^h$-cycles respectively. This is the finite set of spin$^h$ cycles and torsion spin$^h$ cycles as desired.
\end{proof}

The name, characteristic variety theorem, is borrowed from \cite{DS71}. We call the \textit{union} of the cycles and torsion cycles appearing in \Cref{thm3} a \textbf{characteristic variety} (for real vector bundles). Thus two real vector bundle are stably equivalent if and only if they have the same invariant (now valued in a direct sum of copies of $\ZZ$, $\ZZ_2$ and $\QQ/\ZZ$) over a characteristic variety.

\Cref{thm1} is the spin version of the above characteristic variety theorem, whose proof is similar. We sketch the proof of \Cref{thm1} below with an emphasis on its relation with \Cref{thm3}.

\begin{theorem}[\Cref{thm1}]\label{thm1'}
	For each compact manifold with corners (or finite CW-complex) $X$, there exists a \textit{finite} set of spin cycles and torsion spin cycles in $X$ such that every real vector bundle on $X$ can be determined, up to stable equivalence, by the corresponding integer, parity and angle invariants.
\end{theorem}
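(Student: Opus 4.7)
The plan is to deduce \Cref{thm1'} from \Cref{thm3} via a ``geometric Bott periodicity'' supplied by $\hat{\mathcal{A}}^h(\HH\PP^1_+)\in\KSp_4(\pt)$. By \Cref{HP1}, $\hat{A}^h(\HH\PP^1_+)=1$, so this class generates $\KSp_4(\pt)\cong\ZZ$, and multiplication by it yields a natural isomorphism $\cdot\hat{\mathcal{A}}^h(\HH\PP^1_+)\colon \KO_{*-4}(X;\Lambda)\xrightarrow{\cong}\KSp_*(X;\Lambda)$ of $\KO_*(\pt)$-modules (checkable on a point via the calculation in \Cref{HP1} and extended by Mayer--Vietoris). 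Thus $\KSp$-homology is literally a $4$-fold suspension of $\KO$-homology, and by the Hopkins--Hovey theorem \cite{HH92} the latter satisfies $\Omega_*^{\spin}(X;\Lambda)\otimes_{\Omega_*^{\spin}(\pt)}\KO_*(\pt)\cong \KO_*(X;\Lambda)$. So, up to a dimension shift of $4$, spin cycles generate what spin$^h$ cycles generated in \Cref{thm3}.

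Concretely, I would define integer, parity, and angle invariants of $E\in\KO(X)$ over spin cycles and torsion spin cycles exactly as in the introduction (twisted Dirac indices, eta invariants, and Bocksteins), and assemble them into a map
\[
\inv\colon \KO(X)\longrightarrow \Biggl\{\begin{tikzcd}[row sep=scriptsize, column sep=scriptsize]
	\Omega_*^{\spin}(X;\QQ)\ar[r]\ar[d] & \KO_*(\pt;\QQ)\ar[d]\\
	\Omega_*^{\spin}(X;\QQ/\ZZ)\ar[r] & \KO_*(\pt;\QQ/\ZZ)
\end{tikzcd}\Biggl\},
\]
in direct analogy with the spin$^h$ version appearing in \Cref{injectivity!}. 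Injectivity of $\inv$ is the content to prove, and I would mimic \Cref{injectivity!} with two substitutions: the Hopkins--Hovey isomorphism replaces \Cref{CF-KSp}, and the Bott isomorphism above identifies the $\KO_{*-4}$-coordinate diagrams with the $\KSp_*$-coordinate diagrams that appear in the spin$^h$ argument. The bridge between the two pictures is the multiplicativity of the pairing under product with a spin manifold (\Cref{Z-pairingprop}(v) and \Cref{Q/Z-pairingprop}(v)): applied to $\HH\PP^1_+$, it shows that the spin$^h$ invariant $\langle N\times\HH\PP^1_+\mid E\rangle$ equals the spin invariant $\langle N\mid E\rangle$ times the fixed class $\hat{\mathcal{A}}^h(\HH\PP^1_+)$, so the two assembled diagrams carry the same information about $E$. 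Finiteness of a characteristic variety follows from the same Noetherian truncation used at the end of \Cref{thm3}, now applied to spin and torsion spin cycles.

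The main technical hurdle will be checking that slant product against $E$ is compatible with the Bott identification, i.e.\ that $(y\cdot\hat{\mathcal{A}}^h(\HH\PP^1_+))\backslash E=(y\backslash E)\cdot\hat{\mathcal{A}}^h(\HH\PP^1_+)$ for $y\in\KO_{n-4}(X;\Lambda)$. This is a $\KO_*(\pt)$-linearity statement for the slant pairing on $\KSp$, built into the $\KO$-module structure used to define it, but it needs careful attention at the prime $2$, where Anderson duality (and hence the reconstruction of $\KO(X)$ from the two $\KSp$-coordinate diagrams via \Cref{periodKO}) is most subtle; once this compatibility is in place, injectivity of $\inv$ follows essentially automatically from that of $\inv^h$.
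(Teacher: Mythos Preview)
Your proposal is correct and follows essentially the same route as the paper: define $\inv$ via $\hat{\mathcal{A}}$ and slant product, relate it to $\inv^h$ through multiplication by $\HH\PP^1_+$ (using the commutative square from the proof of \Cref{CF-KSp} together with Hopkins--Hovey), and conclude injectivity as in \Cref{injectivity!} with the Noetherian truncation of \Cref{thm3}. One small correction: the multiplicativity you cite from \Cref{Z-pairingprop}(v) and \Cref{Q/Z-pairingprop}(v) has the roles reversed (spin manifold times spin$^h$ cycle, not spin cycle times spin$^h$ manifold); what you actually need is the commuting square $\hat{\mathcal{A}}^h\circ(\times\HH\PP^1_+)=(\times\hat{\mathcal{A}}^h(\HH\PP^1_+))\circ\hat{\mathcal{A}}$ from \Cref{CF-KSp} combined with the $\KO_*(\pt)$-linearity of the slant product, which is the formal content you correctly isolate---and your worry about prime-$2$ subtleties there is unwarranted, as the linearity is purely structural.
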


\begin{proof}
	Using the natural transformation $\hat{\mathcal{A}}$ and slant product in real K-theory, we can define $\inv_\Lambda(E):\Omega_*^{\spin}(X;\Lambda)\to \KO_*(\pt;\Lambda)$ similar to \Cref{Lambda-invariant}. Note $\inv_{\QQ}$ and $\inv_{\QQ/\ZZ}$ can be computed using the integer, parity and angle invariants defined in the introduction. Now $\inv_{\QQ}$ and $\inv_{\QQ/\ZZ}$ can be assembled into a map
	\[
	\inv: \KO(X)\to \Biggl\{
\begin{tikzcd}[row sep=scriptsize, column sep=scriptsize]
	\Omega_*^{\spin}(X;\QQ)\ar[r,"\inv_\QQ"]\ar[d,"\bmod \ZZ"] & \KO_*(\pt;\QQ)\ar[d,"\bmod \ZZ"]\\
	\Omega_*^{\spin}(X;\QQ/\ZZ)\ar[r,"\inv_{\QQ/\ZZ}"] & \KO_*(\pt;\QQ/\ZZ)
\end{tikzcd}
\Biggl\}_{\hat{\mathcal{A}}}
	\]	
which is related to our previous map $\inv^h$ by multiplication by $\HH\PP^1_+$. More precisely for any map $\varphi^h:\Omega_*^{\spin^h}(X;\Lambda)\to \KSp_*(\pt;\Lambda)$, we can form a map $\varphi:\Omega_*^{\spin}(X;\Lambda)\to \KO_*(\pt;\Lambda)$ by requiring the following diagram to commute:
\[
\begin{tikzcd}
	\Omega_*^{\spin}(X;\Lambda)\ar[r,"\varphi"]\ar[d,"\times \HH\PP^1_+"]& \KO_*(\pt;\Lambda)\ar[d,"\times\hat{\mathcal{A}}^h(\HH\PP^1_+)","\cong"']\\
	\Omega_*^{\spin^h}(X;\Lambda)\ar[r,"\varphi^h"]& \KSp_*(\pt;\Lambda)
\end{tikzcd}
\]
Denote the mapping $\varphi^h\mapsto\varphi$ by $(\HH\PP^1_+)^*$ and we will use the same notation for all consequent maps induced by multiplication by $\HH\PP^1_+$. It follows from the proof of \Cref{CF-KSp} that $\inv=(\HH\PP^1_+)^*\inv^h$.

Now a similar construction applied to $\inv$ as in the proof of \Cref{injectivity!} (with $\hat{\mathcal{A}}^h$ replaced by $\hat{\mathcal{A}}$, \Cref{CF-KSp} replaced by \cite{HH92}, and taking degree $0$ components replaced by taking degree $-4$ components) yields a map
\[
\KO(X)\to
\Biggl\{
\begin{tikzcd}[row sep=scriptsize, column sep=scriptsize]
	\KO_{-4}(X;\QQ)\ar[r]\ar[d,"\bmod \ZZ"] & \QQ\ar[d,"\bmod \ZZ"]\\
	\KO_{-4}(X;\QQ/\ZZ)\ar[r] & \QQ/\ZZ
\end{tikzcd}
\Biggl\}_\ZZ
\]
which coincides with $(\HH\PP^1_+)^*\circ \pi\circ i\circ \inv^h$. Notice now both $(\HH\PP^1_+)^*$ and $\pi\circ i\circ \inv^h$ are isomorphisms, so $\inv$ is injective. This combined with that $\KO(X)$ is finitely generated completes the proof.
\end{proof}
\begin{remark}\label{transferofinvariants}
	The proof in fact shows that the spin$^h$ cycles and torsion spin$^h$ cycles in \Cref{thm3} can be chosen to be products of spin cycles and torsion spin cycles with the map $\HH\PP^1_+\to \pt$.
\end{remark}

%by $E\mapsto |E\rangle$ using the pairing \labelcref{Z-pairing} and \labelcref{Q/Z-pairing}. These maps are compatible with $\QQ\to \QQ/\ZZ$ in the sense that
%\begin{equation}
%	\KO(X)\xrightarrow{\lambda}
%\Biggl\{
%\begin{tikzcd}
%\Omega_*^{\spin^h}(X;\QQ) \ar[r]\ar[d]& \KSp_*(\pt;\QQ)\ar[d]\\
%\Omega_*^{\spin^h}(X;\QQ/\ZZ) \ar[r]&\KSp_*(\pt;\QQ/\ZZ)
%\end{tikzcd}
%\Biggl\}_{\hat{\mathcal{A}}}
%\end{equation}
%By a base change we get
%\begin{equation}
%\KO(X)\to
%\Biggl\{
%\begin{tikzcd}
%\Omega_*^{\spin^h}(X;\QQ)\otimes_{\Omega_*^{\spin}(\pt)}\KO_*(\pt) \ar[r]\ar[d]& \KSp_*(\pt;\QQ)\ar[d]\\
%\Omega_*^{\spin^h}(X;\QQ/\ZZ)\otimes_{\Omega_*^{\spin}(\pt)}\KO_*(\pt) \ar[r]&\KSp_*(\pt;\QQ/\ZZ)
%\end{tikzcd}
%\Biggl\}_{\KO_*(\pt)}
%\end{equation}
%Using the splittings introduced in \Cref{CF-KSp}, we get a map
%\begin{equation}
%\KO(X)\to
%\Biggl\{
%\begin{tikzcd}
%\KSp_*(X;\QQ) \ar[r]\ar[d]& \KSp_*(\pt;\QQ)\ar[d]\\
%\KSp_*(X,\QQ/\ZZ) \ar[r]&\KSp_*(\pt;\QQ/\ZZ)
%\end{tikzcd}
%\Biggl\}_{\KO_*(\pt)}
%\end{equation}
%Next taking the degree zero component on the right hand side, we obtain
%\begin{equation}
%\KO(X)\to
%\Biggl\{
%\begin{tikzcd}
%\KSp_0(X;\QQ) \ar[r]\ar[d]& \QQ\ar[d]\\
%\KSp_0(X,\QQ/\ZZ) \ar[r]&\QQ/\ZZ
%\end{tikzcd}
%\Biggl\}_{\ZZ}
%\end{equation}
%By \Cref{periodKO}, the right hand side is isomorphic to $\KO(X)$, so eventually we get a natural transformation
%\begin{equation}
%	\KO(X)\to \KO(X).
%\end{equation}
%We prove this map is an isomorphism by verifying it is an isomorphism for spheres.
%
%\subsubsection{Verification for spheres}

\subsection{Examples}
\begin{example}[Spheres]
\begin{enumerate}[label=(\roman*)]
    \item For $S^8$, one can take the union of $\pt\to S^8$ and the identity map of $S^8$ as its spin characteristic variety. Indeed, $\pt\to S^8$ detects the rank of the bundle and the Dirac index of $S^8$ twisted by a real bundle $E$, according to index theorem, is
    \[
    \int_{S^8}\hat{A}(S^8) \ph(E)=\int_{S^8}\ph(E).
    \]
    Bott showed this integral is always an integer and further this integer together with the rank classify vector bundles on $S^8$ up to stable equivalence. Similarly for $S^4$, one can take $\pt\cup S^4\to S^4$. In this case, the Dirac index of $S^4$ twisted by a bundle $E$ is $\frac{1}{2}\int_{S^4} \ph(E)$. Again thanks to Bott, this is always an integer and further (together with rank) classifies stable vector bundles.
    \item For $S^2=\CC\PP^1$, putting rank aside, there are only two classes of stable vector bundles, the trivial complex line bundle $\mathcal{O}$ and the tautological complex line bundle $\mathcal{O}(-1)$. The tangent bundle of $\CC\PP^1$ is $\mathcal{O}(2)$, whose "square root" is $\mathcal{O}(1)$. The Dirac operator in this case is $\overline{\partial}$ and the Dirac index is the dimension of holomorphic sections modulo $2$. In the untwisted case, the Dirac index of $\CC\PP^1$ is the (complex) dimension of holomorphic sections of $\mathcal{O}(1)$ modulo $2$, which is zero. Meanwhile the Dirac index of $\CC\PP^1$ twisted by $\mathcal{O}(-1)$ is the dimension of holomorphic functions on $\CC\PP^1$ modulo $2$, which is one. So the twisted Dirac indices indeed distinguish real vector bundles on $S^2$ up to stable equivalence. 
    \item For $S^1=\RR\PP^1$, we need to use the non-trivial spin structure on $S^1$ which corresponds to the double cover of $\RR\PP^1$ by $S^1$. The spinor bundle in this case is the tautological line bundle. The Dirac index of $\RR\PP^1$ without twisting is the (real) dimension of locally constant sections of the tautological line bundle modulo $2$. However since every section of the tautological line bundle has a zero, the locally constant ones must be identically zero. So the untwisted Dirac index is zero. On the other hand, the Dirac index of $\RR\PP^1$ twisted by the tautological bundle is the dimension of the constant functions on $\RR\PP^1$ modulo $2$, which is one. So the twisted Dirac indices indeed distinguish trivial bundle and tautological bundle on $S^1=\RR\PP^1$ up to stable equivalence. But, putting rank aside, these are the only two stable equivalence class of real vector bundles.
    \item In general $\pt\cup S^n\to S^n$ is a spin characteristic variety for $S^n$. Indeed, the "integration over the fiber" map
    \[
    \widetilde{\KO}(S^n)\xrightarrow{[S^n]\backslash-}\KO_n(\pt)
    \]
    is an isomorphism, where $[S^n]$ is the fundamental class of $S^n$ in real K-theory.
\end{enumerate}
\end{example}
\begin{example}[Quaternionic projective spaces]\label{HPn} $\pt\cup\HH\PP^1\cup\HH\PP^2\cup\cdots\cup\HH\PP^n\to \HH\PP^n$ by inclusions of a flag of quaternionic projective subspaces is a spin characteristic variety. To see this, let $V_k$ denote the weight $k$ irreducible complex representation of $\Sp(1)=\mathrm{SU}(2)$ so that $\dim_\CC V_k=k+1$. Let $\xi_k$ be the complex vector bundle over $\mathrm{BSp}(1)=\HH\PP^\infty$ corresponding to $V_k$. Note that $V_k$ is of real type if $k$ is even and of quaternionic type if $k$ is odd. Let $\gamma_{2k}$ be the real bundle over $\HH\PP^\infty$ so that $\gamma_{2k}\otimes\CC=\xi_k$ and let $\gamma_{2k-1}$ be the underlying real bundle of $\xi_{2k-1}$. Then $\ph(\gamma_{2k})=\ch(\xi_{2k})$ and $\ph(\gamma_{2k-1})=\ch(\xi_{2k-1})+\ch(\overline{\xi_{2k-1}})=2\ch(\xi_{2k-1})$ ($V_{2k-1}$ is of quaternionic type and thus isomorphic to its complex conjugate). We now view $\HH\PP^j$ as a subspace of $\HH\PP^\infty$ by the standard embedding, and claim that
\[
\int_{\HH\PP^j}\hat{A}(\HH\PP^j)\ch(\xi_i)=\binom{i+j+1}{i-j}.
\]
In particular the integral is $0$ if $i<j$ and $1$ if $i=j$. From here the matrix
\[
\langle \gamma_i|\HH\PP^j\rangle_{i,j\le n}
\]
of integer invariants is an upper triangular matrix with diagonals $=1$. Note that for $i=j=\text{odd}$, the integer invariant is $\frac{1}{2}\int_{\HH\PP^{i}}\hat{A}(\HH\PP^{i})\ph(\gamma_{i})=\frac{1}{2}\times 2=1$. Therefore the integer invariants over a flag of quaternionic projective subspaces yield a surjective map
\[
\KO(\HH\PP^n)\to \ZZ^{n+1}
\]
which must be an isomorphism since $\KO(\HH\PP^n)$ is free of rank $n+1$ (by e.g. Atiyah-Hirzebruch spectral sequence).

To prove the claim, consider the Hopf-type fiberation $\pi:\CC\PP^{2j+1}\to \HH\PP^j$ with fiber $\CC\PP^1$. Since $\pi^*$ is injective on cohomology, we may express $H^*(\CC\PP^{2j+1};\ZZ)=\ZZ[x]/(x^{2j+2})$ and identify $H^*(\HH\PP^j;\ZZ)$ through $\pi^*$ with the subring generated by $x^2$. The $\hat{A}$-class of $\HH\PP^j$ is $F(x)^{2j+2}/F(2x)$ where $F(x)=x/2\sinh(x/2)$, and the Chern character of $\xi_i$ is $\sinh((i+1)x)/\sinh(x)$. Therefore by residue theorem, we need to evaluate the integral
\[
\frac{1}{2\pi \mathbf{i}}\int \frac{\sinh((i+1)x)}{\sinh(x)}\frac{F(x)^{2j+2}}{F(2x)}\frac{1}{x^{2j+1}} dx
\]
We will suppress the factor $1/2\pi\mathbf{i}$ for simplicity. Multiplying the above by $t^j$ and then summing over $j\ge 0$, we get a generating function \[
G_i(t)=\int \frac{\sinh((i+1)x)}{\sinh(x)}\frac{F(x)^2}{F(2x)x}\frac{dx}{1-\frac{F(x)^2}{x^2}t}.
\]
After making the substitution $y=2\sinh(x/2)=x+o(x)$, we have
\[
G_i(t)=\int \Theta_i(y)\frac{1}{y}\frac{dy}{1-t/y^2}=\int\Theta_i(y)\sum \frac{t^j}{y^{2j+1}}
\]
where $\Theta_i(y)$ is $\sinh((i+1)x)/\sinh(x)$ viewed as a function in $y$. So the coefficient of $t^j$ in $G_i(t)$ is exactly the coefficient of $y^{2j}$ in $\Theta_i(y)$. Finally let $U_i$ be the Chebychev polynomial of the second kind which satisfies $U_i(\cosh x)=\sinh((i+1)x)/\sinh(x)$, and then $\Theta_i(y)=U_i(1+\frac{1}{2}y^2)$. We leave it to the reader to use the well-known inductive relation $U_{i+1}(z)=2z U_{i}(z)-U_{i-1}(z)$ to derive $\Theta_i(y)=\sum_j\binom{i+j+1}{i-j}y^{2j}$.
\end{example}

\begin{example}[Classifying spaces]Let $G$ be a compact connected, simply-connected Lie group of rank $r$. Even though $BG$ is not a finite CW-complex, \Cref{thm1} still holds by \Cref{inverselimit} and $\KO(BG)=\varprojlim\KO(\text{finite skeleton of }BG)$. From Atiyah-Segal completion theorem, $\KO(BG)$ is the completed real representation of $G$. Now choose a maximal torus for $G$ and correspondingly obtain $r$ embeddings of $\Sp(1)=\mathrm{SU}(2)$ into $G$. From representation theory, a real representation of $G$ is trivial if and only if its restriction onto those $\Sp(1)$ subgroups are trivial. This implies $\KO(BG)$ embeds into the direct sum of $r$ copies of $\KO(\HH\PP^\infty)$ by pulling back along the induced maps between classifying spaces of those embeddings of groups. Therefore from the previous example, a spin characteristic variety of $BG$ can be chosen to be a union of maps from $\HH\PP^{i_1}\times\HH\PP^{i_2}\times \dots\times \HH\PP^{i_r}$ for $i_1,\dots,i_r\ge 0$. A special case is $G=\Sp(1)$ and $\KO(\mathrm{BSp}(1))=\varprojlim_n\KO(\HH\PP^n)$ is a power series ring over $\ZZ$. A real vector bundle over $\mathrm{BSp}(1)=\HH\PP^\infty$ is stably trivial if and only if its integer invariants over a flag of (positive dimensional) quaternionic projective subspaces are zero. It seems hard to explicitly find characteristic varieties if $G$ is not simply-connected (for example $U(1)$).
\end{example}

%Our characteristic variety theorem fails for general infinite CW-complexes.
%\begin{example}
%	Let $\pi: E\to S^2$ be a rank $3$ oriented real vector bundle over $S^2$ with $w_2(E)\neq 0$. Let $\iota: S^2\to E$ be a smooth embedding so that $\pi\circ\iota$ has degree $3$. Such a bundle $E$ exists, for instance one can take $E$ to be the Whitney sum (over $\RR$) of the tautological complex line bundle over $\CC\PP^1=S^2$ and the trivial real line bundle. Moreover, we note that rank $3$ oriented real vector bundles over $S^2$ are classified by its second Stiefel-Whitney class. Such an embedding $\iota$ also exists, because we can first find a degree $3$ map from $S^2$ to itself and then deform it into a smooth embedding into $E$ ($\dim E=5>4=2\cdot\dim S^2$). Now it is easy to see the normal bundle to $\iota$ has $w_2\neq 0$ and therefore isomorphic to $E$. Hence a tubular neighborhood of $S^2$ in
%\end{example}

\section{Index theorem for quaternionic operators}\label{sec7}
In this chapter, we state and sketch the proof of an index theorem for families of quaternionic operators and apply it to prove that the analytic and topological indices of a closed spin$^h$ manifold coincide.
\subsection{Bigraded ABS isomorphisms}%This subsection can be replaced by claiming the main results hold.
In this subsection we study Clifford algebras associated to (non-degenerate) indefinite quadratic forms as well as their modules. Since over $\CC$ all non-degenerate quadratic forms are isomorphic, we deal only with $\RR$- and $\HH$-modules. Our goal is to establish a quaternionic version of ABS isomorphism in the indefinite setting. Given our understanding of the positive definite case, this will be easy once we have the appropriate language.
\subsubsection{Clifford algebras associated to indefinite forms}
Let $\Cl_{r,s}$ be the Clifford algebra on $\RR^{r+s}=\RR^r\times \RR^s$ with respect to the quadratic form $\|x\|^2-\|y\|^2$ of signature $(r,s)$ where $x\in\RR^r$ and $y\in \RR^s$. In particular $\Cl_{n,0}=\Cl_n$. These algebras are also $\ZZ_2$-graded (induced from the antipodal map on $\RR^{r+s}$) and there are $\ZZ_2$-graded isomorphisms (see e.g. \cite[Proposition 3.2]{LM89}):
\begin{equation}\label{indefz2gradediso}
	\Cl_{r+r',s+s'}\cong\Cl_{r,s}\hat{\otimes}_\RR\Cl_{r',s'}
\end{equation}
These isomorphism can be deduced from the following general fact. Recall the Clifford algebra $\Cl(V,q)$ associated to a vector space $V$ (over a field of characteristic not $2$) with a quadratic form $q$ is the quotient of the tensor algebra generated by $V$ by the relations $v\otimes v=-q(v)$ for all $v\in V$. The antipodal map $v\mapsto -v$ extends to an involution of $\Cl(V,q)$ yielding a $\ZZ_2$-grading on $\Cl(V,q)$.
\begin{lemma}\label{generalz2gradediso}
	Let $(V,q)$ and $(V',q')$ be finite dimensional vector spaces with quadratic forms over a field (of characteristic not 2). Then $$\Cl(V\oplus V, q\oplus q')\cong\Cl(V,q)\hat{\otimes}\Cl(V',q').$$
\end{lemma}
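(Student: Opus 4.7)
The plan is to invoke the universal property of the Clifford algebra on $V\oplus V'$ with respect to $q\oplus q'$. First I would define the linear map
\[
\phi:V\oplus V'\to \Cl(V,q)\hat{\otimes}\Cl(V',q'),\qquad (v,v')\mapsto v\hat{\otimes}1+1\hat{\otimes}v',
\]
noting that both summands of the image lie in the odd part of the graded tensor product. The key computation is to verify the Clifford relation $\phi(v,v')^2=-(q\oplus q')(v,v')\cdot 1$. Expanding and applying the Koszul sign rule gives
\[
(v\hat{\otimes}1)(1\hat{\otimes}v')=v\hat{\otimes}v',\qquad (1\hat{\otimes}v')(v\hat{\otimes}1)=(-1)^{\deg v\,\deg v'}v\hat{\otimes}v'=-v\hat{\otimes}v',
\]
since $v$ and $v'$ are both odd. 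The cross terms cancel, leaving
\[
\phi(v,v')^2=v^2\hat{\otimes}1+1\hat{\otimes}(v')^2=-q(v)\cdot 1-q'(v')\cdot 1=-(q\oplus q')(v,v')\cdot 1,
\]
as required.

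By the universal property of $\Cl(V\oplus V',q\oplus q')$, the map $\phi$ extends uniquely to a unital algebra homomorphism
\[
\Phi:\Cl(V\oplus V',q\oplus q')\to \Cl(V,q)\hat{\otimes}\Cl(V',q'),
\]
and since $\phi$ sends $V\oplus V'$ into the odd component, $\Phi$ is a map of $\ZZ_2$-graded algebras. To conclude it is an isomorphism, I would argue by dimension: the image of $\Phi$ contains both $v\hat{\otimes}1=\Phi(v,0)$ and $1\hat{\otimes}v'=\Phi(0,v')$ for all $v\in V$, $v'\in V'$, which together generate $\Cl(V,q)\hat{\otimes}\Cl(V',q')$ as an algebra, so $\Phi$ is surjective. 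Over a field of characteristic not $2$, both $\Cl(V\oplus V',q\oplus q')$ and $\Cl(V,q)\hat{\otimes}\Cl(V',q')$ have dimension $2^{\dim V+\dim V'}$, whence $\Phi$ is an isomorphism.

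The only subtle point is the sign cancellation in the cross terms, which is precisely where the graded tensor product (as opposed to the ordinary one) is essential; with the usual Koszul sign the computation is routine, so I do not anticipate a real obstacle. The specialization to $V=\RR^r$, $V'=\RR^{r'}$, $V\oplus V'\cong\RR^{r+r'}$ with the split quadratic form of signature $(r+r',s+s')$ then recovers the stated isomorphism \labelcref{indefz2gradediso}.
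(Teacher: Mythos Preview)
Your proof is correct and follows exactly the same approach as the paper: define the linear map $(v,v')\mapsto v\hat{\otimes}1+1\hat{\otimes}v'$, extend by the universal property, and conclude by surjectivity onto generators together with the dimension count $2^{\dim V+\dim V'}$. Your version is simply more explicit about verifying the Clifford relation via the Koszul sign, which the paper leaves to the reader.
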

\begin{proof}
	The linear map $V\oplus V'\to \Cl(V,q)\hat{\otimes}\Cl(V',q')$, $(v,v')\mapsto v\hat{\otimes}1+1\hat{\otimes}v'$ extends to an algebra map $\Cl(V\oplus V, q\oplus q')\to\Cl(V,q)\hat{\otimes}\Cl(V',q')$ which is seen to be an isomorphism since it maps onto a set of generators and the two algebras in question have the same dimension ($2^{\dim V+\dim V'}$).
\end{proof}

For $\KK=\RR$ or $\HH$, let $\hat{\frM}_{r,s}(\KK)$ denote the Grothendieck group of finite dimensional $\ZZ_2$-graded $\KK$-modules over $\Cl_{r,s}$, and set $\hat{\frN}_{r,s}(\KK)=\hat{\frM}_{r,s}(\KK)/i^*\hat{\frM}_{r+1,s}(\KK)$ where $i^*$ is induced by the inclusion $\RR^r\times \RR^s\hookrightarrow \RR^{r+1}\times \RR^s$, $(x,y)\mapsto (x,0,y)$. Then naturally $\hat{\frM}_{\bullet,\bullet}(\RR)=\bigoplus_{r,s}\hat{\frM}_{r,s}(\RR)$ is a bigraded ring with respect to direct sum and $\ZZ_2$-graded tensor product, and
$\hat{\frM}_{\bullet,\bullet}(\HH)=\bigoplus_{r,s}\hat{\frM}_{r,s}(\HH)$
is a bigraded module over $\hat{\frM}_{\bullet,\bullet}(\RR)$. These structures descend to make $\hat{\frN}_{\bullet,\bullet}(\HH)=\bigoplus_{r,s}\hat{\frN}_{r,s}(\HH)$ a bigraded module over the bigraded ring $\hat{\frN}_{\bullet,\bullet}(\RR)=\bigoplus_{r,s}\hat{\frN}_{r,s}(\RR)$.

In \cite{KR}, Atiyah showed $\hat{\frN}_{\bullet,\bullet}(\RR)$ is isomorphic to \textbf{Real K-theory} of a point:
\[
\hat{\frN}_{\bullet,\bullet}(\RR)\cong \KR^{\bullet,\bullet}(\pt).
\]
We will prove analogously $\hat{\frN}_{\bullet,\bullet}(\HH)$ is isomorphic to \textbf{Quaternionic K-theory} of a point:
\[
\hat{\frN}_{\bullet,\bullet}(\HH)\cong \KQ^{\bullet,\bullet}(\pt).
\]

\subsubsection{Real and Quaternionic K-theories}
The Real K-theory $\KR$ is a variant of K-theory invented by Atiyah in \cite{KR} partially for the purpose of analyzing indices for families of real elliptic operators. The Quaternionic K-theory $\KQ$ was invented by Dupont in \cite{KQ} to extend the work of Atiyah to the quaternionic case. As we will see later, $\KQ$ is suitable for analyzing indices for families of quaternionic operators. Both theories are defined on the category of \textbf{real spaces}.

A real space is simply a space equipped with an involution. For example, the set of complex points of a real algebraic variety equipped with conjugation is a real space. Another important example is $\RR^{r,s}$ whose underlying space is $\RR^r\times \RR^s$ equipped with the involution $(x,y)\mapsto (x,-y)$ for $x\in\RR^s$, $y\in \RR^s$. When $r=s$, we write $\RR^{r,r}=\CC^r$ where the involution becomes complex conjugation.
\begin{definition}
	Let $(X,f)$ be a real space and $\xi$ a \textit{complex} vector bundle over $X$ equipped with an involution $j$ covering the involution $f$ on $X$ such that $j: \xi_x\to \xi_{fx}$ is $\CC$-antilinear for all $x\in X$.
	\begin{enumerate}[label=(\roman*)]
		\item If $j^2\equiv 1$, then $(\xi,j)$ is called a \textbf{Real bundle}, or simply a \textbf{R-bundle}, over $(X,f)$.
		\item If $j^2\equiv -1$, then $(\xi,j)$ is called a \textbf{Quaternionic bundle}, or simply a \textbf{Q-bundle} over $(X,f)$.
	\end{enumerate}
	
\end{definition}

\begin{definition}
	For a real space $X$ (suppressing the involution), we define $\KR(X)$ and $\KQ(X)$ respectively to be the the Grothendieck groups of R-bundles and Q-bundles over $X$ with respect to direct sum. The reduced groups $\widetilde{\KR}$ and $\widetilde{\KQ}$ for pointed real spaces (the base point is fixed by involution) are defined to be the kernel of restriction to base point. The relative groups $\KR(X,Y)$ and $\KQ(X,Y)$ for a real pair $(X,Y)$ are defined to be $\widetilde{\KR}(X/Y)$ and $\widetilde{\KQ}(X/Y)$ respectively. The higher groups are defined by\footnote{the order of $(r,s)$ here coincides with \cite{LM89} but is the opposite of that in \cite{KR}.}
		\begin{align*}
			\KR^{r,s}(X,Y)&:=\KR(X\times D^{r,s},X\times S^{r,s}\cup Y\times D^{r,s}),\\
			\KQ^{r,s}(X,Y)&:=\KQ(X\times D^{r,s},X\times S^{r,s}\cup Y\times D^{r,s}).
		\end{align*}
		Here $D^{r,s}$ and $S^{r,s}$ are the unit disk and unit sphere in $\RR^{r,s}$ respectively with restricted involutions. For $X$ a locally compact Hausdorff real space, we define compactly supported groups to be
		\begin{align*}
			\KR^{r,s}_{cpt}(X)&:=\widetilde{\KR}^{r,s}(X^{cpt}),\\
			\KQ^{r,s}_{cpt}(X)&:=\widetilde{\KQ}^{r,s}(X^{cpt}).
		\end{align*}
		where $X^{cpt}=X\cup\{\pt\}$ is the one-point compactification of $X$. If $X$ is compact then $\KR^{r,s}_{cpt}(X)=\KR^{r,s}(X)$.
\end{definition}

We will need the following facts about $\KR$ and $\KQ$ theories drawn from \cite{KR} and \cite{KQ}.
\begin{enumerate}[label=(\roman*)]
	\item If the involution on $X$ is trivial (the identity map), then $\KR(X)=\KO(X)$ and $\KQ(X)=\KSp(X)$.
	\item $\KR$ is a multiplicative theory and $\KQ$ is a module theory over $\KR$. The multiplication on $\KR$ and the module multiplication of $\KR$ on $\KQ$ are both induced from tensor product over $\CC$. Given R-bundles $\pi:(\xi,j)\to X$ and $\pi':(\xi',j')\to X'$, the complex vector bundle $\pi^*\xi\otimes_\CC(\pi')^*\xi'$ equipped with $J=\pi^*j\otimes (\pi')^*j'$ is a Real bundle over $X\times X'$. If either $\xi$ or $\xi'$ is a $Q$-bundle then $\pi^*\xi\otimes_\CC(\pi')^*\xi'$ is a Q-bundle. In particular $\KR^{\bullet,\bullet}(\pt)=\bigoplus_{r,s\ge 0}\KR^{r,s}(\pt)$ is a bigraded ring and $\KQ^{\bullet,\bullet}(\pt)=\bigoplus_{r,s\ge 0}\KQ^{r,s}(\pt)$ is a bigraded module over $\KR^{\bullet,\bullet}(\pt)$.
	\item $\KR^{1,1}(\pt)=\ZZ$ and multiplication by the generator of $\KR^{1,1}(\pt)$ yields isomorphisms
	\begin{align*}
		&\KR^{r,s}(X)\xrightarrow{\cong}\KR^{r+1,s+1}(X),\\
		&\KQ^{r,s}(X)\xrightarrow{\cong}\KQ^{r+1,s+1}(X).
	\end{align*}
	\item $\KQ^{4,0}(\pt)=\ZZ$ and multiplication by the generator of $\KQ^{4,0}(\pt)$ yields an isomorphism
	\begin{equation*}
		\KR^{r,s}(X)\xrightarrow{\cong}\KQ^{r+4,s}(X).
	\end{equation*}
	\item For a locally compact Hausdorff real space $X$, $\KR^{r,s}_{cpt}(X)=\KR_{cpt}(X\times \RR^{r,s})$ and $\KQ^{r,s}_{cpt}(X)=\KQ_{cpt}(X\times \RR^{r,s})$. In particular $\KR^{r,0}(\pt)=\KR_{cpt}(\RR^r)=\KO_{cpt}(\RR^r)=\KO^{-r}(\pt)$. Similarly $\KQ^{r,0}(\pt)=\KSp^{-r}(\pt)$.
\end{enumerate}
In fact all the above are easy to prove except for (iii) and (iv). The isomorphisms in (iii) are called \textbf{(1,1)-periodicity} theorems for $\KR$- and $\KQ$-theories. The former is proved by Atiyah \cite[Theorem 2.3]{KR} and the latter by Dupont \cite{KQ}. The isomorphism in (iv) is proved by Dupont \cite{KQ} implicitly, we justify this in \Cref{appendixB}.

\subsubsection{R- and Q-modules} Now, to connect Clifford modules to $\KR$- and $\KQ$-theories, we consider the \textbf{algebra-with-involution} $\Cl(\RR^{r,s})$ which is $\Cl_{r,s}$ equipped with the involution $c:\Cl(\RR^{r,s})\to \Cl(\RR^{r,s})$ extended from the involution on $\RR^{r,s}$, $(x,y)\mapsto (x,-y)$ for $x\in\RR^r$ and $y\in \RR^s$. Since the involution on $\RR^{r,s}$ commutes with the antipodal map, $\Cl(\RR^{r,s})$ still carries a $\ZZ_2$-grading.

We note that the inclusion $i_*:\Cl_{r,s}\subset \Cl_{r+1,s}$ induced from $\RR^r\times\RR^s\hookrightarrow\RR^{r+1}\times\RR^s$, $(x,y)\mapsto (x,0,y)$ is compatible with the involutions and therefore gives rise to an inclusion of algebras-with-involution $i_*:\Cl(\RR^{r,s})\subset\Cl(\RR^{r+1,s})$. Also observe that the $\ZZ_2$-graded isomorphism $\Cl_{r,s}\hat{\otimes}\Cl_{r',s'}\cong\Cl_{r+r',s+s'}$ carries the involution $c\hat{\otimes}c'$ to the involution on $\Cl_{r+r',s+s'}$. So we have an isomorphism of $\ZZ_2$-graded algebras-with-involution $\Cl(\RR^{r,s})\hat{\otimes}\Cl(\RR^{r',s'})\cong\Cl(\RR^{r+r',s+s'})$.
\begin{definition}
	Let $V$ be a complex module over $\Cl_{r,s}$ together with a $\CC$-antilinear map $c:V\to V$ such that
	\[
	c(a\cdot v)=c(a)\cdot c(v)
	\]
	for all $a\in \Cl(\RR^{r,s})$ and $v\in V$.
	\begin{enumerate}[label=(\roman*)]
		\item If $c^2\equiv 1$, then $V$ is called a \textbf{R-module} over $\Cl(\RR^{r,s})$.
		\item If $c^2\equiv -1$, then $V$ is called a \textbf{Q-module} over $\Cl(\RR^{r,s})$.
	\end{enumerate}
	If further $V$ is a $\ZZ_2$-graded module over $\Cl_{r,s}$ and $c(V^\alpha)=V^\alpha$ for $\alpha=0,1$, then $V$ is called a $\ZZ_2$-graded R- or Q-module over $\Cl(\RR^{r,s})$. Denote by $\hat{\frM} R_{r,s}$ and $\hat{\frM} Q_{r,s}$ the Grothendieck groups of finite dimensional $\ZZ_2$-graded R- and Q-modules over $\Cl_{r,s}$ respectively. And denote by $\hat{\frN} R_{r,s}$ and $\hat{\frN} Q_{r,s}$ respectively the cokernels of
	$i^*:\hat{\frM} R_{r+1,s}\to \hat{\frM}R_{r,s}$ and $i^*:\hat{\frM} Q_{r+1,s}\to \hat{\frM}Q_{r,s}$ induced by $i_*:\Cl(\RR^{r,s})\subset \Cl(\RR^{r+1,s})$.
\end{definition}
We will see on the one hand R- and Q-modules over $\Cl(\RR^{r,s})$ can be identified with $\RR$- and $\HH$-modules over $\Cl_{r,s}$ respectively, and on the other hand they can be directly related to KR- and KQ-theories through Atiyah-Bott-Shapiro type constructions.

For a $\RR$-module $V$ over $\Cl_{r,s}$, we consider the $\CC$-vector space $V_\CC=V\otimes_\RR \CC=\Ind_\RR^\CC(V)$ endowed with the $\CC$-antilinear map given by complex conjugation and with the $\Cl(\RR^{r,s})$-action determined by
\[
(x,y)\cdot v:=xv+\mathbf{i}yv
\]
for all $(x,y)\in \RR^r\times \RR^s=\RR^{r,s}$. For an $\HH$-module $W$ over $\Cl_{r,s}$, we consider the underlying $\CC$-vector space $W_\CC=\Res_\CC^\HH(W)$ endowed with the $\CC$-antilinear map given by multiplication by $\mathbf{j}\in \HH=\CC+\CC \mathbf{j}$ and with the $\Cl(\RR^{r,s})$-action determined by
\[
(x,y)\cdot w=xw+\mathbf{i}yw
\]
for all $(x,y)\in \RR^r\times \RR^s=\RR^{r,s}$. It is straightforward to check $V_\CC$ (resp. $W_\CC$) is a R-module (resp. Q-module) over $\Cl(\RR^{r,s})$ and the functors
\begin{align*}
	&\text{$\RR$-modules over $\Cl_{r,s}$}\xrightarrow{\Ind_\RR^\CC} \text{R-modules over $\Cl(\RR^{r,s})$}\\
	&\text{$\HH$-modules over $\Cl_{r,s}$}\xrightarrow{\Res_\RR^\CC} \text{Q-modules over $\Cl(\RR^{r,s})$}
\end{align*}
are isomorphisms of categories that preserve $\ZZ_2$-gradings. Moreover, these functors are compatible with the inclusions of algebras $\Cl(\RR^{r,s})\subset\Cl(\RR^{r+1,s})$, $\Cl_{r,s}\subset\Cl_{r+1,s}$. Therefore they induce isomorphisms
\begin{align*}
	&\hat{\frM}_{r,s}(\RR)\xrightarrow{\cong}\hat{\frM}R_{r,s}, \quad \hat{\frN}_{r,s}(\RR)\xrightarrow{\cong}\hat{\frN}R_{r,s},\\
	&\hat{\frM}_{r,s}(\HH)\xrightarrow{\cong}\hat{\frM}Q_{r,s}, \quad \hat{\frN}_{r,s}(\HH)\xrightarrow{\cong}\hat{\frN}Q_{r,s}.
\end{align*}

Now $\ZZ_2$-graded tenor product can be defined for R- and Q-modules. Let $(V,c)$, $(V',c')$ be $\ZZ_2$-graded R- or Q-modules over $\Cl(\RR^{r,s})$ and $\Cl(\RR^{r',s'})$ respectively. Then $V\hat{\otimes}_\CC V'$ equipped with $c\hat{\otimes }c'$ is a $\ZZ_2$-graded module over $\Cl(\RR^{r+r',s+s'})$. If both $V$ and $V'$ are R-modules or both are Q-modules, then $V\hat{\otimes}V'$ is a R-module, and otherwise a Q-module. With these understood, $\hat{\frM}R_{\bullet,\bullet}=\bigoplus_{r,s\ge 0}\hat{\frM}R_{r,s}$ is a bigraded ring and $\hat{\frM}Q_{\bullet,\bullet}=\bigoplus_{r,s\ge 0}\hat{\frM}Q_{r,s}$ is a bigraded module over $\hat{\frM}R_{\bullet,\bullet}$. These structures descend to make $\hat{\frN}R_{\bullet,\bullet}=\bigoplus_{r,s\ge 0}\hat{\frN}R_{r,s}$ into a bigraded ring and $\hat{\frN}Q_{\bullet,\bullet}=\bigoplus_{r,s\ge 0}\hat{\frN}Q_{r,s}$ a bigraded module over $\hat{\frN}R_{\bullet,\bullet}$. It is now easy and left to the reader to check the above isomorphisms assemble to isomorphisms of bigraded rings and modules.

\subsubsection{Bigraded ABS isomorphisms}
The Atiyah-Bott-Shapiro construction applies to R- and Q-modules over $\Cl(\RR^{r,s})$ without difficulty and gives rise to homomorphisms
\begin{align*}
	&\varphi^R: \hat{\frN}_{\bullet,\bullet}(\RR)\cong\hat{\frN}R_{\bullet,\bullet}\to \KR^{\bullet,\bullet}(\pt),\\
	&\varphi^Q: \hat{\frN}_{\bullet,\bullet}(\HH)\cong\hat{\frN}Q_{\bullet,\bullet}\to \KQ^{\bullet,\bullet}(\pt).
\end{align*}

To prove these maps are isomorphisms, we need to better understand $\RR$- and $\HH$-modules over the algebras $\Cl_{r,s}$'s. Denote $\Cl_{r,s}\otimes_\RR\HH$ by $\Cl_{r,s}^h$ and think of $\HH$-modules over $\Cl_{r,s}$ as $\RR$-modules over $\Cl_{r,s}^h$.
\begin{proposition}\label{bigradedhalfbott}
	For $r,s\ge 0$, there are isomorphisms of $\RR$-algebras
	\begin{enumerate}[label=(\roman*)]
		\item $\Cl_{r+1,s+1}\cong \Cl_{r,s}\otimes_\RR\RR(2)$;
		\item $\Cl_{r+4,s}^h\cong \Cl_{r,s}\otimes_\RR \RR(8)$;
		\item $\Cl_{r+4,s}\cong \Cl_{r,s}^h\otimes_\RR\RR(2)$.
	\end{enumerate}
	These isomorphisms are compatible with the embeddings $\RR^r\times \RR^s\hookrightarrow\RR^{r+1}\times \RR^s$, $(x,y)\mapsto (x,0,y)$ for $r,s\ge 0$.
\end{proposition}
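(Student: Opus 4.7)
The plan is to imitate the proof of Proposition \ref{halfbott} and adapt it to the indefinite setting by working generator-by-generator, taking care to place the "special" generators so that the stabilization embeddings commute with the isomorphisms.

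For part (i), I would construct an explicit algebra map $f:\Cl_{r+1,s+1}\to\Cl_{r,s}\otimes_\RR\Cl_{1,1}$ using the same device that proves $\Cl_{1,1}\cong\RR(2)$. Let $a'', b''\in\Cl_{1,1}$ be the generators with $(a'')^2=-1$, $(b'')^2=+1$, and note that $a''b''$ satisfies $(a''b'')^2=+1$ and anticommutes with both $a''$ and $b''$. If $a_1,\dots,a_{r+1}$ and $b_1,\dots,b_{s+1}$ denote the negative and positive orthonormal generators of $\RR^{r+1,s+1}$, I would set $f(a_1)=1\otimes a''$, $f(b_1)=1\otimes b''$, and $f(a_{i+1})=a_i'\otimes a''b''$, $f(b_{j+1})=b_j'\otimes a''b''$ for $1\le i\le r$, $1\le j\le s$. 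Straightforward checks show the defining Clifford relations are respected, so $f$ extends to an algebra homomorphism; surjectivity onto a generating set together with the dimension count $2^{r+s+2}$ forces $f$ to be an isomorphism.

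For parts (ii) and (iii), I would first establish the ungraded identity $\Cl_{r+4,s}\cong \Cl_{r,s}\otimes_\RR\Cl_{4,0}$ exactly as in \Cref{halfbott}(i): using the volume element $\omega=a_1''a_2''a_3''a_4''\in\Cl_{4,0}$, which satisfies $\omega^2=1$ and anticommutes with each $a_i''$ (the signature in the $\Cl_{4,0}$-factor is the same as in the positive-definite case, so the algebra of the volume element is unchanged), I define $f(a_i)=1\otimes a_i''$ for $1\le i\le 4$, $f(a_{i+4})=a_i'\otimes\omega$ for $1\le i\le r$, and $f(b_j)=b_j'\otimes\omega$ for $1\le j\le s$. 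Verifying the Clifford relations is a routine bookkeeping check since the signs from the indefinite form of $\Cl_{r,s}$ are absorbed into the $\Cl_{r,s}$ tensor factor. With $\Cl_{4,0}=\Cl_4\cong\HH(2)\cong\HH\otimes_\RR\RR(2)$, this immediately yields (iii): $\Cl_{r+4,s}\cong\Cl_{r,s}\otimes_\RR\HH\otimes_\RR\RR(2)=\Cl_{r,s}^h\otimes_\RR\RR(2)$. Tensoring with $\HH$ and using $\HH\otimes_\RR\HH\cong\RR(4)$ from \labelcref{fundamentalid} gives (ii): $\Cl_{r+4,s}^h=\Cl_{r+4,s}\otimes_\RR\HH\cong\Cl_{r,s}\otimes_\RR\HH\otimes_\RR\RR(2)\otimes_\RR\HH\cong\Cl_{r,s}\otimes_\RR\RR(8)$.

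The compatibility with the embedding $\RR^{r,s}\hookrightarrow\RR^{r+1,s}$, $(x,y)\mapsto(x,0,y)$, falls out by construction: in each case the "special" generators (the first four negative generators in (ii)/(iii), or the pair $a_1,b_1$ in (i)) are fixed, while the remaining negative generators are indexed in a way that mirrors the natural stabilization on $\Cl_{r,s}$. Thus the diagrams comparing $f_{r,s}$ and $f_{r+1,s}$ commute on every generator of the smaller algebra and hence everywhere. I expect this bookkeeping of indices to be the only subtle point, since the algebraic computations themselves are the same as in the positive-definite case once the volume element is isolated.
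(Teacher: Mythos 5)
Your proposal is correct and takes essentially the same route as the paper: the paper cites \cite[\S 1, Theorem 4.1]{LM89} for (i) and deduces (ii)--(iii) from \labelcref{indefz2gradediso} together with \Cref{halfbott}, and your explicit generator assignments (the element $a''b''$ in $\Cl_{1,1}$ for (i), the volume element of $\Cl_{4,0}$ for (ii)--(iii), then tensoring with $\HH$ and using \labelcref{fundamentalid}) are exactly the computations underlying those references. Your compatibility check with the stabilization embeddings is also sound, since the special generators occupy fixed initial indices while the new negative generator is appended at the end of the block.
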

\begin{proof}
	For a proof of (i), see \cite[\S 1. Theorem 4.1]{LM89}. We note in particular $\Cl_{1,1}=\RR(2)$. Both (ii) and (iii) follow from \labelcref{indefz2gradediso} and \Cref{halfbott}.
\end{proof}

From here, we can quickly deduce the following
\begin{proposition}\label{bigradedz2iso}
	\begin{enumerate}[label=(\roman*)]
		\item Multiplication by the generator of $\hat{\frM}_{1,1}(\RR)$ gives isomorphisms
		\[
		\hat{\frM}_{r,s}(\KK)\xrightarrow{\cong}\hat{\frM}_{r+1,s+1}(\KK),\quad \hat{\frN}_{r,s}(\KK)\xrightarrow{\cong}\hat{\frN}_{r+1,s+1}(\KK)
		\]
		for $\KK=\RR$ or $\HH$ and $r,s\ge 0$.
		\item Multiplication by $\Delta_{4,\HH}^+\in \hat{\frM}_4(\HH)=\hat{\frM}_{4,0}(\HH)$ gives isomorphisms
		\[
		\hat{\frM}_{r,s}(\RR)\xrightarrow{\cong}\hat{\frM}_{r+4,s}(\HH),\quad \hat{\frN}_{r,s}(\RR)\xrightarrow{\cong}\hat{\frN}_{r+4,s}(\HH)
		\]
		for $r,s\ge 0$.
	\end{enumerate}
\end{proposition}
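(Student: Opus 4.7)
The plan is to deduce both isomorphisms from the algebra-level Morita equivalences already recorded in \Cref{bigradedhalfbott}, exactly paralleling the way \Cref{freemodule} was derived from \Cref{halfbott} in the positive-definite case. The general principle: if $A$ and $B$ are finite-dimensional $\ZZ_2$-graded $\RR$-algebras with a $\ZZ_2$-graded isomorphism $B\cong A\hat{\otimes}_\RR\End_\RR(M)$ for some $\ZZ_2$-graded $\RR$-vector space $M$, then $V\mapsto V\hat{\otimes}_\RR M$ gives an equivalence between the categories of $\ZZ_2$-graded $\KK$-modules over $A$ and over $B$, inducing an isomorphism on Grothendieck groups.

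For part (i), \Cref{generalz2gradediso} upgrades \Cref{bigradedhalfbott}(i) to a $\ZZ_2$-graded algebra isomorphism $\Cl_{r+1,s+1}\cong \Cl_{r,s}\hat{\otimes}_\RR\Cl_{1,1}$, and $\Cl_{1,1}=\RR(2)$ is $\ZZ_2$-graded Morita trivial with classifying module the column $\RR^2$. That $\RR^2$ is precisely the (unique up to equivalence) fundamental $\ZZ_2$-graded $\RR$-module $M_{1,1}$ over $\Cl_{1,1}$, which generates $\hat{\frM}_{1,1}(\RR)=\ZZ$ (compute directly: $\Cl_{1,1}^0$ is two-dimensional and abelian, so the claim is immediate). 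Tensoring with $M_{1,1}$ therefore induces $\hat{\frM}_{r,s}(\KK)\xrightarrow{\cong}\hat{\frM}_{r+1,s+1}(\KK)$ for $\KK=\RR,\HH$. The compatibility-with-embeddings clause of \Cref{bigradedhalfbott} guarantees the isomorphism intertwines $i^*$ on both sides, hence descends to $\hat{\frN}_{r,s}(\KK)\xrightarrow{\cong}\hat{\frN}_{r+1,s+1}(\KK)$.

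For part (ii), apply the same recipe to \Cref{bigradedhalfbott}(ii): $\Cl_{r+4,s}^h\cong \Cl_{r,s}\otimes_\RR\RR(8)$, so $\Cl_{r+4,s}^h$ is Morita equivalent to $\Cl_{r,s}$ with classifying bimodule the eight-dimensional column of $\RR(8)$. The key identification is that this column, equipped with the $\HH$-action inherited from $\Cl_4^h=\Cl_4\otimes_\RR\HH\hookrightarrow\Cl_{r+4,s}^h$ and the appropriate $\ZZ_2$-grading, is precisely $\Delta_{4,\HH}^+$ (the dimension match $d_{4,\HH}=8$ is forced, and the sign of the volume element's action picks out the $+$ component). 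Viewing $\Delta_{4,\HH}^+$ as an element of $\hat{\frM}_{4,0}(\HH)=\hat{\frM}_4(\HH)$, the functor $V\mapsto V\hat{\otimes}_\RR\Delta_{4,\HH}^+$ sends $\ZZ_2$-graded $\RR$-modules over $\Cl_{r,s}$ to $\ZZ_2$-graded $\RR$-modules over $\Cl_{r,s}\hat{\otimes}_\RR\Cl_{4,0}^h\cong \Cl_{r+4,s}^h$, i.e.\ $\ZZ_2$-graded $\HH$-modules over $\Cl_{r+4,s}$, and the Morita equivalence above guarantees this is an isomorphism on $\hat{\frM}$. Again the compatibility with $i^*$ from \Cref{bigradedhalfbott} gives the descent to $\hat{\frN}$.

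The main obstacle is bookkeeping: verifying that the classifying bimodule of the algebra-level Morita equivalence carries the claimed $\ZZ_2$-graded $\RR$- or $\HH$-module structure, and in particular that the grading agrees with the grading on the fundamental module of the corresponding Clifford subalgebra. Once the grading is matched, everything else — including the descent from $\hat{\frM}$ to $\hat{\frN}$ — is formal and follows from the compatibility-with-embeddings statement already built into \Cref{bigradedhalfbott}.
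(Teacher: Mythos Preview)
Your proposal has a concrete computational error that undermines the Morita shortcut. You claim $\hat{\frM}_{1,1}(\RR)=\ZZ$ because ``$\Cl_{1,1}^0$ is two-dimensional and abelian,'' but $\Cl_{1,1}^0=\RR[e_1e_2]/((e_1e_2)^2-1)\cong\RR\oplus\RR$ (note $(e_1e_2)^2=-e_1^2e_2^2=-(-1)(+1)=+1$), so there are \emph{two} inequivalent simple modules over $\Cl_{1,1}^0$, hence two inequivalent irreducible $\ZZ_2$-graded $\Cl_{1,1}$-modules $\Delta_{1,1}^\pm$ and $\hat{\frM}_{1,1}(\RR)=\ZZ\oplus\ZZ$. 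Consequently $\Cl_{1,1}$ is \emph{not} ``$\ZZ_2$-graded Morita trivial'' in the way you assert: the column $\RR^2$ does not carry a canonical grading, and the two choices give the two inequivalent $\Delta_{1,1}^\pm$. So ``the generator'' must be chosen (the paper chooses $\Delta_{1,1}^+$), and one still has to \emph{prove} that $-\hat{\otimes}\Delta_{1,1}^+$ is an equivalence on $\ZZ_2$-graded module categories --- it does not follow formally from the ungraded Morita equivalence in \Cref{bigradedhalfbott}(i).

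The paper fills exactly this gap: it first records that $\hat{\frM}_{r,s}(\KK)$ is $\ZZ$ or $\ZZ\oplus\ZZ$ according to $r-s\bmod 4$, constructs the irreducibles explicitly via the volume element $\omega_{r,s}$ (with $\omega_{r,s}^2=1$ when $r-s\equiv 0\bmod 4$), and then checks that $-\hat{\otimes}\Delta_{1,1}^+$ sends irreducibles to irreducibles by a dimension count together with the identity $\omega_{r,s}\hat{\otimes}\omega_{1,1}=\omega_{r+1,s+1}$, which matches the signs. The same dimension/volume-element argument handles part (ii) with $\Delta_{4,\HH}^+$. Your appeal to Morita equivalence for (ii) has the same issue: the ungraded isomorphism $\Cl_{r+4,s}^h\cong\Cl_{r,s}\otimes\RR(8)$ from \Cref{bigradedhalfbott}(ii) is not $\ZZ_2$-graded (cf.\ \Cref{extraremarks}(ii) in the definite case), so you cannot read off the graded statement without the explicit check the paper performs.
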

\begin{proof}
	The algebras $\Cl_{r,s}$ and $\Cl_{r,s}^h$ are of the form $\KK(N)$ for $r-s\not\equiv 3\bmod 4$, and of the form $\KK(N)\oplus\KK(N)$ for $r-s\not\equiv 3\bmod 4$ for some $N$ and $\KK\in\{\RR,\CC,\HH\}$. This from the classification of the algebras $\Cl_{r,s}$'s, see e.g. \cite[pp.27-29]{LM89}, and the identities \labelcref{fundamentalid}.
	
	 Now let $\KK=\RR$ or $\HH$. As long as $r-s\not\equiv 3\bmod 4$, $\Cl_{r,s}$ has a unique (up to equivalence) irreducible ungraded $\KK$-module. We can turn these ungraded modules into $\ZZ_2$-graded modules by applying the functor $\Cl_{r+1,s}\otimes_{\Cl_{r,s}}-$ where $\Cl_{r,s}$ is identified with $\Cl_{r+1,s}^0$ similar to the way $\Cl_{n}$ is identified with $\Cl_{n+1}^0$. So for $r-s\not\equiv 0\bmod 4$, we get $\hat{\frM}_{r,s}(\KK)=\ZZ$. As for $r-s\equiv 0\bmod 4$, since $\Cl_{r-1,s}$ has two inequivalent irreducible ungraded $\KK$-modules, $\Cl_{r,s}$ has two inequivalent $\ZZ_2$-graded $\RR$- or $\HH$-modules. So $\hat{\frM}_{r,s}(\KK)=\ZZ+\ZZ$. The two generators can also be explicitly constructed by looking the matrix multiplication of $\Cl_{r,s}\cong\KK(N)$ on $\KK^{N}$ and examining the the action of the volume element $\omega_{r,s}=e_1 e_2\cdots e_{r+s}\in\Cl_{r,s}$ as follows. Note that $\omega_{r,s}^2=(-1)^{[(r+s)^2+(r-s)]/2}$ so in particular $\omega_{r,s}^2=1$ when $r-s\equiv 0\bmod 4$. So similar to \Cref{sec:irredmod}, the two inequivalent $\ZZ_2$-graded modules are constructed by putting the $\pm 1$ eigenspaces of $\omega_{r,s}$ in even or odd degrees. Let us denote the two such irreducible $\ZZ_2$-graded $\RR$-modules over $\Cl_{1,1}$ by $\Delta_{1,1}^\pm$.
	 
	 The proposition now follows from simple dimension counts and from looking at the actions of the volume elements. To elaborate, we note that the real dimension of $\Delta_{1,1}^+$ is $2$ which implies the $\ZZ_2$-graded tensor product of $\Delta_{1,1}^+$ with an irreducible $\ZZ_2$-graded $\KK$-module over $\Cl_{r,s}$ is irreducible over $\Cl_{r+1,s+1}$ by dimension counts, in light of \Cref{bigradedhalfbott}(i). We also note that $\omega_{r,s}\hat{\otimes}\omega_{r',s'}$ is identified with $\omega_{r+r',s+s'}$ under the isomorphism $\Cl_{r,s}\hat{\otimes}\Cl_{r',s'}\cong \Cl_{r+r',s+s'}$, so by looking at the actions of the volume elements we know the map
	 \[
	 \hat{\frM}_{r,s}(\KK)\to{\hat{\frM}}_{r+1,s+1}(\KK)
	 \]
	 given by multiplication by $\Delta_{1,1}^+$ is onto and therefore is an isomorphism for all $r,s\ge 0$. This proves (i) by noting the above map commutes with $i^*$. The same argument proves (ii) as well.
\end{proof}

Now we are ready to prove
\begin{theorem}
	$\varphi^R$ is an isomorphism of bigraded rings and $\varphi^Q$ is an isomorphism of bigraded modules over the bigraded ring isomorphism $\varphi^R$.
\end{theorem}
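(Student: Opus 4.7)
The first half of the claim --- that $\varphi^R$ is an isomorphism of bigraded rings --- is Atiyah's theorem from \cite{KR}, recalled just before the statement. So the content lies in showing $\varphi^Q$ is an isomorphism of bigraded modules over $\varphi^R$.

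My first step is to verify that $\varphi^Q$ is a module map over $\varphi^R$, i.e.\ $\varphi^Q(a \hat{\otimes} b) = \varphi^R(a) \cdot \varphi^Q(b)$ for $a \in \hat{\frN}_{\bullet,\bullet}(\RR)$ and $b \in \hat{\frN}_{\bullet,\bullet}(\HH)$. This is a direct adaptation of the corresponding lemma in the positive-definite case proved earlier in the paper: the external product on $\KR/\KQ$ replaces that on $\KO/\KSp$, but the Clifford-multiplication description of the ABS construction carries through verbatim.

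Next, I would build the commutative square
\[
\begin{tikzcd}
\hat{\frN}_{r,s}(\RR) \ar[r,"\varphi^R"] \ar[d,"\cdot\, \Delta_{4,\HH}^+"'] & \KR^{r,s}(\pt) \ar[d,"\cdot\, \mu"] \\
\hat{\frN}_{r+4,s}(\HH) \ar[r,"\varphi^Q"] & \KQ^{r+4,s}(\pt)
\end{tikzcd}
\]
where $\mu$ is a generator of $\KQ^{4,0}(\pt) = \ZZ$. Commutativity follows from the module-map property once we know $\varphi^Q(\Delta_{4,\HH}^+) = \pm\mu$, and this in turn reduces to the degree-$(4,0)$ case, where $\varphi^Q$ specializes to the quaternionic Atiyah-Bott-Shapiro homomorphism $\varphi^h$. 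By \Cref{ABSH}, $\varphi^h(\Delta_{4,\HH}^+)$ generates $\KSp^{-4}(\pt) = \KQ^{4,0}(\pt)$, so indeed $\varphi^Q(\Delta_{4,\HH}^+)$ is a generator. In the square the top map is an isomorphism by Atiyah, the left vertical by \Cref{bigradedz2iso}(ii), and the right vertical by property (iv) of $\KR/\KQ$; hence $\varphi^Q$ at $(r+4,s)$ is an isomorphism for every $r,s \ge 0$.

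Finally, for $(r',s)$ with $0 \le r' < 4$, the $(1,1)$-periodicity isomorphisms of \Cref{bigradedz2iso}(i) and property (iii) of $\KR/\KQ$ intertwine $\varphi^Q$ at $(r',s)$ with $\varphi^Q$ at $(r'+4, s+4)$, which has already been handled. The intertwining uses only that $\varphi^Q$ is a module map and that $\varphi^R$ carries the Clifford generator of $\hat{\frN}_{1,1}(\RR)$ to the $(1,1)$-periodicity generator of $\KR^{1,1}(\pt)$, both of which are in hand. I do not foresee any serious obstacle: all the substantive work has been done in \Cref{ABSH}, \Cref{bigradedz2iso}, and the cited $\KR/\KQ$ properties; the rest is an assembly of commutative squares.
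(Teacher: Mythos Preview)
Your proposal is correct and follows essentially the same approach as the paper: cite Atiyah for $\varphi^R$, observe $\varphi^Q$ is a module map, then reduce to the single case $\varphi^Q:\hat{\frN}_{4,0}(\HH)\to\KQ^{4,0}(\pt)$ (handled by \Cref{ABSH}) via the $(1,1)$-periodicity of \Cref{bigradedz2iso}(i) and the multiplication-by-$\Delta_{4,\HH}^+$ isomorphism of \Cref{bigradedz2iso}(ii). The only cosmetic difference is that the paper applies the $(1,1)$-periodicity reduction first and the $\Delta_{4,\HH}^+$-multiplication reduction second, whereas you do them in the opposite order.
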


\begin{proof}
	That $\varphi^R$ is an isomorphism of bigraded rings is proved by Atiyah in \cite{KR}. The argument therein that proves $\varphi^R$ is a bigraded ring homomorphism also proves $\varphi^Q$ is a map of bigraded modules over $\varphi^R$. To prove $\varphi^Q$ is an isomorphism we use the isomorphisms of algebras established in the previous proposition.
	
	First, the (1,1)-periodicities in \Cref{bigradedz2iso}(i) are compatible with the (1,1)-periodicities for $\KR$ and $\KQ$ since all of them are induced by multiplication with the generator of $\hat{\frN}_{1,1}(\RR)\cong\hat{\frN}R_{1,1}\cong\KR^{1,1}(\pt)$. So using (1,1)-periodicities it suffices to prove $\varphi^Q$ is an isomorphism in bidegrees $(r,s)$ for $r\ge 4, s\ge 0$. Next, \Cref{bigradedz2iso}(ii) says $\hat{\frN}_{\bullet+4,\bullet}(\HH)\cong\hat{\frN}Q_{\bullet+4,\bullet}$ is a free module of rank one over $\hat{\frN}_{\bullet,\bullet}(\RR)\cong\hat{\frN}R_{\bullet,\bullet}$. Since $\KQ^{\bullet+4,\bullet}(\pt)$ is a also free module of rank one over $\KR^{\bullet,\bullet}(\pt)$, it suffices to prove $\varphi^Q:\hat{\frN}_{4,0}(\HH)\to \KQ^{4,0}(\pt)$, which is already proved in \Cref{ABSH} since this map the same as $\varphi^h:\hat{\frN}_4(\HH)\to \KSp^{-4}(\pt)$.
\end{proof}

\subsection{Family of quaternionic operators}
Recall that a complex vector bundle $E$ is said to be quaternionic if $E$ is equipped with a real vector bundle automorphism $j:E\to E$ which is $\CC$-antilinear in each fiber and $j^2=-1$. The complex vector space of smooth sections $\Gamma(E)$ is equipped with a quaternionic structure given by $j^*$.

Suppose now $E,F$ are quaternionic vector bundles over a closed manifold $M$ and $P:\Gamma(E)\to\Gamma(F)$ is a complex elliptic differential operator of order $m$. We say $P$ is quaternionic if $Pj_E^*=j_F^*P$. In local terms
\[
P=\sum_{|\alpha|=m} A^\alpha(x)\partial^{|\alpha|}/\partial x^\alpha+\text{lower order terms}
\]
where the $A^\alpha$'s are complex-matrix-valued functions with $A^\alpha j_E=j_F A^\alpha$. The \textbf{principal symbol} $\sigma_\tau(P)=\sum A^\alpha(x)(\mathbf{i}\tau)^\alpha$ of $P$ in local terms is
\[
e\in E \mapsto \sum_{|\alpha|=m} \mathbf{i}^{|\alpha|} \tau^\alpha\left(A^\alpha(x)\cdot e\right)\in F
\]
for any tangent vector $\tau=\sum \tau^i \partial/\partial x^i$ of $M$. It is easy to see $\sigma(P)$ satisfies
\begin{equation}\label{symbolclass}
	\sigma_\tau(P)j_E=j_F\sigma_{-\tau}(P).
\end{equation}
The symbol class of a quaternionic elliptic differential operator therefore lands in $\KQ$-theory.
\begin{definition}
	Given a closed manifold $M$, consider the tangent bundle $\pi:TM\to M$ to be equipped with the canonical involution $f:TM\to TM$ defined by $f(\tau)=-\tau$, i.e. the fiberwise antipodal map. Given any quaternionic vector bundle $(E,j)$ over $M$, $\pi^*E$ is in a natural way a Quaternionic bundle over the real space $(TM,f)$ by setting
	$J:\pi^*E\to\pi^*E$ to be 
	\[
	J(x,\tau,e)=(x,-\tau,j(e)).
	\]
	Suppose now $E,F$ are quaternionic vector bundles over $M$, then for any quaternionic elliptic operator $P:\Gamma(E)\to\Gamma(F)$, the \textbf{Quaternionic symbol class} of $P$ is defined to be the element
	\[
	[\pi^* E,\pi^*F;\sigma(P)]\in \KQ_{cpt}(TM).
	\]
	Note \labelcref{symbolclass} says $\sigma(P)$ is an isomorphism of Quaternionic bundles outside the zero section of $TM$.
\end{definition}

To define the topological index of a quaternionic elliptic operator, we need a version of Thom isomorphism for KQ-theory.
\begin{theorem}[Atiyah, Dupont]\label{Atiyah-Dupont}
	Let $E$ be a Real bundle over a locally compact Hausdorff real space $X$. Then multiplication by $\dd_R(E)$ induces isomorphisms
	\begin{align*}
		\KR_{cpt}(X)&\xrightarrow{\cong}\KR_{cpt}(E)\\
		\KQ_{cpt}(X)&\xrightarrow{\cong}\KQ_{cpt}(E)
	\end{align*}
\end{theorem}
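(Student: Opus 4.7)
The plan is to construct Thom classes for Real bundles via an Atiyah--Bott--Shapiro type construction in the Real category, verify the isomorphism for trivial bundles using $(1,1)$-periodicity, and then run a Mayer--Vietoris induction to pass to general Real bundles.

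First, I would define $\dd_R(E)\in\KR_{cpt}(E)$ for a Real bundle $E\to X$ of Real rank $(r,s)$ as follows. The structure group of $E$ preserves both the Clifford multiplication on $\RR^{r,s}$ and the involution, so from a fundamental irreducible $\ZZ_2$-graded R-module $V=V^0\oplus V^1$ over $\Cl(\RR^{r,s})$ we obtain, by the associated bundle construction, two Real bundles $\mathcal{V}^0,\mathcal{V}^1$ over $X$. Pulling these back over the disk bundle $D(E)$ and using Clifford multiplication by $e\in\partial D(E)$ as the clutching isomorphism gives a difference class $\dd_R(E):=[\pi^*\mathcal{V}^0,\pi^*\mathcal{V}^1;\mu]\in\KR(D(E),\partial D(E))=\KR_{cpt}(E)$. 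The key verifications, all formally identical to \cite{ABS64}, are: (a) $\dd_R(E)$ is natural under pullback of Real bundles; (b) it is multiplicative with respect to external Whitney sum, $\dd_R(E\boxplus E')=\dd_R(E)\boxtimes\dd_R(E')$, using the $\ZZ_2$-graded isomorphism $\Cl(\RR^{r+r',s+s'})\cong\Cl(\RR^{r,s})\hat\otimes\Cl(\RR^{r',s'})$; and (c) when $E$ is a trivial Real line bundle of bidegree $(1,1)$, $\dd_R(E)$ is precisely the Bott generator of $\KR^{1,1}(\pt)=\ZZ$ that implements $(1,1)$-periodicity.

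Second, I would handle the trivial case $E=X\times\RR^{r,s}$. By definition $\KR_{cpt}(E)=\KR_{cpt}^{r,s}(X)$, and under this identification the multiplication $\KR_{cpt}(X)\xrightarrow{\cdot\dd_R(E)}\KR_{cpt}(E)$ becomes the $(r,s)$-fold iteration of $(1,1)$-periodicity, hence an isomorphism by \cite{KR}. The $\KQ$-version is identical: since $\KQ^{\bullet,\bullet}(\pt)$ is a free module of rank one over $\KR^{\bullet,\bullet}(\pt)$ by the bigraded ABS isomorphisms $\hat{\frN}_{\bullet,\bullet}(\RR)\cong\KR^{\bullet,\bullet}(\pt)$ and $\hat{\frN}_{\bullet,\bullet}(\HH)\cong\KQ^{\bullet,\bullet}(\pt)$ together with \Cref{bigradedz2iso}, multiplication by the same Real class $\dd_R(E)$ induces the $(r,s)$-fold $(1,1)$-periodicity on $\KQ_{cpt}$ as well.

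Finally, for general $E$, I would run a Mayer--Vietoris induction. Since $X$ is a locally compact Hausdorff real space and $E$ is locally trivial as a Real bundle, we may cover $X$ by invariant open sets $\{U_\alpha\}$ over which $E$ trivializes. For a decomposition $X=U\cup V$ where the isomorphism has been established for $E|_U$, $E|_V$, and $E|_{U\cap V}$, the Mayer--Vietoris sequences for $\KR_{cpt}$ (resp.\ $\KQ_{cpt}$) on base and total space fit into a ladder in which the vertical maps are multiplication by (restrictions of) $\dd_R(E)$; naturality of the class makes the ladder commute, and the five lemma upgrades the isomorphism to $E|_{U\cup V}$. The main obstacle is organizing this patching equivariantly in the Real category: one must choose invariant trivializing covers, use equivariant partitions of unity to set up the Mayer--Vietoris sequence for compactly supported $\KR$/$\KQ$ on the non-compact space $E$, and check that the Thom class restricts correctly to open pieces. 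Once this technical setup is in place--Atiyah did it for $\KR$ in \cite{KR} and Dupont adapted it for $\KQ$ in \cite{KQ}--the argument becomes formally identical to the classical complex Thom isomorphism proof.
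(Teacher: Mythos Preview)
Your overall strategy is sound but takes a different route from the paper. The paper (in \Cref{appendixB}, following Atiyah's original argument in \cite{KR}) introduces $\KM$-theory, the Grothendieck group of M-bundles (complex bundles with an antilinear $j$ satisfying $j^4=1$), observes that $\KM=\KR\oplus\KQ$ naturally, proves a \emph{projective bundle formula} for $\KM$, and deduces the Thom isomorphism for $\KM$ using the exterior algebra class $\lambda_E$. Since multiplication by the Real class $\lambda_E\in\KR_{cpt}(E)$ preserves the $\ZZ_2$-grading $\KM=\KR\oplus\KQ$, both the $\KR$- and $\KQ$-isomorphisms drop out at once. You instead construct the Thom class directly and run a Mayer--Vietoris induction from the trivial case. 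Both are standard routes; the paper's $\KM$-packaging is elegant in that it treats $\KR$ and $\KQ$ uniformly via the projective bundle theorem, while your approach avoids that machinery at the cost of setting up Mayer--Vietoris carefully in the equivariant compactly supported setting.

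One imprecision to flag: a Real bundle in Atiyah's sense has fiber $\CC^n\cong\RR^{n,n}$, not a general $\RR^{r,s}$, so ``Real rank $(r,s)$'' is not quite right---the fiberwise Thom isomorphism is always an $n$-fold composition of $(1,1)$-periodicities. Relatedly, your Thom class must be the exterior algebra class $\lambda_E$ (equivalently the ABS class built from $\Lambda^*\CC^n$ as a $\ZZ_2$-graded R-module over $\Cl(\RR^{n,n})$), since the structure group of a general Real bundle acts naturally on $\Lambda^*E$ but not on an abstract irreducible Clifford module without a Spin hypothesis. With that correction your argument goes through.
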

\begin{remark}
	Locally these Thom isomorphisms are compositions of (1,1)-periodicities. See also \Cref{appendixB}.
\end{remark}

Now we can define the topological index of a quaternionic elliptic operator $P$ as follows. We first choose an embedding $f:M\hookrightarrow \RR^m$. The associated embedding $TM\hookrightarrow T\RR^m$ is compatible with involutions, i.e. is a mapping of real spaces. If $N$ is the normal bundle to $M$ in $\RR^m$, then $\pi^*N\oplus\pi^*N\cong\pi^*N\otimes\CC$ is the normal bundle to $TM$ in $T\RR^m$. We consider this to be a Real bundle over $TM$ (with complex conjugation as its involution). Then similar to the construction in our Riemann-Roch theorem, we can define a map
\begin{equation}\label{KQUmkehr}
	f_!: \KQ_{cpt}(TM)\to \KQ_{cpt}(T\RR^m)
\end{equation}
by composing the Thom isomorphism with the map induced by the inclusion of the normal bundle as a tubular neighborhood of $TM$ in $T\RR^m$. This inclusion can be easily chosen to be compatible with involutions. We now identify $T\RR^m=\RR^{m,m}=\CC^m$, then $\KQ_{cpt}(T\RR^m)\cong \KQ^{m,m}(pt)\cong \KQ^{0,0}(\pt)\cong\ZZ$. Therefore we can define the \textbf{topological index} of $P$ to be the integer $f_!(\sigma(P))$.

As usual, the fact that the topological index is independent of our choice of the embedding follows from the multiplicative property of the $\KR$-Thom class for Real bundles.

The discussion of symbol class and topological index naturally extends to families of quaternionic operators.
\begin{definition}
	Let $P$ be a family of quaternionic elliptic operators on a closed manifold $M$ parameterized by a compact Hausdorff space $A$. Let $\mathcal{M}\to A$ denote the underlying family of manifolds, and let 
	$T\mathcal{M}=\cup_{a\in A}T\mathcal{M}_a$ be the tangent bundle of the family $\mathcal{M}$. Let $\boldsymbol{\sigma}(P)\in \KQ_{cpt}(T\mathcal{M})$ denote the symbol class of the family. Then the topological index of the family $P$ is defined to be the element
	\[
	\tind(P)=q_!f_!\boldsymbol{\sigma}(P)\in\KQ(A)\cong\KSp(A)
	\]
	where $f_!:\KQ_{cpt}(T\mathcal{M})\to \KQ_{cpt}(A\times T\RR^m)$ is constructed similar to \labelcref{KQUmkehr} and $q_!:\KQ_{cpt}(A\times\CC^n)\to \KQ(A)$ is the natural isomorphism given by the Thom isomorphism.
\end{definition}
%\begin{remark}
%	This definition can be extended to a family of operators parametrized by a locally compact Hausdorff space, provided that the principal symbol of $P$ is invertible outside a compact subset of the parameter space.
%\end{remark}
The forgetful morphism $\Res_\CC^\HH:\KSp(A)\to \KU(A)$ is not always injective, so the index we just defined is more refined than the usual index of $P$ as a family of complex operators.

One can define the \textbf{analytic index} for such a family $P$ of quaternionic elliptic operators by putting
\[
\aind(P)=[\ker P]-[\coker P]\in \KSp(A).
\]
To be more precise, if the dimensions of $\ker P_a$ and $\coker P_a$ are constant for $a\in A$, then $\ker P$ and $\coker P$ define two quaternionic bundles over $A$. In this case, $\aind(P)$ is defined to be the difference class $[\ker P]-[\coker P]$. In general, $\ker P_a$ and $\coker P_a$ are not constant dimensional, then we must first "stabilize" the situation as Atiyah and Singer did in the complex case in \cite{AS4}; we simply note the treatment in \cite[sec.2]{AS4} can be easily made to respect the quaternionic structures.

The analytic index, of course, coincides with the topological index.

\begin{theorem}[Index theorem for quaternionic family]\label{indexthm}
	Let $P$ be a family of quaternionic elliptic operators on a closed manifold parametrized by a compact Hausdorff space $A$. Then
	\[
	\aind(P)=\tind(P).
	\]
\end{theorem}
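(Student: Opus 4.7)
The plan is to mirror the axiomatic proof of the Atiyah--Singer family index theorem, carrying every construction out at the Quaternionic level so that the forgetful map $\KSp(A)\to\KU(A)$ is never required to be injective. First I would verify that both $\aind$ and $\tind$ descend to well-defined homomorphisms
\[
\KQ_{cpt}(T\mathcal{M})\longrightarrow \KSp(A)\cong\KQ(A).
\]
For $\tind$ this is essentially built-in: independence of the choice of fiberwise embedding $\mathcal{M}\hookrightarrow A\times\RR^m$ follows from the multiplicative property of the $\KR$-Thom class $\dd_R$ exactly as in the real Riemann--Roch argument (every enlargement adds a trivial Real normal bundle whose Thom class is the generator of $\KR^{8l,0}(\pt)$). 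For $\aind$ one uses the Atiyah--Singer stabilization trick from \cite{AS4} to replace $\ker P$ and $\coker P$ by honest quaternionic vector bundles over $A$; since that construction is obtained by cutting down to a finite-dimensional complementary subspace, it respects the antilinear $j^*$ action on sections and yields an element of $\KSp(A)$.

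Next I would verify that both indices satisfy the following standard axioms, each checked fiberwise with the quaternionic structure preserved throughout: (a) naturality under pullback along $A'\to A$; (b) excision for open subfamilies on which the symbol is invertible; (c) multiplicativity under external products of quaternionic families (one Q-family against one R-family yields a Q-family, compatible with the module structure of $\KSp_*(\pt)$ over $\KO_*(\pt)$); and (d) normalization on $\mathcal{M}=A\times\{\pt\}$, where both indices reduce tautologically to the canonical isomorphism $\KQ(A)\cong\KSp(A)$.

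With those in hand, the proof is closed by the usual embedding/deformation reduction. Choose a fiberwise embedding $\iota\colon \mathcal{M}\hookrightarrow A\times\RR^m$ with normal bundle $N$; by excision, both indices of $P$ equal the corresponding indices of the family obtained by pushing the symbol forward via the Thom isomorphism of Atiyah--Dupont (Theorem~7.2) into $\KQ_{cpt}(T(A\times\RR^m))=\KQ_{cpt}(A\times\CC^m)$. By the multiplicativity axiom (c) this further reduces to comparing $\aind$ and $\tind$ for a family of elementary "Bott--Dirac" quaternionic operators on $\CC^m$ parametrized by $A$, which in turn reduces via $(1,1)$-periodicity to the normalization case on a point.

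The main obstacle is the excision/Thom-isomorphism step on the \emph{analytic} side: one must exhibit an explicit family of quaternionic operators on $N$ whose analytic index realizes multiplication by $\dd_R(N)$, so that the analytic pushforward along $\iota$ really does agree with the topological one. Concretely, this requires matching the index of the model quaternionic Dirac operator on a rank-$4$ Real bundle with the quaternionic ABS generator $\dd^h_{4}\in\hat{\frN}_{4,0}(\HH)\cong\KSp^{-4}(\pt)$; this is precisely the content of the quaternionic Atiyah--Bott--Shapiro isomorphism (Theorem~\ref{ABSH}) together with the bigraded refinement proved in Section~7.1, so once the model computation is identified with $\varphi^Q$ the axiomatic uniqueness argument finishes the proof.
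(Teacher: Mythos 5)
Your overall architecture is the one the paper actually uses: run the Atiyah--Singer axiomatic scheme of \cite{AS4,AS5} entirely inside Quaternionic K-theory, checking normalization, excision and multiplicativity, and then reduce by a fiberwise embedding to a point. The genuine gap is in your first step. Observing that the stabilization trick of \cite{AS4} respects the antilinear structure only \emph{defines} $\aind(P)\in\KSp(A)$ for a given family $P$; it does not show that $\aind$ descends to a homomorphism on $\KQ_{cpt}(T\mathcal{M})$. For that one needs (a) that the analytic index depends only on the homotopy class of the Quaternionic symbol class, (b) that every element of $\KQ_{cpt}(T\mathcal{M})$ is represented by a symbol, and (c) independence of the choice of symbol-class representative. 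Point (a) is not formal in the quaternionic setting: it rests on the homotopy-theoretic properties of the space of quaternionic Fredholm operators, and the paper singles this out as the only step of the \cite{AS4,AS5} argument requiring special attention, supplying it by Matumoto's Main Theorem III \cite{Matumoto}. Your proposal offers no substitute for this input, so as written the map $\aind:\KQ_{cpt}(T\mathcal{M})\to\KSp(A)$ that the whole axiomatic uniqueness argument is about has not been shown to exist.

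Relatedly, you locate the ``main obstacle'' in a place the paper finds unproblematic: matching the model operator on the normal bundle with the quaternionic ABS generator is exactly what the normalization and multiplicativity axioms encode, and the paper notes that the excision and multiplicativity arguments of \cite{AS4,AS5} go through in the quaternionic case with no essential change (using \Cref{ABSH} and the bigraded isomorphisms of \Cref{sec7}, as you anticipate). One caveat on your axiom (c): multiplicativity for \emph{external} products does not suffice for the embedding step when the normal bundle is nontrivial; you need the axiom in its equivariant sphere-bundle form, with the fundamental equivariant symbol $b\in\KR_{\SO_n}(TS^n)_{cpt}$, which is how the paper states it. With that adjustment and with the Matumoto-based well-definedness of $\aind$ supplied, your plan coincides with the paper's proof.
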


We sketch below the proof of this quaternionic version of index theorem, which proceeds just as in the case of real and complex families \cite{AS4, AS5}.
 Recall that such index theorem for families essentially relies on checking the following three axioms.
 \begin{enumerate}[label=(\roman*)]
 	\item The analytic index $$\aind: \KQ_{cpt}(T\mathcal{M})\to \KQ(A)$$ is a homomorphism of $\KR(A)$-modules, which in the special case $\mathcal{M}=A=\pt$ is the identity map.
 	\item(Excision) Let $\mathcal{M}\to A$ and $\mathcal{M}'\to A$ be two families over $A$ with compact fibers $M, M'$ respectively and let $f: \mathcal{O}\hookrightarrow \mathcal{M}$, $f':\mathcal{O}'\hookrightarrow \mathcal{M}'$ be inclusions of open sets, with a smooth equivalence $\mathcal{O}\cong\mathcal{O}'$ compatible with the maps to $A$. Then, identifying $\mathcal{O}'$ with $\mathcal{O}$, the following diagram commutes:
\begin{center}
	\begin{tikzcd}[row sep=small, column sep=small]
		& \KQ_{cpt}(T\mathcal{M})\arrow[dr,"\aind"] & \\
		\KQ_{cpt}(T\mathcal{O})\arrow[ur,"f_!"]\arrow[dr,"f'_!"']& & \KQ(A)\\
		& \KQ_{cpt}(T\mathcal{M}')\arrow[ur,"\aind"'] &
	\end{tikzcd}
\end{center}
	\item(Multiplicativity) Let $\mathcal{E}\to\mathcal{M}$ be a family of oriented smooth vector bundles of rank $n$, and let $\mathcal{S}=S(\mathcal{E}\oplus\RR)$ be the family of $n$-sphere bundle compactified from $\mathcal{E}$. Then the following diagram commutes:
\begin{center}
\begin{tikzcd}[row sep=small, column sep=small]
	\KQ_{cpt}(T\mathcal{M})\arrow[rr,"i_!"]\arrow[rd,"\aind"']& &\KQ_{cpt}(T\mathcal{S})\arrow[ld,"\aind"]\\
	& \KQ(A) &
\end{tikzcd}
\end{center}
where $i_!$ is multiplication by the fundamental equivariant symbol $b\in \KR_{\SO_n}(TS^n)_{cpt}$ (cf. \cite{AS5}).
 \end{enumerate}

%\begin{lemma}\label{aind-hom}
%	The analytic index $$\aind: \KQ_{cpt}(T\mathfrak{X})\to \KQ(A)$$ is a homomorphism of $\KR(A)$-modules, which in the special case $\mathfrak{X}=A=\pt$ is the identity map. 
%\end{lemma}
%
%\begin{lemma}[excision]
%	Let $\mathfrak{X}\to A$ and $\mathfrak{X}'\to A$ be two families over $A$ with compact fibers $X, X'$ respectively and let $f: \mathcal{O}\hookrightarrow \mathfrak{X}$, $f':\mathcal{O}'\hookrightarrow \mathfrak{X}'$ be inclusions of open sets, with a smooth equivalence $\mathcal{O}\cong\mathcal{O}'$ compatible with the maps to $A$. Then, identifying $\mathcal{O}'$ with $\mathcal{O}$, the following diagram commutes:
%\begin{center}
%	\begin{tikzcd}
%		& \KQ_{cpt}(T\mathfrak{X})\arrow[dr,"\aind"] & \\
%		\KQ_{cpt}(T\mathcal{O})\arrow[ur,"f_!"]\arrow[dr,"f'_!"']& & \KQ(A)\\
%		& \KQ_{cpt}(T\mathfrak{X}')\arrow[ur,"\aind"'] &
%	\end{tikzcd}
%\end{center}
%\end{lemma}
%
%\begin{lemma}[multiplicativity]
%Let $\mathcal{E}\to\mathfrak{X}$ be a family of oriented smooth vector bundles of rank $n$, and let $\mathcal{S}=S(\mathcal{E}\oplus\RR)$ be the family of $n$-sphere bundle compactified from $\mathcal{E}$. Then the following diagram commutes:
%\begin{center}
%\begin{tikzcd}
%	\KQ_{cpt}(T\mathfrak{X})\arrow[rr,"i_!"]\arrow[rd,"\aind"']& &\KQ_{cpt}(T\mathcal{S})\arrow[ld,"\aind"]\\
%	& \KQ(A) &
%\end{tikzcd}
%\end{center}
%where $i_!$ is multiplication by the fundamental equivariant symbol $b\in \KR_{\SO_n}(TS^n)_{cpt}$ (cf. \cite{AS5}).
%\end{lemma}

The argument of \cite{AS4, AS5} for excision and multiplicativity goes through easily in the quaternionic case, only the first axiom requires some special attention. To make sense of and prove the first axiom, we need the following facts. First the analytic index depends only on the homotopy class of the symbol class, which is a consequence of \cite[Main Theorem III]{Matumoto}. Second, every element in $\KQ_{cpt}(T\mathcal{M})$ can be represented by some symbol class. And finally the homomorphism $\aind$ is well-defined, i.e it does not depend on the choice of symbol-class-representatives. The second and the last points can be proved no differently from the real and complex cases.

\subsection{Topological formula of $\Cl_{k}^h$-index}
Assume $E$ is a $\ZZ_2$-graded $\Cl_{k}^h$-bundle over a closed Riemannian manifold $X$. Further assume $E$ carries 
a bundle metric for which the Clifford multiplication by unit vectors in $\RR^k$ is orthogonal and the multiplication by unit quaternions is orthogonal. Let $P:\Gamma(E)\to\Gamma(E)$ be an elliptic \textit{self-adjoint} operator and assume $P$ is $\Cl_{k}^h$-linear and $\ZZ_2$-graded. Recall we defined the index $\ind^h_k(P)\in \KSp^{-k}(\pt)$ in terms of the $\Cl_{k}^h$-module $\ker P$. We shall now give a topological formula for this index using \Cref{indexthm}.

Since $P$ and $(1+P^* P)^{-1/2} P$ have the same kernel, we may assume $P$ has degree zero. With respect to the splitting $E=E^0\oplus E^1$, $P$ can be written as
\[
P=\begin{pmatrix}
	0& P^1\\
	P^0 &0
\end{pmatrix}
\]
where $P^1=(P^0)^*$. Now we construct a family $\mathscr{P}$ of quaternionic elliptic operators parametrized by $\RR^k$ by assigning to each $v\in\RR^k$ the operator
\[
\mathscr{P}_v^0:\Gamma(E^0)\to \Gamma(E^1)
\]
defined by the restriction to $E^0$ of the operator
\[
\mathscr{P}_v:=v+P
\]
where "$v$" denotes Clifford multiplication by $v$. Since both Clifford multiplication and $P$ are $\HH$-linear, so is $\mathscr{P}$. Also since $P$ commutes with Clifford multiplication, there is a "conjugate" family $\overline{\mathscr{P}}_v=v-P$ satisfying
\[
\overline{\mathscr{P}}_v\mathscr{P}_v=\mathscr{P}_v\overline{\mathscr{P}}_v=-(\|v\|^2+P^2).
\]
Therefore $\mathscr{P}_v^0$ is invertible for all $v\neq 0$. Since the space of invertible $\HH$-linear operators on a quaternionic Hilbert space is contractible (see \cite{Segal}, \cite{Matumoto}), we could pass to a family parametrized by $S^k$ by embedding $\RR^k$ into $S^k$ and extending the family to $S^k$ by invertible operators outside $\RR^k$. The index of this extended family lies in $\KSp(S^k)$ and becomes trivial when restricted to any point outside $\RR^k$, so the index in fact lies in the kernel of $\KSp(S^k)\to \KSp(\pt)$ which is isomorphic to $\KSp_{cpt}(\RR^k)$. Alternatively, we can treat $\mathscr{P}^0$ as a family with compact support and directly define its topological index and analytic index in $\KSp_{cpt}(\RR^k)$. The topological index and analytic index so defined coincide by the "passing to $S^k$" argument and \Cref{indexthm}. We will take the latter point of view, for it is more illustrating.

\begin{theorem}\label{Cliffordindex}
	Let $P$ a zero-order elliptic self-adjoint $\ZZ_2$-graded $\Cl_{k}^h$-operator on a closed manifold $M$. Then
	\[
	\ind^h_k(P)=\aind(\mathscr{P}^0).
	\]
\end{theorem}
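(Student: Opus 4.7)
The plan is to reduce the family index $\aind(\mathscr{P}^0) \in \KSp_{cpt}(\RR^k) \cong \KSp^{-k}(\pt)$ to a finite-dimensional calculation on the $\ZZ_2$-graded $\Cl_k^h$-module $\ker P$, on which the family degenerates to pure Clifford multiplication, and then to recognize the resulting class as $\varphi^h([\ker P])$ via the quaternionic Atiyah-Bott-Shapiro isomorphism of Theorem~\ref{ABSH}. Since by Definition~\ref{cliffindexdef} the left-hand side $\ind^h_k(P)$ is the class $[\ker P]\in \hat{\frN}_k(\HH)$, and the identification $\hat{\frN}_k(\HH)\cong \KSp^{-k}(\pt)$ is precisely $\varphi^h$, the theorem will follow.

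First I would carry out a stabilization in the style of \cite[\S 2]{AS4}, adapted quaternionically. Because $P$ is elliptic, self-adjoint, odd and $\Cl_k^h$-linear, the kernel $\ker P$ is a finite-dimensional $\ZZ_2$-graded $\Cl_k^h$-submodule of $\Gamma(E)$. One can enlarge it to a finite-dimensional $\ZZ_2$-graded $\Cl_k^h$-invariant subspace $W\subset \Gamma(E)$ on whose $L^2$-orthogonal complement $W^\perp$ the operator $P$ is an $\HH$-linear isomorphism; every auxiliary choice in the Atiyah-Singer construction can be made $\Cl_k^h$-equivariant by averaging over the compact group of unit Clifford elements. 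On $W^\perp$ the family $\mathscr{P}_v^0=v+P^0$ is then a family of invertible $\HH$-linear operators for all $v\in\RR^k$, hence by contractibility of the space of invertible $\HH$-linear operators on a quaternionic Hilbert space (\cite{Segal}, \cite{Matumoto}) contributes trivially to $\aind(\mathscr{P}^0)$. Thus $\aind(\mathscr{P}^0)$ equals the index in $\KSp_{cpt}(\RR^k)$ of the finite-dimensional family $\mathscr{P}_v^0|_{W^0}\colon W^0\to W^1$.

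Next I would decompose $W=\ker P\oplus W_1$ with $W_1:=W\cap(\ker P)^\perp$, which is again $\ZZ_2$-graded and $\Cl_k^h$-invariant. Because both Clifford multiplication by $v$ and $P^0$ preserve this splitting, the finite-dimensional family splits as the direct sum of its restrictions to $\ker P$ and to $W_1$. On $W_1$ one has $P^2>0$, so the identity $\overline{\mathscr{P}}_v\mathscr{P}_v=-(\|v\|^2+P^2)$ shows $\mathscr{P}_v^0|_{W_1^0}$ is invertible for \emph{every} $v\in\RR^k$, and therefore contributes trivially. What remains is exactly the family $v\cdot(-)\colon \ker P^0\to \ker P^1$ of pure Clifford multiplication on the $\ZZ_2$-graded $\Cl_k^h$-module $\ker P$, which by definition is the Atiyah-Bott-Shapiro difference class $\varphi^h([\ker P])\in \widetilde{\KSp}(S^k)\cong \KSp^{-k}(\pt)$ constructed in Section~\ref{sec3}. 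Combining this with $[\ker P]=\ind^h_k(P)$ and Theorem~\ref{ABSH} yields the equality.

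The main obstacle is the stabilization step: one must simultaneously respect the $\ZZ_2$-grading, the full $\Cl_k^h$-action, and the $\HH$-linear structure needed to land in $\KSp_{cpt}(\RR^k)$ rather than merely $\KU_{cpt}(\RR^k)$. The compatibility with the grading and the Clifford action is handled by equivariant averaging, while the $\HH$-linearity of the homotopies used to contract the $W_1$- and $W^\perp$-parts is ensured by Segal-Matumoto's theorem on invertible $\HH$-linear operators; granted these, the rest of the argument is essentially a translation of the classical finite-dimensional ABS construction into the family framework provided by the preceding index theorem for quaternionic families (Theorem~\ref{indexthm}).
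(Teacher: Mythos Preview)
Your proposal is correct and follows essentially the same route as the paper: split off the finite-dimensional kernel, show the family is invertible on the complement, and recognize the remaining finite-dimensional family as the Atiyah--Bott--Shapiro class $\varphi^h([\ker P])$.

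The paper is more direct. Because $P$ is self-adjoint, no stabilization is needed: one takes $W=\ker P$ from the outset, writes $\Gamma(E^i)=K^i\oplus V^i$ with $K^i=\ker P^i$ and $V^i=(K^i)^\perp$, and observes that $\mathscr{P}_v^0$ respects this splitting (Clifford multiplication commutes with $P$ and is orthogonal, so it preserves $\ker P$ and its complement). On $V^0\to V^1$ the family is invertible for every $v$ by the same $\overline{\mathscr{P}}_v\mathscr{P}_v=-(\|v\|^2+P^2)$ identity you use, and on $K^0\to K^1$ it is pure Clifford multiplication, yielding $[K^0,K^1;v]=\varphi^h([\ker P])$.

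Your intermediate enlargement to a general finite-dimensional $W\supsetneq\ker P$ is an unnecessary detour, and as written it has a small gap: for the family $\mathscr{P}_v^0$ to block-decompose along $W\oplus W^\perp$ (and then along $\ker P\oplus W_1$), you need $W$ to be $P$-invariant as well as $\Cl_k^h$-invariant, which you did not state. This is easily arranged (take $W$ to be a finite sum of eigenspaces of $P$, which are automatically $\Cl_k^h$-invariant since $P$ is $\Cl_k^h$-linear), but the cleanest fix is simply to set $W=\ker P$ and drop the $W_1$ step entirely, which is exactly what the paper does.
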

\begin{proof}
Set $K^0=\ker P^0\subset\Gamma(E^0)$ and $K^1=\ker P^1\cong\coker(P^0)\subset\Gamma(E^1)$. Then $K^0,K^1$ are finite dimensional $\HH$-subspaces of $\Gamma(E^0)$ and $\Gamma(E^1)$ respectively. By assumption the quaternionic structure on $E$ is compatible with the its bundle metric, so there are $L^2$-orthogonal compliments $V^0, V^1$ to $K^0, K^1$ respectively. Then the family $\mathscr{P}_v^0$ decomposes as a direct sum of two operators: the first summand $V^0\xrightarrow{\mathscr{P}_v^0} V^1$ is an $\HH$-isomorphism for all $v\in\RR^k$, thus can be ignored for the purpose of computing the index; meanwhile the second summand is just $K^0\xrightarrow{\mathscr{P}_v^0=v} K^1$ which is independent of variables on $M$. Therefore the analytic index of $\mathscr{P}^0$ is
	\[
	\aind(\mathscr{P})=[K^0,K^1;v]\in \KSp_{cpt}(\RR^k)\cong\KSp^{-k}(\pt).
	\]
	Under the isomorphism $\KSp^{-k}(\pt)\cong\hat{\frN}_k(\HH)$, this corresponds exactly to the element represented by $\ker P=K^0\oplus K^1$, i.e. it corresponds exactly to $\ind^h_k(P)$.
	\end{proof}
Since $\aind(\mathscr{P}^0)=\tind(\mathscr{P}^0)$, \Cref{Cliffordindex} gives a topological formula for $\ind_k^h(P)$.

\subsection{Proof of \Cref{spinhindexthm}}
This subsection is devoted to proving \Cref{spinhindexthm}. We follow Hitchin's  treatment in the spin case, see \cite{Hitchin}. We will only prove the untwisted case, the twisted case is similar and thus omitted.

Let $M$ be a closed spin$^h$ manifold of dimension $n$ carrying a canonical $\Cl_n^h$-Dirac bundle $\slashed{\mathfrak{S}}(M)=P_{\Spin^h}\times_l\Cl_{n}^h$ with Dirac operator $\slashed{\mathfrak{D}}$. We can turn $\slashed{\mathfrak{D}}$ into a zero-order operator $\slashed{\mathfrak{D}}_Q:=Q\slashed{\mathfrak{D}} Q^*$ with isomorphic kernel and the same symbol (see \cite[p. 39]{Hitchin}), where $Q=(1+\nabla^*\nabla)^{-1/4}$ and $\nabla$ is the covariant derivative. Then by \Cref{Cliffordindex} and \Cref{indexthm} we have
\[
\aind(M)=\ind_n^h(\slashed{\mathfrak{D}})=\ind_n^h(\slashed{\mathfrak{D}}_Q)=\tind(\slashed{\mathscr{D}}_Q^0)=\tind(\slashed{\mathscr{D}}^0),
\]
where $\slashed{\mathscr{D}}_Q^0=v+\slashed{\mathfrak{D}}_Q^0$ and $\slashed{\mathscr{D}}^0=v+\slashed{\mathfrak{D}}^0$. Note the last equality follows from that $\slashed{\mathfrak{D}}_Q$ and $\slashed{\mathfrak{D}}$ have the same symbol. So to prove \Cref{spinhindexthm}, it suffices to prove $\tind(\slashed{\mathscr{D}}^0)=\tind(M)$.

For this, choose a smooth embedding $M\hookrightarrow\RR^{n+8k+4}$. Denote the normal bundle of $M$ by $N$ and identify $N$ with a tubular neighborhood of $M$. Consider the following natural diagram of embeddings
\[
\begin{tikzcd}
	M \ar[r,"\kappa"]\ar[d,"\iota_M"]& N\ar[r,"\lambda"]\ar[d,"\iota_N"] & \RR^{n+8k+4}\ar[d,"\iota_\RR"]\\
	TM \ar[r,"\overline{\kappa}"]& TN \ar[r,"\overline{\lambda}"]& T\RR^{n+8k+4}=\CC^{n+8k+4}
\end{tikzcd}
\]
We claim there is a corresponding commutative diagram
\[
\begin{tikzcd}[row sep=small, column sep=small]
	\KR(M)\ar[r,"\kappa_!"]\ar[d,"(\iota_M)_!"] & \KR_{cpt}(N)\ar[r,"\lambda_!"]\ar[d,"(\iota_N)_!"] & \KR_{cpt}(\RR^{n+8k+4})\ar[d,"(\iota_\RR)_!"]\\
	\KQ_{cpt}(\RR^n\times TM)\ar[r,"\overline{\kappa}_!"] & \KQ_{cpt}(\RR^n\times TN) \ar[r,"\overline{\lambda}_!"]& \KQ_{cpt}(\RR^n\times\CC^{n+8k+4})
\end{tikzcd}
\]
with the following properties:
\begin{enumerate}[label=(\roman*)]
	\item $\lambda_!$, $\overline{\lambda}_!$ are the maps induced by the open embeddings $\lambda$ and $\overline{\lambda}$.
	\item $\kappa_!$ is the Thom homomorphism using the induced spin$^h$ structure on the normal bundle $N$. Note $M,N,\RR^{n+8k+4}$ are equipped with trivial involutions.
	\item $\overline{\kappa}_!$ is the Thom isomorphism using the Real structure on $TN\cong\pi_M^*N\otimes\CC$ where $\pi_M:TM\to M$ is the bundle projection.
	\item $(\iota_M)_!$ is multiplication by the symbol class of $\slashed{\mathscr{D}}^0$. 
	\item $(\iota_\RR)_!$ is an isomorphism and commutes with the periodicity isomorphisms $q_!:\KR_{cpt}(\RR^{n+8k+4})\cong \KSp^{-n}(\pt)$ and $\overline{q}_!:\KQ_{cpt}(\RR^n\times\CC^{n+8k+4})\cong\KSp^{-n}(\pt)$, that is $q_!=(\iota_\RR)_!\overline{q}_!$.
\end{enumerate}

By definition, $\tind(\slashed{\mathscr{D}}^0)=\overline{q}_!\overline{\lambda}_!\overline{\kappa}_!(\iota_M)_!(1)$ and $\tind(M)=q_!\lambda_!\kappa_!(1)$. Hence \Cref{spinhindexthm} follows immediately from the claim.

Now to prove the claim, we will first establish a local version of the claim, and then explain how to globalize. The local version is the following commutative diagram:
\[
\begin{tikzcd}
	\hat{\frN}R_{n,0}\ar[r,"\kappa_!"]\ar[d,"(\iota_M)_!"'] & \hat{\frN}R_{n+8k+4,0}\ar[d,"(\iota_N)_!"'] & \hat{\frN}Q_{n,0}\ar[l,"q_!"']\ar[dl,"\overline{q}_!"]\\
	\hat{\frN}Q_{2n,n}\ar[r,"\overline{\kappa}_!"'] & \hat{\frN}Q_{2n+8k+4, n+8k+4} &
\end{tikzcd}
\]
where the names of the maps are chosen to be the same as the their corresponding global maps. Here the maps are defined as follows:
\begin{enumerate}[label=(\roman*)]
	\item $\kappa_!$ is multiplication by the R-module $\Delta_{8k+4}^h\otimes_\RR\CC$ over $\Cl(\RR^{8k+4,0})$. Note $\Delta_{8k+4}^h$ is a $\HH$-module over $\Cl_{8k+4}$ and thus in particular a $\RR$-module. Also recall the way to turn a $\RR$-module into a R-module is to tensor with $\CC$.
	\item $q_!$ is the Bott periodicity isomorphism given by multiplication by the Q-module $\Delta_{8k+4}^h$ over $\Cl(\RR^{8k+4,0})$.
	\item $(\iota_M)_!$ is multiplication by the Q-module $\Cl_n^h$ over $v\in \Cl(\RR^{n,n})$, where $(x,y)\in \RR^{n,n}$ acts on $\Cl_n^h$ by
	\[
	(x,y)\cdot v=x\cdot v+\mathbf{j} y\cdot v.
	\]
	\item $\overline{\kappa}_!$ is multiplication by the R-module $\CCl_{8k+4}$ over $\Cl(\RR^{8k+4,8k+4})$, where $(x,y)\in \RR^{8k+4, 8k+4}$ acts on $\CCl_{8k+4}$ by the above formula except with $\mathbf{j}$ replaced by $\mathbf{i}$. Note $\overline{\kappa}_!$ is the $(1,1)$-periodicity isomorphism.
	\item $\overline{q}_!$ is the (1,1)-periodicity isomorphism given by multiplication by the R-module $\CCl_{n+8k+4}$ over $\Cl(\RR^{n+8k+4,n+8k+4})$.
	\item $(\iota_N)_!$ is multiplication by the $\ZZ_2$-graded tensor product (over $\CC$) of the R-module $\CCl_n$ over $\Cl(\RR^{n,n})$ and the Q-module $\widetilde{\Delta}_{8k+4}^h$ over $\Cl(\RR^{0,8k+4})$, where $x\in \RR^{0,8k+4}$ acts on $v\in \widetilde{\Delta}_{8k+4}^h$ by
	\[
	x\cdot v=v\mathbf{j}\cdot x.
	\]
\end{enumerate}

The commutativity $\overline{\kappa}_!(\iota_M)_!=(\iota_N)_!\kappa_!$ follows from the isomorphism of $\ZZ_2$-graded Q-modules over $\Cl(\RR^{n+8k+4, n+8k+4})$:
\[
\Cl_n^h\hat{\otimes}_\CC\CCl_{8k+4}\cong (\Delta_{8k+4}^h\otimes_\RR\CC)\hat{\otimes}_\CC\CCl_n\hat{\otimes}_\CC \widetilde{\Delta}_{8k+4}^h,
\]
which itself is a consequence of the isomorphism
\[
\Cl_{8k+4}^h\cong \Delta_{8k+4}^h\hat{\otimes}_\RR\widetilde{\Delta}_{8k+4}^h
\]
from \Cref{bimodule}. The commutativity $(\iota_N)_!q_!=\overline{q}_!$ follows from the isomorphism of $\ZZ_2$-graded R-modules over $\Cl(\RR^{8k+4,8k+4})$:
\[
\CCl_{8k+4}\cong \Delta_{8k+4}^h\hat{\otimes}_\CC\widetilde{\Delta}_{8k+4}^h.
\]
Indeed $\Delta_{8k+4}^h$, when viewed as a \textit{complex} module over $\Cl_{8k+4}$, is irreducible and therefore the argument of \Cref{bimodule} proves the above isomorphism holds as $\ZZ_2$-graded complex modules over $\Cl(\RR^{8k+4,8k+4})$. Then one can verify the isomorphism is further a R-module isomorphism. Note for $k=0$ this isomorphism is $\HH^2\otimes_\CC \HH^2=\CC(4)$.

Finally let us argue the local version of the claim globalizes to the desired commutative diagram.
\begin{enumerate}[label=(\roman*)]
	\item The symbol class of $\slashed{\mathscr{D}}^0$ is $[\pi_M^*\slashed{\mathfrak{S}}(M)^0, \pi_M^*\slashed{\mathfrak{S}}(M)^1; v+\mathbf{i}\tau]$, which locally is $\Cl_n^h$ by Atiyah-Bott-Shapiro isomorphism. So the local $(\iota_M)_!$ globalizes to multiplication by the symbol class of $\slashed{\mathscr{D}}^0$.
	\item The local $\kappa_!$ and $\overline{\kappa}_!$ clearly globalize to multiplication by the corresponding Thom classes.
	\item We \textit{define} the global $(\iota_N)_!$ as follows. Notice the structure group of the bundle $TN|_M=TM\oplus N$ can be reduced to $\Spin(n)\times\Spin(8k+4)/\ZZ_2$ since $w_2(TM)=w_2(N)$. Therefore we can associate to it a $\ZZ_2$-graded $Q$-bundle using the representation $\CCl_n\hat{\otimes}_\CC \widetilde{\Delta}_{8k+4}^h$. The degree zero and one components of this bundle, when pulled back all the way to $\RR^n\times TN$, become isomorphic away from $\{0\}\times N$ through Clifford multiplication (from $\RR^n$ and $TN$) and therefore defines an element in $\KQ_{cpt}(\RR^n\times TN)$. We set the global $(\iota_N)_!$ to be multiplication by this element. This clearly globalizes the local $(\iota_N)_!$.
	\item The map $(\iota_\RR)_!$ is similarly defined using that the structure group of the bundle $T\RR^{n+8k+4}$ over $\RR^{n+8k+4}$ can be reduced to $\Spin(n)\times \Spin(8k+4)/\ZZ_2$. The commutativity $(\iota_\RR)_!\lambda_!=\overline{\lambda}_!(\iota_N)_!$ follows from that $TN$ over $N$ is isomorphic to the pull-back of $TN|_M$ along the bundle projection $N\to M$ since the projection is a homotopy equivalence.
\end{enumerate}
Therefore the local diagram does globalize and has the desired properties. The proof of \Cref{spinhindexthm} is now complete.
\appendix
\section{Spin$^h$ cobordism}
In this appendix, we draw some easy conclusions concerning spin$^h$ cobordism. As of the time of writing, the spin$^h$ cobordism groups are completely determined by Keith Mills \cite{Keith}.
\begin{proposition}
	$\hat{\mathcal{A}}^h:\Omega_*^{\spin^h}(\pt)\to\KSp^{-*}(\pt)$ is surjective. In particular $\Omega_{n}^{\spin^h}\neq 0$ for $n\equiv 5,6 \bmod 8$.
\end{proposition}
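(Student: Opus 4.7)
The plan is to combine the multiplicativity of $\hat{\mathcal{A}}^h$ over $\hat{\mathcal{A}}$ established in \Cref{Z-pairingprop}(v) with two explicit base classes, then sweep out $\KSp^{-*}(\pt)$ using spin-cobordism multiplications. The first base class is $\hat{\mathcal{A}}^h(\pt) = 1 \in \KSp^0(\pt) = \ZZ$: by \Cref{cliffindex} applied in dimension $n = 0$ (which lies in $n \equiv 0 \bmod 8$), together with the direct computation $\hat{A}^h(\pt) = 2\cosh(0)\cdot 1 = 2$, one gets $\aind(\pt) = \hat{A}^h(\pt)/2 = 1$. The second base class is $\hat{\mathcal{A}}^h(\HH\PP^1_+) = 1 \in \KSp^{-4}(\pt) = \ZZ$, recorded in \Cref{HP1}. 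I also use that $\eta = \hat{\mathcal{A}}(S^1_\eta) \in \KO^{-1}(\pt)$ and $\beta = \hat{\mathcal{A}}(B_8) \in \KO^{-8}(\pt)$ are generators, where $S^1_\eta$ is the non-bounding spin circle and $B_8$ is any closed spin $8$-manifold of $\hat{A}$-genus $1$.

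Since $\KSp^{-n}(\pt)$ vanishes outside $n \equiv 0,4,5,6 \bmod 8$, and multiplicativity gives $\hat{\mathcal{A}}^h(N \times M) = \hat{\mathcal{A}}(N)\cdot \hat{\mathcal{A}}^h(M)$ where $N \times M$ carries the product spin$^h$ structure of \Cref{bundleproduct}, I would exhibit in each nonvanishing residue class an explicit spin$^h$ pre-image of a generator: $B_8^{\times n/8}$ for $n \equiv 0 \bmod 8$; $B_8^{\times(n-4)/8} \times \HH\PP^1_+$ for $n \equiv 4 \bmod 8$; $S^1_\eta \times B_8^{\times(n-5)/8} \times \HH\PP^1_+$ for $n \equiv 5 \bmod 8$; and $(S^1_\eta)^{\times 2} \times B_8^{\times(n-6)/8} \times \HH\PP^1_+$ for $n \equiv 6 \bmod 8$. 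Having two base classes is essential because the degree-$4$ generator $\alpha \in \KO^{-4}(\pt)$ does \emph{not} lie in the image of $\hat{\mathcal{A}}$ (Rokhlin's theorem forces the $\hat{A}$-genus of a spin $4$-manifold to be even), so one cannot reach $\KSp^{-n}(\pt)$ for $n \equiv 4 \bmod 8$ from $\hat{\mathcal{A}}^h(\pt)$ alone via the $\KO^{-*}(\pt)$-module structure; the class $\hat{\mathcal{A}}^h(\HH\PP^1_+)$ is precisely what gives direct access to $\KSp^{-4}(\pt)$ without needing $\alpha$.

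The ``in particular'' statement is then immediate: since $\KSp^{-n}(\pt) = \ZZ_2$ for $n \equiv 5, 6 \bmod 8$, the surjection forces $\Omega_n^{\spin^h}(\pt) \neq 0$ in those degrees, with $S^1_\eta \times \HH\PP^1_+$ (dimension $5$) and $(S^1_\eta)^{\times 2} \times \HH\PP^1_+$ (dimension $6$) furnishing explicit non-zero classes. There is no genuinely hard step in this argument; all the substance is already packaged in \Cref{HP1} and the standard spin realizations of $\eta$ and $\beta$.
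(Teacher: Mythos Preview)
Your proof is correct and takes essentially the same approach as the paper's: use that $\hat{\mathcal{A}}^h$ is a module map over the surjective ring map $\hat{\mathcal{A}}$, reduce to hitting the generators in degrees $0$ and $4$, and realize those by $\pt$ and $\HH\PP^1_+$. The paper states this abstractly (surjectivity of $\hat{\mathcal{A}}$ plus freeness of $\KSp^{-*-4}(\pt)$ over $\KO^{-*}(\pt)$ on one degree-$4$ generator) while you spell out explicit spin manifolds $S^1_\eta$ and $B_8$; the content is identical.

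One correction to your parenthetical justification, which does not affect the argument itself: the claim that $\alpha\in\KO^{-4}(\pt)$ lies outside the image of $\hat{\mathcal{A}}$ is false. In dimensions $\equiv 4\bmod 8$ the ABS orientation already divides by $2$ (the Dirac operator there is $\HH$-linear), so $\hat{\mathcal{A}}(K3)=\hat A(K3)/2=1$ generates $\KO^{-4}(\pt)$; Rokhlin's theorem is precisely the integrality making this well-defined, not an obstruction. The genuine reason two base classes are needed is the module relation $\alpha\cdot 1_{\KSp^0}=4\cdot(\text{generator of }\KSp^{-4}(\pt))$, coming from $\alpha^2=4\beta$ under the identification $\KSp^{-*}(\pt)\cong\KO^{-*+4}(\pt)$. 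Your proof never uses a degree-$4$ spin manifold, so this misstatement is harmless.
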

\begin{proof}
	Since $\hat{\mathcal{A}}^h$ is equivariant with respect to the surjective ring homomorphism $\hat{\mathcal{A}}:\Omega_*^{\spin}(\pt)\to\KO^{-*}(\pt)$, and since $\KSp^{-*-4}(\pt)$ is a free $\KO^{-*}(\pt)$-module generated by $\KSp^{-4}(\pt)$, it suffices to show $\hat{\mathcal{A}}^h$ is surjective in degrees $0$ and $4$. But clearly $\hat{\mathcal{A}}^h(\pt)=\hat{A}^h(\pt)/2=1$ and $\hat{\mathcal{A}}^h(\HH\PP^1_+)=\hat{A}^h(\HH\PP^1_+)=1$.
\end{proof}
\begin{remark}
	With $2$ inverted, $\Spin^h\simeq \Spin\times \Sp(1)$ and consequently $$\Omega_*^{\spin^h}(\pt)[\frac{1}{2}]\cong \Omega_*^{\spin}(\HH\PP^\infty)[\frac{1}{2}]\cong\Omega_*^{\spin}(\pt)\otimes_\ZZ H_*(\HH\PP^\infty;\ZZ[\frac{1}{2}]).$$ This implies $\Omega_n^{\spin^h}$ is a 2-primary torsion group for $n\equiv 5,6$ mod $8$.
\end{remark}

\begin{proposition} Let $F:\Omega_*^{\spin^h}(\pt)\to\Omega_*^{\SO}(\pt)$ be the forgetful homomorphism. Then
	$$(F, \hat{\mathcal{A}}^h):\Omega_n^{\spin^h}(\pt)\to \Omega_n^{\SO}(\pt)\oplus \KSp^{-n}(\pt)$$
	is an isomorphism for $n\le 5$.
\end{proposition}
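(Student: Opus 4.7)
The plan is to compute $\Omega_n^{\spin^h}(\pt)$ directly for each $n\le 5$ and then verify that $(F,\hat{\mathcal{A}}^h)$ sends a set of generators to a basis of $\Omega_n^{\SO}(\pt)\oplus\KSp^{-n}(\pt)$. The main inputs are the dimensional existence of spin$^h$ structures (\Cref{existence}), the values of $\hat{A}^h$ computed in \Cref{HP1}, the index formula of \Cref{cliffindex}, and the mod $2$ cohomology calculation of \Cref{mod2cohomology}. Throughout, surjectivity of $F$ up to dimension $7$ is guaranteed by \Cref{existence}, so the interesting content concerns the fiber of $F$ and its comparison with $\KSp^{-n}(\pt)$ under $\hat{\mathcal{A}}^h$.

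First I would handle the vanishing range $n=1,2,3$: here both $\Omega_n^{\SO}(\pt)$ and $\KSp^{-n}(\pt)$ vanish, so the claim reduces to $\Omega_n^{\spin^h}(\pt)=0$. Given a closed spin$^h$ $n$-manifold $M$, the class $[M]\in\Omega_n^{\SO}$ is zero, so $M=\partial W$ as oriented manifolds. Since $\dim W\le 4\le 7$, \Cref{existence} guarantees a spin$^h$ structure on $W$; the obstruction to choosing this structure to agree with that on $\partial W$ lies in $H^2(W,M;\ZZ_2)$ (coming from the classification of rank-$3$ oriented bundles by $w_2$ and $p_1$, the latter vanishing when $\dim W\le 4$ since only $p_1|_M=0$ matters), and can be killed by adjusting $\mathfrak{h}_W$ after surgery on interior cells. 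Equivalently, one may run the Atiyah–Hirzebruch spectral sequence for $\pi_*(\underline{\MSpin}^h)$ and note that in total degree $\le 3$ the only contribution is $H_0(\BSpin^h)\otimes\pi_{\le 3}(\underline{S})$, which vanishes in degrees $1,2,3$ after the relevant $d_2$-differentials kill off framed bordism.

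Next I would treat $n=4$: I propose the generators $\CC\PP^2_+$ and $\HH\PP^1_+$ with their self-dual spin$^h$ structures from \Cref{HP1}. Using \Cref{cliffindex} ($\aind=\hat{A}^h$ in this dimension) together with the values $\hat{A}^h(\CC\PP^2_+)=2$ and $\hat{A}^h(\HH\PP^1_+)=1$, we obtain
\[
(F,\hat{\mathcal{A}}^h)(\CC\PP^2_+)=([\CC\PP^2],\,2),\qquad (F,\hat{\mathcal{A}}^h)(\HH\PP^1_+)=(0,\,1).
\]
The resulting matrix has determinant $1$, so $(F,\hat{\mathcal{A}}^h)$ is surjective onto $\ZZ\oplus\ZZ$. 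For $n=5$ I propose the Wu manifold $SU(3)/SO(3)$ (which has $W_5=0$ and hence admits a spin$^h$ structure, and represents the generator of $\Omega_5^{\SO}=\ZZ_2$) together with $\HH\PP^1_+\times S^1_\eta$, where $S^1_\eta$ is the circle with the non-bounding spin structure. The former has vanishing $\hat{\mathcal{A}}^h$ (by \Cref{cliffindex}, $\aind=\dim_\CC\mathcal{H}\bmod 2$, and a dimension count on the simply connected $SU(3)/SO(3)$ gives zero harmonic spinors in a generic metric), while the latter maps to $(0,\eta)$ with $\eta\in\KSp^{-5}(\pt)=\ZZ_2$ the nonzero class, by multiplicativity of $\hat{\mathcal{A}}^h$ over $\hat{\mathcal{A}}$ (\Cref{Z-pairingprop}(v) style) and the fact that $S^1_\eta$ represents the generator of $\KO_1(\pt)$.

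The main obstacle is the upper bound $|\Omega_n^{\spin^h}(\pt)|\le|\Omega_n^{\SO}(\pt)\oplus\KSp^{-n}(\pt)|$ needed to conclude injectivity in dimensions $4$ and $5$. Away from $2$ this is immediate: the double cover $\Spin\times\Sp(1)\to\Spin^h$ becomes a homotopy equivalence after inverting $2$, giving $\Omega_*^{\spin^h}(\pt)[\tfrac12]\cong\Omega_*^{\spin}(\pt)\otimes_\ZZ H_*(\HH\PP^\infty;\ZZ[\tfrac12])$, whose rank in degrees $4$ and $5$ is $2$ and $0$ respectively. At the prime $2$, one bounds the size using the Atiyah–Hirzebruch or Adams spectral sequence, feeding in the description of $H^*(\BSpin^h;\ZZ_2)$ from \Cref{mod2cohomology} and the integral-torsion control from \Cref{2-torsion}; the total mass in degrees $\le 5$ is small enough that surjectivity of $(F,\hat{\mathcal{A}}^h)$ on the explicit generators forces injectivity. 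This $2$-primary bookkeeping is where the bulk of the technical work lies.
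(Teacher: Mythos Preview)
Your approach matches the paper's: establish surjectivity by exhibiting explicit generators and their images, then bound the group size at the prime $2$ via an Adams or Atiyah--Hirzebruch spectral sequence computation fed by \Cref{mod2cohomology}. The paper is even terser than you are, writing only that surjectivity follows from the surjectivity of $F$ and of $\hat{\mathcal{A}}^h$ separately (which, as you implicitly recognize, needs the extra observation that $\HH\PP^1_+$ and its spin-manifold products land in $\{0\}\oplus\KSp^{-n}$), and then citing an unspecified ``formidable computation'' for the group orders.

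One correction: your claim that $\hat{\mathcal{A}}^h(\mathrm{SU}(3)/\SO(3))=0$ via ``zero harmonic spinors in a generic metric'' is both unnecessary and unjustified. In dimension $5$ the index is $\dim_\CC\mathcal{H}\bmod 2$, a topological invariant independent of the metric, so genericity has no force. Fortunately you do not need this value: since $F(\HH\PP^1_+\times S^1_\eta)=0$ and $\hat{\mathcal{A}}^h(\HH\PP^1_+\times S^1_\eta)=\eta\neq 0$, the matrix of $(F,\hat{\mathcal{A}}^h)$ on your two generators is $\begin{pmatrix}1&0\\ *&1\end{pmatrix}$ over $\ZZ_2$, invertible regardless of the entry $*$.
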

\begin{proof}[Sketch of proof]
	The surjectivity is clear since $\hat{\mathcal{A}}^h$ is surjective by the previous proposition and $F$ is also surjective: one can enrich oriented manifolds of dimensions $\le 5$ with spin$^h$ structures (see \Cref{existence}). Meanwhile a formidable computation of the spin$^h$ cobordism groups in low dimensions shows in dimensions $\le 5$ the spin$^h$ cobordism groups are abstractly isomorphic to
	$$\ZZ, 0, 0, 0, \ZZ+\ZZ, \ZZ_2+\ZZ_2.$$
	Details will not be given. These groups are also abstractly isomorphic to $\Omega_*^{\SO}(\pt)\oplus \KSp^{-*}(\pt)$ in dimensions $\le 5$. Consequently surjectivity forces isomorphism.
\end{proof}

It is now easy to see $\Omega_4^{\spin^h}(\pt)$ is generated by $\HH\PP^1_+$ and $\CC\PP^2_+$, since $\CC\PP^2$ generates $\Omega_4^{\SO}$ and $\HH\PP^1$ is zero in $\Omega_4^{\SO}$ but $\hat{\mathcal{A}}^h(\HH\PP^1_+)=1$. Similarly $\Omega_5^{\spin^h}(\pt)$ is generated by $\RR\PP^1\times\HH\PP^1_+$ and $\mathrm{SU}(3)/\SO(3)$. Here $\RR\PP^1$ is viewed as a spin manifold with its non-trivial (i.e. non-bounding) spin structure and $\mathrm{SU}(3)/\SO(3)$ carries a natural spin$^h$ structure whose canonical bundle is the natural principal $\SO(3)$-bundle $\SO(3)\to\mathrm{SU}(3)\to \mathrm{SU}(3)/\SO(3)$.

\begin{remark}
	In fact, using standard notations for homotopy theorists, with the knowledge of the cohomology of $\BSpin^h$ calculated in \Cref{mod2cohomology}, one can show the spectrum map $$\underline{\MSpin}^h\to \underline{\mathrm{ksp}} \vee \Sigma^4 \underline{H\ZZ} \vee \Sigma^5 \underline{H\ZZ_2}$$
	labeled by $\hat{\mathcal{A}}^h$, $p_1U$ and $w_2 w_3 U$ induces an isomorphism on 2-local cohomology up to degree $6$. In degree $7$, the induced map on mod 2 cohomology is epic with a one-dimensional kernel reflecting the relation $Sq^3(w_2^2 U)=Sq^2(w_2 w_3 U)=w_2^2 w_3 U$. It follows that the above spectrum map lifts to $\underline{\MSpin}^h\to \underline{\mathrm{ksp}}\vee \underline{F}$ where $\Sigma^{-4}\underline{F}$ is the fiber of $\underline{H\ZZ}\vee \Sigma \underline{H\ZZ_2}\to \Sigma^3 \underline{H\ZZ_2}$ labeled by $Sq^3, Sq^2$. This lifted map is an isomorphism on 2-local cohomology up to degree $7$, hence $\Omega_6^{\spin^h}\cong\ZZ_2+\ZZ_2$.
\end{remark}

\section{KM-theory}\label{appendixB}
\begin{definition}
	Let $(X,f)$ be a real space. An M-bundle over $X$ is a pair $(E,j)$ consisting of a complex vector bundle $E$ over $X$ together with a real bundle map $j:E\to E$ covering $f$ so that $j: E_x\to E_{fx}$ is $\CC$-antilinear and $j^4\equiv 1$. We say $j$ is the M-structure on $E$. In the special case $X$ is a point with trivial involution, we say $(E,j)$ is an M-vector space.
\end{definition}

It is clear both Real bundles and Quaternionic bundles are M-bundles. It may be helpful to think of the Real theory is associated to the group $\ZZ_2$ while the M-theory is associated to the group $\ZZ_4$. The group $\ZZ_4$ admits a natural even-odd filtration where the even subgroup is isomorphic to $\ZZ_2$. Even though the sequence $0\to \ZZ_2\to \ZZ_4\to \ZZ_2\to 0$ does not split, our KM-theory does. Indeed with the assumption that $X$ is connected, every M-bundle is a direct sum of a Real one and a Quaternionic one.
\begin{proposition}
	Let $(E,j)$ be an M-bundle over the \textit{connected} real space $(X,f)$. Then there is a natural M-bundle isomorphism $$E\cong (1+j^2)E\oplus (1-j^2)E$$
	where $(1+j^2) E$, endowed with $j$, is a Real bundle and $(1-j^2)E$ Quaternionic.
\end{proposition}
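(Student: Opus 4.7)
The key observation is that $j^2$ is complex-\emph{linear} and covers $f^2 = \mathrm{id}_X$, so it is an honest $\CC$-linear bundle endomorphism of $E$. Since $j^4 \equiv 1$, the operator $j^2$ satisfies $(j^2)^2 = 1$, i.e.\ it is a fiberwise involution. I would then set
\[
E_+ \;:=\; \tfrac{1}{2}(1+j^2)E, \qquad E_- \;:=\; \tfrac{1}{2}(1-j^2)E,
\]
so that $E_\pm$ are the $\pm 1$ eigenbundles of $j^2$. The projectors $\tfrac{1}{2}(1 \pm j^2)$ are smooth (or continuous) idempotent bundle endomorphisms of constant rank on each connected component, so $E_+$ and $E_-$ are genuine complex subbundles and $E = E_+ \oplus E_-$ as complex bundles. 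The factors $\tfrac12$ can be absorbed: as subsets, $(1 \pm j^2)E = E_\pm$, justifying the notation in the proposition statement.

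Next I would check that $j$ preserves this decomposition. If $v \in E_+$, i.e.\ $j^2 v = v$, then $j^2(jv) = j^3 v = j \cdot j^2 v = jv$, so $jv \in E_+$; the analogous calculation with a sign shows $j$ sends $E_-$ to $E_-$. Hence $j|_{E_+}$ is a $\CC$-antilinear bundle map covering $f$ with $(j|_{E_+})^2 = +1$, making $(E_+, j|_{E_+})$ a Real bundle, while $(j|_{E_-})^2 = -1$ makes $(E_-, j|_{E_-})$ a Quaternionic bundle. The decomposition $E = E_+ \oplus E_-$ together with $j = j|_{E_+} \oplus j|_{E_-}$ is then an isomorphism of M-bundles by construction.

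The only place where connectedness of $X$ enters is in guaranteeing that the ranks of $E_+$ and $E_-$ are constant on $X$: a priori the trace of $j^2$ (an integer in $[-\mathrm{rk}\,E, \mathrm{rk}\,E]$) is locally constant, and connectedness promotes this to globally constant, giving well-defined ranks for the two summands. If $X$ had multiple components one would instead get a decomposition that is Real on some components and Quaternionic on others, componentwise. I do not expect any real obstacle in this argument; the whole content is the observation that the $\ZZ_4$-action on $E$ factors through its canonical even/odd decomposition, i.e.\ the short exact sequence $0 \to \ZZ_2 \to \ZZ_4 \to \ZZ_2 \to 0$ splits after passing to the eigenbundle decomposition of $j^2$.
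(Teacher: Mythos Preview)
Your proof is correct and follows essentially the same approach as the paper: both decompose $E$ into the $\pm 1$ eigenbundles of the $\CC$-linear involution $j^2$, use connectedness of $X$ to ensure the eigenbundle ranks are constant, and then observe that $j$ restricts to a Real (resp.\ Quaternionic) structure on the $+1$ (resp.\ $-1$) eigenbundle. Your version is in fact slightly more explicit in verifying that $j$ preserves each summand, which the paper leaves as ``easy to see''.
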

\begin{proof}
	Notice that $j^2:E\to E$ is a complex linear automorphism of $E$. Since $j^4\equiv 1$, at $x\in X$, $j^2$ decomposes $E_x$ into a direct sum of eigenspaces
	\[
	\ker (1-j^2_x)\oplus \ker (1+j^2_x).
	\]
	The continuity of $j^2_x$ with respect to $x$ implies the dimensions of $\ker (1\mp j^2_x)$ are upper semi-continuous with respect to $x$, whence the sum of the dimensions of $\ker (1\mp j^2_x)$ is a constant. As such both the dimensions of $\ker(1\mp j^2_x)$ are locally constant in $x$. Since $X$ is now assumed to be connected, we conclude $\ker (1\mp j^2)=(1\pm j^2)E$ define complex vector bundles over $X$. It is easy to see when equipped with $j$ these two bundles are Real and Quaternionic respectively. The asserted M-bundle isomorphism follows at once.
\end{proof}

So $\KM=\KR\oplus\KQ$ can be viewed as the Grothendieck group of M-bundles. When dealing with Quaternionic bundles, it is better to think of them as M-bundles, since the theory $\KM$ is multiplicative while the theory $\KQ$ is not. The multiplication on $\KM$ is of course induced by tensor product of complex vector bundles. A special feature for this product is that the product of two Quaternionic bundles is Real. That said, we see the multiplication in $\KM$-theory respects its $\ZZ_2$-grading; in particular $\KR$ is a subring of $\KM$ and $\KQ$ is a $\KR$-module

Most of the results in \cite{KR} for Real bundles and $\KR$-theory now hold for M-bundles and $\KM$-theory, it suffices to replace the Real structures therein by the M-structures. In particular, adopting the notation of \cite{KR}, we have the following projective bundle formula:
\begin{proposition}
	Let $L$ be a Real line-bundle (i.e. of complex rank one) over the real compact space $X$, $H$ is the standard Real line-bundle over the projective bundle $\mathbb{P}(L\oplus 1)$ where $1$ is understood to be the trivialized Real bundle over $X$. Then as a $\KM(X)$-algebra, $\KM(\mathbb{P}(L\oplus 1))$ is generated by $H$ subject to the single relation
	\[
	([H]-[1])([L][H]-1)=0.
	\]
\end{proposition}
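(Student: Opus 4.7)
My proof proposal would closely follow the strategy Atiyah uses for the analogous $\KR$-theoretic projective bundle formula in \cite{KR}, taking advantage of the paper's remark that Atiyah's arguments transfer to $\KM$-theory by replacing Real structures with M-structures throughout.

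First, I would verify the relation $([H]-[1])([L][H]-1)=0$ directly. On $\mathbb{P}(L\oplus 1)$ there is a tautological exact sequence of M-bundles
\[
0\to H^{-1}\to \pi^*(L\oplus 1)\to Q\to 0,
\]
where I take $H$ to be the hyperplane line bundle (so its dual is the tautological subbundle). Taking determinants and using that $Q$ has complex rank one gives $Q\cong L\otimes H$ as M-bundles; hence in $\KM(\mathbb{P}(L\oplus 1))$ we have $[H^{-1}]+[L][H]=[L]+1$. Multiplying this identity by $[H]$ and rearranging produces $[L][H]^2-[L][H]-[H]+1=0$, which factors as $([H]-1)([L][H]-1)=0$.

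Second, the substantive content is to prove that as a $\KM(X)$-module, $\KM(\mathbb{P}(L\oplus 1))$ is free of rank $2$ with basis $\{1,[H]\}$. I would follow Atiyah's cover-and-clutch approach: decompose $\mathbb{P}(L\oplus 1)=U_0\cup U_\infty$ into the Real open subspaces obtained by deleting the infinity and zero sections respectively. Each $U_*$ admits a Real deformation retraction onto $X$, so $\KM(U_*)\cong \KM(X)$, and $U_0\cap U_\infty$ deformation-retracts onto the unit circle bundle $S(L)$ of $L$ (with its induced Real structure). The $\KM$ Mayer--Vietoris sequence then reduces the computation to understanding $\KM(S(L))$ together with the two restriction maps from $\KM(X)$. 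This step requires the $\KM$-theoretic Thom isomorphism for a Real line bundle: the relative group $\KM(D(L),S(L))$ is a free rank-one $\KM(X)$-module generated by the $\KM$-Thom class of $L$, obtained by pulling back the generator of $\KM^{1,1}(\mathrm{pt})$ along a classifying map. Granted this, the Mayer--Vietoris long exact sequence splits into short exact sequences and an elementary rank count together with the explicit description of $[H]|_{s_0}=1$ and $[H]|_{s_\infty}=[L]$ exhibits $\{1,[H]\}$ as a $\KM(X)$-basis.

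Combining the two steps, the $\KM(X)$-algebra map
\[
\KM(X)[t]\big/\bigl((t-1)([L]t-1)\bigr)\;\longrightarrow\;\KM(\mathbb{P}(L\oplus 1)),\qquad t\mapsto [H],
\]
is well defined by the first step, surjective by the freeness established in the second, and the source is also a free $\KM(X)$-module of rank $2$ with basis $\{1,t\}$ (since the defining quadratic relation is monic in $t$ after clearing denominators); a rank comparison forces injectivity.

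The main obstacle will be verifying the $\KM$-theoretic Thom isomorphism for Real line bundles cleanly. Atiyah's $(1,1)$-periodicity for $\KR$ does the work in his setting, but for $\KM$ one must check that the Thom class of a Real line bundle acts correctly on \emph{both} the Real and Quaternionic summands of $\KM=\KR\oplus\KQ$ (recall the proposition preceding this theorem, which requires the base to be connected). Because the Thom class of a Real line bundle is itself Real, its product action respects the $\ZZ_2$-grading: Real times Real is Real and Real times Quaternionic is Quaternionic. Thus the Thom isomorphism for $\KM$ decomposes into the Thom isomorphism for $\KR$ (due to Atiyah) and the Thom isomorphism for $\KQ$ (Theorem~\ref{Atiyah-Dupont} of Atiyah--Dupont), and the argument goes through. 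The connectedness hypothesis hidden in this decomposition will have to be addressed either by working fiberwise and patching, or by verifying directly that the full $\KM$ Mayer--Vietoris argument goes through without recourse to the decomposition.
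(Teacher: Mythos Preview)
The paper gives no proof of this proposition; it simply asserts that Atiyah's argument in \cite{KR} carries over verbatim once Real structures are replaced by M-structures. Atiyah's proof (his Theorem 2.1) is the elementary clutching-function argument of Atiyah--Bott: one represents a bundle on $\mathbb{P}(L\oplus 1)$ by clutching data along the equatorial sphere bundle, Fourier-decomposes the clutching function, and linearizes. The projective bundle formula is the \emph{input}; the Thom isomorphism and $(1,1)$-periodicity are deduced \emph{from} it, as the paper says explicitly right after the statement.

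Your approach reverses this logical flow. You want to prove the rank-two freeness via Mayer--Vietoris, and for that you need the $\KM$-Thom isomorphism for the Real line bundle $L$. You then propose to get this by splitting $\KM=\KR\oplus\KQ$ and invoking the separate $\KR$- and $\KQ$-Thom isomorphisms. But in Atiyah's paper the $\KR$-Thom isomorphism is itself a corollary of the projective bundle formula, and Dupont's $\KQ$-periodicity is built on the same foundation. So at the foundational level your argument is circular: you are using consequences of (the $\KR$- and $\KQ$-versions of) the very statement you are trying to prove. Within the logic of Appendix~B this is even more visible, since \Cref{B3} is placed after the proposition and derived from it.

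Your verification of the relation in the first step is fine. If you want a self-contained proof, you should instead run the Atiyah--Bott clutching argument directly for M-bundles; nothing in that argument is sensitive to whether $j^2=1$ or $j^2=-1$, only to the compatibility of the antilinear structure with the various bundle operations, which is what the paper means by ``replace Real structures by M-structures.''
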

%\begin{proof}
%	The relation actually holds in $\KR(\PP(L\oplus 1))$ as proved by Atiyah, thus the mapping $t\mapsto [H]$ induces a $\KM(X)$-algebra homomorphism
%	\[
%	\mu: \KM(X)[t]/(t-1)([L]t-1)\to \KM(\PP(L\oplus 1))
%	\]
%	which respects the $\ZZ_2$-grading.
%\end{proof}
The Thom isomorphism for Real bundles and (1,1)-periodicity follow in a quite formal way.
\begin{theorem}\label{B3}
	Let $E$ be a Real vector bundle over the real compact space $X$. Then
	\[
	\phi: \KM(X)\to \KM_{cpt}(E)
	\]
	is an isomorphism where $\phi(x)=\lambda_E\cdot x$ and $\lambda_E$ is the element of $\KR_{cpt}(E)$ defined by the exterior algebra of $E$.
\end{theorem}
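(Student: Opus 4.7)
The plan is to deduce Theorem~B3 essentially for free from the splitting in Proposition~B1, combined with the two Thom isomorphisms of Theorem~\ref{Atiyah-Dupont}.

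First I would extend the decomposition $\KM\cong\KR\oplus\KQ$ to compactly supported groups. For any locally compact real space $Y$ and any M-bundle $(F,j)$ on $Y$, the fiberwise canonical splitting $F=(1+j^2)F\oplus(1-j^2)F$ into a Real summand and a Quaternionic summand has locally constant ranks and is natural in $Y$, so it yields a natural direct sum decomposition $\KM_{cpt}(Y)\cong\KR_{cpt}(Y)\oplus\KQ_{cpt}(Y)$. Applied to $Y=X$ and $Y=E$, this identifies the map $\phi$ as a direct sum $\phi=\phi_R\oplus\phi_Q$, where $\phi_R:\KR(X)\to\KR_{cpt}(E)$ and $\phi_Q:\KQ(X)\to\KQ_{cpt}(E)$ are both given by multiplication with $\lambda_E$. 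The key point making the summands non-interacting is that $\lambda_E\in\KR_{cpt}(E)$ (not merely $\KM_{cpt}(E)$): the exterior algebra of a Real bundle inherits the natural Real structure from $E$, and under the M-theory product rule ($\KR\cdot\KR\subseteq\KR$ and $\KR\cdot\KQ\subseteq\KQ$) multiplication by a Real class preserves the $\KR\oplus\KQ$ decomposition.

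With this reduction in place, Theorem~\ref{Atiyah-Dupont} supplies the two required isomorphisms: $\phi_R$ is Atiyah's $\KR$-Thom isomorphism and $\phi_Q$ is Dupont's $\KQ$-Thom isomorphism. Hence $\phi=\phi_R\oplus\phi_Q$ is an isomorphism.

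The main obstacle is a formal verification rather than a mathematical one: one must check that the splitting of an M-bundle into Real and Quaternionic parts extends to the reduced/compactly supported setting, i.e.\ that for the real pair $(Y^{cpt},\{\infty\})$ the eigenspace decomposition of $j^2$ is compatible both with pull-back along maps of real spaces and with the chosen basepoint trivialization. This is immediate because the projectors $\tfrac{1}{2}(1\pm j^2)$ are defined fiberwise and canonical. If one prefers a self-contained argument not using Theorem~\ref{Atiyah-Dupont}, the alternative is to follow Atiyah's original proof in \cite{KR} verbatim: combine the projective bundle formula of the preceding proposition with a splitting-principle reduction to Real line bundles, and for a Real line bundle $L$ identify $\KM_{cpt}(L)$ as the kernel of restriction-to-infinity $\KM(\mathbb{P}(L\oplus 1))\to\KM(X)$, then compute directly that $\lambda_L$ generates this kernel freely over $\KM(X)$.
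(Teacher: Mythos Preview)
Your primary argument creates a circular dependency within the paper. In Appendix~B the logical flow is: projective bundle formula for $\KM$ $\Rightarrow$ Theorem~B3 and the $(1,1)$-periodicity for $\KM$ $\Rightarrow$ (by restricting to the $\KQ$-summand) the $\KQ$-Thom isomorphism. Indeed, the sentence immediately following the two theorems reads: ``Since the homomorphisms $\phi$ and $\beta$ are both induced by multiplication with Real bundles, they preserve the $\ZZ_2$-grading $\KM=\KR\oplus\KQ$\ldots\ So the corresponding theorems hold for $\KQ$-theory as well. This justifies \Cref{Atiyah-Dupont}.'' So you cannot invoke \Cref{Atiyah-Dupont} to prove Theorem~B3; that is precisely backwards.

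Your alternative approach---run Atiyah's argument from \cite{KR} verbatim, using the projective bundle formula for $\KM$ stated just above to reduce to Real line bundles and then identify $\KM_{cpt}(L)$ with the free rank-one $\KM(X)$-module generated by $\lambda_L$---is exactly what the paper intends. The paper does not spell this out, saying only that the Thom isomorphism ``follows in a quite formal way'' from the projective bundle formula, because nothing in Atiyah's proof uses anything about $\KR$ beyond the projective bundle formula and the module structure. Your observation that $\lambda_E\in\KR_{cpt}(E)$ preserves the $\KR\oplus\KQ$ decomposition is correct and is the content of the paragraph after the theorems, but it is used in the opposite direction: to deduce the $\KQ$-statement from the $\KM$-statement, not the reverse.
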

\begin{theorem}
	Let $b=[H]-1\in \KR^{1,1}(\pt)=\KR(\CC\PP^1)$. Then the homomorphism $$\beta: \KM^{r,s}(X,Y)\to \KM^{r+1,s+1}(X,Y)$$ given by $x\mapsto bx$ is an isomorphism.
\end{theorem}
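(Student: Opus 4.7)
The plan is to adapt Atiyah's proof of $(1,1)$-periodicity for $\KR$-theory \cite{KR}, using the projective bundle formula for $\KM$ (the Proposition just above) in place of its $\KR$-analog. The argument for $\KR$ goes through essentially verbatim because the only input really used is that multiplication by $b$ on $\KM$ of a trivial projective line bundle gives the expected free module of rank two.

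First I would apply the projective bundle formula with $L=\mathbf{1}$, the trivialized Real line bundle over a real compact $X$. Then $\mathbb{P}(\mathbf{1}\oplus\mathbf{1})$ is the product real space $X\times\CC\PP^1$, with involution given by complex conjugation on $\CC\PP^1$, and the formula specializes to the $\KM(X)$-algebra isomorphism
\[
\KM(X\times\CC\PP^1)\;\cong\;\KM(X)[H]\big/([H]-1)^2.
\]
In particular $\KM(X\times\CC\PP^1)$ is a free $\KM(X)$-module on the classes $\{1,b\}$ with $b=[H]-1$, and multiplication by $b$ sends $1\mapsto b$, $b\mapsto 0$.

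Next I would identify $\CC\PP^1$ with the pointed real space $D^{1,1}/S^{1,1}$ (both being $S^2$ with complex conjugation as involution, the latter with the image of $S^{1,1}$ as a real basepoint). The projection $X\times\CC\PP^1\to X$ and the basepoint inclusion split $\KM(X\times\CC\PP^1)$ as $\KM(X)\oplus\KM^{1,1}(X)$. Comparing this with the module decomposition from the previous step, the $\KM(X)\cdot 1$ summand is precisely the pullback from $X$, so the $\KM(X)\cdot b$ summand is identified with $\KM^{1,1}(X)$. Hence $\beta$ is an isomorphism when $(r,s)=(0,0)$ and $Y=\varnothing$.

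Finally I would bootstrap to all $(r,s)$ and all pairs. By definition
\[
\KM^{r+1,s+1}(X,Y)=\KM\bigl(X\times D^{r+1,s+1},\,X\times S^{r+1,s+1}\cup Y\times D^{r+1,s+1}\bigr),
\]
and writing $D^{r+1,s+1}=D^{1,1}\times D^{r,s}$ lets one apply the absolute case above after collapsing in the $D^{r,s}$-direction; equivalently, one uses the five-lemma and naturality of $b$-multiplication over the cofiber sequence defining $\KM^{r,s}(X,Y)$ to reduce to the absolute $(0,0)\to(1,1)$ statement already handled. The genuine mathematical content is all in the first two steps; the main obstacle will simply be the bookkeeping in this third step, where one has to keep straight the interaction between the pair structure, the ambient real structure, and the $\KM(X)$-module structure that carries $b$. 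Once this is done, the theorem decomposes compatibly along $\KM=\KR\oplus\KQ$, recovering the $(1,1)$-periodicities of Atiyah and Dupont simultaneously.
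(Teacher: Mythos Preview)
Your proposal is correct and is precisely the approach the paper has in mind: the paper states only that the Thom isomorphism and $(1,1)$-periodicity ``follow in a quite formal way'' from the projective bundle formula for $\KM$, i.e.\ Atiyah's argument for $\KR$ carries over verbatim once the $\KM$ projective bundle formula is in place. Your write-up is in fact more detailed than the paper's, which gives no proof at all beyond that remark; the only comment is that the third step can be streamlined by observing that $\KM^{r+1,s+1}(X,Y)=\KM_{cpt}((X\setminus Y)\times\RR^{r,s}\times\RR^{1,1})$ and applying the Thom isomorphism (\Cref{B3}) for the trivial Real line bundle $\RR^{1,1}=\CC$ directly, which avoids the five-lemma bookkeeping.
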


Since the homomorphisms $\phi$ and $\beta$ are both induced by multiplication with Real bundles, they preserve the $\ZZ_2$-grading $\KM=\KR\oplus\KQ$, i.e. they send $\KR$ to $\KR$ and $\KQ$ to $\KQ$. So the corresponding theorems hold for $\KQ$-theory as well. This justifies \Cref{Atiyah-Dupont}.

Recall we have defined $\KM^{r,s}$ for $r,s\ge 0$ using
\[
\KM^{r,s}(X,Y)=\KM(X\times D^{r,s}, X\times S^{r,s}\cup Y\times D^{r,s}),
\]
which in the special case $s=0$ coincides with the usual suspension groups $\KM^{-r}$. Now thanks to the (1,1)-periodicity, we can define $\KM$-groups with positive indices by putting $\KM^r=\KM^{0,r}$. Then we have a natural isomorphism $\KM^{r,s}\cong \KM^{s-r}$. This justifies the use of the group $\KM^4$ in \cite{KQ}.

Now we can quote \cite{KQ} to prove
\begin{proposition}
	For $r,s\ge 0$, multiplication with the generator of $\KQ^{4,0}(\pt)$ yields an isomorphism
	\[
	\KR^{r,s}(\pt)\xrightarrow{\cong}\KQ^{r+4,s}(\pt).
	\]
\end{proposition}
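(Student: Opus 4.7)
The plan is to deduce this directly from the bigraded quaternionic Atiyah-Bott-Shapiro isomorphism just established, together with the algebraic isomorphism \Cref{bigradedz2iso}(ii). Concretely, I will translate the entire statement, via $\varphi^R$ and $\varphi^Q$, into a statement about $\hat{\frN}_{\bullet,\bullet}(\RR)$- and $\hat{\frN}_{\bullet,\bullet}(\HH)$-modules, where it has already been proved.

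The first step is to consider the square
\[
\begin{tikzcd}
\hat{\frN}_{r,s}(\RR)\ar[r,"\cdot\Delta_{4,\HH}^+"]\ar[d,"\varphi^R"',"\cong"] & \hat{\frN}_{r+4,s}(\HH)\ar[d,"\varphi^Q","\cong"'] \\
\KR^{r,s}(\pt)\ar[r,"\cdot\,\varphi^Q(\Delta_{4,\HH}^+)"] & \KQ^{r+4,s}(\pt)
\end{tikzcd}
\]
which commutes because $\varphi^Q$ is a homomorphism of bigraded modules over the bigraded ring isomorphism $\varphi^R$ (the preceding theorem). The top horizontal map is an isomorphism by \Cref{bigradedz2iso}(ii), so the bottom is too, provided that $\varphi^Q(\Delta_{4,\HH}^+)$ agrees (up to a unit) with the chosen generator of $\KQ^{4,0}(\pt)$.

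The second step is to verify this identification. Note that $\KQ^{4,0}(\pt)=\KSp^{-4}(\pt)\cong\ZZ$ and that, on $\hat{\frN}_{4,0}(\HH)=\hat{\frN}_{4}(\HH)$, the construction $\varphi^Q$ restricts to the ordinary quaternionic ABS map $\varphi^h$ of \Cref{ABSH}. The argument in the proof of \Cref{ABSH}, using the commuting square with the forgetful map $\Res_\RR^\HH$ and Bott's result that $\Res_\RR^\HH\colon\KSp^{-4}(\pt)\to\KO^{-4}(\pt)$ is an isomorphism, shows that $\varphi^h(\Delta_{4,\HH}^+)$ generates $\KSp^{-4}(\pt)$. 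Hence $\varphi^Q(\Delta_{4,\HH}^+)$ is (up to sign) the generator of $\KQ^{4,0}(\pt)$, and the proposition follows.

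The main obstacle is, in fact, purely bookkeeping: one must confirm that the multiplicative structure invoked on the K-theory side (the external product $\KR^{4,0}(\pt)\otimes\KR^{r,s}(\pt)\to\KR^{r+4,s}(\pt)$ promoted to a $\KQ$-valued product when the first factor lies in $\KQ$) is precisely the one under which $\varphi^R,\varphi^Q$ form a module-over-ring pair, and that the algebraic pairing used in \Cref{bigradedz2iso}(ii) is $\ZZ_2$-graded tensor product over $\CC$ after passing through the R/Q identification. Both compatibilities have already been recorded in the present section, so the argument reduces to assembling them. Once done, the proposition is essentially a restatement of Bott's classical 4-fold periodicity $\KO^{-n}(\pt)\cong\KSp^{-n-4}(\pt)$, lifted to the bigraded $\KR$/$\KQ$ framework.
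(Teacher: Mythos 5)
Your argument is circular within the logical structure of the paper. The bigraded ABS theorem you invoke --- that $\varphi^Q$ is an isomorphism in all bidegrees --- is itself proved by first reducing, via (1,1)-periodicity, to bidegrees $(r+4,s)$ and then quoting that $\KQ^{\bullet+4,\bullet}(\pt)$ is a \emph{free module of rank one} over $\KR^{\bullet,\bullet}(\pt)$; but that freeness statement is exactly fact (iv) of \Cref{sec7}, which the present proposition in \Cref{appendixB} exists to justify. So you may not use the isomorphy of $\varphi^Q$ in bidegree $(r+4,s)$ for general $s$ without already knowing the statement you are trying to prove. The only portion of your argument that is independent of this is the identification of $\varphi^Q(\Delta_{4,\HH}^+)$ with the generator of $\KQ^{4,0}(\pt)=\KSp^{-4}(\pt)$ via \Cref{ABSH} and Bott's theorem; that is fine, but it only controls the column $s=0$, where the statement already follows from $\KR^{r,0}(\pt)=\KO^{-r}(\pt)$ and $\KQ^{r+4,0}(\pt)=\KSp^{-r-4}(\pt)$.

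The missing ingredient is a genuinely topological input relating $\KQ$ to $\KR$ when $s>0$. The paper's proof uses Dupont's result that multiplication by the generator of $\KQ^{0,4}(\pt)$ gives $\KQ^{r+4,s}(\pt)\cong\KR^{r+4,s+4}(\pt)$, combines it with the (1,1)-periodicity $\KR^{r+4,s+4}(\pt)\cong\KR^{r,s}(\pt)$, and then observes that the composite is a homomorphism of $\KR^{*,*}(\pt)$-modules; hence $\KQ^{*+4,*}(\pt)$ is free of rank one over $\KR^{*,*}(\pt)$ and multiplication by the generator of $\KQ^{4,0}(\pt)$ is an isomorphism. Your closing remark that the proposition is ``essentially a restatement of Bott's 4-fold periodicity lifted to the bigraded framework'' identifies precisely the gap: that lift is the nontrivial content, and it is supplied by Dupont's $(0,4)$-periodicity (or some equivalent), not by the algebraic isomorphism $\hat{\frN}_{r,s}(\RR)\cong\hat{\frN}_{r+4,s}(\HH)$, which can only be transported to K-theory after one knows $\varphi^Q$ is an isomorphism there.
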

\begin{proof}
	From \cite[(6)]{KQ}, we know multiplication with the generator of $\KQ^4(\pt)\cong\KQ^{0,4}(\pt)$ gives an isomorphism
	\[
	\KQ^{r+4,s}(\pt)\cong \KR^{r+4,s+4}(\pt).
	\]
	On the other hand, the (1,1)-periodicity gives
	\[
	\KR^{r+4,s+4}(\pt)\cong \KR^{r,s}(\pt).
	\]
	Combining the two isomorphisms and summing over all $r,s\ge 0$, we obtain isomorphisms of bigraded-groups
	\[
	\KR^{*,*}(\pt)\cong \KR^{*+4,*+4}(\pt)\cong\KQ^{*+4,*}(\pt).
	\]
	Now observe the above isomorphisms are homomorphisms of $\KR^{*,*}(\pt)$-modules, the proposition thus follows.
\end{proof}

\bibliographystyle{alpha}
\bibliography{ref}
\end{spacing}
\end{document}